\DeclareSymbolFont{rsfscript}{OMS}{rsfs}{m}{b}
\DeclareSymbolFontAlphabet{\mathrsfs}{rsfscript}
\newtheorem{theorem}{Theorem}[section]
\newtheorem{corollary}[theorem]{Corollary}
\newtheorem{lemma}[theorem]{Lemma}
\newtheorem{proposition}[theorem]{Proposition}
\theoremstyle{definition}
\newtheorem{definition}[theorem]{Definition}
\newtheorem{example}[theorem]{Example}
\newtheorem{remark}[theorem]{Remark}
\newcommand{\add}{\raisebox{.5mm}{\hspace{.7mm}\scalebox{.5}{$\bullet$}\hspace{.7mm}}}
\newcommand{\ti}{\tilde}
\newcommand{\al}{\alpha}
\newcommand{\om}{\omega}
\newcommand{\ga}{\gamma}
\newcommand{\la}{\lambda}
\newcommand{\eps}{\varepsilon}
\newcommand{\si}{\sigma}
\newcommand{\La}{\Lambda}
\newcommand{\Ga}{\Gamma}
\newcommand{\ov}[1]{\overline{#1}}
\newcommand{\os}[2]{\overset{#1}{#2}}
\newcommand{\scb}[2]{\scalebox{#1}{#2}}
\newcommand{\crossp}{\overset{+}{\leadsto}}
\newcommand{\fh}{\mathfrak{h}}
\newcommand{\fg}{\mathfrak{g}}
\newcommand{\cC}{\mathcal{C}}
\newcommand{\cF}{\mathcal{F}}
\newcommand{\cU}{\mathcal{U}}
\newcommand{\cA}{\mathcal{A}}
\renewcommand{\sb}{\mathsf{b}}
\newcommand{\sq}{\mathsf{q}}
\newcommand{\scal}[2]{\langle #1,#2\rangle}
\newcommand{\scald}[2]{(#1,#2)}
\newcommand{\qu}[1]{\quad\text{#1}\quad}
\newcommand{\tbb}{{\tiny $\bullet$}}
\newcommand{\wt}{\mathsf{wt}}
\newcommand{\w}{\mathsf{w}}
\renewcommand{\sf}{\mathsf{f}}
\newcommand{\Z}{\mathbb{Z}}
\newcommand{\N}{\mathbb{N}}
\newcommand{\R}{\mathbb{R}}
\newcommand{\Q}{\mathbb{Q}}
\newcommand{\sz}{\mathsf{z}}
\newcommand{\sB}{\mathsf{B}}
\newcommand{\sZ}{\mathsf{Z}}
\newcommand{\wB}{\widehat{B}}
\newcommand{\sLa}{\mathsf{\La}}
\newcommand{\cAJ}{\mathcal{A}_J}
\newcommand{\func}[1]{\operatorname{#1}}
\newcommand{\lra}{\longrightarrow}
 \newcommand{\cI}{\mathcal{I}}
\definecolor{purple}{rgb}{0.8,0.12,0.8}
\definecolor{orange}{rgb}{1.0,0.7,0.0}
\definecolor{pink}{rgb}{1,0.5,0.8}
\definecolor{blackg}{rgb}{0.1,0.25,0.1}
\definecolor{ForestGreen}{cmyk}{0.91,0,0.88,0.42}
\definecolor{Turquoise}{cmyk}{0.85,0,0.20,0}
\definecolor{GreenYellow}{cmyk}{0.15,0,0.69,0} 
\definecolor{Yellow}{cmyk}{0,0,1.,0} 
\definecolor{Goldenrod}{cmyk}{0,0.10,0.84,0} 
\definecolor{Dandelion}{cmyk}{0,0.29,0.84,0} 
\definecolor{Apricot}{cmyk}{0,0.32,0.52,0} 
\definecolor{Peach}{cmyk}{0,0.50,0.70,0} 
\definecolor{Melon}{cmyk}{0,0.46,0.50,0} 
\definecolor{YellowOrange}{cmyk}{0,0.42,1.,0} 
\definecolor{Orange}{cmyk}{0,0.61,0.87,0} 
\definecolor{BurntOrange}{cmyk}{0,0.51,1.,0} 
\definecolor{Bittersweet}{cmyk}{0,0.75,1.,0.24} 
\definecolor{RedOrange}{cmyk}{0,0.77,0.87,0} 
\definecolor{Mahogany}{cmyk}{0,0.85,0.87,0.35} 
\definecolor{Maroon}{cmyk}{0,0.87,0.68,0.32} 
\definecolor{BrickRed}{cmyk}{0,0.89,0.94,0.28} 
\definecolor{Red}{cmyk}{0,1.,1.,0} 
\definecolor{OrangeRed}{cmyk}{0,1.,0.50,0} 
\definecolor{RubineRed}{cmyk}{0,1.,0.13,0} 
\definecolor{WildStrawberry}{cmyk}{0,0.96,0.39,0} 
\definecolor{Salmon}{cmyk}{0,0.53,0.38,0} 
\definecolor{CarnationPink}{cmyk}{0,0.63,0,0} 
\definecolor{Magenta}{cmyk}{0,1.,0,0} 
\definecolor{VioletRed}{cmyk}{0,0.81,0,0} 
\definecolor{Rhodamine}{cmyk}{0,0.82,0,0} 
\definecolor{Mulberry}{cmyk}{0.34,0.90,0,0.02} 
\definecolor{RedViolet}{cmyk}{0.07,0.90,0,0.34} 
\definecolor{Fuchsia}{cmyk}{0.47,0.91,0,0.08} 
\definecolor{Lavender}{cmyk}{0,0.48,0,0} 
\definecolor{Thistle}{cmyk}{0.12,0.59,0,0} 
\definecolor{Orchid}{cmyk}{0.32,0.64,0,0} 
\definecolor{DarkOrchid}{cmyk}{0.40,0.80,0.20,0} 
\definecolor{Purple}{cmyk}{0.45,0.86,0,0} 
\definecolor{Plum}{cmyk}{0.50,1.,0,0} 
\definecolor{Violet}{cmyk}{0.79,0.88,0,0} 
\definecolor{RoyalPurple}{cmyk}{0.75,0.90,0,0} 
\definecolor{BlueViolet}{cmyk}{0.86,0.91,0,0.04} 
\definecolor{Periwinkle}{cmyk}{0.57,0.55,0,0} 
\definecolor{CadetBlue}{cmyk}{0.62,0.57,0.23,0} 
\definecolor{CornflowerBlue}{cmyk}{0.65,0.13,0,0} 
\definecolor{MidnightBlue}{cmyk}{0.98,0.13,0,0.43} 
\definecolor{NavyBlue}{cmyk}{0.94,0.54,0,0} 
\definecolor{RoyalBlue}{cmyk}{1.,0.50,0,0} 
\definecolor{Blue}{cmyk}{1.,1.,0,0} 
\definecolor{Cerulean}{cmyk}{0.94,0.11,0,0} 
\definecolor{Cyan}{cmyk}{1.,0,0,0} 
\definecolor{ProcessBlue}{cmyk}{0.96,0,0,0} 
\definecolor{SkyBlue}{cmyk}{0.62,0,0.12,0} 
\definecolor{Turquoise}{cmyk}{0.85,0,0.20,0} 
\definecolor{TealBlue}{cmyk}{0.86,0,0.34,0.02} 
\definecolor{Aquamarine}{cmyk}{0.82,0,0.30,0} 
\definecolor{BlueGreen}{cmyk}{0.85,0,0.33,0} 
\definecolor{Emerald}{cmyk}{1.,0,0.50,0} 
\definecolor{JungleGreen}{cmyk}{0.99,0,0.52,0} 
\definecolor{SeaGreen}{cmyk}{0.69,0,0.50,0} 
\definecolor{Green}{cmyk}{1.,0,1.,0} 
\definecolor{ForestGreen}{cmyk}{0.91,0,0.88,0.12} 
\definecolor{PineGreen}{cmyk}{0.92,0,0.59,0.25} 
\definecolor{LimeGreen}{cmyk}{0.50,0,1.,0} 
\definecolor{YellowGreen}{cmyk}{0.44,0,0.74,0} 
\definecolor{SpringGreen}{cmyk}{0.26,0,0.76,0} 
\definecolor{OliveGreen}{cmyk}{0.64,0,0.95,0.40} 
\definecolor{RawSienna }{cmyk}{0,0.72,1.,0.45} 
\definecolor{Sepia}{cmyk}{0,0.83,1.,0.70} 
\definecolor{Brown}{cmyk}{0,0.81,1.,0.60} 
\definecolor{Tan}{cmyk}{0.14,0.42,0.56,0} 
\definecolor{Gray}{cmyk}{0,0,0,0.50} 
\definecolor{Black}{cmyk}{0,0,0,1.} 
\definecolor{White}{cmyk}{0,0,0,0} 
\newcommand\blfootnote[1]{%
  \begingroup
  \renewcommand\thefootnote{}\footnote{#1}%
  \addtocounter{footnote}{-1}%
  \endgroup
}
\newcommand{\syc}{\mathsf{s_C}}
\begin{document}

\title{Homology rings of affine grassmannians and positively multiplicative graphs}
\author{J\'{e}r\'{e}mie Guilhot, C\'{e}dric Lecouvey and Pierre Tarrago}

\begin{abstract} 
Let $\fg$ be an untwisted affine Lie algebra with associated Weyl group $W_a$. To any level 0 weight~$\ga$ we associate a weighted graph $\Ga_\ga$ that encodes the orbit of $\ga$ under the action $W_a$. We show that the graph~$\Ga_\ga$ encodes the periodic orientation of certain subsets of alcoves in $W_a$ and therefore can be interpreted as an automaton determining the reduced expressions in these subsets. Then, by using some relevant quotients of the homology ring of affine Grassmannians, we show that $\Ga_\ga$ is positively multiplicative. This allows us in particular to compute the structure constants of the homology rings using elementary linear algebra on multiplicative graphs. In another direction, the positivity of $\Ga_\ga$ yields the key ingredients to study a large class
of central random walks on alcoves.\end{abstract}

\blfootnote{{\it 2020 Mathematics Subject Classification. 05,15,16,20\smallskip}}
\keywords{root systems, affine Grassmannians, homology ring, oriented graphs, alcove walks\smallskip} 

\maketitle

\blfootnote{The three authors are supported by the Agence Nationale de la Recherche funding ANR CORTIPOM 21-CE40-001. 
The first author is supported by the Agence Nationale de la Recherche funding ANR JCJC Project ANR-18-CE40-0001 and by the Australian Research Council discovery project DP200100712.\smallskip}

\section{Introduction}

The notion of positively multiplicative graphs naturally comes from the
study of harmonic functions on infinite graphs and random walks on simple
geometric configurations. As defined in \cite{Ker} for infinite graded
graphs and in \cite{GLT1} in general, a graph $\Gamma $ is positively
multiplicative if there exist a commutative algebra~$A$ together with an
element $x\in A$ and a basis $\mathsf{B}$ such that the structure constants
of~$A$ with respect to~$\mathsf{B}$ are nonnegative and the matrix of the
multiplication by $x$ expressed in the basis $\mathsf{B}$ is the adjacency
matrix of $\Gamma $. This is  the case for example for the well-known Young
lattice of partitions with $A$ the algebra of symmetric functions, $\mathsf{B%
}$ its basis of Schur functions and $x=s_{(1)}$ the Schur function
associated to the partition $(1)$. As explained in \cite{GLT1}, multiplicative graphs give a
convenient tool to study various problems connected to representation theory
of finite groups (character theory), representation theory of Lie algebras
and their associated random walks, harmonic functions and Markov chains on
infinite graded graphs, automatons recognizing reduced expressions in
Coxeter groups, orbits in affine crystals etc. In \cite{LT2} it is shown
how the study of a natural class of random walks on the alcove tesselation
of type $A$ can be reduced to the determination of the Perron-Frobenius
elements for a suitable positively multiplicative graph defined on $k$%
-irreducible partitions. One of the goal of this paper is to extend some
results of \cite{LT2} by showing that a much larger class of graphs defined
from general affine Lie algebras are positively multiplicative. 

\medskip

More precisely, let $\mathfrak{g}$ be a non twisted affine Lie
algebra with associated affine Weyl group $W_{a}$. To any level $0$ weight $\gamma $, we associate an oriented weighted graph $\Gamma _{\gamma }$ that
encodes the orbit of $\gamma $ under the action of $W_{a}$. In this
paper, we establish that the graphs $\Gamma _{\gamma }$ are positively
multplicative. This will be achieved by showing that the orientation of $\Gamma _{\gamma }$ actually describes the periodic orientation between alcoves lying
in a subset of $W_{a}$ (subset that depends on $\gamma $) and that the
adjacency matrix of $\Gamma _{\gamma }$ is the matrix of the multiplication
by the distinguished element $\xi _{s_{0}}$ in some quotient of the homology
ring of affine Grassmannians (quotient that also depends on~$\gamma $). In type $A
$ and when $\gamma =\rho $ (the sum of all classical fundamental weights~of~$\mathfrak{g}$), the graph $\Ga_\rho$ is the positively multiplicative graph
on $k$-irreducible partitions studied in \cite{LT2}. With this positivity property in hand, it becomes possible to generalize all the results of~\cite{LT2} by studying in each case walks on the relevant subset of alcoves.

\medskip

The affine Weyl group $W_a$ associated to $\mathfrak{g}$ is generated by the
simple reflections $s_0,\ldots,s_n$ with respect to the roots $\alpha_0,\ldots,\alpha_n$. The weight lattice of $\mathfrak{g}$ is $P_a=\oplus \mathbb{Z}\Lambda_i \oplus\mathbb{Z}\delta$ where $\Lambda_0,\ldots,\Lambda_n$ are the fundamental weights and $\delta$ is the
imaginary root. Let $v^\vee = (a_0^\vee,\ldots,a_n^\vee)\in \mathbb{Z}^n$
and $v=(a_0,\ldots,a_n)\in\mathbb{Z}^n$ be the unique vectors with
relatively prime coordinates such that $v^\vee\cdot A = A\cdot {^t}v = 0$
where $A$ is the Cartan matrix of~$\mathfrak{g}$. Let $\omega_i=\Lambda_i -
a_i^\vee \Lambda_0$ for $1\leq i \leq n$. Then the $\omega_i$'s generate the
weight lattice $P$ of the finite type Cartan submatrix associated to $A$
which can be obtained by erasing the row and the column corresponding to the
root~$\alpha_0$. We denote by $W$ the finite Weyl group associated to this
submatrix, it is generated by the reflections $s_1,\ldots,s_n$. Any weight
in~$P_a$ can be written under the form $\ell \Lambda_0+\lambda+k\delta$
where $\lambda\in P$ and $k,\ell\in \mathbb{Z}$. The integer $\ell$ is the
level of the weight and the level $0$ weight lattice is thus $P_0 = P\oplus\mathbb{Z}\delta$. It is stable under the action of $W_a$. In this paper, we
are interested in the orbit of the level 0 weights under the action of~$W_a$. First, since $\delta$ is a fixed point for this action, it is enough to
look at the orbit of the elements of $P$. Next, for all $\gamma\in P$, one
can show that~$s_i(\gamma)\in P$ if $i\neq 0$ and that $s_0(\gamma) =
s_\theta(\gamma) + (\gamma,\beta)\delta$ for some $\beta\in P$. Here~$s_\theta$ is the reflection with respect to the root $\theta = \delta -
\alpha_0$. We can now define the oriented weighted graph $\Gamma_\gamma$ for
all $\gamma\in P$. The weights of $\Gamma_\gamma$ lie in the group algebra $\mathbb{Z}[P]=\langle \mathsf{z}^\beta\mid \beta\in P\rangle_\mathbb{Z}$ and
the orientation of $\Gamma_\gamma$ is determined by the usual partial order
on $P$, that is $\lambda^{\prime }\prec \lambda$ if $\lambda-\lambda^{\prime
}$ is a sum of positive roots. We have

\begin{enumerate}
	\item[{\protect\tiny $\bullet$}] the vertices of $\Gamma_\gamma$ are the
	elements of the orbit of $\gamma$ under the action of $W$;

	\item[{\protect\tiny $\bullet$}] there is an arrow of type $i\neq 0$ and
	weight $a_i\in \mathbb{Z}$ from $\lambda$ to $\lambda^{\prime }$ when $%
	s_i(\lambda) = \lambda^{\prime }$ and $\lambda^{\prime }\prec \lambda$;

	\item[{\protect\tiny $\bullet$}] there is an arrow of type $0$ and weight $a_0\mathsf{z}^{\beta}$ from $\lambda$ to $\lambda^{\prime }$ when $s_0(\lambda) = s_\theta(\lambda) +(\lambda,\beta)\delta$, $s_\theta(\lambda)=\lambda^{\prime }$ and $\lambda\prec\lambda^{\prime }$.
\end{enumerate}
The affine Weyl group $W_a$ is a semi-direct product $W\ltimes Q^\vee$ where 
$Q^\vee$ is the $\mathbb{Z}$-lattice generated by the $W$-orbit of $\theta$.
Any element in $W_a$ can be uniquely written as $ut_\beta$ where $(u,\beta)\in W\times Q^\vee$ and the graph~$\Gamma_\gamma$ can be defined
only in terms of $W_a$. For instance, in the case where $\gamma=\rho$, the
sum of all classical fundamental weights $\omega_1+\cdots +\omega_n$, the
stabiliser of $\rho$ in $W$ is reduced to the identity and the orbit of $\gamma$ under the action of $W$ is in bijection with $W$. There is an arrow
of type $i\neq 0$ and weight $a_i$ in $\Gamma_\rho$ from~$w$ to $w^{\prime }$
whenever $w^{\prime }=s_iw$ and~$s_iw>w$ in the (weak) Bruhat order and
there is an arrow of type $0$ and weight~$a_0\mathsf{z}^\beta$ between $w$
and $w^{\prime }$ whenever~$s_0w = w^{\prime }t_{\beta}$, $w^{\prime }\in W$
and $w^{\prime }<w$.

\medskip

The graph $\Gamma_\gamma$ is defined using the level $0$ action of $W_a$ on $P$. It does however contains a lot of information on the geometry of alcoves. It
is well-known that $W_a$ can be realised as a group of affine motions in an
affine Euclidean space $V$ generated by affine reflections. More precisely,
there exists a finite root system~$\Phi$ with dual root system $\Phi^\vee$
such that $W_a$ is generated by all reflections $\sigma_{\alpha,k}$ with
respect to the affine hyperplane $H_{\alpha,k} = \{x\in V\mid (
x,\alpha) = k\}$ where $\alpha\in \Phi$ and $k\in \mathbb{Z}$. Again,
we get that $W_a=W\ltimes Q^\vee$ where~$Q^\vee$ is the coroot lattice
generated by $\Phi^\vee$. The set of alcoves is the set of connected
components of the set $V\setminus \cup_{\alpha\in \Phi,k\in \mathbb{Z}}\{H_{\alpha,k}\}$. The group $W_a$ acts simply transitively on this set,
hence, fixing the fundamental alcove $A_0$, there is a one to one
correspondance between alcoves and elements of $W_a$. In the case where~$\gamma=\rho$, the graph $\Gamma_\rho$ encodes the periodic orientation of adjacent
alcoves in the following way. Let $A$ and $A^{\prime }$ be two adjacent
alcoves corresponding respectively to $ut_\beta$ and $u^{\prime
}t_\beta^{\prime }$ in $W_a$ and let $H_{\alpha,k}$ with~$\alpha\in \Phi^+$
(the set of positive roots) and $k\in \mathbb{Z}$ be the unique hyperplane
separating $A$ and $A^{\prime }$. Then there is an arrow from $u$ to $u^{\prime }$ in the graph $\Gamma_\rho$ if and only if $A$ lies on the
positive side of $H_{\alpha,k}$ i.e the half-plane caracterised by $(x,\alpha)>k$ and $A^{\prime }$ on the negative side. In the case where 
$\gamma$ has a non trivial stabiliser $W_J$ in $W$ (here $W_J$ is a standard
parabolic subgroup of $W$ generated by $\{s_j\mid j\in J\}$), the graph~$\Gamma_\gamma$ encodes the orientation of adjacent alcoves in the so-called fundamental $J$-alcove $\cAJ$ which is defined as the set of points that lie between
hyperplanes $H_{\beta,0}$ and $H_{\beta,1}$ for all positive roots $\beta$
in $\Phi_J$, the parabolic root system of~$W_J$.

\medskip

It is interesting to note that since the graph $\Gamma _{\rho }$ naturally
encodes the orientation of adjacents alcoves, it can be interpreted as an
automaton for reduced expressions of affine Grassmanians elements. Indeed, the set $W_{a}^{\Lambda _{0}}$ of affine Grassmannians elements is the set of elements of
minimal length in the cosets of $W_{a}$ with respect to the finite Weyl
group $W$. This set can also be caracterised as the set of alcoves that lies
in the fundamental chamber $\mathcal{C}_{0}=\{x\in V\mid \langle x,\alpha
\rangle \geq 0,\ \forall \alpha \in \Phi ^{+}\}$. It follows that if $w\in
W_{a}^{\Lambda _{0}}$ and $s\in S$ are such that $sw,w\in W_{a}^{\Lambda_{0}}$ then we will have we have $sw>w$ if and only if $sw$ lies on the
positive side of the unique hyperplane separating $w$ and $sw$. As a
consequence, reduced expression of elements in $W_{a}^{\Lambda _{0}}$ will
be in bjection with certain paths in the graph $\Gamma _{\rho }$.
In the case where $\gamma $ has a non trivial stabiliser $W_{J}$, we obtain
an automaton for affine Grassmanians lying in the fundamental $J$-alcove $\cAJ$. 

\medskip

As mentionned before, we show that the graph $\Gamma_\gamma$ is positively
multiplicative for any level 0 weight $\gamma$. The main tool to prove this
result is the homology ring of affine Grassmannians. We refer to
Section \ref{Sec_Homolo_Ring} for precise definitions and some references.
The homology ring of affine Grassmanians $\sLa$ is a commutative $\mathbb{Q}$-algebra with a distinguished basis $\{\xi_w\mid w\in W_a^{\Lambda_0}\}$
such that the structure constants with respect to this basis are positive
integers. In type $A$, this ring can be identified with a subalgebra of the
symmetric functions and the previous distinguished basis then coincides with
the basis of $k$-Schur functions (see \cite{LLMSSZ}). This positive
structure is the key to our result. The connection between $\sLa$ and our
graph is made through the Pieri rules that describe in particular the
multiplication by $\xi_{s_0}$ in $\sLa$: 
\begin{equation*}
\xi_{s_0}\xi_{w} = \sum a_i \xi_{s_iw}\quad\text{for all $w\in
	W_a^{\Lambda_0}$}\quad
\end{equation*}
where the sum is over all $i\in \{0,\ldots,n\}$ such that $s_iw>w$ and $s_iw\in W_{a}^{\Lambda_{0}}$. It turns out that the algebra~$\sLa$ has very nice factorisation properties: for all $w\in W_a^{\Lambda_0}$ and all $\omega_i$, we have $\xi_{wt_{\omega_i}} = \xi_{w}\xi_{t_{\omega_i}}$. This
property can be used to show that $\sLa$ is finite-dimensional over the
ring of polyomials $\mathbb{Q}[\xi_{t_{\omega_1}},\ldots,\xi_{t_{\omega_n}}]$. In turn, this allows us to extend the scalars in the definition of $\sLa$
to the ring of Laurent polynomials and to define a basis in bijection with
the full set of alcoves, not only the affine Grassmanians elements. We then show that
the adjacency matrix of $\Gamma_\rho$ is essentially equal to the matrix of the
multiplication by $\xi_{s_0}$ in this extended algebra expressed in a well chosen
basis, hence showing the positivity. In the more general case where $\gamma$ has a non trivial stabiliser in $W$, we introduce a
quotient of some ideal of our extended algebra that has a basis in bijection with the set
of alcoves in $\cAJ$ and show that there are extra factorisation
properties by "pseudo-translations" in this quotient. We then interpret the
adjacency matrix of $\Gamma_\gamma$ as the matrix of the multiplication by $\tilde{\xi}_{s_0}$, the image of $\xi_{s_0}$ in the quotient, hence showing
the positivity of~$\Gamma_\gamma$.  It is worth noticing here that one can apply to $\Gamma _{\rho}$
the general procedure for computing positively multiplicative bases obtained
in \cite{GLT1}. This gives a simple way to compute the structure constants of
the homology ring of affine Grassmannians, at least in small ranks. We have done this in type $G_2$ \cite{G2}. 

\medskip

As previously mentionned, the fact that $\Gamma _{\gamma }$ is positively
multiplicative has important consequences in the theory of random walks on
alcoves that never cross twice the same hyperplane. We can
expand the graph $\Gamma _{\gamma }$ to obtain an infinite graph $\Gamma
_{\gamma }^{e}$ which is the graph of the weak Bruhat order on the set of affine
Grassmanians that lie in the $J$-alcove $\cAJ$ and study Markov chains
on this graph. The case where $\gamma =\rho $ and each new alcove (or each
new vertex) is chosen uniformly among all the possible alcoves (vertices)
was considered in \cite{Lam2}. More in the spirit of the works by Kerov and
Vershik \cite{Ker}, one can also study central random walks on alcoves or
central Markov chains on $\Gamma _{\gamma }^{e}$. This is another class of random
walks that never cross twice the same hyperplane for which two trajectories
with the same end points have the same probability. These are controlled by the positive harmonic functions on $\Gamma
_{\gamma }^{e}$ (see \cite{LT2} for the case $\gamma =\rho $ in type $A$).
Then, thanks to the positivity of $\Gamma _{\gamma }$, the determination of
these positive harmonic functions is completely explicit. This is a
particular case of a more general phenomenon for positively multiplicative
graphs detailed in \cite{GLT1}. It then becomes possible to obtain
a rational expression for the drift of the previous central random walks
(i.e. for all types and for all level $0$ weight $\gamma $) from the methods
and results obtained in \cite{LT2} for the affine Grassmanian in type $A$.
We will not pursue in this direction in the present paper where we rather
focus on the algebraic properties of the graphs $\Gamma _{\gamma }$. 

\medskip

In classical types, the case $\Gamma _{\omega _{i}}$ where $\omega _{i}$ is a fundamental
classical weight is also of special interest since it decribes the
transitions of $k$ particles moving with small steps on a
circle (in type $A$) or on a segment (in type $B,C,D$) with $n$ sites  without intersecting each other. This problem in type $A$ is studied
in details in \cite{GLT3} using the fact the eigenvalues and the
eigenvectors of the adjacency matrix of $\Gamma _{\omega _{i}}$ are known. A precise asymptotic behavior of the random walk on the graph is obtained and asymptotic formulas are deduced for the coefficients of the quantum cohomology of Grassmanniann.
It would be interesting to study the spectral properties of the
adjacency matrices of the graphs $\Gamma _{\gamma }$ in general.

\medskip

Finally, the graph $\Gamma _{\gamma }$ also carries information on finite
affine crystals. To any fundamental classical weight~$\omega _{i}\in P$,
there exists a finite-dimensional representation of the quantum group $U_{q}^{\prime }(\mathfrak{g})$ called a Kirillov-Reshetikhin module. In many
cases (in particular for the classical affine root systems), this module is
known to admit a combinatorial skeleton $\widehat{B}(\omega _{i})$, its
so-called associated crystal (see \cite{BumpSchilling2017} for a complete
review on crystals). There is an action of the affine Weyl group $W_{a}$ on $\widehat{B}(\omega _{i})$ defined using its $i$-chains structure: for $v\in 
\widehat{B}(\omega _{i})$, the image of $v$ under the action of $s_{i}$ is
the symmetric of~$v$ in the $i$-chain containing $v$. The crystal $\widehat{B}(\omega _{i})$ contains a unique vertex $v_{0}$ of weight $\omega _{i}$ and
the orbit of~$v_{0}$ under the action of $W_{a}$ on this crystal is the same
as the orbit of $\omega _{i}$ under the action of $W_{a}$ in the following
sense: the weight of the vertex $w\cdot v_{0}$ is $w(\omega _{i})$. Thus the
graph $\Gamma _{\omega _{i}}$ also encodes the action of $W_{a}$ on $v_{0}$.
In type $A$, since all the $i$-chains in $\widehat{B}(\omega _{i})$ have
length at most 1, we actually so recover the full crystal~$\widehat{B}(\omega _{i})$. More generally, one can associate an affine crystal $\widehat{B}(\gamma )$ to any dominant classical weight $\gamma =\sum \gamma
_{i}\omega _{i}$ using tensor products of crystals by setting 
\begin{equation*}
\widehat{B}(\gamma )=\widehat{B}(\omega _{1})^{\otimes \gamma _{1}}\otimes
\cdots \otimes \widehat{B}(\omega _{n})^{\otimes \gamma _{n}}.
\end{equation*}%
As before, there is an action of $W_{a}$ on $\widehat{B}(\gamma )$ and the
orbit of the unique vertex of weight $\gamma $ in $\widehat{B}(\gamma )$
under this action is the same as the orbit of $\gamma $ under the action of $W_{a}$. Hence the graph $\Gamma _{\gamma }$ also encodes the action of the
affine Weyl group on $\widehat{B}(\gamma )$. Here again, it is a natural
question to ask for the affine crystals~$\widehat{B}(\gamma )$ that could be
positively multiplicative when they do not reduce to one orbit. In connection with the alcove model for
KR-crystals developed in \cite{Len1,Len2} it would yield interesting
generalisations of the previous alcove random walks.

\medskip

We now review the content of the paper. In Section 2, we give some results
and definitions of \cite{GLT1} about positively multiplicative graphs. In
Section 3, we introduce the notion of affine Weyl groups together with their
action on the weight lattice $P_a$ and their action on the set of alcoves. In
Section 4, we define the graph~$\Gamma _{\gamma }$ associated to a level $0$
weight $\gamma $ and provide a lot of examples. In Section 5, we introduce
the homology ring of affine Grassmanians and present some of its properties.
In Section 6, we prove that $\Gamma _{\gamma }$ is positively
multiplicative for all level $0$ weight $\gamma$. Finally, in Section 7, 
we present an interpretation of the graph $\Ga_\ga$ in terms of particle moving on either a circle or a segment.

\medskip
\noindent\textbf{Acknowledgments:} The authors would like to thank James Parkinson and Eloïse Little for many helpful discussions.

\section{Multiplicative graphs}
\label{mutl_graph}

In this section we review some results of \cite{GLT1}. We start by defining the notion of positively mutliplicative algebras and positively multiplicative graphs.

\subsection{Multiplicative finite graphs}
\label{Sec_PM_Graphs}

Let $\sZ = \{\sz_1,\ldots,\sz_N\}$ be a set (possibly empty) of formal indeterminates.  Let $\Q[\sZ^{\pm 1}]$ be the ring of Laurent polynomials in the variables $\sZ$ and let $\Q(\sZ^{\pm 1})$ be its fraction field. Write $\Q_+[\sZ^{\pm 1}]$ for the subset of~$\Q[\sZ^{\pm 1}]$ containing the
polynomials with nonnegative coefficients. 

\medskip

Let $\sLa$ be a unital algebra over $\Q[\sZ^{\pm 1}]$ and let $\sB$ be a basis of $\sLa$ containing $1$.  We say that $\sLa$ is positively multiplicative with respect to $\sB$, if the structure constants of the basis $\sB$ are all in $\Q_+[\sZ^{\pm 1}]$. We say that~$\sLa$ is positively multiplicative if it is  positively multiplicative with respect to some basis containing~$1$.

\medskip

Let $\Gamma=(V,E)$ be a finite oriented weighted graph with set of
vertices $V=\{v_{1},\ldots,v_{n}\}$, set of arrows~$E\subset V\times V$ and edge weight function with values in $\Q_+[\sZ^{\pm1}]$. We denote by $A_\Ga=(a_{i,j})_{1\leq i,j\leq n}$ the adjacency matrix of $\Gamma$,  that is the coefficient $a_{i,j}$ is equal to $0$ if there is no arrow from $v_j$ to $v_i$ and to the sum of the weights of the arrows from $v_j$ to $v_i$ otherwise. The adjacency algebra of $\Ga$ is the algebra $\Q[\sZ^{\pm1}][A_\Ga]$ of polynomials in $A_\Ga$ with coefficients in~$\Q[\sZ^{\pm1}]$. It will often be convenient to split any arrow in $\Gamma$ weighted by a polynomial $P$ in $\Q_+[\sZ^{\pm1}]$ into a multiple set of arrows, each of them being weighted by a monomial in $\Q_+[\sZ^{\pm1}]$.   

\medskip

 In this section, we will assume that all graphs are strongly connected. This means that for each pair of vertices
$(u,v)$, there is an oriented path from $u$ to~$v$. 

\begin{definition}
\label{Def_MGraphs}
A graph  $\Gamma=(V,E)$ is said to be 
\emph{multiplicative} at $v_{i_{0}}\in V$ if its adjacency algebra is contained in
a commutative algebra $\sLa$ of $n\times n$ matrices with coefficients in $\Q(\sZ^{\pm 1})$ with a basis $\sB_{i_{0}}=\{\sb_{i},i=1,\ldots,n\}$ such that 
\begin{equation*}
\sb_{i_{0}}=1\text{ and }A_\Ga \sb_{j}=\sum_{i=1}^{n}a_{i,j}\sb_{i}\text{ for any
}j=1,\ldots,n. \label{harmonic}%
\end{equation*}
In other words, the matrix $\func{Mat}_{\sB_{i_0}}(m_{A_\Ga})$ of the multiplication by $A_\Ga$ in the basis $\sB_{i_{0}}$ is $A_\Ga$ itself. In addition, if the algebra $\sLa$ is positively multiplicative with respect to $\sB_{i_0}$ we say that $\Ga$ is positively multiplicative at~$v_{i_0}$.
\end{definition}

Let $\sLa$ be a commutative algebra and $x\in \sLa$. As above, we write $m_x$ for the linear map $\sLa\to\sLa$ defined by $m_x(a) = xa$. We start by stating a useful and easy result.  
\begin{proposition}
\label{m_x}
Let $\Ga$ be a finite oriented weighted graph with adjacency matrix $A_\Ga = (a_{i,j})$. The graph $\Ga$ is multiplicative at $v_{i_0}$ if and only if there exists a commutative $n$-dimensional  algebra $\sLa$ over $\Q(\sZ^{\pm 1})$, an element $x\in \sLa$ and a basis $\sB=(\sb_1,\ldots,\sb_{n})$ of $\sLa$ such that $\sb_{i_0} =1$ and $\func{Mat}_{\sB}(m_x) = A_\Ga$.
\end{proposition}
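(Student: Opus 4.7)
The plan is to recognize Proposition~\ref{m_x} as essentially a reformulation of Definition~\ref{Def_MGraphs} via the left regular representation: the definition requires $\sLa$ to be a concrete commutative algebra of $n\times n$ matrices, while the proposition only asks for an abstract $n$-dimensional commutative algebra together with a distinguished element $x$ whose multiplication matrix reproduces $A_\Ga$. I would prove the two implications separately.

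For the forward direction, I would start from the data provided by Definition~\ref{Def_MGraphs}: a commutative algebra $\sLa$ of $n\times n$ matrices, containing the adjacency algebra, together with a basis $\sB_{i_0} = (\sb_1,\ldots,\sb_n)$ satisfying $\sb_{i_0}=1$ and $A_\Ga \sb_j = \sum_i a_{i,j}\sb_i$. I would then set $x := A_\Ga$, which is a bona fide element of $\sLa$ since the adjacency algebra is contained in $\sLa$, and take $\sB := \sB_{i_0}$. The $n$-dimensionality of $\sLa$ follows from $|\sB_{i_0}|=n$, and reading the identity $A_\Ga \sb_j = \sum_i a_{i,j}\sb_i$ column by column gives $\func{Mat}_\sB(m_x) = A_\Ga$, as required.

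For the backward direction, I would embed the abstract algebra $\sLa$ into $M_n(\Q(\sZ^{\pm 1}))$ via the left regular representation
\[
\rho : \sLa \lra M_n(\Q(\sZ^{\pm 1})), \qquad a \longmapsto \func{Mat}_\sB(m_a).
\]
Injectivity follows from the presence of the unit $\sb_{i_0}=1$: indeed $\rho(a)=0$ forces $a = m_a(1) = 0$. Commutativity of the image is inherited from $\sLa$ via $m_a m_b = m_{ab} = m_{ba} = m_b m_a$. Identifying $\sLa$ with $\rho(\sLa)$ and the basis $\sB$ with $\rho(\sB)$, the element $x$ becomes the matrix $A_\Ga$, the element $\sb_{i_0}=1$ becomes the identity matrix, and the adjacency algebra $\Q[\sZ^{\pm 1}][A_\Ga]$ sits inside $\rho(\sLa)$ as the subalgebra generated by $\rho(x) = A_\Ga$. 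Finally, the relation $x\sb_j = \sum_i a_{i,j}\sb_i$ in $\sLa$ translates under $\rho$ into $A_\Ga \cdot \rho(\sb_j) = \sum_i a_{i,j}\rho(\sb_i)$, which is exactly the condition appearing in Definition~\ref{Def_MGraphs}.

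There is no real obstacle here: the argument is essentially a definition-chase, and the only genuine content is the standard observation that the left regular representation embeds a unital commutative algebra faithfully into matrices with commutative image. I would expect the whole proof to fit in a few lines.
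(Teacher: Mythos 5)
Your proposal is correct and follows essentially the same route as the paper: the nontrivial direction is handled by the left regular representation $a\mapsto \func{Mat}_{\sB}(m_a)$, whose injectivity comes from the presence of the unit $\sb_{i_0}=1$ and whose image is a commutative matrix algebra containing the adjacency algebra, while the other direction is the immediate identification $x=A_\Ga$, $\sB=\sB_{i_0}$. The only difference is cosmetic: the paper writes out the regular-representation direction and dismisses the other as obvious, whereas you spell out both.
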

\begin{proof}
Assume that there exists such an algebra $\sLa$. The map
$$\begin{array}{ccccccc}
\func{Mat}_{\sB}&:& \sLa&\to&\func{Mat}_n(\Q(\sZ^{\pm 1}))\\
&& y &\mapsto & \func{Mat}_{\sB}(m_y)
\end{array}$$
is an injective morphism of algebras. Then $\func{Mat}_{\sB}(\sLa)$ is a commutative subalgebra of $\func{Mat}_n(\Q(\sZ^{\pm 1}))$. We claim that $\Ga$ is multiplicative at $v_{i_0}$ with associated algebra $\func{Mat}_{\sB}(\sLa)$ and basis 
$$\{\func{Mat}_{\sB}(m_{\sb_i})\mid i=1,\ldots,n\}.$$
Indeed we have $x\sb_j=m_x(\sb_j) = \sum_{i=1}^{n} a_{i,j} \sb_i$ so that $m_{x\sb_j} = \sum a_{i,j} m_{\sb_i}$. Then 
$$A_\Ga \func{Mat}_{\sB}(m_{\sb_j}) = \func{Mat}_{\sB}(m_x) \func{Mat}_{\sB}(m_{\sb_j}) =\func{Mat}_{\sB}(m_xm_{\sb_j}) =\func{Mat}_{\sB}(m_{x\sb_j}) = \sum_{i=1}^{n} a_{i,j} \func{Mat}_{\sB}(m_{\sb_i}).$$
Since $\sb_{i_0}=1$, we have $\func{Mat}_{\sB}(m_{x\sb_j}) =1$ hence the result. The converse is obvious by definition of a positively multiplicative graph.
\end{proof}

\subsection{Graph of maximal dimension}

We say that a graph $\Ga$ is of maximal dimension when its adjacency algebra is of dimension $n$, the number of vertices in $\Ga$ and the size of $A_\Ga$. This is the case if and only if the minimal polynomial $\mu_{A_\Ga}$ of~$A_\Ga$ is of degree $n$.

\begin{theorem}{\cite[Proposition 3.6]{GLT1}}
\label{unicity_mult_basis}
Let $\Ga$ be a graph of maximal dimension. Then there exists a basis $\sB=(\sb_1,\ldots,\sb_n)$ of the adjacency algebra of $A_\Ga$ such that $\func{Mat}_{\sB}(m_{A_\Ga})=A_\Ga$. Further, if $\sb_{i_0}$ is invertible, there exists a unique basis $\sB'=(\sb'_1,\ldots,\sb'_n)$ such that $b'_{i_0}=1$ and $\func{Mat}_{\sB'}(m_{A_\Ga})=A_\Ga$.
\end{theorem}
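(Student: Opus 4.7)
The plan is to translate the condition $\func{Mat}_{\sB}(m_{A_\Ga})=A_\Ga$ into an isomorphism of modules over the adjacency algebra, which becomes tractable because maximal dimension forces $A_\Ga$ to be cyclic.

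Write $F=\Q(\sZ^{\pm 1})$ and let $\cA = F[A_\Ga]$ denote the adjacency algebra after extending scalars to $F$; by hypothesis $\dim_F \cA = n$, so the minimal polynomial of $A_\Ga$ has degree $n$ and therefore coincides with the characteristic polynomial of $A_\Ga$ acting on $V=F^n$. Standard linear algebra then provides a cyclic vector $v_0 \in V$, and the $F$-linear map $\phi:\cA \to V$, $P\mapsto Pv_0$, becomes an isomorphism intertwining left multiplication by $A_\Ga$ on $\cA$ with the natural action of $A_\Ga$ on $V$. Setting $\sb_i := \phi^{-1}(e_i)$ for the standard basis $(e_i)$ of $V$ and applying $\phi^{-1}$ to the tautology $A_\Ga e_j = \sum_i a_{i,j} e_i$ yields $A_\Ga \sb_j = \sum_i a_{i,j} \sb_i$, that is, $\func{Mat}_{\sB}(m_{A_\Ga})=A_\Ga$. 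This proves the existence assertion.

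For the second assertion, let $\sB'=(\sb'_i)$ be any basis of $\cA$ satisfying the same matrix identity, and define the $F$-linear map $\psi:\cA \to \cA$ by $\psi(\sb_i)=\sb'_i$. The matrix identity translates immediately into $\psi \circ m_{A_\Ga} = m_{A_\Ga} \circ \psi$, and since $\cA = F[A_\Ga]$ is generated over $F$ by $A_\Ga$, this forces $\psi$ to commute with multiplication by every element of $\cA$. Thus $\psi$ is a left $\cA$-module endomorphism of the rank-one free $\cA$-module $\cA$, so it is determined by $u:=\psi(1)$ via $\psi(P) = P\cdot u$; in particular $\sb'_i = u\, \sb_i$ for all $i$, and $\sB'$ is a basis precisely when $u \in \cA^\times$. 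The normalization $\sb'_{i_0}=1$ becomes $u\sb_{i_0}=1$, which admits a (unique) solution $u=\sb_{i_0}^{-1}$ exactly when $\sb_{i_0}$ is invertible in $\cA$; setting $\sB':=\sb_{i_0}^{-1}\sB$ componentwise then produces the unique basis with the prescribed properties.

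The only step that requires any care is the passage from the coefficient ring $\Q[\sZ^{\pm 1}]$ to its fraction field $F$ in order to invoke cyclicity of $A_\Ga$, since cyclic vectors need not exist over a non-field coefficient ring. This is harmless in the present setting, because Definition~\ref{Def_MGraphs} already allows the ambient algebra $\sLa$ to consist of matrices with coefficients in $\Q(\sZ^{\pm 1})$; no step after this reduction involves more than formal manipulation of module structures.
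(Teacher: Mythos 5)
Your proof is correct. Note that the paper itself gives no argument for this statement: it is imported verbatim from \cite[Proposition 3.6]{GLT1}, so the only internal point of comparison is the companion result (Proposition~\ref{ThGG1}), where the same source is quoted through the path-counting matrix $M_{i_0}$ whose $j$-th column is $A_\Ga^{j-1}e_{i_0}$. Your argument is the coordinate-free version of exactly that mechanism: invertibility of $M_{i_0}$ is the statement that $e_{i_0}$ is a cyclic vector, and the columns of $M_{i_0}^{-1}$ are your $\phi^{-1}(e_j)$ expressed in the basis $\{1,A_\Ga,\ldots,A_\Ga^{n-1}\}$ for the specific choice $v_0=e_{i_0}$. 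What your abstract route buys is a clean existence statement (a cyclic vector always exists once $\mu_{A_\Ga}$ has degree $n$, since the minimal polynomial then equals the characteristic polynomial, without having to exhibit a particular invertible $M_k$) and a transparent uniqueness argument via the endomorphism ring of $\cA$ as a rank-one free module over itself; what the concrete route buys is an effective criterion and an algorithm, which is what the rest of the paper actually uses. Your closing remark on passing to $F=\Q(\sZ^{\pm1})$ is the right caveat and is consistent with the paper's conventions, since Definition~\ref{Def_MGraphs} and Proposition~\ref{ThGG1}(2) already place the basis elements in $\frac{1}{\det M_{i_0}}\Q[\sZ^{\pm1}]$ rather than in the adjacency algebra over the Laurent polynomial ring itself.
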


\noindent
Note that this is not always possible to find an index $i_0$ such that $\sb_{i_0}$ is invertible, see \cite[\S 3.2]{GLT1} for details.  

\medskip

A path of length $N\in \N$ in a graph $\Ga$ is a sequence $v_0,v_1,\ldots,v_N$ of vertices together with a family of arrows $(e_1,\ldots,e_N)$ where $e_i$ is an arrow from  $v_{i-1}$ to $v_i$.  The weight of a path is the product of the weights of the arrows, it is therefore an element of $\Q_+[\sZ^{\pm1}]$. 

\medskip

Let $k\in \{1,\ldots,n\}$. We denote by $M_k$ the $n\times n$ matrix whose coefficient $m_{i,j}$ is the sum of the weights of all paths of length $j-1$ starting at $v_k$ and ending at $v_i$.

\begin{proposition}{\cite[Proposition 3.6]{GLT1}}
\label{ThGG1}
\begin{enumerate}
\item The graph $\Gamma$  is multiplicative at $v_{i_0}$ and is of maximal dimension if
and only if the matrix $M_{i_{0}}$ is invertible. In this case, the
columns of $M_{i_{0}}^{-1}$ define a basis
$\sB_{i_{0}}=\{\sb_{1},\ldots,\sb_{i_{0}}=1,\ldots,\sb_{n}\}$  of the adjacency algebra (expressed
in the basis $\{1,A_\Ga,\ldots,A_\Ga^{n-1}\}$) that satisfies $\func{Mat}_{\sB_{i_0}}(m_{A_\Ga}) = A_\Ga$.

\item When $M_{i_0}$ is invertible, the entries of the matrices in the basis $\sB_{i_0}$ belong to
$\frac{1}{\det M_{i_{0}}}\Q[\sZ^{\pm1}]$.

\item When $\mu_{A}$ is irreducible, the matrix $M_{i_{0}}$ is invertible for any vertex $v_{i_0}$.
\end{enumerate}
\end{proposition}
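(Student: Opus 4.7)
My plan hinges on the identification that the $j$-th column of $M_{i_0}$ is exactly $A_\Ga^{j-1} e_{i_0}$, where $e_{i_0}$ is the standard basis vector: indeed, the $(i,j)$-entry of $M_{i_0}$ counts weighted paths of length $j-1$ from $v_{i_0}$ to $v_i$ and hence equals $(A_\Ga^{j-1})_{i,i_0}$. Consequently, invertibility of $M_{i_0}$ over $\Q(\sZ^{\pm 1})$ is equivalent to linear independence of $\{A_\Ga^{k} e_{i_0}\}_{k=0}^{n-1}$, which forces $\{1, A_\Ga, \ldots, A_\Ga^{n-1}\}$ to be linearly independent as matrices, and so $\Ga$ to be of maximal dimension. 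This single observation is the backbone of the whole argument.

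For the forward direction of (1), I would take the basis $\sB = \{\sb_j\}$ provided by multiplicativity at $v_{i_0}$ and induct on $k$, using $\sb_{i_0}=1$ and the relation $A_\Ga\sb_j = \sum_i a_{i,j} \sb_i$, to obtain $A_\Ga^{k} = A_\Ga^{k}\sb_{i_0} = \sum_j (A_\Ga^{k})_{j,i_0}\sb_j$. This realises $M_{i_0}$ as the change-of-basis matrix from $\{A_\Ga^{k}\}$ to $\{\sb_j\}$ inside the $n$-dimensional adjacency algebra, so $M_{i_0}$ is invertible; reading $M_{i_0}^{-1}$ the other way recovers the expression of the $\sb_j$'s in the basis $\{A_\Ga^{i-1}\}$. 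For the converse I would reverse the construction and \emph{define} $\sb_j := \sum_i (M_{i_0}^{-1})_{i,j} A_\Ga^{i-1}$, so that $\{\sb_j\}$ is automatically a basis of the adjacency algebra (maximal dimension following from invertibility as noted). A direct calculation from $M_{i_0} M_{i_0}^{-1} = 1$ shows $\sb_j e_{i_0} = e_j$. The crucial final step is then to notice that both $\sb_{i_0}-1$ and $A_\Ga\sb_j - \sum_i a_{i,j}\sb_i$ are polynomials in $A_\Ga$ of degree at most $n-1$ that annihilate $e_{i_0}$; since $\{A_\Ga^{k} e_{i_0}\}_{k=0}^{n-1}$ is linearly independent, no such nonzero polynomial exists, so these differences vanish in the adjacency algebra and yield multiplicativity.

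Part (2) is Cramer's rule: each entry of $M_{i_0}^{-1}$ is $(\det M_{i_0})^{-1}$ times a cofactor in $\Q[\sZ^{\pm 1}]$, and plugging these into $\sb_j = \sum_i (M_{i_0}^{-1})_{i,j} A_\Ga^{i-1}$ gives matrices whose entries lie in $\tfrac{1}{\det M_{i_0}}\Q[\sZ^{\pm 1}]$. For (3), I would consider the vector-level minimal polynomial $\mu_{i_0}$ of $e_{i_0}$ under $A_\Ga$, i.e.\ the monic generator of $\{p : p(A_\Ga)e_{i_0}=0\}$. It divides $\mu_{A_\Ga}$ and is not $1$ since $e_{i_0}\neq 0$; irreducibility of $\mu_{A_\Ga}$ (of degree $n$) then forces $\mu_{i_0}=\mu_{A_\Ga}$, so no polynomial in $A_\Ga$ of degree less than $n$ annihilates $e_{i_0}$, which is exactly invertibility of $M_{i_0}$.

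The main subtlety I expect is the upgrade in the converse of (1) from a vector-level identity (valid after evaluating on $e_{i_0}$) to an identity in the full adjacency algebra. This bridge is provided precisely by the cyclicity of $e_{i_0}$ under $A_\Ga$, which is itself equivalent to the invertibility of $M_{i_0}$ we are assuming; recognising this equivalence is the conceptual heart of the proof, and everything else is linear-algebraic bookkeeping.
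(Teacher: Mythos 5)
The paper gives no proof of this proposition (it is imported verbatim from \cite{GLT1}), so your argument has to stand on its own. Parts (1) and (2) are correct and, I expect, essentially the standard argument: the identification of the $j$-th column of $M_{i_0}$ with $A_\Ga^{j-1}e_{i_0}$, the induction giving $A_\Ga^{k}=\sum_j (A_\Ga^{k})_{j,i_0}\sb_j$ in the multiplicative case, the computation $\sb_j e_{i_0}=e_j$ in the converse, and the upgrade from a vector-level identity to an identity in the algebra via injectivity of evaluation at $e_{i_0}$ are exactly the right steps. One small wording slip: $A_\Ga\sb_j-\sum_i a_{i,j}\sb_i$ is a priori a polynomial in $A_\Ga$ of degree $n$, not $n-1$; you should first note that $\Q(\sZ^{\pm1})[A_\Ga]$ is spanned by $\{A_\Ga^{k}\}_{0\le k\le n-1}$ (Cayley--Hamilton) and that evaluation at $e_{i_0}$ is injective on that span, which is precisely what the independence of the $A_\Ga^{k}e_{i_0}$ gives. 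That is cosmetic and easily repaired.

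Part (3), however, has a genuine gap. Your argument correctly shows that the annihilator of $e_{i_0}$ in $\Q(\sZ^{\pm1})[x]$ is generated by $\mu_{A_\Ga}$, hence that the cyclic subspace generated by $e_{i_0}$ has dimension $\deg\mu_{A_\Ga}$; but you then assert parenthetically that $\mu_{A_\Ga}$ has degree $n$, and irreducibility alone does not give this. An irreducible minimal polynomial can have degree $d<n$, in which case the characteristic polynomial equals $\mu_{A_\Ga}^{n/d}$ and $M_{i_0}$ is \emph{never} invertible, so without an argument that $d=n$ the statement you are proving would actually fail. The missing ingredient is the standing hypothesis of this section that $\Ga$ is strongly connected: specializing the variables $\sz_i$ at generic positive reals yields an irreducible nonnegative real matrix whose Perron eigenvalue is a simple root of the characteristic polynomial, whereas the factorization $\mu_{A_\Ga}^{n/d}$ would force every root to have multiplicity $n/d$; hence $n/d=1$ and $\deg\mu_{A_\Ga}=n$. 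With that supplement your proof of (3) closes.
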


\begin{example}
	\label{ex_A3}
Consider the graph $\Gamma$ with adjacency matrix $A_\Ga$ given as follows:

\medskip

\begin{minipage}{8cm}
$$
\begin{tikzpicture}[scale =1]
\tikzstyle{vertex}=[inner sep=2pt,minimum size=10pt,ellipse,draw]

\node[vertex] (a0) at (0,0) {\scalebox{.7}{$v_1$}};
\node[vertex] (a1) at (0,-1) {\scalebox{.7}{$v_2$}};
\node[vertex] (a2) at (-1,-2) {\scalebox{.7}{$v_3$}};
\node[vertex] (a3) at (1,-2) {\scalebox{.7}{$v_4$}};
\node[vertex] (a4) at (0,-3) {\scalebox{.7}{$v_5$}};
\node[vertex] (a5) at (0,-4) {\scalebox{.7}{$v_6$}};

\draw[->] (a0) edge node  {} (a1);
\draw[->] (a1) edge node{} (a2);
\draw[->] (a1) edge node{} (a3);
\draw[->] (a2) edge node{} (a4);
\draw[->] (a3) edge node{} (a4);
\draw[->] (a4) edge node{} (a5);

\draw[->]  (a2) edge[bend left = 30] node[left]{\scalebox{.7}{$\sz_1$}} (a0);
\draw[->]  (a3) edge[bend left = -30] node[right]{\scalebox{.7}{$\sz_3$}} (a0);

\draw[->]  (a4) edge[bend left = 30] node[right,pos=.3]{\scalebox{.7}{$\sz_2$}} (a0);
\draw[->]  (a5) edge[bend right = 30] node[right,pos=.3]{\scalebox{.7}{$\sz_2$}} (a1);

\draw[->]  (a5) edge[bend left = 30] node[left]{\scalebox{.7}{$\sz_3$}} (a2);
\draw[->]  (a5) edge[bend left = -30] node[right]{\scalebox{.7}{$\sz_1$}} (a3);

\end{tikzpicture}$$
\end{minipage}
\begin{minipage}{4cm}
$$
A_\Ga=\left(
\begin{array}
[c]{cccccc}%
0 & 0 & \sz_{1} & \sz_{3} & \sz_{2} & 0\\
1 & 0 & 0 & 0 & 0 & \sz_{2}\\
0 & 1 & 0 & 0 & 0 & \sz_{3}\\
0 & 1 & 0 & 0 & 0 & \sz_{1}\\
0 & 0 & 1 & 1 & 0 & 0\\
0 & 0 & 0 & 0 & 1 & 0
\end{array}
\right)
.$$
\end{minipage}

\medskip

\noindent
The minimal polynomial of $A_\Ga$ is
$$\mu_{A_\Ga}(T)=T^{6}-2\left(  \sz_{1}+\sz_{3}\right)  T^{3}-4\sz_{2}T^{2}+\left(  \sz_{1}-\sz_{3}\right)  ^{2}$$
 which is irreducible. The matrices
$M_{1}$ and $M_{1}^{-1}$ are respectively%
\[
\begin{pmatrix}
1 & 0 & 0 & \sz_{1}+\sz_{3} & 2\sz_{2} & 0\\
0 & 1 & 0 & 0 & \sz_{1}+\sz_{3} & 4\sz_{2}\\
0 & 0 & 1 & 0 & 0 & \sz_{1}+3\sz_{3}\\
0 & 0 & 1 & 0 & 0 & 3\sz_{1}+\sz_{3}\\
0 & 0 & 0 & 2 & 0 & 0\\
0 & 0 & 0 & 0 & 2 & 0
\end{pmatrix}
\qu{and}
\begin{pmatrix}
1 & 0 & 0 & 0 & -\frac{\sz_{1}+\sz_{3}}{2} & -\sz_{2}\\
0 & 1 & \frac{2\sz_{2}}{\sz_{1}-\sz_{3}} & \frac{2\sz_{2}}{\sz_{3}-\sz_{1}} & 0 &
-\frac{\sz_{1}+\sz_{3}}{2}\\
0 & 0 & \frac{3\sz_{1}+\sz_{3}}{2\sz_{1}-2\sz_{3}} & \frac{\sz_{1}+3\sz_{3}}{2\sz_{3}%
-2\sz_{1}} & 0 & 0\\
0 & 0 & 0 & 0 & \frac{1}{2} & 0\\
0 & 0 & 0 & 0 & 0 & \frac{1}{2}\\
0 & 0 & \frac{-1}{2\sz_{1}-2\sz_{3}} & \frac{-1}{2\sz_{3}-2\sz_{1}} & 0 & 0
\end{pmatrix}.
\]
The graph $\Gamma$ is multiplicative at $v_1$. Observe that in this particular example, that all the entries in $\{\sb_{1}=1,\ldots,\sb_{6}\}$ belong to $\mathbb{Q_{+}}%
[\sz_{1},\sz_{2},\sz_{3}]$. For example, we have
\[
b_{3}=\frac{1}{\sz_{1}-\sz_{3}}\left(  2\sz_{2}A+\frac{1}{2}(3\sz_{1}+\sz_{3}%
)A^{2}-\frac{1}{2}A^{5}\right)  =\left(
\begin{array}
[c]{cccccc}%
0 & \sz_{1} & \sz_{2} & 0 & 0 & \sz_{1}\sz_{3}\\
0 & 0 & \sz_{1} & 0 & \sz_{2} & 0\\
1 & 0 & 0 & 0 & 0 & 0\\
0 & 0 & 0 & 0 & \sz_{1} & \sz_{2}\\
0 & 1 & 0 & 0 & 0 & \sz_{1}\\
0 & 0 & 0 & 1 & 0 & 0
\end{array}
\right).  
\]

\end{example}

\subsection{Expansion of a graph}
Let $\Gamma$ be a strongly connected weighted graph with set of vertices $\{v_1,\ldots,v_n\}$. We assume that all the weights in $\Ga$ are positive  monomials in $\Q_{+}[\sZ^{\pm1}]$. 
\begin{definition}
The \emph{expansion} of the graph $\Gamma$ at $v_1$ is the graded graph with set of vertices $\Gamma_{e}=(\Gamma^{\ell})_{\ell\in\mathbb{N}^\ast}$ constructed by induction as follows:
\begin{enumerate}
\item[{\tiny $\bullet$}] $\Gamma_1=\{(v_1,1,1)\}$
\item[{\tiny $\bullet$}] if $(v_{j},\sz^{\beta},\ell)\in \Gamma^\ell$ and there is an arrow $v_{j}\overset{m_{i,j}\sz^{\beta_{i,j}}}{\rightarrow
}v_{i}$ in $\Gamma$ then $(v_{i},\sz^{\beta+\beta_{i,j}},\ell+1)\in \Gamma^{\ell+1}$ and there is an arrow $
(v_{j},\sz^{\beta},\ell)\overset{m_{i,j}}{\rightarrow}(v_{i},\sz^{\beta+\beta_{i,j}},\ell+1)
$ in $\Gamma_e$.
\end{enumerate}
\end{definition}
Assume that $\Ga$ is a finite positively multiplicative graph. As in Definition \ref{Def_MGraphs}, let $\sLa$ be a commutative $n$-dimensional algebra over $\Q(\sZ^{\pm 1})$, $x\in \sLa$ and $\sB=(\sb_1,\ldots,\sb_{n})$ a basis of $\La$ such that $\sb_{1} =1$ and $\func{Mat}_{\sB}(m_x) = A_\Ga$. 
Since $\sLa$  is an algebra over~$\Q[\sZ^{\pm1}]$, it can also be seen as a $\Q$-algebra of infinite dimension with basis 
$$\{\sz^\beta \sb_i\mid i\in \{1,\ldots,n\}, \beta\in \Z^m\}$$ where for all $\beta=(\beta_1,\ldots,\beta_m)\in \Z^m$ we have set $\sz^{\beta} = \sz_1^{\beta_1}\ldots \sz_n^{\beta_n}$. We will denote by $\sLa_\Q$ this algebra.
We set $\sLa'_{e}=\sLa_{\Q}\otimes_{\Q}\Q[\sq]$ where $\sq$ is a new
indeterminate. It is easy to check that the set 
$$\sB_{e}^{\prime}=\{\sq^{\ell
}\sz^{\beta}\sb_{i}\mid  i\in \{1,\ldots,n\},\beta\in\mathbb{Z}^{m},\ell\in\mathbb{N}%
\}$$
is a basis of $\sLa'_{e}$.  We write $\sLa_e$ for the subspace of $\sLa'_e$ with basis $\sB_{e}=\{\sq^{\ell}\sz^{\beta}\sb_{i}\mid
(v_{i},\sz^{\beta},\ell)\in\Gamma_{e}\}.$

\begin{proposition}
\label{Prop_Gammae_PM}With the previous notation, the following assertions
hold in $\sLa_{e}$.

\begin{enumerate}
\item For any vertex $(v_{j},\sz^{\beta},\ell)\in\Gamma_{e}$
\[
\sq x\times \sq^{\ell}\sz^{\beta}\sb_{j}=\sum_{(v_{j},\sz^{\beta},\ell)\overset{m_{i,j}%
}{\rightarrow}(v_{i},\sz^{\beta'},\ell+1)}m_{i,j}\sq^{\ell+1}%
\sz^{\beta'}\sb_{i}.
\]

\item The element $1=\sq^{0}\sz^{0}\sb_{1}$ belongs to $\sB_{e}$ and the
product of two elements in the basis $\sB_{e}$ expands on~$\sB_{e}$ with nonnegative coefficients. In particular
$\sLa_e$ is a positively multplicative subalgebra of $\sLa'_e$ with associated basis
$\sB_{e}$.
\end{enumerate}
\end{proposition}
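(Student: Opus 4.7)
The plan is to handle~(1) by direct expansion of definitions, and~(2) via a characterization lemma identifying $\sB_e$ with the supports of the iterates $x^\ell$.

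For~(1), the defining identity $x\sb_j=\sum_{i=1}^n a_{i,j}\sb_i$ from Proposition~\ref{m_x} expresses each $a_{i,j}\in\Q_+[\sZ^{\pm1}]$ as a sum $\sum_e m_{i,j}^{(e)}\sz^{\beta_{i,j}^{(e)}}$ indexed by the arrows $e\colon v_j\to v_i$ of $\Ga$. Multiplying by $\sq^{\ell+1}\sz^\beta$ yields
\[
\sq x \cdot \sq^\ell \sz^\beta \sb_j \;=\; \sum_{e\colon v_j\to v_i} m_{i,j}^{(e)}\,\sq^{\ell+1}\sz^{\beta+\beta_{i,j}^{(e)}}\sb_i,
\]
and the inductive definition of $\Ga_e$ matches each summand with the unique arrow of $\Ga_e$ out of $(v_j,\sz^\beta,\ell)$ induced by $e$.

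Toward~(2), I would first establish, by induction on $\ell\geq 0$ using~(1), the following characterization: $\sq^\ell\sz^\beta\sb_i\in\sB_e$ if and only if $\sz^\beta\sb_i$ appears with strictly positive coefficient in the expansion of $x^\ell$ in the basis $\{\sz^\gamma\sb_k\}$ of $\sLa_\Q$. The base case $\ell=0$ records the initial vertex, and the inductive step equates, via~(1), ``endpoints of arrows of $\Ga_e$ out of level-$\ell$ vertices'' with ``monomials of positive coefficient in $x\cdot x^\ell=x^{\ell+1}$''. Nonnegativity of all arrow weights of $\Ga$ ensures no cancellation occurs at any stage.

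With the lemma in hand, both assertions of~(2) are immediate. The identity $1=\sq^0\sz^0\sb_1$ matches the unit in $x^0=\sb_1$, so $1\in\sB_e$. For a product $uv$ of basis elements $u=\sq^\ell\sz^\beta\sb_i$ and $v=\sq^{\ell'}\sz^{\beta'}\sb_j$ of $\sB_e$, we compute $uv=\sq^{\ell+\ell'}\sz^{\beta+\beta'}\sb_i\sb_j=\sum_k c_{ij}^k\sq^{\ell+\ell'}\sz^{\beta+\beta'}\sb_k$, with $c_{ij}^k\in\Q_+[\sZ^{\pm1}]$ by positive multiplicativity of $\sLa$. Each monomial $\sz^\gamma\sb_k$ appearing with positive coefficient in $\sz^{\beta+\beta'}\sb_i\sb_j$ likewise appears with positive coefficient in $x^\ell\cdot x^{\ell'}=x^{\ell+\ell'}$ --- no cancellation can occur by positivity --- hence $\sq^{\ell+\ell'}\sz^\gamma\sb_k\in\sB_e$ by the characterization. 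Thus $\sLa_e$ is closed under multiplication in $\sLa'_e$ and positively multiplicative with respect to $\sB_e$. The main obstacle is the characterization lemma itself, which depends on the two positivity inputs --- nonnegativity of the arrow weights of $\Ga$ and of the structure constants of $\sLa$ in $\sB$ --- both required to prevent cancellations that would break the match between $\sB_e$ and the support of the iterates of $x$.
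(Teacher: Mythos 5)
Your proof is correct; the paper in fact states Proposition \ref{Prop_Gammae_PM} without proof, and your argument supplies exactly the intended one: part (1) is definition-unwinding of the split arrows of $\Ga$ against the identity $x\sb_j=\sum_i a_{i,j}\sb_i$, and part (2) rests on the characterization of $\sB_e$ as the support of the powers $x^\ell$, with the two positivity hypotheses (nonnegative arrow weights of $\Ga$ and nonnegative structure constants of $\sB$) ruling out cancellation in both the inductive step and the product $x^\ell\cdot x^{\ell'}$. The only cosmetic point is the indexing of levels: the paper's definition starts $\Gamma_1=\{(v_1,1,1)\}$ while assertion (2) requires the initial vertex at level $0$ so that $\sq^0\sz^0\sb_1=1$; you implicitly adopt the level-$0$ convention, which is the one that makes the statement literally true.
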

\begin{example}\label{ex_expan}
The expansion of \begin{tikzpicture}[scale =1,baseline]
\tikzstyle{vertex}=[inner sep=1pt,minimum size=10pt,draw,circle]
\node[vertex] (a0) at (0,.5) {\scalebox{.9}{$v_1$}};
\node[vertex] (a1) at (0,-.5) {\scalebox{.9}{$v_2$}};

\draw[->] (a0) -- (a1);
\draw[->,bend left=55] (a1) to node[left]{$\sz_1$} (a0);
\draw[->,bend right=55] (a1) to node[right]{$\sz_2$} (a0);

\end{tikzpicture}
 is the graph 
\begin{center}
\begin{tikzpicture}[scale =1]
\def\rx{0.5}
\def\ry{0.866}
\tikzstyle{vertex}=[inner sep=2pt,minimum size=10pt]

\node[vertex] (a0) at (0,0) {(\scalebox{1}{$v_1$,1,1)}};
\node[vertex] (a1) at (0,-1) {(\scalebox{1}{$v_2$,1,2)}};
\node[vertex] (a21) at (-1,-2) {(\scalebox{1}{$v_1$,$\sz_1$,3)}};
\node[vertex] (a22) at (1,-2) {(\scalebox{1}{$v_1$,$\sz_2$,3)}};

\node[vertex] (a31) at (-1,-3){(\scalebox{1}{$v_2$,$\sz_1$,4)}};
\node[vertex] (a32) at (1,-3) {(\scalebox{1}{$v_2$,$\sz_2$,4)}};

\node[vertex] (a41) at (-2,-4) {(\scalebox{1}{$v_1$,$\sz_1^2$,5)}};
\node[vertex] (a42) at (0,-4) {(\scalebox{1}{$v_1$,$\sz_1\sz_2$,5)}};
\node[vertex] (a43) at (2,-4) {(\scalebox{1}{$v_1$,$\sz_2^2$,5)}};

\node[vertex] (a51) at (-2,-5) {(\scalebox{1}{$v_2$,$\sz_1^2$,6)}};
\node[vertex] (a52) at (0,-5) {(\scalebox{1}{$v_2$,$\sz_1\sz_2$,6)}};
\node[vertex] (a53) at (2,-5)  {(\scalebox{1}{$v_2$,$\sz_2^2$,6)}};

\node[vertex] (a61) at (-3,-6) {(\scalebox{1}{$v_1$,$\sz_1^3$,7)}};
\node[vertex] (a62) at (-1,-6) {(\scalebox{1}{$v_1$,$\sz_1^2\sz_2$,7)}};
\node[vertex] (a63) at (1,-6) {(\scalebox{1}{$v_1$,$\sz_1\sz_2^2$,7)}};
\node[vertex] (a64) at (3,-6) {(\scalebox{1}{$v_1$,$\sz_2^3$,7)}};

\draw[line width=.5pt,->] (a0) to   (a1);
\draw[line width=.5pt,->] (a1) to  (a21);
\draw[line width=.5pt,->] (a1) to  (a22);

\draw[line width=.5pt,->] (a21) to  (a31);
\draw[line width=.5pt,->] (a22) to  (a32);

\draw[line width=.5pt,->] (a31) to  (a42);
\draw[line width=.5pt,->] (a32) to  (a42);
\draw[line width=.5pt,->] (a31) to  (a41);
\draw[line width=.5pt,->] (a32) to  (a43);

\draw[line width=.5pt,->] (a41) to  (a51);
\draw[line width=.5pt,->] (a42) to  (a52);
\draw[line width=.5pt,->] (a43) to  (a53);

\draw[line width=.5pt,->] (a41) to  (a51);
\draw[line width=.5pt,->] (a42) to  (a52);
\draw[line width=.5pt,->] (a43) to  (a53);

\draw[line width=.5pt,->] (a51) to  (a61);
\draw[line width=.5pt,->] (a51) to  (a62);
\draw[line width=.5pt,->] (a52) to  (a62);
\draw[line width=.5pt,->] (a52) to  (a63);
\draw[line width=.5pt,->] (a53) to  (a63);
\draw[line width=.5pt,->] (a53) to  (a64);
\end{tikzpicture}
\end{center}
%
%
%
%

\end{example}
\section{Affine Weyl group}

\subsection{Affine root systems and affine Lie algebras}

\label{Subsec_affineRS}

Let $I=\{0,1,\ldots,n\}$, $I^\ast=\{1,2,\ldots,n\}$ and $A=(a_{i,j})_{(i,j)\in I^{2}}$ be a generalized
Cartan matrix of a non twisted affine root system.\ These affine root systems
are classified in~\cite{KacB} (see the Table 1 page 54). The
matrix $A$ has rank $n$ and there exists a unique vector $v = (a_i)_{i\in I}\in\mathbb{Z}^{n+1}$ with $(a_i)_{i\in I}$ relatively primes and a unique vector $v^{\vee} =(a_i^\vee)_{i\in I}\in \Z^{n+1}$ with $(a^\vee_i)_{i\in I}$ relatively primes such that $v^{\vee}\cdot A=A\cdot {^t}\hspace{-.1mm}v=0$. Note that, since we are in the non twisted case, we have $a_0=a_0^\vee = 1$. We refer to \cite{carter_affine,KacB} for details and proofs of the results presented in this section.  


\medskip

Let $(\fh,\Pi,\Pi^\vee)$ be a realization of $A$ that is
\begin{enumerate}
\item $\fh$ is a complex vector space of dimension $n+2$,
\item $\Pi=\{\al_0,\ldots,\al_n\}\subset \fh^\ast$ and $\Pi^\vee = \{\al_0^\vee,\ldots,\al_n^\vee\}\subset \fh$ are linearly independant subsets,
\item $a_{i,j} = \scal{\al_j}{\al^\vee_i}$ for $0\leq i,j\leq n$.
\end{enumerate}
Here, $\scal{\cdot}{\cdot}:\fh^\ast\times \fh\to \mathbb{C}$ denotes the pairing $\scal{\al}{h} = \al(h)$.
Fix an element $d\in \fh$ such that $\scal{\al_i}{d} = \delta_{0,i}$ for all $i\in I$ so that $\Pi^\vee\cup \{d\}$ is a basis of $\fh$. 
We denote by $\fg$ the affine Lie algebra associated to this datum and refer to ~\cite{KacB} for its definition.

\medskip

Let $\La_0,\ldots,\La_{n}\subset \fh^\ast$ be the such that 
$$\La_i(\al^\vee_j) = \delta_{i,j}\qu{and}\La_i(d) = 0\text{ for all $i,j\in I$.}$$
We set $\delta = \sum_{j=0}^{n} a_j \al_j$ so that 
$$\delta(\al_i^\vee) = \sum_{j=0}^n a_j \al_j(\al_i^\vee)=\sum_{j=0}^n a_j a_{i,j} = [Av]_{i} =0\qu{and} \delta(d) = a_0 = 1.$$
Then the family $(\La_0,\ldots,\La_n,\delta)$ is the dual basis of $(\al^\vee_0,\ldots,\al^\vee_n,d)\subset \fh$ and we have
$$\al_j = \sum_{i\in I} \scal{\al_j}{\al_i^\vee} \La_i + \scal{\al_j}{d}\delta = \sum_{i\in I} a_{i,j} \La_i \qu{for all $j\in I^\ast.$}$$

\smallskip

There exists an invariant nondegenerate symmetric bilinear form $(\cdot,\cdot)$ on $\fg$ such that $(\cdot,\cdot)$ is nondegenerate on $\fh$ and uniquely defined by
$$\begin{cases}
(\al_i^\vee,\al^\vee_j) = \scal{\al_i}{\al_j^\vee}a_i{a_i^\vee}^{-1}=a_j{a_j^\vee}^{-1}a_{j,i}&\mbox{for $i,j\in I$}\\
(\al_i^\vee,d) = 0 &\mbox{for $i\in I^\ast$}\\
(\al_0^\vee,d) = a_0 &\\
(d,d) = 0
\end{cases}
$$
It can be checked that $(\al_i^\vee,\al_j^\vee) = (\al_j^\vee,\al_i^\vee)$ for all $i,j\in I$. Let $\nu$ be the associated map from $\fh$ to its dual:
$$\begin{array}{ccccccc}
\nu &:& \fh & \to & \fh^\ast\\
&& h&\mapsto & (h,\cdot) 
\end{array}$$
Then we have $\nu(d) =\La_0$, $a_{i,j} = \dfrac{2(\al_i,\al_j)}{(\al_i,\al_i)}$ and $\nu(\al_i^{\vee}) = \dfrac{2\al_i}{(\al_i,\al_i)}$. The form $(\cdot,\cdot)$ on $\fh$ then induces a form on $\fh^\ast$ via $\nu$. We still denote this form $(\cdot,\cdot)$ and we have
$$\begin{cases}
(\al_i,\al_j) = a_i^\vee{a_i}^{-1}a_{i,j}&\mbox{for $i,j\in I$}\\
(\al_i,\La_0) = 0 &\mbox{for $i\in I^\ast$}\\
(\al_0,\La_0) = a_0^{-1} \\
(\La_0,\La_0) = 0\\
(\La_0,\delta)=1
\end{cases}
$$
%

There are different objects associated to the datum $(\fh,\Pi,\Pi^\vee)$, some of them live in $\fh$ other in $\fh^\ast$. We will as much as possible use the following convention: we will add the suffix ``co'' to the name of the object to indicate that it naturally lives in $\fh$ (eventhough we sometime think of it as an element of $\fh^\ast$) and we will add a superscript $^\vee$ to the notation. For instance, $\al_i^\vee$ is called a coroot as it is an element of~$\fh$. We now introduce various lattices:
\begin{enumerate}
\itemsep.4em 
\item[{\tiny $\bullet$}] the coweight lattice: $P_a^\vee := \bigoplus_{i\in I} \Z \al_i^\vee +\Z d\subset \fh$,
\item[{\tiny $\bullet$}] the weight lattice $P_a = \{\ga\in \fh^\ast\mid \ga(P_a^\vee) \subset \Z\} = \bigoplus_{i\in I} \Z\La_i  + \Z\delta$,
 \item[{\tiny $\bullet$}] the dominant weights in $\fh^\ast$ are the elements of $P^+_a =  \bigoplus_{i\in I} \Z_{\geq 0}\La_i  + \Z\delta$,
\item[{\tiny $\bullet$}] the root lattice  $Q_a=\bigoplus_{i\in I} \Z \al_i$,
\item[{\tiny $\bullet$}] the coroot lattice $Q_a^\vee=\bigoplus_{i\in I} \Z \al^\vee_i$.
\end{enumerate}
For any $i\in I$, we define the simple reflection $s_{i}$ on $\fh^\ast$ by
\begin{equation*}
s_{i}(x)=x-\scal{\alpha_{i}^{\vee}}{x}\alpha_{i}\ \text{for any
}x\in \fh^\ast\text{.} \label{defSi}%
\end{equation*}
The affine Weyl group $W_{a}$ is the subgroup of $GL(\mathfrak{h}^{\ast})$ generated
by the reflections $s_{i}$. Since $(\La_0,\ldots,\La_n,\delta)$ is the dual basis of $(\al_0^\vee,\ldots,\al_n^\vee,d)$, we have for all $i\in I$
\begin{align*}
s_i(\delta) &=\delta - \scal{\al_i^\vee}{\delta} \al_i = \delta\\
s_i(\La_j) &= \La_j \qu{if $i\neq j$}\\
s_i(\al_i) &= -\al_i
\end{align*} 
The Weyl group $W_a$ is acting on the weight
lattice $P_{a}$. 

\medskip

Let $\os{\ \circ}{A}$ be the matrix obtained from $A$ by deleting the row and the column corresponding to $0$. Then it is well-known that $\os{\ \circ}{A}$ is a Cartan matrix of finite type. Let $\os{\circ}{\fh^\ast}$ and $\os{\circ}{\fh}$ be the vector spaces spanned by the subset $\os{\circ}{\Pi} = \Pi\setminus \{\al_0\}$ and $\os{\hspace{-2.5mm}\circ}{\Pi^\vee} = \Pi\setminus \{\al^\vee_0\}$.  The root and weight lattices associated to $\os{\ \circ}{A}$ are  $Q=\bigoplus_{i\in I^\ast}\mathbb{Z}\alpha_{i}$ and $P=\bigoplus_{i\in I^\ast}\mathbb{Z}\omega_{i}$ where we have set $\omega_{i}=\Lambda_{i}-a_{i}^{\vee}\Lambda_{0}$ for all $i\in I^\ast$. We denote by $W$ the finite Weyl group associated to $\os{\ \circ}{A}$: it is generated by the orthogonal reflections $s_i$ with respect to the hyperplane orthogonal to $\al_i$ in $\os{\circ}{\fh^\ast}$. Finally we set $Q^\vee=\bigoplus_{i\in I^\ast}\mathbb{Z}\alpha^\vee_{i}\subset \os{\circ}{\fh^\ast}$. Note that the reflection $s_i\in W_a$ for $i\in I^\ast$ stablises~$\os{\circ}{\fh^\ast}$ so that the Weyl group $W$ can be seen as the subgroup of $W_a$ generated by $(s_i)_{i\in I^\ast}$.

\medskip

Let $\theta$ be the highest root of the finite root system $\os{\circ}{\Pi} = \{\al_1,\ldots,\al_n\}$. Since we assumed that we are in the untwisted type, we have
$\delta=\alpha_{0}+\theta$. Let $s_\theta$ be the orthogonal reflection with respect to $\theta$ defined by $s_\theta(\ga) = \ga - \scal{\theta^\vee}{\ga}\theta$. For all $\ga\in \fh^\ast$ we have 
$$s_0s_{\theta}(\ga)= \gamma +(\ga,\delta)\theta - ((\ga,\theta)+\dfrac{1}{2}(\theta,\theta)(\ga,\delta))\delta.$$
Consider $\beta\in \os{\circ}{\fh^\ast}$. We define the map $t_\beta$ on $\fh^\ast$ by 
$$t_{\beta}(\gamma) = \gamma +(\gamma,\delta)\beta - ((\gamma,\beta)+\dfrac{1}{2}(\beta,\beta)(\gamma,\delta))\delta$$
for all $\gamma$ in $\fh^\ast$. 
Note that if $(\ga,\delta) = 0$ (as it is the case when $\ga\in \os{\circ}{\fh^\ast}$) we get a simpler formula
$$t_{\beta}(\gamma) = \gamma - (\gamma,\beta)\delta.$$
It is important to observe that $t_{\beta}$ doesn't act as a translation on $\os{\circ}{\fh^\ast}$. 
Nevertheless it can be shown that 
\begin{enumerate}
\item[\tbb] $t_\beta\circ t_{\beta'}=t_{\beta+\beta'}$ for all $\beta,\beta'\in \fh^\ast$,
\item[\tbb] $w\circ t_{\beta} \circ w^{-1} = t_{w(\beta)}$ for all $\beta\in \fh^\ast$ and $w\in W$,
\item[\tbb] $s_{0}=t_{\theta}s_{\theta}=s_{\theta}t_{-\theta}$.
\end{enumerate}
These relations tell us that $W_a$ is the semi-direct product of the finite Weyl group $W$ with  a lattice that is generated by the $W$-orbit of $\theta$. In the untwisted case, this orbit is equal to $\nu(Q^\vee)$ so that we have 
$$W_a\simeq W\ltimes \nu(Q^\vee).$$
We will simply write $W_a\simeq W\ltimes Q^\vee$ where we identify $Q^\vee$ with the lattice $\nu(Q^\vee)$ given by: 
$$\bigoplus_{i\in I} \Z \nu(\al^\vee_i)= \bigoplus_{i\in I} \Z \dfrac{2\al_i}{(\al_i,\al_i)} \subset \fh^\ast.$$

\subsection{Affine action of $W_a$ on alcoves}
\label{SubSecAlcoves}

In the previous section, we have seen that the affine Weyl group~$W_a$ is a semi-direct product $W_a\simeq W\ltimes Q^\vee$. In this section, we construct an affine action of $W_a$ on the set of alcoves in an Euclidean affine space $V$ of dimension $n$. This action can be realised from the linear action in~$\fh^\ast$ described in the previous section but the construction is technical and we will not need it here. We refer to \cite{KacB} and \cite{carter_affine} for details of this construction. For the conventions in this section, we refer to \cite{Bourbaki}.

\begin{remark}
We will write the action of $W_a$ on the affine space $V$ and on the set of alcoves on the right. Let us briefly explain why. In this paper, we are studying the orbit of some weights $\ga\in \fh^\ast$ under the action of $W$ (and $W_a$). We have written this action on the left, so that if $\ga$ has a stabiliser of the form $W_J$ where $J\subset I^\ast$, then the orbit of $\ga$ under the action of $W$ will be in bijection with the set $W^J$ of minimal length representatives  of $W/W_J = \{xW_J\mid x\in W_a\}$. By considering a right action on the set of alcoves, the set $W^J$ becomes connected (i.e. is made up of adjacent alcoves) and this makes it much easier to draw, see Examples \ref{A_2} and \ref{strip_G2}.   
\end{remark}

We start with $\Phi$, a reduced, irreducible, finite, crystallographic root system with set of simple roots $\Delta=\{\alpha_1,\ldots,\alpha_n\}$ in an $n$-dimensional $\R$-vector space $V$ with inner product $(\cdot,\cdot)$. Let $\Phi^+$ be the positive roots associated to~$\Delta$. For all $\al\in \Phi^+$, we define $H_{\al,0}$ to be the hyperplane orthogonal to $\al$ and $s_{\al,0}$ to be the orthogonal reflection with respect to $H_{\al,0}$. Setting $\al^{\vee} =2\alpha/\scald{\al}{\al}$ for all $\al\in \Phi$ we have  
$$H_{\al,0} =\{x\in V\mid \scald{x}{\al} = 0\}\qu{and}(x) s_{\alpha,0}=x-\scald{x}{\alpha}\alpha^{\vee}.$$
The Weyl group $W$ of $\Phi$ is the subgroup of $GL(V)$ generated by the orthogonal reflections $s_\al$ with $\al\in \Phi$.  It is a Coxeter group with set of distinguished generators  $S = \{s_1,\ldots,s_{n}\}$ where we set $s_i := s_{\al_i,0}$.

\medskip

The dual root system of $\Phi$ is $\Phi^{\vee}=\{\alpha^{\vee}\mid \alpha\in \Phi\}$ and the coroot lattice of $\Phi$ is 
$
Q^\vee=\mathbb{Z}\textrm{-span of }\Phi^{\vee}
$. The \textit{fundamental weights} of $\Phi$ are the vectors $\omega_1,\ldots,\omega_n$ where $\scald{\omega_i}{\alpha_j}=\delta_{ij}$. The weight lattice is $P=\mathbb{Z}\omega_1+\cdots+\mathbb{Z}\omega_n$, and the cone of \textit{dominant weights} is $P^+=\mathbb{N}\omega_1+\cdots+\mathbb{N}\omega_n$. Note that $Q^\vee\subseteq P$. 

\medskip

The Weyl chambers are the closures of the open connected components of $V\backslash\mathcal{F}_0$ where 
$$\mathcal{F}_0 = \bigcup_{\al\in \Phi^+} H_{\al,0}.$$
The fundamental Weyl chamber is defined by 
$$
\cC_0=\{x\in V\mid  (x,\alpha)\geq 0\text{ for all $\alpha\in\Phi^+$}\}.
$$
The Weyl group $W$ acts (on the right) simply transitively on the set of Weyl chambers.

 \smallskip

The Weyl group $W$ acts on $Q^\vee$ and the affine Weyl group $W_a$ is $W_a=W\ltimes Q^\vee$ 
where we identify~$\lambda\in Q^\vee$ with the translation $(x)t_\la=x+\lambda$. For $\al\in \Phi^+$ and $k\in \Z$, we define the hyperplane 
$$H_{\alpha,k}=\{x\in V\mid(x,\alpha)=k\}$$
and write $s_{\al,k}$  for the (affine) orthogonal reflection with respect to $H_{\al,k}$. Explicitly, $(x)s_{\alpha,k}=x-(( x,\alpha)-k)\alpha^{\vee}$, so that $s_{\alpha,k}=s_{\alpha}t_{k\alpha^{\vee}}$. It is well-known that $W_a$ is generated by the orthogonal reflections $s_{\alpha,k}$ and that it is a Coxeter group with disitinguished set of generators $S_a=\{s_0,s_1,\ldots,s_n\}$, where $s_0=s_{\theta}t_{\theta^{\vee}}$, with~$\theta$ the highest root of $\Phi$. Any element of $W_a$ can be uniquely written under the form $ut_\al$ where $u\in W$ and $\al\in Q^\vee$. 
We define two applications $\sf:W_a\to W$ and $\wt:W_a\to Q^\vee$ via the equation $w=\sf(w)t_{\wt(w)}$.

\smallskip

Each hyperplane $H_{\alpha,k}$ with $\alpha\in \Phi^+$ and $k\in\mathbb{Z}$ divides $V$ into two half spaces, denoted
$$
H_{\alpha,k}^+=\{x\in V\mid (x,\alpha)> k\}\quad\text{and}\quad H_{\alpha,k}^-=\{x\in V\mid(x,\alpha)< k\}.
$$
This is the "periodic orientation'': it is invariant under translation by~$\lambda\in Q^\vee$.

\medskip

The alcoves of $W_a$ are the closure of the open connected components of $V\backslash\mathcal{F}$ where 
$$\cF = \bigcup_{\al\in \Phi^+,k\in \Z} H_{\al,k}.$$
The fundamental alcove is defined by 
$$
A_0=\{x\in V\mid 0< (x,\alpha)< 1\text{ for all $\alpha\in\Phi^+$}\}.
$$
The hyperplanes bounding $A_0$ are called the \textit{walls} of $A_0$. Explicitly these walls are $H_{\alpha_i,0}$ with $i=1,\ldots,n$ and $H_{\theta,1}$. We say that a \textit{face} of $A_0$ (that is, a codimension~$1$ facet) has type $s_i$ for $i=1,\ldots,n$ if it lies on the wall $H_{\alpha_i,0}$ and of type $s_0$ if it lies on the wall $H_{\theta,1}$. 

\medskip

The affine Weyl group $W$ acts simply transitively on the set of alcoves, and we use this action to identify the set of alcoves with $W$. We will denote this action on the right and identify the set of elements of $W_a$ with the set of alcoves via $w\leftrightarrow A_0w$.  The fundamental alcove $A_0$ corresponds to the identity element $e$. Moreover, we use the action of $W_a$ to transfer the notions of walls, faces, and types of faces to arbitrary alcoves. Alcoves $A$ and $A'$ are called \textit{$s$-adjacent}, written $A\sim_s A'$, if~$A\neq A'$ and $A$ and $A'$ share a common face of type $s$. Under the identification of alcoves with elements of~$W$, the alcoves $w$ and $sw$ are $s$-adjacent.

\begin{definition}
\label{crossing}
Let $w\in W$,  $s \in S$ and $H$ be the unique hyperplane separating $w$ and $sw$. We say that the crossing  from $w$ to $sw$ is positive and write $w\overset{+}{\leadsto} sw$ if $w\in H^{-}$ and $sw\in H^+$.  We say that the crossing is negative and write $w\overset{-}{\leadsto} sw$ otherwise. 
\end{definition}
Let $w\in W_a$ and let $(s_{i_{\ell}},\ldots,s_{i_1})\in S^\ell$ be such that $w=s_{i_\ell}\ldots s_{i_1}$. Then we have
$$
e\sim_{s_{i_{1}}}s_{i_{1}}\sim_{s_{i_{2}}} s_{i_{2}}s_{i_{1}}\sim_{s_{i_{3}}}\cdots\sim_{s_{i_\ell}}s_{i_{\ell}}\cdots s_{i_1}.
$$
In this way, the expression of $s_{i_\ell}\ldots s_{i_1}$ representing $w$ determine a sequence of adjacents alcoves starting at $A_0$ and finishing at $A_0w$. It is a well known result that the length of $w\in W_a$ is equal to the number of hyperplanes that separate $A_0$ and $A_0w$.

\medskip

\begin{example}
\label{G2}
Let $\Phi$ be a root system of type $G_2$ with simple roots $\alpha_1$ and $\alpha_2$ and fundamental weights~$\om_1$ and~$\om_2$. We have $P=Q^\vee$, and the dual root system is
$$
\Phi^\vee:=\pm \{\al_1^\vee,\al_2^\vee,\al_1^\vee+\al_2^\vee,\al_1^\vee+2\al_2^\vee,\al_1^\vee+3\al_2^\vee, 2\al_1^\vee+3\al_2^\vee\}.
$$
Recall that the affine Weyl group acts on the right on the set of alcoves. In the figure below 
\begin{itemize}
\itemsep = .1mm
\item[\tbb] the dark gray alcove is the fundamental alcove $A_0$;
\item[\tbb] the thick arrows are the coroot system $\Phi^\vee$;
\item[\tbb] the alcove $w= s_2s_0s_1s_2s_1$ is in dark green.
\item[\tbb] the set of alcoves $s_2,s_2s_0,s_2s_0s_1,s_2s_0s_1s_2$ are in orange and numbered respectively from 1 to 4. 
\item[\tbb] the set of alcoves $s_1,s_2s_1,s_1s_2s_1,s_0s_1s_2s_1,s_2s_0s_1s_2s_1$ are in green.
\end{itemize}

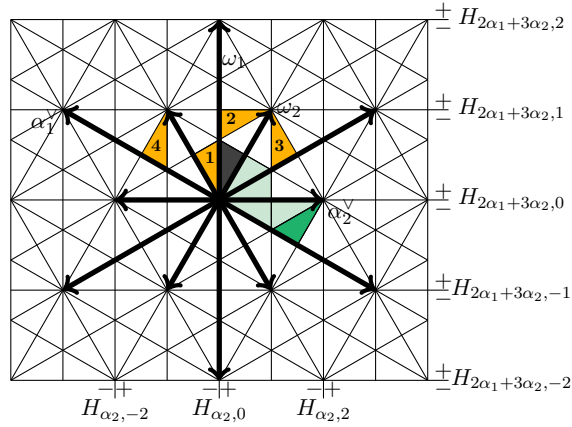
\begin{figure}[H]
\begin{center}
\begin{tikzpicture}[scale = .8]

\newcommand{\x}{.866}
\newcommand{\y}{1.5}

\draw[fill = orange] (0,0) -- (0,1) -- (-\x/2,.75);
\draw[fill = orange] (0,1) -- (0,1.5) -- (\x,1.5);
\draw[fill = orange] (-\x,1.5) -- (-\x,.5) -- (-3/2*\x,.75);
\draw[fill = orange] (\x,1.5) -- (\x,.5) -- (3/2*\x,.75);

\draw[fill = Green!20!] (0,0) -- (\x/2,.75) -- (\x,.5);
\draw[fill = Green!20!] (0,0)  -- (\x,.5) -- (\x,0);
\draw[fill = Green!20!] (0,0)  -- (\x,-.5) -- (\x,0);
\draw[fill = Green!20!] (2*\x,0)  -- (\x,-.5) -- (\x,0);
\draw[fill = Green!80!] (\x,-.5) -- (2*\x,0) -- (3/2*\x,-.75);

\draw[fill = darkgray] (0,0) -- (0,1) -- (\x/2,.75);
\draw[->,line width = .7mm] (0,0) -- (2*\x,0);
\draw[->,line width = .7mm] (0,0) -- (\x,\y);
\draw[->,line width = .7mm] (0,0) -- (-\x,\y);
\draw[->,line width = .7mm] (0,0) -- (-2*\x,0);
\draw[->,line width = .7mm] (0,0) -- (-\x,-\y);
\draw[->,line width = .7mm] (0,0) -- (\x,-\y);

\draw[->,line width = .7mm] (0,0) -- (3*\x,\y);
\draw[->,line width = .7mm] (0,0) -- (0,2*\y);
\draw[->,line width = .7mm] (0,0) -- (-3*\x,\y);
\draw[->,line width = .7mm] (0,0) -- (-3*\x,-\y);
\draw[->,line width = .7mm] (0,0) -- (0,-2*\y);
\draw[->,line width = .7mm] (0,0) -- (3*\x,-\y);

\node at (.25,2.3) {\scalebox{.8}{$\om_1$}};
\node at (\x+.3,\y) {\scalebox{.8}{$\om_2$}};
\node at (2*\x+.3,-.2) {\scalebox{.8}{$\al^\vee_2$}};
\node at (-3*\x-.3,1.3) {\scalebox{.8}{$\al^\vee_1$}};

\draw (4*\x,2*\y) -- (4.4*\x,2*\y);
\node at (4.8*\x+.7,2*\y) {\scalebox{.8}{$H_{2\al_1+3\al_2,2}$}};
\node at (4.3*\x,2*\y+.15) {\scalebox{.8}{$+$}};
\node at (4.3*\x,2*\y-.15) {\scalebox{.8}{$-$}};

\draw (4*\x,1*\y) -- (4.4*\x,1*\y);
\node at (4.8*\x+.7,1*\y) {\scalebox{.8}{$H_{2\al_1+3\al_2,1}$}};
\node at (4.3*\x,1*\y+.15) {\scalebox{.8}{$+$}};
\node at (4.3*\x,1*\y-.15) {\scalebox{.8}{$-$}};

\draw (4*\x,0*\y) -- (4.4*\x,0*\y);
\node at (4.8*\x+.7,0*\y) {\scalebox{.8}{$H_{2\al_1+3\al_2,0}$}};
\node at (4.3*\x,0*\y+.15) {\scalebox{.8}{$+$}};
\node at (4.3*\x,0*\y-.15) {\scalebox{.8}{$-$}};

\draw (4*\x,-1*\y) -- (4.4*\x,-1*\y);
\node at (4.8*\x+.7,-1*\y) {\scalebox{.8}{$H_{2\al_1+3\al_2,-1}$}};
\node at (4.3*\x,-1*\y+.15) {\scalebox{.8}{$+$}};
\node at (4.3*\x,-1*\y-.15) {\scalebox{.8}{$-$}};

\draw (4*\x,-2*\y) -- (4.4*\x,-2*\y);
\node at (4.8*\x+.7,-2*\y) {\scalebox{.8}{$H_{2\al_1+3\al_2,-2}$}};
\node at (4.3*\x,-2*\y+.15) {\scalebox{.8}{$+$}};
\node at (4.3*\x,-2*\y-.15) {\scalebox{.8}{$-$}};

\draw (2*\x,-2*\y) -- (2*\x,-2*\y-.3) ;
\node at (2*\x,-2*\y-.5) {\scalebox{.8}{$H_{\al_2,2}$}};
\node at (2*\x+.15,-2*\y-.15) {\scalebox{.8}{$+$}};
\node at (2*\x-.15,-2*\y-.15) {\scalebox{.8}{$-$}};

\draw (0*\x,-2*\y) -- (0*\x,-2*\y-.3) ;
\node at (0*\x,-2*\y-.5) {\scalebox{.8}{$H_{\al_2,0}$}};
\node at (0*\x+.15,-2*\y-.15) {\scalebox{.8}{$+$}};
\node at (0*\x-.15,-2*\y-.15) {\scalebox{.8}{$-$}};
\draw (-2*\x,-2*\y) -- (-2*\x,-2*\y-.3) ;

\node at (-2*\x,-2*\y-.5) {\scalebox{.8}{$H_{\al_2,-2}$}};
\node at (-2*\x+.15,-2*\y-.15) {\scalebox{.8}{$+$}};
\node at (-2*\x-.15,-2*\y-.15) {\scalebox{.8}{$-$}};

\node at (-.15,.7) {\scalebox{.6}{${\bf 1}$}};
\node at (.2,1.35) {\scalebox{.6}{${\bf 2}$}};
\node at (1,.9) {\scalebox{.6}{${\bf 3}$}};
\node at (-1.05,.9) {\scalebox{.6}{${\bf 4}$}};


\draw (-3.464,3) -- (3.464,3);
\draw (-3.464,1.5) -- (3.464,1.5);
\draw (-3.464,0) -- (3.464,0);
\draw (-3.464,-1.5) -- (3.464,-1.5);
\draw (-3.464,-3) -- (3.464,-3);

\foreach \n in {-4,...,4} {
\draw (\n * \x,-3) -- (\n * \x,3);}

\foreach \e in {-1,1} {
\draw (\e*2*\x,3) -- (\e*4*\x,2);
\draw (\e*0*\x,3) -- (\e*4*\x,1);
\draw (\e*-2*\x,3) -- (\e*4*\x,0);
\draw (\e*-4*\x,3) -- (\e*4*\x,-1);

\draw (-\e*4*\x,2) -- (\e*4*\x,-2);
\draw (-\e*4*\x,1) -- (\e*4*\x,-3);

\draw (-\e*4*\x,0) -- (\e*2*\x,-3);
\draw (-\e*4*\x,-1) -- (\e*0*\x,-3);
\draw (-\e*4*\x,-2) -- (-\e*2*\x,-3);}

\foreach \e in {-1,1} {
\draw (\e*2*\x,3) -- (\e*4*\x,0);
\draw (\e*0*\x,3) -- (\e*4*\x,-3);
\draw (-\e*2*\x,3) -- (\e*2*\x,-3);
\draw (-\e*4*\x,3) -- (\e*0*\x,-3);
\draw (-\e*4*\x,0) -- (-\e*2*\x,-3);}
\end{tikzpicture}
\end{center}
\caption{Alcoves and root system in type $G_2$}
\label{rootsystem}
\end{figure}
\end{example}

\subsection{Affine grassmanians} 
\label{aff_grass}
The weak Bruhat order on $W_a$ is defined as follows: we have $u\leq v$ if and only if there exists a reduced expression  of $v$ starting by a reduced expression of $u$. 
\begin{definition}
\label{Def_AG}
The affine Grassmannian elements are the minimal length
representatives of the left cosets of $W_{a}$ with respect to the finite Weyl group $W$. We will denote this set $W^{\La_0}_a$. The weak Bruhat  order induces a graph structure on $W^{\La_0}_a$ and we will denote this graph by $\Gamma_{\La_0}$. 
\end{definition}
When identifying the set of alcoves with $W_a$, the set $W^{\La_0}_a$ is simply the set of alcoves that lies in the fundamental Weyl chambers $\cC_0$. Two elements of $W^{\La_0}_a$ are connected by a directed path in the graph $\Gamma_{\La_0}$ if and only if there is a path from one to the other such that all the crossing are positive.   

\smallskip

The graph $\Gamma_{\La_0}$ also describes the orbit of the weight $\La_0$ under the action of $W_a$. Indeed the stabiliser of $\La_0$ in $W_a$ is the finite Weyl group $W$, so that the orbit of $\La_0$ is in bijection with $W_a\slash W$.

\begin{example}
\label{A_2}
Let $\Phi$ be a root system of type $A_2$ with simple roots $\alpha_1$ and $\alpha_2$. The dual root system is
$$
\Phi^\vee:=\pm \{\al_1^\vee,\al_2^\vee,\al_1^\vee+\al_2^\vee\}.
$$
In type $A$, it can be useful to color the faces according to their types. In the picture below, we have colored  the $s_1$, $s_2$ and $s_0$ faces in blue, green and red respectively. The gray alcove is the identity and the light gray alcoves represent $W^{\La_0}_a$.
We represent the graph $\Ga_{\La_0}$ on the right hand-side. Note that it coincides with the expansion obtained in Example \ref{ex_expan}.

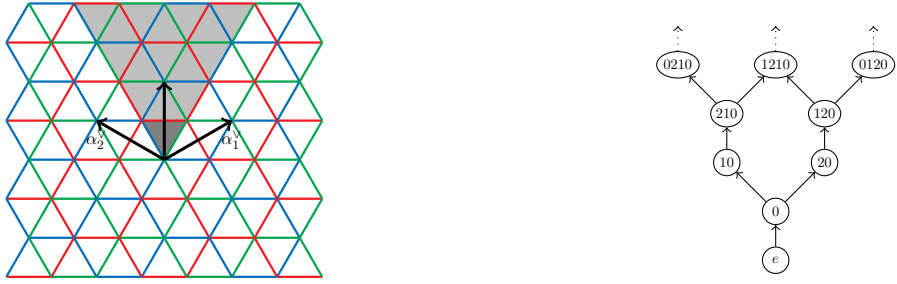
\begin{figure}[H]
\begin{minipage}{8cm}
\begin{center}
\begin{tikzpicture}[scale=.6]
\newcommand{\y}{.866}
\newcommand{\x}{.5}

\draw[fill = lightgray] (0,0) -- (-4*\x,4*\y) -- (4*\x,4*\y);
\draw[fill = gray] (0,0) -- (-\x,\y) -- (\x,\y);


\foreach \k in {0,1}{
\foreach \m in {0,...,3}{
\draw[color = NavyBlue,line width=.3mm] (-3+6*\k*\x,4*\y - \m*2*\y) -- (-2+6*\k*\x,4*\y - \m*2*\y);}}

\foreach \k in {0,1}{
\foreach \m in {0,...,3}{
\draw[color = NavyBlue,line width=.3mm] (-3+3*\x+6*\k*\x,3*\y - \m*2*\y) -- (-2+3*\x+6*\k*\x,3*\y - \m*2*\y);}}

\foreach \k in {0,1,2}{
\foreach \m in {0,...,3}{
\draw[color = Red,line width=.3mm] (-4+6*\k*\x+\x,3*\y - \m*2*\y) -- (-3+6*\k*\x+\x,3*\y - \m*2*\y);}}

\foreach \k in {0,1}{
\foreach \m in {0,...,3}{
\draw[color = Red,line width=.3mm] (-3+3*\x+6*\k*\x-\x,4*\y - \m*2*\y) -- (-2+3*\x+6*\k*\x-\x,4*\y - \m*2*\y);}}

\foreach \k in {0,1}{
\foreach \m in {0,...,3}{
\draw[color = Green,line width=.3mm] (-4+6*\k*\x+3*\x,3*\y - \m*2*\y) -- (-3+6*\k*\x+3*\x,3*\y - \m*2*\y);}}

\foreach \k in {0,1}{
\foreach \m in {0,...,3}{
\draw[color = Green,line width=.3mm] (-3+3*\x+6*\k*\x+\x,4*\y - \m*2*\y) -- (-2+3*\x+6*\k*\x+\x,4*\y - \m*2*\y);}}

\foreach \m in {0,3,6}{
\foreach \n in {0,...,3}{
\draw[color = NavyBlue,line width=.3mm] (-3+2*\m*\x,4*\y-\n*2*\y) -- (-3.5+2*\m*\x,3*\y-\n*2*\y);}

\foreach \n in {0,...,2}{
\draw[color = NavyBlue,line width=.3mm] (-3.5+2*\m*\x,3*\y-\n*2*\y) -- (-3+2*\m*\x,2*\y-\n*2*\y);}}

\foreach \m in {1,4}{
\foreach \n in {0,...,3}{
\draw[color = Red,line width=.3mm] (-3+2*\m*\x,4*\y-\n*2*\y) -- (-3.5+2*\m*\x,3*\y-\n*2*\y);}

\foreach \n in {0,...,2}{
\draw[color = Red,line width=.3mm] (-3.5+2*\m*\x,3*\y-\n*2*\y) -- (-3+2*\m*\x,2*\y-\n*2*\y);}}

\foreach \m in {2,5}{
\foreach \n in {0,...,3}{
\draw[color = Green,line width=.3mm] (-3+2*\m*\x,4*\y-\n*2*\y) -- (-3.5+2*\m*\x,3*\y-\n*2*\y);}

\foreach \n in {0,...,2}{
\draw[color = Green,line width=.3mm] (-3.5+2*\m*\x,3*\y-\n*2*\y) -- (-3+2*\m*\x,2*\y-\n*2*\y);}}

\foreach \m in {0,3,6}{
\foreach \n in {0,...,3}{
\draw[color = Green,line width=.3mm] (-3.5+2*\m*\x+\x,4*\y-\n*2*\y) -- (-3+2*\m*\x+\x,3*\y-\n*2*\y);}

\foreach \n in {0,...,2}{
\draw[color = Green,line width=.3mm](-3+2*\m*\x+\x,3*\y-\n*2*\y) -- (-3.5+2*\m*\x+\x,2*\y-\n*2*\y);}}

\foreach \m in {1,4}{
\foreach \n in {0,...,3}{
\draw[color = NavyBlue,line width=.3mm] (-3.5+2*\m*\x+\x,4*\y-\n*2*\y) -- (-3+2*\m*\x+\x,3*\y-\n*2*\y);}

\foreach \n in {0,...,2}{
\draw[color = NavyBlue,line width=.3mm](-3+2*\m*\x+\x,3*\y-\n*2*\y) -- (-3.5+2*\m*\x+\x,2*\y-\n*2*\y);}}

\foreach \m in {2,5}{
\foreach \n in {0,...,3}{
\draw[color = Red,line width=.3mm] (-3.5+2*\m*\x+\x,4*\y-\n*2*\y) -- (-3+2*\m*\x+\x,3*\y-\n*2*\y);}

\foreach \n in {0,...,2}{
\draw[color = Red,line width=.3mm](-3+2*\m*\x+\x,3*\y-\n*2*\y) -- (-3.5+2*\m*\x+\x,2*\y-\n*2*\y);}}

\draw[color = black, ->,line width = .4mm] (0,0) -- (-3*\x,\y);
\draw[color = black, ->,line width = .4mm] (0,0) -- (3*\x,\y);
\draw[color = black, ->,line width = .4mm] (0,0) -- (0,2*\y);

\node at (3*\x,.5*\y) {\scalebox{.6}{$\al^\vee_1$}};
\node at (-3*\x,.5*\y) {\scalebox{.6}{$\al^\vee_2$}};

\end{tikzpicture}
\end{center}
\end{minipage}
\begin{minipage}{8cm}
\begin{center}
\begin{tikzpicture}[scale=.65]
\tikzstyle{vertex}=[inner sep=.2pt,minimum size=10pt,ellipse,draw]

\node[vertex] (a1) at (0,0) {\scalebox{.5}{$e$}};
\node[vertex] (a2) at (0,1) {\scalebox{.5}{$0$}};
\node[vertex] (a31) at (-1,2) {\scalebox{.5}{$10$}};
\node[vertex] (a32) at (1,2) {\scalebox{.5}{$20$}};
\node[vertex] (a41) at (-1,3) {\scalebox{.5}{$210$}};
\node[vertex] (a42) at (1,3) {\scalebox{.5}{$120$}};
\node[vertex] (a51) at (-2,4) {\scalebox{.5}{$0210$}};
\node[vertex] (a52) at (0,4) {\scalebox{.5}{$1210$}};
\node[vertex] (a53) at (2,4) {\scalebox{.5}{$0120$}};

\node (a61) at (-2,5) {};
\node (a62) at (0,5) {};
\node (a63) at (2,5) {};

\draw[->] (a1) edge node  {} (a2);

\draw[->] (a2) edge node{} (a31);
\draw[->] (a2) edge node{} (a32);

\draw[->] (a31) edge node{} (a41);
\draw[->] (a32) edge node{} (a42);

\draw[->] (a41) edge node{} (a51);
\draw[->] (a41) edge node{} (a52);
\draw[->] (a42) edge node{} (a52);
\draw[->] (a42) edge node{} (a53);

\draw[->,dotted] (a51) edge node{} (a61);
\draw[->,dotted] (a52) edge node{} (a62);
\draw[->,dotted] (a53) edge node{} (a63);

\end{tikzpicture}
\end{center}
\end{minipage}
\caption{Affine grassmanians in type $\tilde{A}_2$ and its induced graph}
\label{rootsystemA2}
\end{figure}
\end{example}

\section{The graphs $\Ga_\ga$ associated to level $0$ weight}\label{SecGraphGg}

The level 0 weight lattice is the sublattice of $P_a$ defined by
\[
P_{0}=\bigoplus_{i\in I^\ast}\mathbb{Z}\omega_{i}+\mathbb{Z\delta}\text{.}%
\]
The lattice $P_{0}$ is stabilized by $W_a$.  In this section, we construct graphs encoding the orbit of the elements of $P_0$, the so-called level $0$ weights, under the action of $W_a$. We denote by $\Z[Q^\vee]$ the group algebra of~$Q^\vee$ over $\Z$ and write $\sz^\al$ for the element associated to~$\al\in Q^\vee$ so that $\sz^\al \sz^{\al'} =\sz^{\al+\al'}$. Recall that the affine Weyl group $W_a$ is the semi-direct product $W\ltimes Q^\vee$ and that we defined  $\sf:W_a\to W$ and $\wt:W_a\to Q^\vee$ via the equation~$w = \sf(w)t_{\wt(w)}$. 

\begin{remark}
We work over $\Q$ to fit with the setting of multiplicative graphs.
\end{remark}

\subsection{The graph $\Gamma_\rho$} We look at the level $0$-weight $\rho=\om_1+\cdots+\om_n$. The stabiliser of $\rho$ under the action of $W$ is reduced to the identity. 
\begin{definition}\label{defGrho}
The graph $\Gamma_{\rho}$ is the oriented weighted graph with weights in $\Z[Q^\vee]$ defined as follows
\begin{itemize}
\itemsep=1mm
\item[\tbb] the set of vertices is $W$ 
\item[\tbb] there is an arrow $w\overset{i}{\underset{a_i}{{\lra}}} s_iw$ of type $i$ and weight $a_i$ for $i\in I^{\ast}$ when $\ell(s_iw)=\ell(w)+1$
\item[\tbb] there is an arrow $w\overset{0}{\underset{\sz^\al}{{\lra}}} \sf(s_0w)$ of type $0$ and weight $\sz^{\al}$ when $\ell(\sf(s_0w))<\ell(w)$ and $\al = \wt(s_0w)$. 
\end{itemize}
\end{definition}
There are different ways to interpret the graph $\Ga_\rho$. First,  if we forget about the weights and the orientations of the arrows, then the graph $\Ga_\rho$ is the Cayley graph of $W$ with generators $s_i$ and $s_\theta$. Indeed we have $\sf(s_0w) = s_\theta w$ for all $w\in W$. As a consequence, since the stabiliser of $\rho$ is reduced to the identity, this shows that $\Ga_\rho$ represents the orbit of $\rho$ under the action of $W$. The edge weight function on $\Ga_\rho$ allows to recover the action of $W_a$ on $\rho$. Let $w\in W$. If we set $u=\sf(s_0w)$ and $\al = \wt(s_0w)$, we have
$$s_0w(\rho) =  ut_\al (\rho) = u(\rho - (\rho,\al)\delta) = u(\rho) - (\rho,\al)\delta$$
so that the weight $\al$ on the arrow from $w$ to $\sf(s_0w)$ gives the coefficients of $\delta$ in the equality above. 
Next, if we forget about the $0$-arrows, then the graph $\Ga_\rho$ is the graph of the weak Bruhat order on $W$. Finally,~$\Ga_\rho$ also encodes encodes the orientation of adjacent alcoves as we will see later in this section.  

\medskip

 \medskip
 
In the following, when drawing examples of graphs $\Gamma_\rho$, we will omit the type $i\in I$ on the arrows as it can be deduced from the two extremities of the arrows. Further, we do not indicate the weight of an arrow when it is equal to $1$.

\begin{example}
We draw the graph $\Ga_{\rho}$ in type $\tilde{A}_2$. We set $\sz_1=\sz^{\al_1}$ and $\sz_2=\sz^{\al_2}$ in $\Z[Q^\vee]$.  The arrows going downward are all of type $i$ with $i\in I^\ast$ and of weight $1$ since we have $a_0=a_1=a_2=1$. The arrows going upward are all of type~$0$. On the left hand-side, we label the vertices with $W$ and on the right hand-side with the corresponding weight $w(\rho)$. One can notice that when the arrows are going down (respectively up), so does the weights (mod $\delta$). 

\medskip
\begin{figure}[H]
\begin{minipage}{8cm}
$$
\begin{tikzpicture}[scale =1.5]
\tikzstyle{vertex}=[inner sep=2pt,minimum size=10pt,ellipse,draw]

\node[vertex] (a1) at (0,0) {\scalebox{.7}{$e$}};
\node[vertex] (a2) at (-1.1,-1) {\scalebox{.7}{$1$}};
\node[vertex] (a3) at (1.1,-1) {\scalebox{.7}{$2$}};
\node[vertex] (a4) at (-1.1,-2) {\scalebox{.7}{$21$}};
\node[vertex] (a5) at (1.1,-2) {\scalebox{.7}{$12$}};
\node[vertex] (a6) at (0,-3) {\scalebox{.7}{$121$}};

\draw[->] (a1) edge node  {} (a2);
\draw[->] (a1) edge node{} (a3);
\draw[->] (a2) edge node{} (a4);
\draw[->] (a3) edge node{} (a5);
\draw[->] (a5) edge node{} (a6);
\draw[->] (a4) edge node{} (a6);

\draw[->]  (a6) edge[bend left = 0] node[right,pos = .25]{\scalebox{.6}{$\dfrac{1}{\sz_1\sz_2}$}} (a1);
\draw[->]  (a4) edge[bend right = 0] node[pos=.8,above]{{\scalebox{.6}{$\sz_1^{-1}$}}} (a3);
\draw[->]  (a5) edge[bend left = 0] node[pos=.8,above]{{\scalebox{.6}{$\sz_2^{-1}$}}} (a2);

\end{tikzpicture}$$

\end{minipage}
\begin{minipage}{8cm}
$$
\begin{tikzpicture}[scale =1.5]
\tikzstyle{vertex}=[inner sep=2pt,minimum size=10pt,ellipse,draw]

\node[vertex] (a1) at (0,0) {\scalebox{.7}{$\rho$}};
\node[vertex] (a2) at (-1.1,-1) {\scalebox{.7}{$\rho-\al_1$}};
\node[vertex] (a3) at (1.1,-1) {\scalebox{.7}{$\rho-\al_2$}};
\node[vertex] (a4) at (-1.1,-2) {\scalebox{.7}{$\rho-2\al_2-\al_1$}};
\node[vertex] (a5) at (1.1,-2) {\scalebox{.7}{$\rho-2\al_1-\al_2$}};
\node[vertex] (a6) at (0,-3) {\scalebox{.7}{$\rho-2\al_2-2\al_1$}};

\draw[->] (a1) edge node  {} (a2);
\draw[->] (a1) edge node{} (a3);
\draw[->] (a2) edge node{} (a4);
\draw[->] (a3) edge node{} (a5);
\draw[->] (a5) edge node{} (a6);
\draw[->] (a4) edge node{} (a6);

\draw[->]  (a6) edge[bend left = 0] node[right,pos = .15]{\scalebox{.6}{$\dfrac{1}{\sz_1\sz_2}$}} (a1);
\draw[->]  (a4) edge[bend right = 0] node[pos=.75,above]{{\scalebox{.6}{$\sz_1^{-1}$}}} (a3);
\draw[->]  (a5) edge[bend left = 0] node[pos=.75,above]{{\scalebox{.6}{$\sz_2^{-1}$}}} (a2);

\end{tikzpicture}$$

\end{minipage}
\caption{The graph $\Ga_\rho$ in type $\tilde{A_2}$}
\end{figure}
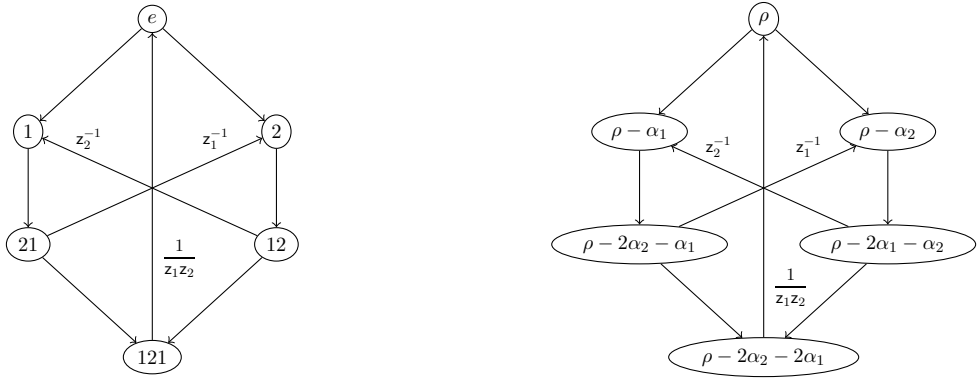
\end{example}

\begin{example}
We draw the graphs $\Ga_{\rho}$ in type $\tilde{G}_2$. Again we set $\sz_1=\sz^{\al_1}$ and $\sz_2=\sz^{\al_2}$ in $\Z[Q^\vee]$   and we keep the same convention as in the previous example.%
%
%
%
%
%
%
%

\begin{figure}[H]$$
\begin{tikzpicture}[scale =.9]
\tikzstyle{vertex}=[inner sep=2pt,minimum size=10pt,ellipse,draw]

\def\h{1.1}
\node[vertex] (a1) at (0,0) {\scalebox{.7}{$e$}};
\node[vertex] (a2) at (-1.5,-\h) {\scalebox{.7}{$1$}};
\node[vertex] (a3) at (1.5,-1*\h) {\scalebox{.7}{$2$}};
\node[vertex] (a4) at (-1.5,-2*\h) {\scalebox{.7}{$21$}};
\node[vertex] (a5) at (1.5,-2*\h) {\scalebox{.7}{$12$}};
\node[vertex] (a6) at (-1.5,-3*\h) {\scalebox{.7}{$121$}};
\node[vertex] (a7) at (1.5,-3*\h) {\scalebox{.7}{$212$}};
\node[vertex] (a8) at (-1.5,-4*\h) {\scalebox{.7}{$2121$}};
\node[vertex] (a9) at (1.5,-4*\h) {\scalebox{.7}{$1212$}};
\node[vertex] (a10) at (-1.5,-5*\h) {\scalebox{.7}{$12121$}};
\node[vertex] (a11) at (1.5,-5*\h) {\scalebox{.7}{$21212$}};
\node[vertex] (a12) at (0,-6*\h) {\scalebox{.7}{$121212$}};

\draw[->] (a1) edge node[above]  {\scb{.7}{$3$}} (a2);
\draw[->] (a2) edge node[left]  {\scb{.7}{$2$}} (a4);
\draw[->] (a4) edge node[left]  {\scb{.7}{$3$}} (a6);
\draw[->] (a6) edge node[left]  {\scb{.7}{$2$}} (a8);
\draw[->] (a8) edge node[left]  {\scb{.7}{$3$}} (a10);
\draw[->] (a10) edge node[below,pos=.3]  {\scb{.7}{$2$}} (a12);

\draw[->] (a1) edge node[above]  {\scb{.7}{$2$}} (a3);
\draw[->] (a3) edge node[right]  {\scb{.7}{$3$}} (a5);
\draw[->] (a5) edge node[right]  {\scb{.7}{$2$}} (a7);
\draw[->] (a7) edge node[right]  {\scb{.7}{$3$}} (a9);
\draw[->] (a9) edge node[right]  {\scb{.7}{$2$}} (a11);
\draw[->] (a11) edge node[below,pos=.3]  {\scb{.7}{$3$}} (a12);

\draw[->]  (a12) edge[bend left = 90] node[left,pos = .5]{\scalebox{.6}{$\dfrac{1}{\sz_1\sz^2_2}$}} (a2);
\draw[->]  (a7) edge[bend right = 0] node[above,pos = .75]{\scalebox{.6}{$\sz_2^{-1}$}} (a4);
\draw[->]  (a9) edge[bend left = 0] node[above,pos = .15]{\scalebox{.6}{$\sz_2^{-1}$}} (a6);
\draw[->] (a11) edge[bend right = 90] node[right,pos = .5]{\scalebox{.6}{$\dfrac{1}{\sz_1\sz^2_2}$}} (a1);
\draw[->] (a8) edge[bend right = 00] node[above=.2,pos = .75]{\scalebox{.6}{$\dfrac{1}{\sz_1\sz_2}$}} (a3);
\draw[->] (a10) edge[bend right = 00] node[below,pos = .25]{\scalebox{.6}{$\dfrac{1}{\sz_1\sz_2}$}} (a5);

\end{tikzpicture}$$
\caption{The graph $\Ga_\rho$ in type $\tilde{G}_2$}
\end{figure}
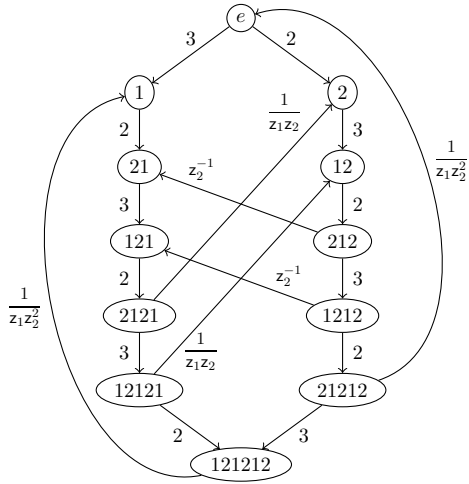
\end{example}

All the arrows in the graph $\Ga_\rho$ are of the form $a_i\sz^{\al}$ where $\al$ can be equal to $0$. Recall that $\al\neq 0$ if and only if $i=0$ and in this case, we have $a_0=1$. 
In the following proposition, we show how one can determine the sign of the crossing between two adjacents alcoves $w$ and $s_iw$ in $W_a$ by looking at the direction of the arrow between $\sf(w)$ and $\sf(s_iw)$ in the graph $\Ga_\rho$.
\begin{proposition}
\label{Gamma_rho_orientation}
Let $w\in W_a$ and $i\in I$.
\begin{enumerate}
\item If there is an arrow from $\sf(w)$ to $\sf(s_iw)$ of weight $a_i\sz^{\al}$ in $\Ga_\rho$, then the crossing from $w$ to $s_iw$ is negative and $\wt(s_iw) =\wt(w)+\al$. 
\item If there is an arrow from $\sf(s_iw)$ to $\sf(w)$ of weight $a_i\sz^\al$ in $\Ga_\rho$, then the crossing from $w$ to $s_iw$ is positive and $\wt(s_iw) =\wt(w)-\al$. 
\end{enumerate} 
\end{proposition}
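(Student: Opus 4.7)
The proof goes by a case-analysis on $i$, splitting into $i \in I^\ast$ and $i = 0$. The algebraic ingredients — formulas for $\sf(s_iw)$, $\wt(s_iw)$ and the weight carried by the relevant arrow of $\Ga_\rho$ — are computed first, then matched with a geometric description of the hyperplane $H$ separating $A_0w$ from $A_0 s_iw$.

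On the algebraic side, using $W_a \simeq W \ltimes Q^\vee$ together with $s_0 = s_\theta t_{\theta^\vee}$ and $t_\beta u = u\,t_{u^{-1}(\beta)}$, one obtains $\sf(s_iw) = s_i\sf(w)$ and $\wt(s_iw) = \wt(w)$ when $i \in I^\ast$ (so $\al = 0$ and the formulas $\wt(s_iw) = \wt(w) \pm \al$ are automatic), and $\sf(s_0w) = s_\theta\sf(w)$, $\wt(s_0w) = \wt(w) + \sf(w)^{-1}(\theta^\vee)$ when $i = 0$. Applying Definition \ref{defGrho} at source vertex $\sf(w)$ in case (1), and at $\sf(s_0w) = s_\theta\sf(w)$ in case (2), reads off the weight of the arrow as $\sz^{\sf(w)^{-1}(\theta^\vee)}$ and $\sz^{-\sf(w)^{-1}(\theta^\vee)}$ respectively; in both cases this is the correct $\pm\al$.

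On the geometric side, since $W_a$ acts on alcoves on the right, $A_0 s_iw = \rho_w(A_0 s_i)$, and $H$ is the image under $\rho_w$ of the $s_i$-wall of $A_0$ — namely $H_{\alpha_i,0}$ for $i \in I^\ast$ and $H_{\theta,1}$ for $i = 0$. A short computation using the conventions of Section \ref{SubSecAlcoves} (so that $\rho_w$ has linear part $\sf(w)^{-1}$ and translation part $\wt(w)$) together with the orthogonality of $\sf(w)$ yields
\[
H = H_{\sf(w)^{-1}(\alpha_i),\,(\wt(w),\,\sf(w)^{-1}(\alpha_i))} \ \ (i \in I^\ast), \qquad H = H_{\sf(w)^{-1}(\theta),\,1+(\wt(w),\,\sf(w)^{-1}(\theta))} \ \ (i = 0).
\]
Pairing a point $(x)w \in A_0w$ with the normal used in the defining equation of $H$ reduces to $(x,\alpha_i) \in (0,1)$ (resp.\ $(x,\theta) \in (0,1)$), and rewriting the normal as a positive root (flipping the equation of $H$ if necessary) shows that $A_0w \in H^+$ iff $\sf(w)^{-1}(\alpha_i) \in \Phi^+$ for $i \in I^\ast$, resp.\ $\sf(w)^{-1}(\theta) \in \Phi^-$ for $i = 0$.

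The final step matches this sign condition with the direction of the arrow in $\Ga_\rho$ via the standard descent criterion: for $u \in W$ and $\beta \in \Phi^+$, $\ell(s_\beta u) > \ell(u)$ iff $u^{-1}(\beta) \in \Phi^+$. Applied with $u = \sf(w)$ and $\beta = \alpha_i$, the arrow $\sf(w) \to s_i\sf(w)$ (of increasing length) exists precisely when $\sf(w)^{-1}(\alpha_i) \in \Phi^+$; applied with $\beta = \theta$, the type-$0$ convention (of decreasing length) makes the arrow $\sf(w) \to s_\theta\sf(w)$ exist precisely when $\sf(w)^{-1}(\theta) \in \Phi^-$. Each of these is exactly the condition for $A_0 w \in H^+$ obtained above, so the crossing $w \to s_iw$ goes $H^+ \to H^-$ and is negative, proving (1); reversing the arrow corresponds to $A_0 w \in H^-$, giving a positive crossing and proving (2). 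The main bookkeeping obstacle is that $\rho_w$ must be governed by $\sf(w)^{-1}$ rather than $\sf(w)$, and that the type-$0$ length convention in $\Ga_\rho$ is reversed relative to $i \in I^\ast$; it is exactly this double inversion that produces the same sign conclusion in both cases.
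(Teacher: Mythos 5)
Your proof is correct and follows essentially the same route as the paper's: both identify the separating hyperplane as the image under the right action of $w$ of the relevant wall of $A_0$, determine which side $A_0w$ lies on, and match that sign with the length/descent criterion $\ell(s_\beta u)>\ell(u)\iff u^{-1}(\beta)\in\Phi^+$ governing the direction of the arrow in $\Ga_\rho$. The only differences are cosmetic: the paper first reduces to $w\in W$ using translation-invariance of the periodic orientation and deduces (2) from (1) by symmetry, whereas you carry $\wt(w)$ explicitly through the hyperplane computation (where it cancels, re-deriving that invariance) and treat both cases directly.
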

\begin{proof}
We only need to prove (1). Let $\beta\in Q^\vee$ and $u\in W$ be such that  $w=ut_{\beta}$. The crossing from $w$ to $s_iw$ is of the same sign as the crossing from $u$ to $s_iu$ since the orientation is left unchanged by translation. In the case where $i\in I^\ast$, we know that the crossing from $u$ to $s_iu$ is positive if and only if $\ell(s_iu)<\ell(u)$. Further, we have $\wt(s_iw) = \wt(w)$ and $\al=0$ so that $\wt(s_iw) =\wt(w)+\al$.
 Assume now that $i=0$. The alcoves $u$ and $s_0u$ are separated by the hyperplane $(H_{\theta,1})u = H_{\theta u,1}$. We have 
\begin{align*}
u\overset{-}{\leadsto}s_0u &\iff  (\theta)u\in \Phi^-\\
&\iff \ell(s_\theta u)<\ell(u)\\
&\iff \text{there is an arrow from $u$ to $s_\theta u$ in $\Ga_\rho$}.
\end{align*}
The last equivalence comes from the fact that $\sf(u)=u$ and $\sf(s_0u) =s_\theta u$. We prove the equality about the weight. By definition of $\Ga_\rho$ we have $\wt(s_0u) = \al$ and we get $\wt(s_0w) = \wt(s_0ut_\beta) = \al+\beta$ as required. 
\end{proof}
Let $\pi$ be a (undirected) path on the graph $\Ga_\rho$, that is, $\pi$ is a sequence $u_0,u_1,\ldots,u_N$ in $W$ such that for all $k\in \{1,\ldots,N\}$ there is an arrow from $u_{k-1}$ to $u_k$ or from $u_k$ to $u_{k-1}$ ($\pi$ does not have to follow the directions of the arrows). We define the word of $\pi$ to be $(i_1,\ldots,i_N)$ where $i_k$ is the type of the arrow between $u_{k-1}$ and $u_k$ and the group element of $\pi$ to be  $s_{i_1}\cdots s_{i_N}\in W_a$. Obviously, two different paths can have the same group element. Conversely, to any $w\in W$ and any expression (not necessarily reduced) $w=s_{i_1}\ldots s_{i_n}$, we can associate a path $\pi_{\vec{w}}$  in $\Ga_\rho$  that starts at $e$ and follows the arrow of type $i_1,\ldots,i_N$. 

\smallskip

With this in mind, the graph $\Ga_\rho$ can be interpreted as an automaton for the set of affine Grassmanians. This is a consequence of the following well known fact on affine Grassmanians. Let $w\in W_a$ such that $w=s_{i_1}\ldots s_{i_N}$.  Then $w\in W_a^{\La_0}$ and the expression $s_{i_1}\ldots s_{i_N}$ is reduced if and only if the crossing from $s_{i_{k}}\ldots s_{i_N}$ to $s_{i_{k+1}}s_{i_k}\ldots s_{i_N}$ is positive for all $1\leq k\leq N-1$. Thus, $w\in W_a^{\La_0}$ if and only if the path~$\pi_{\vec{w}}$ (starting at $e$) corresponding to the expression $s_{i_1}\ldots s_{i_N}$ of $w$ only follows arrows in the opposite direction. 

\begin{example}
\label{automate_G2}
We continue the previous example in type $\tilde{G}_2$. In order to picture the automaton more easily, we start at the identity (the black vertex) and we reverse the orientation of the arrows. We do not indicate the weight of the arrows (it doesn't play any role here) but we put the generator $s_i$ when an arrow is of type $i$. One can obtain all reduced expressions of the affine Grassmanian elements simply by considering all the oriented paths starting at $e$ and by concatenating the type of the arrows from right to left. For instance, one can check that the translations by the fundamental weights $t_{\om_1},t_{\om_2}$ are in $W_a^{\La_0}$ and that the expressions $t_{\om_1} =s_1s_2s_1s_2s_0s_1s_2s_1s_2s_0$ and $t_{\om_2}=s_2s_1s_2s_1s_2s_0$ are reduced. 
\begin{figure}[H]$$
\begin{tikzpicture}[scale=.65]
\tikzstyle{vertex}=[inner sep=2pt,minimum size=5pt,circle,draw]
\def\rs{0.7}
\node[vertex,fill=black] (a1) at (0,0) {};
\node[vertex] (a2) at (0,-1)   {\scalebox{.7}{}};
\node[vertex] (a3) at (0,-2)   {\scalebox{.7}{}};
\node[vertex] (a4) at (0,-3) {\scalebox{.7}{}};
\node[vertex] (a5) at (0,-4)   {\scalebox{.7}{}};
\node[vertex] (a6) at (-1,-5) {\scalebox{.7}{}};
\node[vertex] (a7) at (1,-5)  {\scalebox{.7}{}};
\node[vertex] (a8) at (0,-6)  {\scalebox{.7}{}};
\node[vertex] (a9) at (0,-7)  {\scalebox{.7}{}};
\node[vertex] (a10) at (0,-8) {\scalebox{.7}{}};
\node[vertex] (a11) at (0,-9)  {\scalebox{.7}{}};
\node[vertex] (a12) at (0,-10)  {\scalebox{.7}{}};

\draw[->] (a1) to node[left] {\scalebox{\rs}{$s_0$}} (a2);
\draw[->] (a2) to node[left] {\scalebox{\rs}{$s_2$}}  (a3);
\draw[->] (a3) to node[left] {\scalebox{\rs}{$s_1$}}  (a4);
\draw[->] (a4) to node[left] {\scalebox{\rs}{$s_2$}}  (a5);

\draw[->] (a5) to node[left] {\scalebox{\rs}{$s_0$}}  (a6);
\draw[->] (a5) to node[right] {\scalebox{\rs}{$s_1$}}  (a7);

\draw[->] (a6) to node[left] {\scalebox{\rs}{$s_1$}}  (a8);
\draw[->] (a7) to node[right] {\scalebox{\rs}{$s_0$}}  (a8);

\draw[->] (a8) to node[left] {\scalebox{\rs}{$s_2$}}  (a9);
\draw[->] (a9) to node[left] {\scalebox{\rs}{$s_1$}}  (a10);
\draw[->] (a10) to node[left] {\scalebox{\rs}{$s_2$}}  (a11);
\draw[->] (a11) to node[left] {\scalebox{\rs}{$s_0$}}  (a12);

\draw[->,bend right=55] (a7) to node[left,pos =.6]  {\scalebox{\rs}{$s_2$}}   (a1) ;
\draw[->,bend left=55] (a12) to  node[left,pos =.8] {\scalebox{\rs}{$s_2$}}   (a6);

\draw[->,bend right=65] (a9) to node[right,pos =.25] {\scalebox{\rs}{$s_0$}}(a3);
\draw[->,bend left=65] (a10) to node[left,pos =.75]  {\scalebox{\rs}{$s_0$}} (a4);

\draw[->,bend right=95] (a11) to node[right,pos =.5] {\scalebox{\rs}{$s_1$}} (a1);
\draw[->,bend left=95] (a12) to node[left,pos =.5]  {\scalebox{\rs}{$s_1$}}  (a2);

\end{tikzpicture}
$$
\caption{The graph $\Ga_\rho$ in type $\tilde{G}_2$}
\end{figure}

\end{example}
\newcommand{\wW}{\widetilde{W}}
\subsection{The graphs $\Gamma_\ga$}
\label{gammagamma}

The construction of the previous section can be generalised to any level $0$ weight. The main difference is that such weights can have a non trivial stabiliser in  $W$.  We may assume that $\gamma = \om_{i_1}+\cdots+\om_{i_\ell}\in P$ and we set $J'=\{i_1,\ldots,i_\ell\}$. Indeed, let $\ga$ be a level 0 weight so that~$\ga\in P+\Z\delta$. Since $\delta$ is stable under the action of $W_a$ we may assume that $\ga\in P$. Next, there exists a dominant weight which lies in the $W$-orbit of $\ga$, hence we may assume that $\ga=k_1\om_1+\cdots +k_\ell\om_\ell$ where $(k_1,\ldots,k_\ell)\in \N^\ell$. Now the orbit of $k_1\om_1+\cdots +k_\ell\om_\ell$ under the action of $W$ only depends on whether~$k_i=0$ or not. This is not true for the action of $W_a$ but in the graphs $\Gamma_\ga$ that we will define, we may assume that $\gamma = \om_{i_1}+\cdots+\om_{i_\ell}\in P$ for some $J'=\{i_1,\ldots,i_\ell\}$ just by changing the indeterminates labelling the arrows. 
 
 \smallskip

 So, let $\gamma = \om_{i_1}+\cdots+\om_{i_\ell}\in P$, $J'=\{i_1,\ldots,i_\ell\}$ and $J$ be the complement of $J'$ in $\{1,\ldots,n\}$. Then 
the stabiliser of $\ga$ in $W$ is $W_J$, the group generated by the set $\{s_j,j\in J\}$ and the orbit of $\ga$ under the action of $W$ is in bijection with the set $W^J$ of minimal length representatives of the left cosets $W\slash W_{J}$. 
It is a standard result that any $x\in W$ can be uniquely written as $x^Jx_J$ where $x^J\in W^J$ and $x_J\in W_J$.

\smallskip

Recall that the action of $t_{\beta}$ with $\beta\in Q^\vee$ on $P_{0}$ is given by $t_{\beta}(\gamma)=\gamma-\scald{\gamma}{\beta}\delta$. From there, we see that the
group $\wW_J:= W_{J}\ltimes Q^\vee_{J}$ where 
$$Q^\vee_{J}=\{\beta\in Q^\vee\mid\scald{\gamma}{\beta}=0\} = \bigoplus_{j\in J} \Z\al^\vee_j$$ 
will stabilise $\ga$ (in fact it is strictly contained in the stabiliser of $\ga$ in $W_a$). The group $\wW_J$ is an affine Weyl group.
It is generated by the orthogonal reflections with respect to the hyperplanes $H_{\beta,k}$ with $\beta\in Q^\vee_{J}$ and $k\in \Z$. It acts simply transitively on the connected components of the set
$$\cF_J := V\setminus\{H_{\al,k}\mid\al\in Q_J^\vee,k\in \Z\}.$$
Following Lusztig \cite{Lusz1}, we call these connected components $J$-alcoves. The fundamental $J$-alcove is 
$$\cAJ= \bigcap_{\al\in \Phi_J^+} H_{\al,0}^+\cap H_{\al,1}^-=\{x\in V\mid 0<\scal{x}{\al}<1 \text{ for all $\al\in \Phi^+_J$}\}$$
where $\Phi_J^+\subset \Phi^+$ is the positive root system of $W_J$. The closure of $\cA_J$ is a fundamental domain for the action of~$\widetilde{W}_J$ on $V$, that is,  each $\la\in V$ is conjugate under the action of~$\widetilde{W}_J$ to exactly one element in the closure of $\cAJ$. Given $\beta\in Q^\vee$, we will slightly abuse the notation and write $\beta\in \cAJ$ to mean that~$\beta$ lies in the closure of~$\cAJ$.  Any $x\in \wW_J$ can be uniquely written as $ut_{\al}$ where $u\in W_J$ and $\al\in Q_J^\vee$. Note that when $J=\{1,\ldots,n\}$ we recover the usual alcoves associated to $W_a$ and $\cA_J=A_0$, which explain the terminology of $J$-alcoves. 

\smallskip

The affine Weyl group $\widetilde{W}_J$ and the set of $J$-alcoves have been studied in details in \cite{GLP} in the extended case where the authors studied positively folded alcove paths in $\cAJ$ that bounce on the walls of $\cAJ$.
We also refer to \cite{MST} where the authors study retractions in buildings with respect to chimneys (which are our $J$-alcoves) in order to study orbits in affine flag varieties.

\smallskip

Let $\Z[Q^\vee\slash Q_J^\vee]$ be the group algebra of the quotient $Q^\vee\slash Q_J^\vee$. We write $\ov{\al}$ for the class of $\al\in Q^\vee$ modulo~$Q_J^\vee$ and $\sz^{\ov{\al}}$ for the corresponding element of $\Z[Q^\vee\slash Q_J^\vee]$. We now define our graphs $\Gamma_\ga$ and refer to §\ref{subsec_Examples} for some examples.
\begin{definition}\label{defGgam}
The graph $\Gamma_{\ga}$ is the oriented weighted graph with weights in  $\Z[Q^\vee\slash Q_J^\vee]$ defined as follows
\begin{itemize}
\itemsep=1mm
\item[\tbb] the set of vertices is $W^J$ 
\item[\tbb] there is an arrow $w\underset{a_i}{\overset{i}{\lra}} s_iw$ of type $i$ for $i\in I^{\ast}$ if 
$s_{i}w$ belong to $W^J$ and $\ell(s_iw)=\ell(w)+1$
\item[\tbb] there is an arrow $w\underset{z^{\ov{\al}}}{\overset{0}{\lra}} \sf(s_0w)^J$ of type $0$ if $\ell(\sf(s_0w)^J)<\ell(w)$ and $\wt(s_0w) =\al$. 
\end{itemize}
\end{definition}
When $\ga=\rho$ the graph $\Ga_\rho$ encodes the orbit of $\rho$ and the structure of the semi-direct product $W\ltimes Q^\vee$ together with the periodic orientation of the alcoves in $W_a$. We will show that  the graph~$\Ga_\ga$ encodes the orbit of $\ga$ under the action of $W_a$ and the orientation of the alcoves in $\cAJ$ in a very similar manner.  

\smallskip

It is clear that if we forget about the $0$ arrows then the graph $\Ga_\ga$ encodes the orbit of $\ga$ under the action of $W$ since the stabiliser of $\ga$ is $W_J$. Let $w\in W^J$ and consider the weight $w(\ga)$. We want to show that the action of $s_0$ on $w(\ga)$ is determined by the graph $\Ga_\ga$. First, assume that $\ell(\sf(s_0 w)^J)=\ell(w)$. In this case, since $s_\theta w$ and $w$ are comparable in the Bruhat order, the elements $(s_\theta w)^J = \sf(s_0w)^J$ and $w^J = w$ are also comparable in the Bruhat order, and therefore they must be equal. Further, the equality $(s_\theta w)^J=w$ implies that there exists $v\in W_J$ such that $s_\theta w = wv$ which in turn implies $v = w^{-1}s_\theta w$. This shows that $(\theta^\vee)w\in \Phi_J$. Finally we get 
$$s_0w(\ga) = s_\theta t_{\theta^\vee}w(\ga)= s_\theta w t_{(\theta^\vee)w}(\ga) =  s_\theta w(\ga) =(s_\theta w)^J(\ga) = w(\ga)$$
which explains why there is no arrow of type $0$ leaving from $w$ in this case. For the other cases, we see, doing exactly the same as in the case $\ga=\rho$, that the weights of the $0$ arrow determine the coefficient of $\delta$ in the action of $s_0$ on $w(\ga)$.

\smallskip

We have seen that $W_J\ltimes Q^\vee_J$ stabilises the weight $\ga$ (but is not the full stabilizer of $\ga$) so that to determine the orbit of $\ga$ under $W_a$ it is sufficient to look at the action of  $W_a\slash (W_J\ltimes Q^\vee_J)$. The later  is in bijection with $W^J\times Q^\vee\slash Q^\vee_J$ and we will see that this set is closely related to the fundamental $J$-alcove~$\cAJ$. We also refer to \cite{GLP} for details. 

\begin{lemma}
\label{clU}
Let $\al\in Q^\vee$ and $x\in W_a$. 
\begin{enumerate}
\item There exists a unique element $\beta\in \overline{\alpha}$  such that  $\beta\in \cAJ$. 
\item There exists a unique element $w\in\widetilde{W}_J$ such that $xw\in \cAJ$. We write $p_J(x) = xw$.
\item If $\al\in \cAJ$, there exists a unique element $v\in W_J$ such that $vt_{\al}\in \cAJ$. We write $v = v_{J}(\al)$.
\end{enumerate}
\end{lemma}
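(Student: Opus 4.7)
The plan is to derive all three parts from the fundamental domain property of $\overline{\cAJ}$ for the right action of $\widetilde{W}_J$ on $V$ (recalled just before the lemma), combined with a preliminary observation: for any $\alpha\in Q^\vee$ and any $v\in W_J$, the difference $v(\alpha)-\alpha$ lies in $Q^\vee_J$. I would establish this first via the case of a simple reflection $s_j$ with $j\in J$, where $s_j(\alpha)-\alpha=-(\alpha,\alpha_j)\alpha_j^\vee\in Q^\vee_J$, and then induct on $\ell(v)$.

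For part (1), the observation implies that the $\widetilde{W}_J$-orbit of $\alpha\in Q^\vee$ is contained in $\alpha+Q^\vee_J=\overline{\alpha}$, with reverse inclusion since $Q^\vee_J\subset \widetilde{W}_J$; the strict fundamental domain property then yields existence and uniqueness of an element of $\overline{\alpha}$ in $\overline{\cAJ}$. For part (2), I would pick a point $y$ in the interior of the alcove $A_0x$. Since the hyperplanes bounding $\overline{\cAJ}$ are walls of $W_a$, neither $y$ nor any of its $\widetilde{W}_J$-images lies on such a hyperplane. The fundamental domain property then produces a unique $w\in \widetilde{W}_J$ with $(y)w\in \cAJ$; because $w$ permutes alcoves of $W_a$ and $A_0x\cdot w$ is a single alcove containing the interior point $(y)w$, this entire alcove must be contained in $\overline{\cAJ}$, and uniqueness is immediate.

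For part (3), I would apply part (2) to $x=t_\alpha$ to obtain a unique $w=v't_\lambda\in \widetilde{W}_J$ (with $v'\in W_J$, $\lambda\in Q^\vee_J$) such that $t_\alpha w\in \cAJ$. The commutation $t_\alpha v'=v't_{(v')^{-1}(\alpha)}$ rewrites this as $t_\alpha w=v't_{\alpha'}$ with $\alpha':=(v')^{-1}(\alpha)+\lambda\in \overline{\alpha}$ (by the preliminary observation). Since $\alpha'$ is a vertex of the alcove $A_0\cdot v't_{\alpha'}\subset \overline{\cAJ}$, it lies in $\overline{\cAJ}$, and part~(1) forces $\alpha'=\alpha$. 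Thus $t_\alpha w=v't_\alpha$, and setting $v:=v'$ yields $vt_\alpha\in \cAJ$. Uniqueness of $v$ will follow from uniqueness of $w$ via the correspondence $v\leftrightarrow w=t_\alpha^{-1}vt_\alpha\in \widetilde{W}_J$, which takes values in $\widetilde{W}_J$ precisely because $v(\alpha)-\alpha\in Q^\vee_J$. The main obstacle is this bookkeeping in part (3) needed to pin down the translation part of $w$; parts (1) and (2) are direct applications of the fundamental domain property.
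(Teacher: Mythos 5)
Your argument is correct and follows essentially the same route as the paper: parts (1) and (2) are exactly the fundamental-domain/simply-transitive-action argument the authors use, together with the observation (needed, and left largely implicit in the paper) that the $\widetilde{W}_J$-orbit of $\al\in Q^\vee$ coincides with $\ov{\al}$. The only real divergence is in part (3), where the paper pins down the translation part by arguing that $w$ is a product of reflections in hyperplanes through $\al$ and hence fixes $\al$, whereas you deduce $\al'=\al$ by applying the uniqueness of part (1) to the vertex $\al'$ of the translated alcove; both are valid, and your version has the mild advantage of reusing part (1) instead of invoking the stabilizer-of-a-point fact.
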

\begin{proof}
(1) Let $\al\in Q^\vee$.  There exists a unique element $\beta$ in $\cAJ$ in the orbit of $\al $ under the action of $\widetilde{W}_J$. Since $(\al)\si_{\kappa,k}$ is equal to $\al$ plus a multiple of $\kappa$, we see that $\beta$ is equal to $\al$ plus a linear combination of roots in $\Phi_J$, so that $\beta\equiv \al \mod Q_J^\vee$, that is $\beta\in \ov{\al}$. 

\smallskip

(2) The alcove $x$ lies in a connected component $\cU$ of $V\setminus\{H_{\al,k}\mid\al\in Q_J^\vee,k\in \Z\}$ and there exists a unique element $w\in \widetilde{W}_J$ that sends this component to $\cAJ$.

\smallskip

(3) Assume that $\al\in \cAJ$. 
Let $w \in \widetilde{W}_J$ be such that $t_{\al}w\in \cAJ$. Write $w=vt_{\beta}$ where $v\in W_J$ and $\beta\in Q_J^\vee$.  Then $t_{\al}w = vt_{(\al)v+\beta}\in \cAJ$. Note that since $\al\in \cAJ$, the element $w$ is a product of reflections with respect to hyperplanes going through $\al$, hence $\wt(t_{\al}w) = 
\al$  and this forces the equality $(\al)v+\beta = \al$.
\end{proof}
Let $x\in \cAJ$ and $\al\in Q^\vee$. According to the second point of the lemma, there exists a unique $w\in \widetilde{W}_J$ such that $xt_\al w = p_J(xt_\al)\in \cAJ$. It is easy to see that the element $p_J(xt_\al)$ only depends on $x$ and the class of $\al$ modulo $Q_J^\vee$. Indeed, if $\beta = \al + \nu_J$ with $\nu_J\in Q_J^\vee$, then $t_{-\nu_J}w\in \tilde{W_J}$ and $xt_\beta t_{-\nu_J}w = xt_\al w\in\cAJ$, hence showing that $p_J(xt_\beta) = p_J(xt_\al)$. We write $x\add t_{\ov{\al}} = p_J(xt_\al)$.
\begin{proposition}
The map 
$$\begin{array}{ccccccc}
\cAJ\times Q^\vee\slash Q_J^\vee & \to & \cAJ\\
(u,\ov{\al})&\mapsto&  u\add t_{\ov{\al}}
\end{array}$$
defines a right group action of $Q^\vee\slash Q_J^\vee$ on the fundamental $J$-alcove $\cAJ$.
\end{proposition}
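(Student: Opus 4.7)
The map is well-defined by the remark preceding the proposition (two representatives $\alpha, \beta$ of $\overline{\alpha}$ differ by an element of $Q_J^\vee$, and a translation by $Q_J^\vee$ lies in $\widetilde{W}_J$, so $p_J(xt_\alpha) = p_J(xt_\beta)$). So I only need to verify the two group axioms.

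For the identity element $\overline{0}$, the representative $0 \in Q^\vee$ gives $ut_0 = u \in \mathcal{A}_J$, so by uniqueness in Lemma~\ref{clU}(2) the element $w \in \widetilde{W}_J$ with $ut_0 w \in \mathcal{A}_J$ is $w = e$. Hence $u \add t_{\overline{0}} = u$.

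For associativity, I want to show $(u \add t_{\overline{\alpha}}) \add t_{\overline{\beta}} = u \add t_{\overline{\alpha + \beta}}$. Set $y = u \add t_{\overline{\alpha}} = ut_\alpha w_1$ with $w_1 \in \widetilde{W}_J$. Both sides of the desired equality lie in $\mathcal{A}_J$ by construction, so by the uniqueness in Lemma~\ref{clU}(2) it suffices to show they lie in the same coset $z\widetilde{W}_J$ for some (any) $z \in W_a$. The left-hand side lies in $y t_\beta \widetilde{W}_J = ut_\alpha w_1 t_\beta \widetilde{W}_J$, while the right-hand side lies in $u t_{\alpha+\beta} \widetilde{W}_J = u t_\alpha t_\beta \widetilde{W}_J$. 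Thus the problem reduces to showing
\[
w_1 t_\beta \widetilde{W}_J = t_\beta \widetilde{W}_J,
\]
which in turn reduces to showing that $\widetilde{W}_J$ is normalised by $t_\beta$ for every $\beta \in Q^\vee$.

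The key computation will be that for $v \in W_J$ and $\beta \in Q^\vee$ one has $\beta - (\beta)v \in Q_J^\vee$. This follows by induction on the length of $v$, using that $\beta - (\beta)s_j = \langle \beta, \alpha_j\rangle \alpha_j^\vee \in Q_J^\vee$ for each $j \in J$. Writing $w_1 = v_1 t_{\gamma_1}$ with $v_1 \in W_J$ and $\gamma_1 \in Q_J^\vee$, a direct calculation in the semi-direct product $W \ltimes Q^\vee$ then gives
\[
t_{-\beta}\, w_1\, t_\beta = v_1\, t_{\gamma_1 + (\beta)v_1 - \beta} \in W_J \ltimes Q_J^\vee = \widetilde{W}_J,
\]
which yields the required normalisation. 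Combining everything, both elements belong to the same $\widetilde{W}_J$-coset and lie in $\mathcal{A}_J$, hence are equal. I expect this final semi-direct product identity (and the lemma that $W_J$ preserves cosets modulo $Q_J^\vee$) to be the only real step; the rest is bookkeeping.
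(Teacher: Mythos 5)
Your proposal is correct and follows essentially the same route as the paper: both arguments rest on the fact that conjugating a translation $t_\beta$ by an element of $\widetilde{W}_J$ changes $\beta$ only by an element of $Q_J^\vee$ (the paper phrases this as ``$v(\al)$ is congruent to $\al$ modulo $Q_J^\vee$'' and commutes $t_{v(\al)}$ past $x=vt_\gamma$, while you phrase it as $t_\beta$ normalising $\widetilde{W}_J$), combined with the uniqueness of the $\cAJ$-representative in each right $\widetilde{W}_J$-coset from Lemma~\ref{clU}(2). Your explicit treatment of well-definedness and of the identity axiom, and your length-induction proof that $\beta-(\beta)v\in Q_J^\vee$ for $v\in W_J$, only make explicit what the paper leaves implicit.
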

\begin{proof}
We need to show that for all $u\in \cAJ$ we have  $\left(u\add t_{\ov{\beta}}\right)\add t_{\ov{\al}} = u\add t_{\ov{\al+\beta}}$. Fix $u\in \cAJ$. There exists a unique $x\in \widetilde{W}_J$ such that $u\add t_{\ov{\beta}} =  ut_\beta x\in \cAJ$. Write $x=vt_{\ga}$ with $v\in W_J$ and $\ga\in Q^\vee_J$. The root $v(\al)$ is congruent to $\al$ modulo~$Q^\vee_J$. There exists a unique $y\in \widetilde{W}_J$ such that $(u\add t_{\ov{\beta}})t_{{v(\al)}}y\in \cU_{J}$. We have 
$$\left(u\add t_{\ov{\beta}}\right)\add t_{\ov{\al}} = \left(u\add t_{\ov{\beta}}\right)\add t_{\ov{v(\al)}} =ut_\beta xt_{{v(\al)}}y= ut_{\beta+\al} xy\in \cAJ.$$
By definition, there exists a unique $z\in \widetilde{W}_J$ such that $u\add t_{\ov{\al+\beta}} = u t_{\al+\beta}z\in \cAJ$.  
This shows that $yx = z$ and $\left(u\add t_{\ov{\beta}}\right)\add t_{\ov{\al}} = u\add t_{\ov{\al+\beta}}$ as required. \end{proof}

\begin{theorem}
\label{UJ}
The map $W^J\times Q^\vee\slash Q^\vee_J\to \cAJ$ defined by $(u,\ov{\al}) \mapsto u\add t_{\ov{\al}}$ is a bijection. \end{theorem}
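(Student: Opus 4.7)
The plan is to identify both $W^J \times Q^\vee/Q_J^\vee$ and $\cAJ$ with the set of right cosets $W_a/\wW_J$. Lemma \ref{clU}(2) already provides a bijection $\cAJ \leftrightarrow W_a/\wW_J$ via $p_J$, so the main task will be to show that every right $\wW_J$-coset in $W_a$ contains a unique representative of the form $u t_\alpha$ with $u \in W^J$, and that the assignment $u t_\alpha \wW_J \mapsto (u, \overline{\alpha})$ is well-defined.

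The first step will be to verify that $W^J$, viewed as a set of alcoves via $u \mapsto A_0 u$, lies inside $\cAJ$. For $u \in W^J$ and $j \in J$, the defining condition $\ell(us_j) > \ell(u)$ yields $u(\alpha_j) \in \Phi^+$, and hence $u(\Phi_J^+) \subseteq \Phi^+$. Then for any $x \in A_0$ and $\beta \in \Phi_J^+$, the orthogonality of $u$ gives $((x)u, \beta) = (x, u(\beta)) \in (0,1)$, so $A_0 u \subseteq \cAJ$. This also ensures that $u \add t_{\overline{\alpha}}$ is well-defined for $u \in W^J$.

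For surjectivity, I will start from an arbitrary $w = v t_\beta \in W_a$ and decompose $v = v^J v_J$ uniquely using $W = W^J W_J$. Applying the semi-direct product rule $u_1 t_{\alpha_1} \cdot u_2 t_{\alpha_2} = u_1 u_2 \, t_{(\alpha_1)u_2 + \alpha_2}$ in $W_a = W \ltimes Q^\vee$, a short computation yields $(v^J t_\beta)^{-1} w = v_J t_{\beta - (\beta)v_J}$. The identity $\beta - (\beta)v_J \in Q_J^\vee$, proved by induction on $\ell(v_J)$ starting from $(\beta)s_j - \beta = -(\beta,\alpha_j)\alpha_j^\vee \in Q_J^\vee$ for $j \in J$, will then imply $(v^J t_\beta)^{-1} w \in \wW_J$. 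Thus $w\wW_J = v^J t_\beta \wW_J$, and combining with Lemma \ref{clU}(2) gives $p_J(w) = v^J \add t_{\overline{\beta}}$, producing a preimage of any element of $\cAJ$.

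For injectivity, if $u t_\alpha$ and $u' t_{\alpha'}$ with $u, u' \in W^J$ represent the same right $\wW_J$-coset, a direct computation gives $(u't_{\alpha'})^{-1}(ut_\alpha) = (u')^{-1}u \cdot t_{\alpha - (\alpha')(u')^{-1}u}$, which must belong to $\wW_J$. The condition $(u')^{-1}u \in W_J$, combined with the uniqueness of the decomposition $W = W^J W_J$, forces $u = u'$; the translation condition then reduces to $\overline{\alpha} = \overline{\alpha'}$. The main technical point throughout is the identity $\beta - (\beta)v_J \in Q_J^\vee$ that makes the semi-direct product bookkeeping work, while the geometric heart of the statement is already packaged into Lemma \ref{clU}; after that, the argument is a careful but routine calculation in $W \ltimes Q^\vee$.
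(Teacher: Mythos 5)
Your proof is correct, but it takes a genuinely different route from the paper's. The paper works with an explicit normal form inside $\cAJ$: it writes $x\in\cAJ$ as $uvt_{\al}$ with $u\in W^J$, $v\in W_J$ and $\al\in Q^\vee$ lying in the closure of $\cAJ$, argues via connected components of $\cF_J$ that $v$ must equal the element $v_J(\al)$ of Lemma \ref{clU}(3), and so obtains $\cAJ=\{u\,v_J(\al)\,t_{\al}\}$; bijectivity is then read off from the uniqueness of this normal form, using parts (1) and (3) of Lemma \ref{clU}. You instead use only part (2): you identify $\cAJ$ with the coset space $W_a/\wW_J$ via $p_J$ and prove that $(u,\ov{\al})\mapsto ut_{\al}\wW_J$ is a bijection from $W^J\times Q^\vee/Q^\vee_J$ onto $W_a/\wW_J$ by pure semidirect-product algebra, namely the parabolic decomposition $W=W^JW_J$ together with the identity $\beta-(\beta)v\in Q_J^\vee$ for $v\in W_J$ (both of your window computations $(v^Jt_\beta)^{-1}w=v_Jt_{\beta-(\beta)v_J}$ and $(u't_{\al'})^{-1}(ut_{\al})=(u')^{-1}u\,t_{\al-(\al')(u')^{-1}u}$ check out against the paper's convention $u_1t_{\al_1}u_2t_{\al_2}=u_1u_2t_{(\al_1)u_2+\al_2}$). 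The one genuinely geometric input you need --- and do supply --- is the verification that $A_0u\subseteq\cAJ$ for $u\in W^J$, so that $u\add t_{\ov{\al}}$ is even defined; this replaces the paper's connected-component argument and is arguably cleaner. What the paper's approach buys in exchange is the explicit description of $\cAJ$ as the set of group elements $u\,v_J(\al)\,t_{\al}$, a normal form that is reused later (for instance in the proof of Theorem \ref{cross_strip}); your argument establishes the bijection without producing it.
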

\begin{proof}
Let $x\in \cAJ$. Let $x=uvt_{\al}$ where $u\in W^J$, $v\in W_J$ and $\al\in \cAJ$ since $x\in \cAJ$. The alcoves $vt_{\al}$ and $uvt_{\al}$ lie in the same connected component of $\cF_J$, therefore $vt_{\al}\in \cAJ$.  This forces  $v = v_J(\al)$ by the previous lemma.  This shows that
$$\cAJ\subset \{uv_J(\al)t_{\al}\mid \al\in \cAJ,u\in W^J\}.$$
By definition, we have $v_J(\al)t_\al\in \cAJ$ and  this implies that $uv_J(\al)t_\al\in \cAJ$ for all $u\in W^J$, showing the reverse inclusion. Let $\al\in \cAJ$ and $u\in W^J$. We have 
 $uv_{J}(\al)t_{\al} = u t_{\beta}v_{J}(\al)\in\cAJ$ where $\beta\equiv \al$ modulo~$Q_J^\vee$.  It follows that $u\add t_{\ov{\beta}} = u\add t_{\ov{\al}} = uv_{J}(\al)t_{\al}$ hence showing that the map of the theorem is surjective. \\
Let $u,u'\in W^J$ and $\al,\beta\in Q^\vee$ such that 
$u\add t_{\ov{\al}} = u'\add t_{\ov{\beta}}$. Let $\al_0,\beta_0\in \cAJ$ be such that $\al_0\equiv \al\mod Q_J^\vee$ and $\beta_0\equiv \beta\mod Q_J^\vee$. We know that $uv_J(\al_0)t_{\al_0} = u\add t_{\ov{\al}} = u'\add t_{\ov{\beta}} = u'v_J(\beta_0)t_{\beta_0}$ which implies that $\al_0=\beta_0$ since $uv_J(\al_0)$ and $u'v_J(\beta_0)$ lie in $W$. In particular we have $\al\equiv \beta \mod Q_J^\vee$. Now $uv_J(\al_0)=u'v_J(\beta_0)$ which implies that $u=u'$ and $v_J(\al_0) = v_J(\beta_0)$ since $u,u'\in W^J$ and $v_J(\al_0),v_J(\beta_0)\in W_J$. The map of the theorem is injective. 
\end{proof}
In the following theorem, we show how, given $w\in \cAJ$ and $s_i\in S$, one can determine whether the alcove~$s_iw$ lies in the $J$-alcove $\cAJ$ by looking at the graph $\Ga_\ga$. Further we show that the sign of the crossing from $w$ to $s_iw$ (when they are both in $\cAJ$) is also determined by  $\Ga_\ga$. 

\smallskip

As in Proposition \ref{Gamma_rho_orientation}, we allow the weight $\ov{\al}$ to be equal to $0$. This allows us to treat the cases $i\in I^\ast$ and $i=0$ together.

\begin{theorem}
\label{cross_strip}
Let $w\in \cAJ$ and $i\in I$. We have $s_iw\in \cAJ$ if and only if there is an arrow of type $i$ adjacent to $\sf(w)^J$ in $\Ga_\ga$. In this case 
\begin{enumerate}
\item if $\sf(w)^J\underset{a_iz^{\ov{\al}}}{\overset{i}{\lra}} \sf(s_iw)^J$ then $\ov{\wt(s_iw)}=\ov{\wt(w)+\al}$ and the crossing is negative,
\item $\sf(s_iw)^J\underset{a_iz^{\ov{\al}}}{\overset{i}{\lra}} \sf(w)^J$ then $\ov{\wt(s_iw)}=\ov{\wt(w)-\al}$ and the crossing is positive.
\end{enumerate}
\end{theorem}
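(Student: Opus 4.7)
The plan is to exploit Theorem \ref{UJ}, which lets me write any $w\in\cAJ$ uniquely as $w=u\,v_J(\alpha)\,t_\alpha$ with $u=\sf(w)^J\in W^J$ and $\alpha\in Q^\vee\cap\overline{\cAJ}$ the distinguished representative of $\overline{\wt(w)}\in Q^\vee/Q_J^\vee$. Setting $y:=\sf(w)=u\,v_J(\alpha)$, the theorem reduces to a case split on whether $i\in I^\ast$ or $i=0$, and in each case to comparing the local alcove geometry with the combinatorics of $\Gamma_\gamma$ encoded in Definition \ref{defGgam}.

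For $i\in I^\ast$, a direct computation in $W\ltimes Q^\vee$ yields $s_iw=(s_iu)\,v_J(\alpha)\,t_\alpha$, so $\wt(s_iw)=\wt(w)$. The uniqueness in Theorem \ref{UJ} forces $s_iw\in\cAJ$ to be equivalent to $s_iu\in W^J$, which by Definition \ref{defGgam} is exactly the condition for an arrow of type $i$ to be incident to $u$; the weight of this arrow is $a_i=a_i\sz^{\overline{0}}$, matching the required equality $\overline{\wt(s_iw)}=\overline{\wt(w)}$. Translation invariance of the periodic orientation of $W_a$ equates the sign of the crossing from $w$ to $s_iw$ with that from $y$ to $s_iy$, and the length decomposition $\ell(y)=\ell(u)+\ell(v_J(\alpha))$, together with its analogue for $s_iy$ when $s_iu\in W^J$, shows this sign is negative iff $\ell(s_iu)>\ell(u)$, precisely matching the arrow direction given by Definition \ref{defGgam}.

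For $i=0$, I would use $s_0=s_\theta t_{\theta^\vee}$ and the braiding $t_\beta x=x\,t_{x^{-1}(\beta)}$ to obtain $s_0w=s_\theta y\,t_{\alpha+y^{-1}(\theta^\vee)}$, so that $\sf(s_0w)=s_\theta y$ and $\wt(s_0w)=\alpha+y^{-1}(\theta^\vee)$. Since $v_J(\alpha)\in W_J$, the right coset identity $(s_\theta y)W_J=(s_\theta u)W_J$ gives $\sf(s_0w)^J=(s_\theta u)^J$. The claim I would establish is that $s_0w\in\cAJ$ is equivalent to $\ell((s_\theta u)^J)\neq\ell(u)$, with the sign of the inequality dictating the direction of the type~$0$ arrow at $u$ in Definition \ref{defGgam}. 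For the weight, Definition \ref{defGgam} attaches $\sz^{\overline{u^{-1}(\theta^\vee)}}$ to such an arrow (using $\wt(s_0u)=u^{-1}(\theta^\vee)$), and the required equality $\overline{\wt(s_0w)}=\overline{\wt(w)}+\overline{u^{-1}(\theta^\vee)}$ reduces to $\overline{y^{-1}(\theta^\vee)}=\overline{u^{-1}(\theta^\vee)}$; this holds because each generator $s_j$ of $W_J$ satisfies $s_j(v)-v\in\Z\alpha_j^\vee\subseteq Q_J^\vee$, so $v_J(\alpha)^{-1}$ acts trivially on $Q^\vee/Q_J^\vee$. Orientation is again handled by translation invariance combined with Proposition \ref{Gamma_rho_orientation} applied in $\Gamma_\rho$ to the pair $(y,s_\theta y)$, which descends to $\Gamma_\gamma$ via the projection $W\to W^J$.

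The principal obstacle is the geometric equivalence in case $i=0$: proving that $s_0w\in\cAJ$ is equivalent to $\ell((s_\theta u)^J)\neq\ell(u)$. The failure case $\ell((s_\theta u)^J)=\ell(u)$ should correspond to the $s_0$-wall of the alcove $A_0w$, which by direct computation equals $H_{y(\theta),1+(\alpha,y(\theta))}$, being one of the bounding hyperplanes $H_{\kappa,0}$ or $H_{\kappa,1}$ of $\cAJ$ for some $\kappa\in\Phi_J^+$. The nontrivial step is translating this geometric condition on $y(\theta)$ and on the integer $(\alpha,y(\theta))$ into the purely combinatorial condition $(s_\theta u)^J=u$ on $W^J$, which rests on the interplay between the action of $v_J(\alpha)\in W_J$ on $\theta$ and the fact that $\alpha$ lies in the closure of $\cAJ$.
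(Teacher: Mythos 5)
Your treatment of the case $i\in I^\ast$ is complete and correct: using the uniqueness of the decomposition $w=u\,v_J(\alpha)\,t_\alpha$ from Theorem \ref{UJ} to obtain $s_iw\in\cAJ\Leftrightarrow s_iu\in W^J$, and the additivity $\ell(uv)=\ell(u)+\ell(v)$ for $u\in W^J$, $v\in W_J$ to read off the sign of the crossing, is a slightly more algebraic but valid substitute for the paper's geometric argument. The weight bookkeeping for $i=0$ (reducing to $\ov{y^{-1}(\theta^\vee)}=\ov{u^{-1}(\theta^\vee)}$ because $W_J$ acts trivially on $Q^\vee/Q_J^\vee$) is also correct and matches the paper's computation $\ov{\wt(s_0w)}=\ov{\wt(w)+\al}$.

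However, the proof is genuinely incomplete at exactly the point you flag as ``the principal obstacle'': the equivalence $s_0w\in\cAJ\Leftrightarrow\ell((s_\theta u)^J)\neq\ell(u)$ is announced as a goal and never established, and this equivalence is the substantive content of the theorem for $i=0$. The paper closes it in two short steps that your sketch circles around without executing. First, for any $i$, right multiplication by $(v_J(\alpha)t_\alpha)^{-1}$ carries the adjacent pair $(w,s_iw)$ to $(u,s_iu)$ and changes the direction $\al$ of the separating hyperplane into $(\al)v_J(\alpha)^{-1}$; since $v_J(\alpha)\in W_J$ stabilizes $\Phi_J$, and since for an alcove inside $\cAJ$ crossing a wall exits $\cAJ$ exactly when the wall's direction lies in $\Phi_J$ (such a wall is necessarily $H_{\kappa,0}$ or $H_{\kappa,1}$), one gets $s_iw\in\cAJ\Leftrightarrow s_iu\in\cAJ$ with no need to compute the constant term $1+(\alpha,(\theta)y)$ of the wall of $w$. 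Second, the wall between $u$ and $s_0u$ is $H_{(\theta)u,1}$, so $s_0u\notin\cAJ\Leftrightarrow(\theta)u\in\Phi_J\Leftrightarrow u^{-1}s_\theta u\in W_J\Leftrightarrow(s_\theta u)^J=u$; the Bruhat comparability of $(s_\theta u)^J$ and $u$ then converts $(s_\theta u)^J\neq u$ into $\ell((s_\theta u)^J)\neq\ell(u)$, which is what links the geometry to the presence of a type $0$ arrow in $\Ga_\ga$. Without these two steps your argument does not prove the ``if and only if'' for $i=0$.
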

\begin{proof}
Write $w =uvt_{\beta}$ where $\beta\in \cAJ$, $v=v_J(\beta)$ and $u=\sf(w)^J\in W^J$.  
Let $H_{\al,k}$ be the hyperplane separating the alcoves  $w = u vt_{\beta}$ and $s_iw= s_iu vt_{\beta}$. Then the hyperlane separating  $u$ and $s_iu$ is of direction~$(\al)v^{-1}$. Since $v\in W_J$, we have $\al\in \Phi_J$ if and only if $(\al)v^{-1}\in \Phi_J$. Finally, we get that 
$$s_iw\in \cAJ\iff \al \notin \Phi_J\iff (\al)v^{-1}\notin \Phi_J \iff s_iu\in \cAJ.$$
Assume first that $i\in I^\ast$. Then we have $s_iu\in \cAJ$ if and only if $s_iu\in W^J$. By definition of $\Ga_\ga$, there is an arrow between $u$ and $s_iu$ if and only if $s_iu\in W^J$. In this case, we obtain that $\sf(s_iw)^J=s_iu$ and 
there is an arrow $u\underset{a_i}{\overset{i}{\lra}} s_iu$ when $s_iu>u$ in which case the crossing is negative and  there is an arrow $s_iu\underset{a_i}{\overset{i}{\lra}} u$ when $s_iu<u$ in which case the crossing is positive. 
Since $\wt(s_iw) = \wt(w)$, this proves (1) when~$i\in I^\ast$.

\smallskip

Assume that $i=0$. The hyperplane separating $u$ and $s_0u$ is $H_{(\theta)u,1}$. It follows that
\begin{align*}
s_0u\notin \cAJ &\iff (\theta)u\in \Phi_J 
\iff \exists v\in W_J, u^{-1}s_\theta u = v
\iff \exists v\in W_J, s_\theta u = uv
\iff (s_\theta u)^J = u.
\end{align*}
Assume that $s_0w\in \cAJ$. Then $s_0u\in \cAJ$ and $(s_\theta u)^J \neq  u$. But since the elements $(s_\theta u)^J$ and $u^J=u$ are comparable in the Bruhat order this implies that  $\ell((s_\theta u)^J) \neq  \ell(u)$ and therefore there is an arrow between $u=\sf(w)^J$ and $(s_\theta u)^J$.
Conversely assume that there is an arrow of type $0$ adjacent to $u$ in $\Ga_\ga$. Then $\ell((s_\theta u)^J) \neq  \ell(u)$
which implies that $(s_\theta u)^J \neq  u$ and therefore $s_0u\in \cAJ$ as required. 

\medskip

Assume that there is an arrow $u \underset{z^{\ov{\al}}}{\overset{0}{\lra}} s_\theta u$, that is we have $\ell((s_\theta u)^J)<\ell(u)$. Then  $s_\theta u<u$ and we have seen in the proof of Proposition \ref{Gamma_rho_orientation} that this implies that the crossing from $u$ to $s_0u$ is negative. Finally since $\al = \wt(s_0u)$ we have 
$$\wt(s_0w) = \wt(s_0uvt_\beta) = \wt(s_\theta u t_{\al}vt_\beta) = \beta+v(\al) = \wt(w)+v(\al)$$
and since $v\in W_J$, we have $\ov{v(\al)}=\ov{\al}$ and $\ov{\wt(s_0w)}  = \ov{\wt(w) + \al}$ as required. This prove (1) in the case $i=0$. The proof of (2) is similar. 
\end{proof}
\begin{corollary}
\label{crossUJ}
Let $w,u\in \cAJ$ be two adjacent alcoves and $\al\in Q^\vee$.  We have $w\crossp u$ if and only if $w\add t_{\ov{\al}}\crossp u\add t_{\ov{\al}}$
\end{corollary}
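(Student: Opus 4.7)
The plan is to combine Theorem~\ref{cross_strip} (which translates crossing signs into arrows in $\Ga_\ga$) with Theorem~\ref{UJ} (which identifies $\cAJ$ with $W^J\times Q^\vee/Q^\vee_J$) to reduce the assertion to the invariance of the $W^J$-coordinate under the $Q^\vee/Q^\vee_J$-action.

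Since $w,u\in\cAJ$ are adjacent, I would write $u=s_iw$ for some $i\in I$. The first step is to verify that $u \add t_{\ov\al}=s_i(w\add t_{\ov\al})$, so that $w\add t_{\ov\al}$ and $u\add t_{\ov\al}$ are again adjacent alcoves. By definition, $w\add t_{\ov\al}=wt_\al y_w$ where $y_w\in\widetilde W_J$ is the unique element bringing the $J$-alcove containing $wt_\al$ back to $\cAJ$. Since $w\in\cAJ$ (as a $J$-alcove region in $V$), one has $wt_\al\in\cAJ+\al$, a fixed $J$-alcove depending only on the class $\ov\al$; the same holds for $ut_\al=s_iwt_\al$, as both alcoves lie in the same $J$-alcove $\cAJ+\al$. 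Hence $y_u=y_w=:y$, and $u\add t_{\ov\al}=s_iwt_\al y=s_i(w\add t_{\ov\al})$.

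Next, from the proof of Theorem~\ref{UJ}, the first coordinate in the bijection $\cAJ\leftrightarrow W^J\times Q^\vee/Q^\vee_J$ is exactly $\sf(\cdot)^J$: writing $w=\sf(w)^J\, v_J(\beta)\, t_\beta$ with $\beta\in\cAJ$ representing $\ov{\wt(w)}$, one has $w=\sf(w)^J\add t_{\ov\beta}$. Using the right action of $Q^\vee/Q^\vee_J$ on $\cAJ$ established just before the statement,
\[
w\add t_{\ov\al} = \sf(w)^J\add t_{\ov{\beta+\al}},
\]
so $\sf(w\add t_{\ov\al})^J = \sf(w)^J$, and analogously $\sf(u\add t_{\ov\al})^J=\sf(u)^J$.

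Finally, Theorem~\ref{cross_strip} says the sign of the crossing between two adjacent $J$-alcoves depends only on the existence and direction of an arrow of type $i$ between their $\sf(\cdot)^J$-values in $\Ga_\ga$. Since these values coincide for the pair $(w,u)$ and the pair $(w\add t_{\ov\al},u\add t_{\ov\al})$, the same arrow governs both crossings, which proves the equivalence. The main conceptual point is that the $\add t_{\ov\al}$-action merely shifts the $Q^\vee/Q^\vee_J$-coordinate while preserving the $W^J$-coordinate, and this is precisely why the orientation carried by $\Ga_\ga$ is periodic on $\cAJ$.
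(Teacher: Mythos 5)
Your proof is correct and follows the same strategy as the paper's: reduce via Theorem~\ref{cross_strip} to the invariance of $\sf(\cdot)^J$ under $\add t_{\ov{\al}}$, which you obtain from the group-action proposition and the parametrization of Theorem~\ref{UJ}, while the paper gets it by writing $w\add t_{\ov{\al}}=wv_1t_{v_1(\al)+\beta_1}$ directly. Your explicit check that $u\add t_{\ov{\al}}=s_i(w\add t_{\ov{\al}})$ — i.e.\ that the translated pair is again adjacent of the same type, because $wt_\al$ and $ut_\al$ lie in the same $J$-alcove so the correcting element of $\widetilde{W}_J$ is common to both — is a welcome detail that the paper's proof leaves implicit.
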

\begin{proof}
According to the previous theorem, the sign of the crossing from $w$ to $u$ is determined by the element $\sf(w)^J$ and $\sf(u)^J$. Similarly, the sign of the crossing from $w\add t_{\ov{\al}}$ to $u\add t_{\ov{\al}}$ is determined by the element $\sf(w\add t_{\ov{\al}})^J$ and $\sf(u\add t_{\ov{\al}})^J$. There exists $x,y\in \widetilde{W}_J$ such that $w\add t_{\ov{\al}}=wt_\al x$ and $u\add t_{\ov{\al}} = ut_\al y$. Writing $x=v_1t_{\beta_1}$ and $y=v_2t_{\beta_2}$ with $v_1,v_2\in W_J$ and $\beta_1,\beta_2\in Q_J$, we see that $w\add t_{\ov{\al}} = wv_1t_{v_1(\al)+\beta_1}$ and $u\add t_{\ov{\al}} = uv_2t_{v_2(\al)+\beta_2}$. Thus we get $\sf(w\add t_{\ov{\al}})^J = (wv_1)^J=w^J$ and $\sf(u\add t_{\ov{\al}})^J = (uv_2)^J=u^J$ and the result.  
\end{proof}
As in the case of $\ga = \rho$, the graph $\Ga_\ga$ can be interpreted as an automaton for reduced expression of the elements lying in $\cAJ\cap W_a^{\La_0}$. Indeed, we have $w\in \cAJ\cap W_a^{\La_0}$ and the expression $s_{i_1}\ldots s_{i_N}$ is reduced if and only if $s_{i_{k}}\ldots s_{i_N}\in \cAJ$ for all $1\leq k\leq N$ and if the crossing from $s_{i_{k}}\ldots s_{i_N}$ to $s_{i_{k+1}}s_{i_k}\ldots s_{i_N}$ is positive for all $1\leq k\leq N-1$. Thus, in terms of the graph $\Ga_\ga$, we have $w\in \cAJ\cap W_a^{\La_0}$ if and only if there exists a path~$\pi_{\vec{w}}$ in $\Ga_\ga$ starting at $e$ and following reversed arrows of type $i_1,\ldots,i_N$. We have given two examples of such automaton in Example \ref{strip_G2}.

\subsection{Some examples of $\Ga_\ga$}
\label{subsec_Examples}
We now give a series of examples of graphs $\Ga_\ga$ with a drawing of the corresponding strips in type $\tilde{G}_2$.

\begin{example}
Let $W$ be of type $\tilde{A}_3$ and $\gamma = \om_1$. The stabiliser of $\ga$ in $W$ is the parabolic subgroup generated by $J = \{s_2,s_3\}$ and the set of minimal length representatives is 
$W^J = \{e,s_1,s_2s_1,s_3s_2s_1\}$.
We have $Q_J^\vee =\Z\al_2^\vee +\Z\al_3^\vee$ and the labels of the graph $\Ga_\ga$ are in the group algebra of~$Q^\vee\slash Q^\vee_J=\Z \ov{\al_1}$. We represent the graph $\Gamma_\ga$ below where we have set $\sz_1 = \sz^{\ov{\al_1^\vee}}$. We omit the weights $a_1,a_2,a_3$ since they are all equal to $1$. 
\begin{figure}[H]
$$
\begin{tikzpicture}[scale =1]
\tikzstyle{vertex}=[inner sep=2pt,minimum size=10pt,ellipse,draw]

\node[vertex] (a1) at (0,0) {\scalebox{.7}{$e$}};
\node[vertex] (a2) at (0,-1) {\scalebox{.7}{$s_1$}};
\node[vertex] (a3) at (0,-2) {\scalebox{.7}{$s_2s_1$}};
\node[vertex] (a4) at (0,-3) {\scalebox{.7}{$s_3s_2s_1$}};

\draw[->] (a1) edge node  {} (a2);
\draw[->] (a2) edge node{} (a3);
\draw[->] (a3) edge node{} (a4);
\draw[->] (a4) edge[bend left = 90,left] node{$\frac{1}{\sz_1}$} (a1);

\end{tikzpicture}$$
\caption{The graph $\Ga_{\om_1}$ in type $\tilde{A}_3$}
\end{figure}

Next consider the weight $\gamma = \om_2$. 
 The stabiliser of $\ga$ in $W$ is the parabolic subgroup generated by $J = \{s_1,s_3\}$ and the set of minimal length representatives is $W^J = \{e,s_2, s_1s_2,s_3s_2,s_1s_3s_2,s_2s_1s_3s_2\}$.
We have $Q_J^\vee =\Z\al_1^\vee +\Z\al_3^\vee$ and the labels of the graph $\Ga_\ga$ are in the group algebra of~$Q^\vee\slash Q^\vee_J=\Z \ov{\al_2}$. We represent the graph $\Gamma_\ga$ below where we have set $\sz_2 = \sz^{\ov{\al_2^\vee}}$. Again we omit the weights $a_i$'s.

\begin{figure}[H]
$$
\begin{tikzpicture}[scale =1]
\tikzstyle{vertex}=[inner sep=2pt,minimum size=10pt,ellipse,draw]

\node[vertex] (a1) at (0,0) {\scalebox{.7}{$e$}};
\node[vertex] (a2) at (0,-1) {\scalebox{.7}{$s_2$}};
\node[vertex] (a3) at (-.7,-2) {\scalebox{.7}{$s_1s_2$}};
\node[vertex] (a4) at (.7,-2) {\scalebox{.7}{$s_3s_2$}};
\node[vertex] (a5) at (0,-3) {\scalebox{.7}{$s_1s_3s_2$}};
\node[vertex] (a6) at (0,-4) {\scalebox{.7}{$s_2s_1s_3s_2$}};

\draw[->] (a1) edge node  {} (a2);
\draw[->] (a2) edge node{} (a3);
\draw[->] (a2) edge node{} (a4);
\draw[->] (a3) edge node{} (a5);
\draw[->] (a4) edge node{} (a5);
\draw[->] (a5) edge node{} (a6);

\draw[->]  (a5) edge[bend left = 90] node[left]{$\frac{1}{\sz_2}$} (a1);
\draw[->]  (a6) edge[bend right = 90] node[right]{$\frac{1}{\sz_2}$} (a2);

\end{tikzpicture}$$
\caption{The graph $\Ga_{\om_2}$ in type $\tilde{A}_3$}
\end{figure}
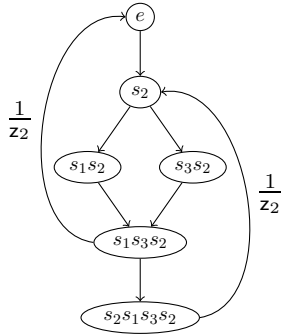
\end{example}

\begin{example}
\label{strip_G2}
Let $W$ be of type $\tilde{G}_2$ as in Example \ref{G2}. We first consider the level $0$-weight  $\ga = \om_2$. The stabiliser of $\ga$ in $W$ is the parabolic subgroup generated by $J = \{s_1\}$ and the set of minimal length representatives is $W^J =\{e,s_2,s_1s_2,s_2s_1s_2,s_1s_2s_1s_2,s_2s_1s_2s_1s_2\}$.
We have $Q_J^\vee = \Z\al_1^\vee$ and the labels of $\Ga_\ga$ are in the group algebra of~$Q^\vee\slash Q^\vee_J = \Z\ov{\al_2}$. We represent the graph $\Gamma_\ga$ below on the left hand-side where we have set $\sz_2 = \sz^{\ov{\al_2^\vee}}$. In the middle, we represent the strip $\cAJ$. The subsets in green are from top to bottom $W^J\add t_{\ov{2\al_2}},W^J$ and $W^J\add t_{-\ov{2\al_2}}$ and the subsets in violet are from top to bottom $W^J\add t_{\ov{\al_2}},W^J\add t_{-\ov{\al_2}}$ and~$W^J\add t_{-\ov{3\al_2}}$. Finally, on the right handside, we represent the automaton for reduced expressions in $\cAJ\cap W_a^{\La_0}$. We keep the same convention as in Example \ref{automate_G2} (we have inverted the direction of the arrows and the identity is the black vertex).  

\begin{figure}[H]
\begin{minipage}{5cm}
$$
\begin{tikzpicture}[scale =.8]
\tikzstyle{vertex}=[inner sep=2pt,minimum size=10pt,ellipse,draw]

\node[vertex] (a1) at (0,0) {\scalebox{.7}{$e$}};
\node[vertex] (a2) at (0,-1) {\scalebox{.7}{$s_2$}};
\node[vertex] (a3) at (0,-2) {\scalebox{.7}{$s_1s_2$}};
\node[vertex] (a4) at (0,-3) {\scalebox{.7}{$s_2s_1s_2$}};
\node[vertex] (a5) at (0,-4) {\scalebox{.7}{$s_1s_2s_1s_2$}};
\node[vertex] (a6) at (0,-5) {\scalebox{.7}{$s_2s_1s_2s_1s_2$}};

\draw[->] (a1) edge node[left]{\scalebox{.7}{2}} (a2);
\draw[->] (a2) edge node[left]{\scalebox{.7}{3}} (a3);
\draw[->] (a3) edge node[left]{\scalebox{.7}{2}} (a4);
\draw[->] (a4) edge node[left]{\scalebox{.7}{3}} (a5);
\draw[->] (a5) edge node[left]{\scalebox{.7}{2}} (a6);

\draw[->]  (a6) edge[bend left = 90] node[left]{$\frac{1}{\sz_2^2}$} (a1);
\draw[->]  (a4) edge[bend right = 90] node[right]{$\frac{1}{\sz_2}$} (a2);
\draw[->]  (a5) edge[bend left = 90] node[left]{$\frac{1}{\sz_2}$} (a3);

\end{tikzpicture}$$
\end{minipage}
\begin{minipage}{7cm}
\begin{center}
\begin{tikzpicture}[scale =.8]
\newcommand{\y}{1.5}
\newcommand{\x}{.866}

\draw[fill = Green!70!] (1/2*\x,1/2*\y) -- (0,1) -- (-\x,.5) -- (-\x,-.5) -- (-1/2*\x,-.75);
\draw[fill = Green!20!] (1/2*\x-\x,1/2*\y - \y) -- (0-\x,1-\y) -- (-\x-\x,.5-\y) -- (-\x-\x,-.5-\y) -- (-1/2*\x-\x,-.75-\y);
\draw[fill = Green!20!] (1/2*\x+\x,1/2*\y + \y) -- (0+\x,1+\y) -- (-\x+\x,.5+\y) -- (-\x+\x,-.5+\y) -- (-1/2*\x+\x,-.75+\y);

\draw[fill = Blue!50!] (-3/2*\x,.75) -- (-\x,.5) -- (-\x,-.5) -- (-2*\x,-1) -- (-5/2*\x,-.75);
\draw[fill = Blue!20!] (-3/2*\x+\x,.75+\y) -- (-\x+\x,.5+\y) -- (-\x+\x,-.5+\y) -- (-2*\x+\x,-1+\y) -- (-5/2*\x+\x,-.75+\y);
\draw[fill = Blue!20!] (-3/2*\x-\x,.75-\y) -- (-\x-\x,.5-\y) -- (-\x-\x,-.5-\y) -- (-2*\x-\x,-1-\y) -- (-5/2*\x-\x,-.75-\y);

\draw[fill = gray] (0,0) -- (0,1) -- (\x/2,.75);

\draw[line width = .4mm] (-2*\x,-2*\y) -- (2*\x,2*\y);
\draw[line width = .4mm] (-4*\x,-2*\y) -- (0*\x,2*\y);
\draw[->,line width = .5mm] (0,0) -- (2*\x,0);
\draw[->,line width = .5mm] (0,0) -- (\x,\y);
\draw[->,line width = .5mm] (0,0) -- (-\x,\y);

\draw[->,line width = .5mm] (0,0) -- (3*\x,\y);
\draw[->,line width = .5mm] (0,0) -- (0,2*\y);
\draw[->,line width = .5mm] (0,0) -- (-3*\x,\y);

\node at (-.5,0) {\scalebox{.8}{$W^\ga$}};
\node at (-.3+\x,.3+\y) {\scalebox{.5}{$\ov{2\al_2}$}};
\node at (-.3-\x,.3-\y) {\scalebox{.5}{$t_{-\ov{2\al_2}}\cdot W^\ga$}};


\draw (-3.464,3) -- (3.464,3);
\draw (-3.464,1.5) -- (3.464,1.5);
\draw (-3.464,0) -- (3.464,0);
\draw (-3.464,-1.5) -- (3.464,-1.5);
\draw (-3.464,-3) -- (3.464,-3);

\foreach \n in {-4,...,4} {
\draw (\n * \x,-3) -- (\n * \x,3);}

\foreach \e in {-1,1} {
\draw (\e*2*\x,3) -- (\e*4*\x,2);
\draw (\e*0*\x,3) -- (\e*4*\x,1);
\draw (\e*-2*\x,3) -- (\e*4*\x,0);
\draw (\e*-4*\x,3) -- (\e*4*\x,-1);

\draw (-\e*4*\x,2) -- (\e*4*\x,-2);
\draw (-\e*4*\x,1) -- (\e*4*\x,-3);

\draw (-\e*4*\x,0) -- (\e*2*\x,-3);
\draw (-\e*4*\x,-1) -- (\e*0*\x,-3);
\draw (-\e*4*\x,-2) -- (-\e*2*\x,-3);}

\foreach \e in {-1,1} {
\draw (\e*2*\x,3) -- (\e*4*\x,0);
\draw (\e*0*\x,3) -- (\e*4*\x,-3);
\draw (-\e*2*\x,3) -- (\e*2*\x,-3);
\draw (-\e*4*\x,3) -- (\e*0*\x,-3);
\draw (-\e*4*\x,0) -- (-\e*2*\x,-3);}
\end{tikzpicture}
\end{center}
\end{minipage}
\begin{minipage}{5cm}
$$
\begin{tikzpicture}[scale =.8]
\tikzstyle{vertex}=[inner sep=2pt,minimum size=5pt,circle,draw]
\def\rs{0.7}

\node[vertex,fill=black] (a1) at (0,0) {};
\node[vertex] (a2) at (0,-1)  {};
\node[vertex] (a3) at (0,-2)  {};
\node[vertex] (a4) at (0,-3)  {};
\node[vertex] (a5) at (0,-4)  {};
\node[vertex] (a6) at (0,-5) {};

\draw[->] (a1) edge node[left]{\scalebox{\rs}{$s_0$}} (a2);
\draw[->] (a2) edge node[left]{\scalebox{\rs}{$s_2$}}  (a3);
\draw[->] (a3) edge node[left]{\scalebox{\rs}{$s_1$}}  (a4);
\draw[->] (a4) edge node[left]{\scalebox{\rs}{$s_2$}} (a5);
\draw[->] (a5) edge node[left]{\scalebox{\rs}{$s_1$}} (a6);

\draw[->]  (a6) edge[bend left = 90] node[left]{\scalebox{\rs}{$s_2$}} (a1);
\draw[->]  (a6) edge[bend right = 90] node[right]{\scalebox{\rs}{$s_0$}} (a4);
\draw[->]  (a5) edge[bend left = 90] node[left]{\scalebox{\rs}{$s_0$}} (a3);

\end{tikzpicture}$$
\end{minipage}
\caption{The graph $\Ga_{\om_2}$, the set $\cA_{\{s_1\}}$ and the associated automaton in type $\tilde{G}_2$}
\end{figure}
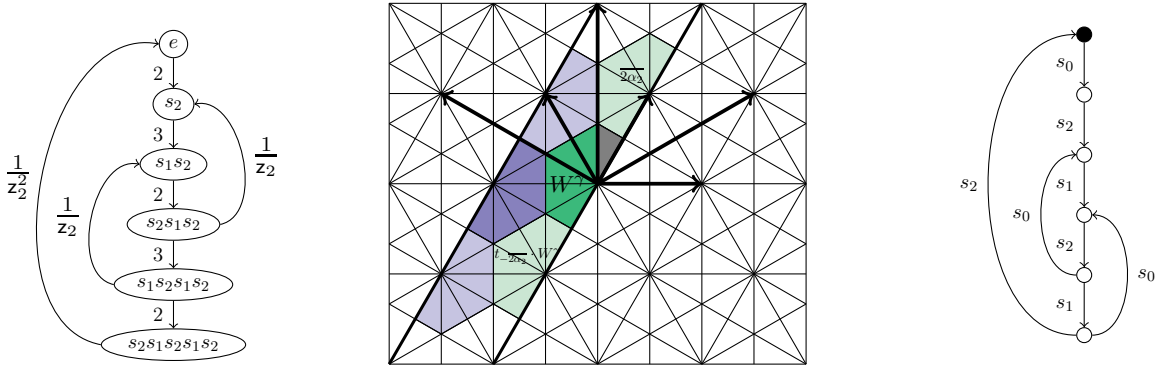

Next consider the weight  $\ga = \om_1$. The stabiliser of $\ga$ in $W$ is the parabolic subgroup generated by $J = \{s_2\}$ and the set of minimal length representatives is:
$W^\ga =\{e,s_1,s_2s_1,s_1s_2s_1,s_2s_1s_2s_1,s_1s_2s_1s_2s_1\}.$
We have $Q_J^\vee = \Z\al_2^\vee$ and the labels of $\Ga_\ga$ are in the group algebra of~$Q^\vee\slash Q^\vee_J=\Z \ov{\al_1}$. We represent the graph $\Gamma_\ga$ below on the left hand-side where we have set $\sz_1 = \sz^{\ov{\al_1^\vee}}$. In the middle, we represent the strip $\cAJ$. The subsets in green are from top to bottom $W^J\add t_{\ov{\al_2}},W^J,W^J\add t_{-\ov{\al_2}}$. Finally, on the right handside, we represent the automaton for reduced expressions in $\cAJ\cap W_a^{\La_0}$.

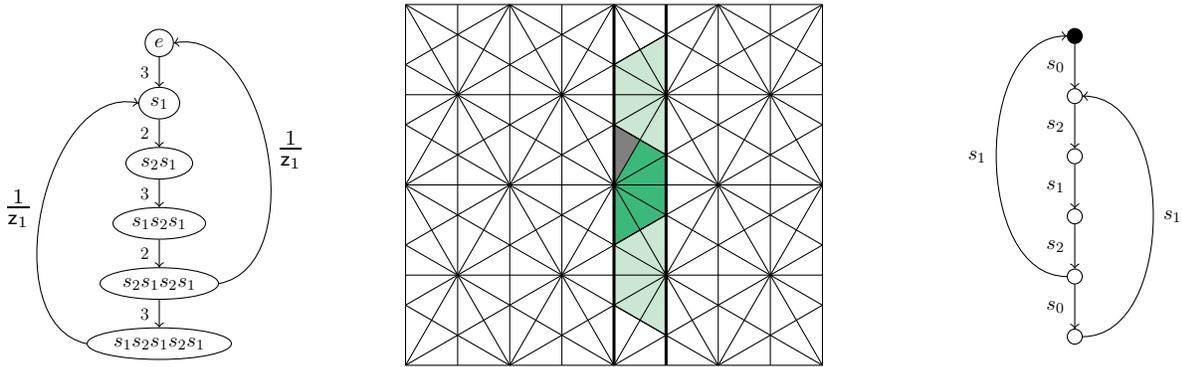
\begin{figure}[H]
\begin{minipage}{5cm}
$$
\begin{tikzpicture}[scale =.8]
\tikzstyle{vertex}=[inner sep=2pt,minimum size=10pt,ellipse,draw]

\node[vertex] (a1) at (0,0) {\scalebox{.7}{$e$}};
\node[vertex] (a2) at (0,-1) {\scalebox{.7}{$s_1$}};
\node[vertex] (a3) at (0,-2) {\scalebox{.7}{$s_2s_1$}};
\node[vertex] (a4) at (0,-3) {\scalebox{.7}{$s_1s_2s_1$}};
\node[vertex] (a5) at (0,-4) {\scalebox{.7}{$s_2s_1s_2s_1$}};
\node[vertex] (a6) at (0,-5) {\scalebox{.7}{$s_1s_2s_1s_2s_1$}};

\draw[->] (a1) edge node[left]{\scalebox{.6}{3}} (a2);
\draw[->] (a2) edge node[left]{\scalebox{.6}{2}} (a3);
\draw[->] (a3) edge node[left]{\scalebox{.6}{3}} (a4);
\draw[->] (a4) edge node[left]{\scalebox{.6}{2}} (a5);
\draw[->] (a5) edge node[left]{\scalebox{.6}{3}}(a6);

\draw[->]  (a6) edge[bend left = 70,out = 90,in=90] node[left]{$\frac{1}{\sz_1}$} (a2);
\draw[->]  (a5) edge[bend right = 70,out =-90,in=-90] node[right]{$\frac{1}{\sz_1}$} (a1);

\end{tikzpicture}$$
\end{minipage}
\begin{minipage}{7cm}
\begin{center}
\begin{tikzpicture}[scale =.8]
\newcommand{\y}{1.5}
\newcommand{\x}{.866}

\draw[fill = Green!70!] (0,1) -- (\x,.5) -- (\x,-.5) -- (0,-1);
\draw[fill = Green!20!] (\x,\y+1) -- (0,\y+.5) -- (0,\y-.5) -- (\x,\y-1);
\draw[fill = Green!20!] (\x,-\y+1) -- (0,-\y+.5) -- (0,-\y-.5) -- (\x,-\y-1);

%
%
\draw[fill = gray] (0,0) -- (0,1) -- (\x/2,.75);

\draw[line width = .4mm] (0,-2*\y) -- (0,2*\y);
\draw[line width = .4mm] (\x,-2*\y) -- (\x,2*\y);


\draw (-3.464,3) -- (3.464,3);
\draw (-3.464,1.5) -- (3.464,1.5);
\draw (-3.464,0) -- (3.464,0);
\draw (-3.464,-1.5) -- (3.464,-1.5);
\draw (-3.464,-3) -- (3.464,-3);

\foreach \n in {-4,...,4} {
\draw (\n * \x,-3) -- (\n * \x,3);}

\foreach \e in {-1,1} {
\draw (\e*2*\x,3) -- (\e*4*\x,2);
\draw (\e*0*\x,3) -- (\e*4*\x,1);
\draw (\e*-2*\x,3) -- (\e*4*\x,0);
\draw (\e*-4*\x,3) -- (\e*4*\x,-1);

\draw (-\e*4*\x,2) -- (\e*4*\x,-2);
\draw (-\e*4*\x,1) -- (\e*4*\x,-3);

\draw (-\e*4*\x,0) -- (\e*2*\x,-3);
\draw (-\e*4*\x,-1) -- (\e*0*\x,-3);
\draw (-\e*4*\x,-2) -- (-\e*2*\x,-3);}

\foreach \e in {-1,1} {
\draw (\e*2*\x,3) -- (\e*4*\x,0);
\draw (\e*0*\x,3) -- (\e*4*\x,-3);
\draw (-\e*2*\x,3) -- (\e*2*\x,-3);
\draw (-\e*4*\x,3) -- (\e*0*\x,-3);
\draw (-\e*4*\x,0) -- (-\e*2*\x,-3);}
\end{tikzpicture}
\end{center}
\end{minipage}
\begin{minipage}{5cm}
$$
\begin{tikzpicture}[scale =.8]
\tikzstyle{vertex}=[inner sep=2pt,minimum size=5pt,circle,draw]
\def\rs{0.7}

\node[vertex,fill=black]  (a1) at (0,0) {};
\node[vertex] (a2) at (0,-1)  {};
\node[vertex] (a3) at (0,-2)  {};
\node[vertex] (a4) at (0,-3)  {};
\node[vertex] (a5) at (0,-4)  {};
\node[vertex] (a6) at (0,-5) {};

\draw[->] (a1) edge node[left]{\scalebox{\rs}{$s_0$}} (a2);
\draw[->] (a2) edge node[left]{\scalebox{\rs}{$s_2$}}  (a3);
\draw[->] (a3) edge node[left]{\scalebox{\rs}{$s_1$}}  (a4);
\draw[->] (a4) edge node[left]{\scalebox{\rs}{$s_2$}} (a5);
\draw[->] (a5) edge node[left]{\scalebox{\rs}{$s_0$}} (a6);

\draw[->]  (a5) edge[bend left = 90] node[left]{\scalebox{\rs}{$s_1$}} (a1);
\draw[->]  (a6) edge[bend right = 90] node[right]{\scalebox{\rs}{$s_1$}} (a2);

\end{tikzpicture}$$
\end{minipage}

\caption{The graph $\Ga_{\om_1}$, the set $\cA_{\{s_2\}}$ and the associated automaton in type $\tilde{G}_2$}
\end{figure}
\end{example}

\subsection{KR graphs in type $A$ and tableau combinatorics}
\label{SubsecKeys}
As mentionned in the introduction, in type~$A_n$, we can associate to any dominant weight $\la = \sum \la_i\om_i\in \mathring{P}$ a finite affine crystal graph $\wB(\la)$. The affine Weyl group acts on $\wB(\la)$ and the image by the weight function of the orbit of the unique vertex of weight $\la$ in $\wB(\la)$  under this action gives the orbit of $\la$ under the action of $W_a$. Hence the graph $\Ga_\la$ also encodes the action of $W_a$ on $\wB(\la)$. 

\medskip

In type $A$, there is a nice and simple combinatorial process that allows one to construct the  graph~$\Ga_\ga$ (without the weights on its arrows).  Let $\ga = \om_{i_1}+\cdots+\om_{i_\ell}\in P_0$ and let $J'=\{i_1,\ldots,i_\ell\}$. We associate to~$\gamma$ the partition $\la_\ga$ whose Young diagram is made with $\ell$ columns of length $i_1,\ldots,i_\ell$. Recall that a semistandard tableau of shape $\la_\ga$ is a filling of $\la_\ga$ by integers between $1$ and $n$ whose columns strickly increase from top to bottom and weakly increase from left to right. We define $T_\ga$ to be the semistandard tableau of shape $\la_\ga$ that contains only $i$'s on the $i$-th row. A key tableau of shape $\la_\ga$ is a semistandard tableau of shape $\la_\ga$ such that the entries in any column $C$ also belong to the column located immediately to the left of $C$, except for the first column (see the figure below).    

\medskip

We can define an action of the affine Weyl group on the set of key tableaux of a given shape as follows.
Let $T$ be a key tableau. If $i\in \{1,\ldots,n\}$ then $s_i\add T$ is the key tableau obtained by changing the entries $i$ into $i+1$ (respectively $i+1$ into $i$) in each column of $T$ which contains $i$ but not $i+1$ (respectively $i+1$ but not $i$). The tableau $s_0\add T$ is the key tableau obtained by changing the entries $n+1$ into $1$  (respectively $1$ but not $n+1$) in the columns of $T$ which contains $n+1$ but not  $1$ (respectively $1$ but not $n+1$), and then by rearranging the columns in increasing order if needed.

\begin{example}
If $\ga=\om_1+\om_3$  in type $\tilde{A}_4$, then the partition $\la_{\ga}$ associated to $\ga$ is made of 2 columns of length $1,3$ and we have
$$\la_{\ga} = \scalebox{.7}{\ydiagram{2,1,1}}\qu{and} T_{\ga} = \scalebox{.7}{\begin{ytableau} 1 &1\\ 2\\3\end{ytableau}}.$$
We have 
$$\scalebox{.7}{\begin{ytableau} 1 &1\\ 2\\3\end{ytableau}} \ \overset{s_1}{\longmapsto}\  
\scalebox{.7}{\begin{ytableau} 1 &2\\ 2\\3\end{ytableau}}
 \ \overset{s_3}{\longmapsto}\  
\scalebox{.7}{\begin{ytableau}  1 &2\\ 2\\4\end{ytableau}}
 \ \overset{s_2}{\longmapsto}\  
\scalebox{.7}{\begin{ytableau} 1 &3\\ 3\\4\end{ytableau}}
 \ \overset{s_1}{\longmapsto}\  
\scalebox{.7}{\begin{ytableau} 2 &3\\ 3\\4\end{ytableau}}
 \ \overset{s_0}{\longmapsto}\  
\scalebox{.7}{\begin{ytableau} 1 &3\\ 2\\3\end{ytableau}}.
 $$
\end{example}

\medskip

The graph $\wB(\ga)$ can be obtain as follows:
\begin{enumerate}
\item[{\tiny $\bullet$}]  the vertices are the key tableaux which form the orbit of $T_{\la_\ga}$ under the action of $W_a$,
\item[{\tiny $\bullet$}] there is an arrow between $T$ and $T'$ of type $i$ if $T'=s_i\add T$ and $T\neq T'$. 
\end{enumerate}
\begin{example}
\label{tableaux}
Let $W$ be of type $\tilde{A}_3$ and $\gamma = \om_1+\om_2$.   The stabiliser of $\ga$ in $W$ is the parabolic subgroup generated by $J = \{s_3\}$ and the set of minimal length representatives is:
$$W^J = \{e,s_1,s_2,s_1s_2,s_2s_1,s_3s_2,s_3s_2s_1,s_2s_1s_2,s_3s_1s_2,s_3s_2s_1s_2,s_2s_3s_1s_2,s_3s_2s_3s_1s_2\}$$
We obtain the following graphs

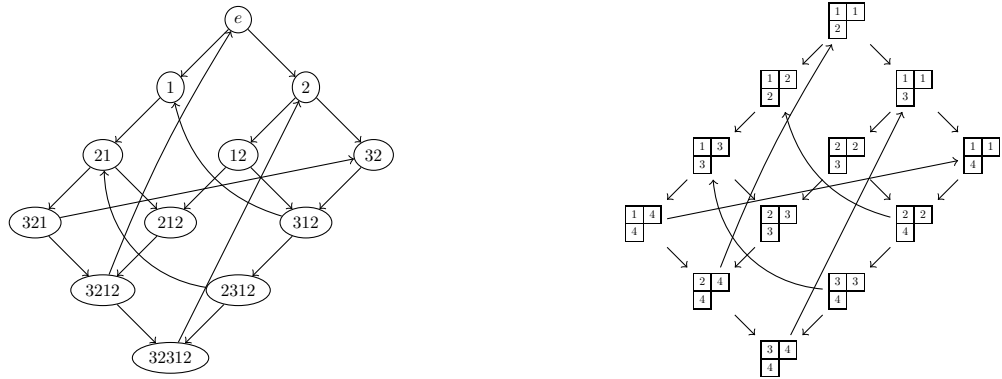
\begin{figure}[H]
\begin{minipage}{8cm}
$$
\begin{tikzpicture}[scale =.9]
\tikzstyle{vertex}=[inner sep=2pt,minimum size=10pt,ellipse,draw]
\node[vertex] (a1) at (0,0) {\scalebox{.6}{$e$}};

\node[vertex] (a2) at (-1,-1) {\scalebox{.6}{$1$}};;
\node[vertex] (a3) at (1,-1) {\scalebox{.6}{$2$}};;

\node[vertex] (a4) at (-2,-2) {\scalebox{.6}{$21$}};;
\node[vertex] (a5) at (0,-2) {\scalebox{.6}{$12$}};;
\node[vertex] (a6) at (2,-2) {\scalebox{.6}{$32$}};;

\node[vertex] (a7) at (-3,-3) {\scalebox{.6}{$321$}};;
\node[vertex] (a8) at (-1,-3) {\scalebox{.6}{$212$}};;
\node[vertex] (a9) at (1,-3) {\scalebox{.6}{$312$}};;

\node[vertex] (a10) at (-2,-4) {\scalebox{.6}{$3212$}};;
\node[vertex] (a11) at (0,-4) {\scalebox{.6}{$2312$}};;

\node[vertex] (a12) at (-1,-5) {\scalebox{.6}{$32312$}};;

\draw[->] (a1) edge node  {} (a2);
\draw[->]  (a1) edge node{} (a3);

\draw[->]  (a2) edge node{} (a4);
\draw[->] (a3) edge node{} (a5);
\draw[->] (a3) edge node{} (a6);

\draw[->]  (a4) edge node{} (a7);
\draw[->]  (a4) edge node{} (a8);

\draw[->]  (a5) edge node{} (a8);
\draw[->]  (a5) edge node{} (a9);

\draw[->]  (a6) edge node{} (a9);

\draw[->]  (a7) edge node{} (a10);

\draw[->]  (a8) edge node{} (a10);

\draw[->]  (a9) edge node{} (a11);

\draw[->]  (a10) edge node{} (a12);
\draw[->]  (a11) edge node{} (a12);

\draw[->] (a10) edge[bend left = 5] node[pos=.3,left]{{\tiny $$}} (a1);
\draw[->] (a9) edge[bend left = 30] node{} (a2);
\draw[->] (a12) edge[bend left = 0] node{} (a3);
\draw[->] (a11) edge[bend right = -40] node{} (a4);
\draw[->] (a7) edge[bend left = 0] node[pos=.8,right]{{\tiny $$}} (a6);
\end{tikzpicture}$$
\end{minipage}
\begin{minipage}{8cm}
$$
\begin{tikzpicture}[scale =.9]
\tikzstyle{vertex}=[inner sep=2pt,minimum size=10pt]
\node[vertex] (a1) at (0,0) {\scalebox{.4}{\begin{ytableau} 1 &1\\ 2\end{ytableau}}};

\node[vertex] (a2) at (-1,-1) {\scalebox{.4}{\begin{ytableau} 1 &2\\ 2\end{ytableau}}};
\node[vertex] (a3) at (1,-1) {\scalebox{.4}{\begin{ytableau} 1 &1\\ 3\end{ytableau}}};

\node[vertex] (a4) at (-2,-2)  {\scalebox{.4}{\begin{ytableau} 1 &3\\ 3\end{ytableau}}};
\node[vertex] (a5) at (0,-2)  {\scalebox{.4}{\begin{ytableau} 2 &2\\ 3\end{ytableau}}};
\node[vertex] (a6) at (2,-2)  {\scalebox{.4}{\begin{ytableau} 1 &1\\ 4\end{ytableau}}};

\node[vertex] (a7) at (-3,-3)   {\scalebox{.4}{\begin{ytableau} 1 &4\\ 4\end{ytableau}}};
\node[vertex] (a8) at (-1,-3) {\scalebox{.4}{\begin{ytableau} 2 &3\\ 3\end{ytableau}}};
\node[vertex] (a9) at (1,-3) {\scalebox{.4}{\begin{ytableau} 2 &2\\ 4\end{ytableau}}};

\node[vertex] (a10) at (-2,-4) {\scalebox{.4}{\begin{ytableau} 2 &4\\ 4\end{ytableau}}};
\node[vertex] (a11) at (0,-4) {\scalebox{.4}{\begin{ytableau} 3 &3\\ 4\end{ytableau}}};

\node[vertex] (a12) at (-1,-5) {\scalebox{.4}{\begin{ytableau} 3 &4\\ 4\end{ytableau}}};

\draw[->] (a1) edge node  {} (a2);
\draw[->]  (a1) edge node{} (a3);

\draw[->]  (a2) edge node{} (a4);
\draw[->] (a3) edge node{} (a5);
\draw[->] (a3) edge node{} (a6);

\draw[->]  (a4) edge node{} (a7);
\draw[->]  (a4) edge node{} (a8);

\draw[->]  (a5) edge node{} (a8);
\draw[->]  (a5) edge node{} (a9);

\draw[->]  (a6) edge node{} (a9);

\draw[->]  (a7) edge node{} (a10);

\draw[->]  (a8) edge node{} (a10);

\draw[->]  (a9) edge node{} (a11);

\draw[->]  (a10) edge node{} (a12);
\draw[->]  (a11) edge node{} (a12);

\draw[->] (a10) edge[bend left = 5] node[pos=.3,left]{{\tiny $$}} (a1);
\draw[->] (a9) edge[bend left = 30] node{} (a2);
\draw[->] (a12) edge[bend left = 0] node{} (a3);
\draw[->] (a11) edge[bend right = -40] node{} (a4);
\draw[->] (a7) edge[bend left = 0] node[pos=.8,right]{{\tiny $$}} (a6);
\end{tikzpicture}$$
\end{minipage}
\caption{The unweighted graph $\Ga_{\om_1+\om_2}$ in $\tilde{A_3}$ and the crystal $\widehat{B}(\om_1+\om_2)$}
\end{figure}
\end{example}

\begin{remark}
	There exist similar simple tableaux constructions for the classical types and for type $G_2$.
\end{remark}

\section{Homology ring of affine Grassmannians}\label{Sec_Homolo_Ring}
In this section, we introduce the homology rings of affine
Grassmannians which will be a key ingredient to show that the graphs that we have constructed in the previous sections are positively multiplicative.

\subsection{Homology rings of Affine Grassmannians}
\label{subsec_affGrass}
Recall the definition of $W_a^{\La_0}$ in Section \ref{aff_grass}. 
The homology algebra $\sLa$ associated to $W_{a}$ is a commutative $\mathbb{Q}$-algebra with a distinguished basis
$\{\xi_{w},w\in W_{a}^{\Lambda_{0}}\}$ corresponding to affine Schubert
classes whose associated structure coefficients are nonnegative integers: 
\[
\xi_{w}\xi_{w'}=\sum_{w''\in W_{a}^{\Lambda_{0}}%
}c_{w,w'}^{w''}\xi_{w^{''}}\qu{with $c_{w,w'}^{w''}\in\mathbb{Z}_{\geq0}$ for all $w,w'\in W_a$.}
\]
This ring is generated by a
finite number of classes $\xi_{\rho_{a}},a\in X=\{1,\ldots,N\}$ where~$\{\rho_{a},a\in X\}$ is a finite subset of $W_{a}^{\Lambda_{0}}$ whose
description can be made explicit but depends on the affine root system
considered (see \cite{LSSC}, \cite{Pon} and also \cite{LLMSSZ} Chapter 4 for a
review on affine Schubert calculus in general affine types). We assume that $\rho_{1}=s_{0}$.\ A general combinatorial description of the
coefficients~$c_{w,w'}^{w^{\prime\prime}}$ is not known in general but
there exist simple formulas for the multiplication $\xi_{\rho_{a}}\xi_{w}$
where $a\in X$ (often referred as Pieri rules in the literature). In particular, as stated in \cite{LSHPieri}, we have
\begin{equation}
\xi_{\rho_{1}}\xi_{w}=\xi_{s_{0}}\xi_{w}=\sum a_{i}\xi_{s_iw} \label{PieriBox}%
\end{equation}
where the sums is over all $i\in I$ such that $s_iw>w$ and $s_iw\in W_{a}^{\Lambda_{0}}$.
Recall here that the integers $a_{i}$ have been defined in Section~\ref{Subsec_affineRS}. More generally, we have for any $a\in X$
\begin{equation}
\xi_{\rho_{a}}\xi_{w}=\sum_{w'}c_{\rho_a,w}^{w'}\xi_{w'}
\label{GenralPieri}%
\end{equation}
where the integers $c_{\rho_a,w}^{w'}$ can be positive only when $w'>w$
for the weak Bruhat order in $W_{a}^{\Lambda_{0}}$, that is if there exists $u\in W_a$ such that $w'=uw$ with $\ell(w')=\ell(u)+\ell(w)$. 
For the affine
classical types, the rings of affine Grassmannians can be made more explicit by using subrings of symmetric functions and the previous Pieri rules then have a simple combinatorial description (see \cite{LLMSSZ,Pon}).

\smallskip

The set $W_{a}^{\Lambda_{0}}$ is stable by translation
by classical dominant weights in $P^+$ in the following sense: given any alcove $A\in W_{a}^{\Lambda_{0}}$ and any dominant weight $\la\in P^+$, the alcove $At_\la$ lies in $W_a^{\Lambda_0}$. In general, the element~$wt_\la$ with $w\in W_a$ and $\la\in P$ is not in the affine Weyl group but in the extended affine Weyl group. There exists however a unique element $u\in W_a$ such that $A_0 wt_\la = A_0u$ with equality as subsets of~$V$ but not pointwise. We will denote this element of $W_a$ by $w\star t_\la$. When $\la\in Q^\vee$, we have $wt_\la\in W_a$ and~$w\star t_\la = wt_\la$. To lighten the notation we will write $\xi_{t_{\om_i}}$ instead of $\xi_{e\star t_{\om_i}}$. 
There is a nice factorization formula in $\sLa$ (see \cite{LLMSSZ} Chapter 4 Corollary
4.14 and Equation (4.30))
\[
\xi_{w\star t_{\om_i}}=\xi_{w}\xi_{t_{\omega_{i}}} =\xi_{t_{\omega_{i}}} \xi_w\qu{for all $w\in W^{\La_0}_a$ and $i\in I^\ast$.}
\]

\begin{definition}
Let $\sB_0$ be the subset of $W_{a}^{\Lambda_{0}}$ of elements
$w$ such that $w\star t_{-\om_{i}}\notin W_{a}^{\Lambda_{0}}$ for all $i\in I^\ast$.
\end{definition}
Given $w\in W_a^{\La_0}$, there exists a unique weight $\kappa = m_1\om_1+\cdots+ m_n\om_n\in P^+$ and a unique $u\in \sB_0$ such that $ w\star t_{-\kappa} = u\in \sB_0$. 
As a consequence of the factorization above, we see that each
Schubert class element $\xi_{w}$ can be written under the form (recall that $\sLa$ is commutative)%
\begin{equation}
\xi_{w}=\xi_{t_{\omega_{1}}}^{m_{1}}\cdots\xi_{t_{\omega_{n}}}^{m_{n}}\xi_{u},u\in\sB_0.
\label{Factorization}%
\end{equation}
The set $\sB_0$ is finite with cardinality $\frac{1}{n_{W}}\mathrm{card}(W)$
where $n_{W}$ is the cardinality of $P/Q^\vee$.

\begin{example}
In affine type $A_{n}$, the ring $\sLa$ can be identified with the
subalgebra of the algebra of symmetric polynomials over
$\mathbb{Q}$ in infinitely many variables generated by the complete symmetric
functions~$\xi_{\rho_{a}}=h_{a}$ with $a=1,\ldots,n$. The Schubert basis
then corresponds to the $n$-Schur polynomials and the elements $\xi
_{t_{\omega_{i}}},i=1,\ldots,n$ to the ordinary Schur functions $s_{R_{i}}$ indexed by
the rectangular partitions $R_{i}=(n-i+1)^{i}$. We then have $n_W=n+1$ and the cardinality of $\sB_0$ is equal to $n!$.
 We refer to 
\cite{LLMSSZ} for a complete exposition and for the related interesting and very
rich combinatorics. Similar results exist for the other classical types and are
stated in \cite{LSSC} and \cite{Pon}. \end{example}

\begin{example}
\label{A2_G2_Box}
We represent the set of alcoves that lies in $\sB_0$ in type $\tilde{A}_2$ and $\tilde{G}_2$ in light gray (except for the fundamental alcove which is in dark gray). The fundamental weights $\om_1,\om_2$ are in blue.  In type $\tilde{A}_2$ we have added 3 translates of $\sB_0$ by $\om_1,\om_2$ and $\om_1+\om_2$ in green, blue and red respectively.

\begin{minipage}{8cm}
\begin{figure}[H]
\begin{center}
\begin{tikzpicture}[scale =.6]
\newcommand{\y}{.866}
\newcommand{\x}{.5}
\draw[fill = lightgray] (0,-2*\y) --  (-\x,-\y) --  (0,0) --  (\x,-\y);
\draw[fill = gray] (0,-2*\y) -- (-\x,-\y) --  (\x,-\y);
\draw[line width = .4mm] (0,-2*\y) -- (-6*\x,4*\y);
\draw[line width = .4mm] (0,-2*\y) -- (6*\x,4*\y);

 \draw[fill = ForestGreen!30!] (0+\x,-2*\y+\y) --  (-\x+\x,-\y+\y) --  (+\x,0+\y) --  (\x+\x,-\y+\y);
 \draw[fill = NavyBlue!50!] (0-\x,-2*\y+\y) --  (-\x-\x,-\y+\y) --  (-\x,0+\y) --  (\x-\x,-\y+\y);
 \draw[fill = Red!50!] (0,-2*\y+2*\y) --  (-\x,-\y+2*\y) --  (0,0+2*\y) --  (\x,-\y+2*\y);

\draw[line width = .6mm,color = blue,->] (0,-2*\y) -- (\x,-\y);
\draw[line width = .6mm,color = blue,->] (0,-2*\y) -- (-\x,-\y);
\node at (.6,-1.4) {\scalebox{.6}{$\om_1$}};
\node at (-.5,-1.4) {\scalebox{.6}{$\om_2$}};



\foreach \n in {-1,...,2}{
\draw (-3,2* \n * \y) -- (3,2 * \n * \y);}

\foreach \n in {-2,...,1}{
\draw (-3.5,2* \n * \y + \y) -- (3.5,2 * \n * \y + \y);}

\foreach \m in {0,...,6}{
\foreach \n in {0,...,3}{
\draw (-3+2*\m*\x,4*\y-\n*2*\y) -- (-3.5+2*\m*\x,3*\y-\n*2*\y);}

\foreach \n in {0,...,2}{
\draw (-3.5+2*\m*\x,3*\y-\n*2*\y) -- (-3+2*\m*\x,2*\y-\n*2*\y);}}

\foreach \m in {0,...,6}{
\foreach \n in {0,...,3}{
\draw (-3.5+2*\m*\x+\x,4*\y-\n*2*\y) -- (-3+2*\m*\x+\x,3*\y-\n*2*\y);}

\foreach \n in {0,...,2}{
\draw (-3+2*\m*\x+\x,3*\y-\n*2*\y) -- (-3.5+2*\m*\x+\x,2*\y-\n*2*\y);}}
\end{tikzpicture}
\caption{ the set $\sB_0$ in type $\tilde{A}_2$}
\label{BBA2}
\end{center}
\end{figure}
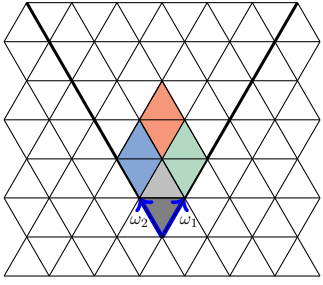
\end{minipage}
\begin{minipage}{8cm}
\begin{figure}[H]
\begin{center}
\begin{tikzpicture}[scale =.6]
\newcommand{\y}{1.5}
\newcommand{\x}{.866}

\draw[line width = .4mm] (-2*\x,-2*\y) -- (-2*\x,2*\y);
\draw[line width = .4mm] (-2*\x,-2*\y) -- (2*\x,2*\y);
\draw[fill = lightgray] (-2*\x,-2*\y) -- (-2*\x,0*\y) -- (-\x,\y) -- (-\x,-\y);
\draw[fill = gray] (-2*\x,-2*\y) -- (-2*\x,-2*\y + 1) -- (-2*\x + 1/2*\x,-2*\y + .75);

\draw[line width = .6mm,color = blue,->] (-2*\x,-2*\y) -- (-\x,-\y);
\draw[line width = .6mm,color = blue,->] (-2*\x,-2*\y) -- (-2*\x,0*\y);

\node at (-\x+.4,-\y-.3) {\scalebox{.6}{$\om_2$}};
\node at (-3*\x+.3,-.2) {\scalebox{.6}{$\om_1$}};


\draw (-3.464,3) -- (3.464,3);
\draw (-3.464,1.5) -- (3.464,1.5);
\draw (-3.464,0) -- (3.464,0);
\draw (-3.464,-1.5) -- (3.464,-1.5);
\draw (-3.464,-3) -- (3.464,-3);

\foreach \n in {-4,...,4} {
\draw (\n * \x,-3) -- (\n * \x,3);}

\foreach \e in {-1,1} {
\draw (\e*2*\x,3) -- (\e*4*\x,2);
\draw (\e*0*\x,3) -- (\e*4*\x,1);
\draw (\e*-2*\x,3) -- (\e*4*\x,0);
\draw (\e*-4*\x,3) -- (\e*4*\x,-1);

\draw (-\e*4*\x,2) -- (\e*4*\x,-2);
\draw (-\e*4*\x,1) -- (\e*4*\x,-3);

\draw (-\e*4*\x,0) -- (\e*2*\x,-3);
\draw (-\e*4*\x,-1) -- (\e*0*\x,-3);
\draw (-\e*4*\x,-2) -- (-\e*2*\x,-3);}

\foreach \e in {-1,1} {
\draw (\e*2*\x,3) -- (\e*4*\x,0);
\draw (\e*0*\x,3) -- (\e*4*\x,-3);
\draw (-\e*2*\x,3) -- (\e*2*\x,-3);
\draw (-\e*4*\x,3) -- (\e*0*\x,-3);
\draw (-\e*4*\x,0) -- (-\e*2*\x,-3);}
\end{tikzpicture}
\end{center}
\caption{ the set $\sB_0$ in type $\tilde{G}_2$}
\label{BBG2}
\end{figure}
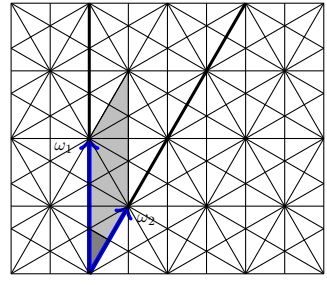
\end{minipage}
\end{example}

Recall the definition of the graph $\Gamma_{\La_0}$ in Definition \ref{Def_AG}. The set of arrows in $\Gamma_{\La_0}$ is the set of pairs $(w,s_iw)\in  W_a^{\La_0}\times  W_a^{\La_0}$ such that $s_iw>w$. If we define the edge weight function $\mathsf{p}$ by setting  $\mathsf{p}(w,s_iw) = a_i$  then  the weighed graph $\Gamma_{\La_0}$ encodes the multiplication by $\xi_{\rho_1}$ in the algebra $\sLa$ and in the basis $\{\xi_w\mid w\in W_a^{\La_0}\}$. Indeed we have for $w\in W_a^{\La_0}$ 
$$\xi_{\rho_1}\xi_{w} = \sum_{(w,w')\in E} \mathsf{p}(w,w') \xi_{w'}.$$

\begin{example}
We go back to the affine type $A_{n}^{(1)}$ case.
The graph $\Gamma_{\Lambda_{0}}$ can alternatively be regarded as the lattice of
$(n+1)$-core partitions where the generators $s_{i},i\in I$ act
on the $(n+1)$-core partitions by either removing all the removable
$i$-nodes or by adding all the addable $i$-nodes (this is well-defined since a core partition cannot contain addable $i$-nodes and removable $i$-nodes). There also exists a bijection between $(n+1)$-core partitions
and $n$-bounded partitions, that is partitions whose Young diagram has at most
$n$ columns. This yields another labelling of $\Gamma_{\Lambda_{0}}$ (which
does not coincide with the Young lattice on $n$-bounded partitions).\ The set
$\sB_0$ contains $n!$ partitions which are exactly the $n$-bounded
partitions that do not contain any rectangle $R_{i}$. Here again we refer to~\cite{LLMSSZ} for a complete exposition.
\end{example}

\subsection{Finite PM-graphs from affine Grassmannians}
\label{posrho}

\begin{lemma}
\label{Lem_RiAlInde}The elements $\xi_{t_{\omega_{1}}},\ldots,\xi_{t_{\omega_{n}}}$ are algebrically
independent in the homology algebra $\sLa$.
\end{lemma}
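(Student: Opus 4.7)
The plan is to exploit the factorization formula $\xi_{w\star t_{\omega_i}}=\xi_w\xi_{t_{\omega_i}}$ to identify each monomial in the $\xi_{t_{\omega_i}}$'s with a single basis element of $\sLa$, and then invoke the fact that $\{\xi_w : w\in W_a^{\Lambda_0}\}$ is a $\Q$-basis.

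More precisely, I would first show by induction on $m_1+\cdots+m_n$, using the factorization formula iteratively, that
\[
\xi_{t_{\omega_1}}^{m_1}\cdots\xi_{t_{\omega_n}}^{m_n}=\xi_{e\star t_\lambda}\qquad\text{where }\lambda=m_1\omega_1+\cdots+m_n\omega_n\in P^+.
\]
The inductive step applies the factorization formula to $w=e\star t_{\lambda'}$ with $\lambda'=\lambda-\omega_i$ (which lies in $P^+$ as long as $m_i\geq 1$), using that $(e\star t_{\lambda'})\star t_{\omega_i}=e\star t_\lambda$. Both statements—that $e\star t_\lambda$ is well defined as an element of $W_a^{\Lambda_0}$ for $\lambda\in P^+$, and that the factorization formula applies—are recalled in Subsection~\ref{subsec_affGrass}.

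Next, I would observe that the assignment $\lambda\mapsto e\star t_\lambda$ is injective on $P^+$: indeed, the alcove $A_0\cdot t_\lambda$ is the translate of the fundamental alcove by the vector $\lambda$, and distinct weights $\lambda\in P$ yield distinct translates since $\omega_1,\ldots,\omega_n$ are linearly independent in $V$. Consequently, as $(m_1,\ldots,m_n)$ ranges over $\mathbb{Z}_{\geq 0}^n$, the elements $\xi_{e\star t_\lambda}$ obtained above are pairwise distinct basis vectors of $\sLa$.

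Suppose now for contradiction that $P(X_1,\ldots,X_n)$ is a nonzero polynomial in $\Q[X_1,\ldots,X_n]$ with $P(\xi_{t_{\omega_1}},\ldots,\xi_{t_{\omega_n}})=0$. Expanding $P=\sum c_{\vec m}X_1^{m_1}\cdots X_n^{m_n}$ and substituting, the relation becomes
\[
\sum_{\vec m}c_{\vec m}\,\xi_{e\star t_{m_1\omega_1+\cdots+m_n\omega_n}}=0,
\]
a nontrivial linear combination of pairwise distinct basis elements of $\sLa$, which is impossible. Hence $\xi_{t_{\omega_1}},\ldots,\xi_{t_{\omega_n}}$ are algebraically independent. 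The only mildly delicate point is the bookkeeping in the inductive identification of monomials with single basis elements, but this is immediate from the factorization formula once one observes that $P^+$ is stable under subtracting any $\omega_i$ that appears with positive multiplicity.
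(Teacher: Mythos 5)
Your proof is correct and follows essentially the same route as the paper's: the paper's own (much terser) argument also reduces a polynomial relation to a linear relation among Schubert classes via the factorization formula and concludes from the basis property. You have simply made explicit the two points the paper leaves implicit, namely that each monomial $\xi_{t_{\omega_1}}^{m_1}\cdots\xi_{t_{\omega_n}}^{m_n}$ equals the single class $\xi_{e\star t_\lambda}$ and that distinct $\lambda\in P^+$ give distinct elements of $W_a^{\Lambda_0}$.
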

\begin{proof}
Assume we have a polynomial $P$ with coefficients in $\mathbb{Q}$ such that
$P(\xi_{t_{\omega_{1}}},\ldots,\xi_{t_{\omega_{n}}})=0$. This gives a linear combination of Schubert classes
equal to zero. Thus the polynomial $P$ should be equal to zero since these
Schubert classes gives a basis of $\sLa$.
\end{proof}
As we have seen in the previous section, any element $\xi_{w}$  can be written under the form 
\begin{equation*}
\xi_{w}=\xi_{t_{\omega_{1}}}^{m_{1}}\cdots\xi_{t_{\omega_{n}}}^{m_{n}}\xi_{u} \qu{where} u\in\sB_0.
\end{equation*}
By the previous lemma, we see that the subring of $\sLa$ generated by $\xi_{t_{\omega_{1}}},\ldots,\xi_{t_{\omega_{n}}}$ is isomorphic to the ring of polynomials $\Q[\sz_1,\ldots,\sz_n]$ in $n$ variables via the map $\xi_{t_{\om_i}} \leftrightarrow \sz_i$ which in turn isomorphic to the monoid algebra of dominant weights $\Q[P^+]=\langle \sz^\la\mid \la\in P^+\rangle_{\Q}$ via the maps $\sz_i \leftrightarrow \sz^{\om_i}$.  Identifying those rings, we see that~$\sLa$ can be seen as a finite-dimensional ring over~$\Q[P^+]$ with basis $\{\xi_w\mid w \in \sB_0\}$. We extend the scalars to the group algebra $\Q[P]$ (or equivalently to  the ring of Laurent polynomials $\Q[\sz_1^{\pm1},\ldots,\sz^{\pm1}_n]$) and we denote by $\sLa_{P}$ this algebra. It is finite dimensional over $\Q[P]$ with basis $\{\xi_w\mid w\in \sB_0\}$. Note that we are in the setting of Section \ref{mutl_graph} where the algebras are defined over $\Q$-rings of Laurent polynomials. Further, the ring $\sLa_{P}$ is positively multiplicative. For any $w\in \sB_0$, we write $m_{\xi_w}$ for the morphism of $\sLa_{P}$ defined by the multiplication by $\xi_w$ and $\func{Mat}_{\sB_0}(m_{\xi_w})$ to for the matrix with respect to the basis $\{\xi_w\mid w\in \sB_0\}$.

\begin{definition}
We define $\Ga_{\sB_0}$ to be the graph with set of vertices $\sB_0$ and adjacency matrix $\func{Mat}_{\sB_0}(m_{\xi_{\rho_1}})$. 
\end{definition}
Equivalently, the graph $\Ga_{\sB_0}$ can be defined as follows:
we put a weighted arrow with weight $a_i\sz^{\kappa}$ with $i\in I$ and $\kappa\in P_+$ from $w$ to $w'$ in $\sB_0$ whenever $\ell(s_iw) =\ell(w)+1$ and $s_iw\star t_{-\kappa} = w'$. Note that $\kappa$ is not equal to zero in the coefficient $a_i\sz^\kappa$ only when $s_iw\notin \sB_0$. We can see that $\Ga_{\sB_0}$ is strongly connected. Indeed, consider an element $w$ in $\sB_0$. Then, there
exists $u$ in $W_{a}$ and $\kappa\in Q^\vee_{+}$ such that $uw=t_{\kappa}$ with
$\ell(t_{\kappa})=\ell(u)+\ell(w)$. This yields a directed path in
$\Gamma_{\sB_0}$ from $w$ to $e$. Since it is clear that there also
exists a path from $e$ to any element of $\sB_0$ in the graph
$\Gamma_{\sB_0}$, this shows that $\Gamma_{\sB_0}$ is strongly connected.

\begin{proposition}
\label{mult_B0}
The graph $\Ga_{\sB_0}$ is positively multiplicative at $e$.
\end{proposition}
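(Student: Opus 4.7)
The plan is to apply Proposition \ref{m_x} directly, with the algebra $\sLa_P$ introduced just above the statement, the distinguished element $x=\xi_{\rho_1}=\xi_{s_0}$, and the basis $\sB=\{\xi_w\mid w\in\sB_0\}$. Two things need to be checked: that the matrix of multiplication by $\xi_{s_0}$ in $\sB$ really is $A_{\Ga_{\sB_0}}$ (with the basis element at index $e$ equal to $1$), and that the structure constants of $\sLa_P$ with respect to $\sB$ belong to $\Q_+[\sz^{\pm 1}]$.

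The first point is essentially tautological. The identity element of $\sLa_P$ is $\xi_e$, so the element of $\sB$ indexed by $e\in\sB_0$ is~$1$. Moreover, $\Ga_{\sB_0}$ was defined by setting $A_{\Ga_{\sB_0}}=\func{Mat}_{\sB}(m_{\xi_{\rho_1}})$, which is exactly the condition of Proposition~\ref{m_x}. So $\Ga_{\sB_0}$ is multiplicative at $e$.

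For the positivity, the plan is to combine the Schubert positivity in $\sLa$ with the factorization formula~\eqref{Factorization}. Given $w,w'\in\sB_0$, expand in the Schubert basis of $\sLa$:
\[
\xi_w\xi_{w'}=\sum_{w''\in W_a^{\La_0}}c_{w,w'}^{w''}\xi_{w''},\qquad c_{w,w'}^{w''}\in\Z_{\geq 0}.
\]
Every $w''\in W_a^{\La_0}$ decomposes uniquely as $w''=u''\star t_{\kappa''}$ with $u''\in\sB_0$ and $\kappa''=\sum_i m_i^{(w'')}\omega_i\in P^+$, and the factorization formula yields $\xi_{w''}=\xi_{t_{\omega_1}}^{m_1^{(w'')}}\cdots\xi_{t_{\omega_n}}^{m_n^{(w'')}}\xi_{u''}=\sz^{\kappa''}\xi_{u''}$ under the identification $\xi_{t_{\omega_i}}\leftrightarrow\sz^{\omega_i}$. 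Collecting terms according to $u''\in\sB_0$ gives
\[
\xi_w\xi_{w'}=\sum_{u''\in\sB_0}\Bigl(\sum_{\kappa''\in P^+}c_{w,w'}^{u''\star t_{\kappa''}}\,\sz^{\kappa''}\Bigr)\xi_{u''},
\]
so each structure constant lies in $\Q_+[P^+]\subset\Q_+[\sz^{\pm 1}]$.

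There is no genuine obstacle: everything reduces to the already-established positivity of Schubert constants in $\sLa$ and the factorization $\xi_{w\star t_{\omega_i}}=\xi_w\xi_{t_{\omega_i}}$. The only point to be slightly careful about is that the re-expansion step really is well defined over $\Q[P^+]$, which is automatic since the decomposition $w''=u''\star t_{\kappa''}$ is unique and only dominant $\kappa''$ occur. Together with the first paragraph, this establishes that $\sLa_P$ is positively multiplicative with respect to $\sB$, hence $\Ga_{\sB_0}$ is positively multiplicative at $e$.
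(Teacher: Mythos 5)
Your proposal is correct and follows the same route as the paper: both invoke Proposition \ref{m_x} with the algebra $\sLa_P$, the element $x=\xi_{\rho_1}$, and the basis $\{\xi_w\mid w\in\sB_0\}$, noting that $\xi_e=1$ sits at the index $e$. The only difference is that the paper simply cites the (previously asserted) positive multiplicativity of $\sLa_P$, whereas you spell out the verification via Schubert positivity and the factorization $\xi_{w''}=\sz^{\kappa''}\xi_{u''}$ — a worthwhile but routine elaboration of the same argument.
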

\begin{proof}
By definition the adjacency matrix of $\Ga_{\sB_0}$ is the matrix of multiplication by $\xi_{\rho_1}$ in the basis $\{\xi_{w}\mid w\in \sB_0\}$ in $\sLa_{P}$ . Since the identity element $\xi_{e}$ of $\sLa_{P}$ lies in $\sB_0$ and since $\sLa_{P}$ is positively multiplicative, we get the result by Proposition \ref{m_x}.
\end{proof}
\begin{example}
We continue example \ref{A2_G2_Box}. The graph $\Ga_{\sB_0}$ in type $\tilde{A}_2$ is particularly simple as there are only two alcoves in $\sB_0$ and only two pairs $(w,s)$ in $\sB\times I$ such that $s_iw\notin \sB$. These pairs are $(s_0,1)$ and~$(s_0,2)$ and we have $A_0s_1s_0=A_0t_{\om_1}$ (as sets) and $A_0s_2s_0=A_0t_{\om_2}$ (as sets) so that $s_1s_0\star t_{-\om_1} = e$ and~$s_2s_0\star t_{-\om_2} = e$. We obtain the following graph and matrix
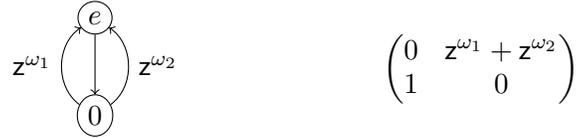
\begin{figure}[H]
\begin{minipage}{5cm}
\begin{center}
\begin{tikzpicture}[scale =1.3]
\tikzstyle{vertex}=[inner sep=2pt,minimum size=10pt,ellipse,draw]

\node[vertex] (a1) at (0,0) {\scalebox{1}{$e$}};
\node[vertex] (a2) at (0,-1) {\scalebox{1}{$0$}};

\draw[->] (a1) -- (a2);
\draw[->,bend left=55] (a2) to node[left] {\scb{1}{$\sz^{\om_1}$}}(a1) ;
\draw[->,bend right=55] (a2) to node[right] {\scb{1}{$\sz^{\om_2}$}} (a1) ;

\end{tikzpicture}
\end{center}
\end{minipage}
\begin{minipage}{5cm}
\begin{center}
$\begin{pmatrix}
0&\sz^{\om_1}+\sz^{\om_2}\\1&0
\end{pmatrix}$
\end{center}
\end{minipage}
\caption{The graph $\Ga_{\sB_0}$ in type $\tilde{A}_2$ and its transition matrix.}
\end{figure}
 In type $\tilde{G}_2$, the situation is more complicated. There are 6 pairs $(w,i)\in \sB\times I$ such that $s_iw\notin \sB_0$. We represent these pairs below on the left hand-side where the gray area is the set $\sB_0$. For each pair $(w,i)$ such that $s_iw\notin \sB_0$ we put an arrow between $(w,s_iw)$. We color the arrows in both figures to show which pairs corresponds to which arrow in the graph. 
\begin{figure}[H]
\begin{center}
\begin{minipage}{2cm}
\begin{center}
\begin{tikzpicture}[scale=.8]

\draw[fill,color = lightgray] (0,0) -- (0,3) -- (.866,4.5) -- (.866,1.5);
\draw[fill,color = gray] (0,0) -- (0,1) -- (.433,.75) ;

\draw[line width = .4mm] (0,0) -- (0,6);
\draw[line width = .4mm] (0,0) -- (1.732,3);
\draw[line width = .4mm] (1.732,6) -- (1.732,3);
\draw[line width = .4mm] (0,3) -- (.866,4.5);
\draw[line width = .4mm] (0,6) -- (.866,7.5);
\draw[line width = .4mm] (.866,4.5) -- (1.732,6); 
\draw[line width = .4mm] (.866,4.5) -- (.866,7.5);
\draw[line width = .4mm] (.866,1.5) -- (.866,4.5);

\draw[line width = .2mm, color = NavyBlue,->] (.75,2.1) -- (1.05,2.1);
\draw[line width = .2mm, color = NavyBlue,->] (.75,3.9) -- (1.05,3.9);

\draw[line width = .2mm, color = Green,->] (.75,2.8) -- (1.05,2.8);
\draw[line width = .2mm, color = Green,->] (.75,3.2) -- (1.05,3.2);

\draw[line width = .2mm, color = Red,->] (.65,3.8) -- (.35,4);
\draw[line width = .2mm, color = Red,->] (.5,3.4) -- (.15,3.6);

\newcommand{\y}{0}
\newcommand{\x}{0}

\draw (0+\x,1+\y) -- (.433+\x,.75+\y);
\draw (0+\x,1+\y) -- (.866+\x,1.5+\y);
\draw (0+\x,1.5+\y) -- (.866+\x,1.5+\y);
\draw (.866+\x,1.5+\y)-- (0+\x,2+\y);
\draw (.866+\x,1.5+\y)-- (0+\x,3+\y);
\draw (0+\x,2+\y) -- (.866+\x,2.5+\y);
\draw (0+\x,3+\y) -- (.866+\x,2.5+\y);
\draw (0+\x,3+\y) -- (.866+\x,3+\y);
\draw (0+\x,3+\y) -- (.866+\x,3.5+\y);
\draw (.433+\x,3.75+\y) -- (.866+\x,3.5+\y);

\renewcommand{\y}{3}
\renewcommand{\x}{0}

\draw (0+\x,1+\y) -- (.433+\x,.75+\y);
\draw (0+\x,1+\y) -- (.866+\x,1.5+\y);
\draw (0+\x,1.5+\y) -- (.866+\x,1.5+\y);
\draw (.866+\x,1.5+\y)-- (0+\x,2+\y);
\draw (.866+\x,1.5+\y)-- (0+\x,3+\y);
\draw (0+\x,2+\y) -- (.866+\x,2.5+\y);
\draw (0+\x,3+\y) -- (.866+\x,2.5+\y);
\draw (0+\x,3+\y) -- (.866+\x,3+\y);
\draw (0+\x,3+\y) -- (.866+\x,3.5+\y);
\draw (.433+\x,3.75+\y) -- (.866+\x,3.5+\y);

\renewcommand{\y}{1.5}
\renewcommand{\x}{.866}

\draw (0+\x,1+\y) -- (.433+\x,.75+\y);
\draw (0+\x,1+\y) -- (.866+\x,1.5+\y);
\draw (0+\x,1.5+\y) -- (.866+\x,1.5+\y);
\draw (.866+\x,1.5+\y)-- (0+\x,2+\y);
\draw (.866+\x,1.5+\y)-- (0+\x,3+\y);
\draw (0+\x,2+\y) -- (.866+\x,2.5+\y);
\draw (0+\x,3+\y) -- (.866+\x,2.5+\y);
\draw (0+\x,3+\y) -- (.866+\x,3+\y);
\draw (0+\x,3+\y) -- (.866+\x,3.5+\y);
\draw (.433+\x,3.75+\y) -- (.866+\x,3.5+\y);
%

\end{tikzpicture}
\end{center}
\end{minipage}
\begin{minipage}{8cm}
\begin{center}
\begin{tikzpicture}[scale=.65]
\tikzstyle{vertex}=[inner sep=.2pt,minimum size=10pt,ellipse,draw]

\node[vertex] (a1) at (0,0) {\scalebox{.45}{$e$}};

\node[vertex] (a2) at (0,-1) {\scalebox{.45}{$0$}};

\node[vertex] (a3) at (0,-2) {\scalebox{.45}{$20$}};

\node[vertex] (a4) at (0,-3) {\scalebox{.45}{$120$}};

\node[vertex] (a5) at (0,-4) {\scalebox{.45}{$2120$}};

\node[vertex] (a6) at (-1,-5) {\scalebox{.45}{$02120$}};

\node[vertex] (a7) at (1,-5) {\scalebox{.45}{$12120$}};

\node[vertex] (a8) at (0,-6) {\scalebox{.45}{$102120$}};

\node[vertex] (a9) at (0,-7) {\scalebox{.45}{$2102120$}};

\node[vertex] (a10) at (0,-8) {\scalebox{.45}{$12102120$}};

\node[vertex] (a11) at (0,-9) {\scalebox{.45}{$212102120$}};

\node[vertex] (a12) at (0,-10) {\scalebox{.45}{$0212102120$}};

\draw (a1) to node[left] {\scalebox{.8}{1}} (a2);
\draw (a2) to node[left] {\scalebox{.8}{2}}  (a3);
\draw (a3) to node[left] {\scalebox{.8}{3}}  (a4);
\draw (a4) to node[left] {\scalebox{.8}{2}}  (a5);

\draw (a5) to node[left] {\scalebox{.8}{1}}  (a6);
\draw (a5) to node[right] {\scalebox{.8}{3}}  (a7);

\draw (a6) to node[left] {\scalebox{.8}{3}}  (a8);
\draw (a7) to node[right] {\scalebox{.8}{1}}  (a8);

\draw (a8) to node[left] {\scalebox{.8}{2}}  (a9);
\draw (a9) to node[left] {\scalebox{.8}{3}}  (a10);
\draw (a10) to node[left] {\scalebox{.8}{2}}  (a11);
\draw (a11) to node[left] {\scalebox{.8}{1}}  (a12);

\draw[->,NavyBlue,bend right=55] (a7) to node[left,pos =.6]  {{\scb{.7}{$2\sz^{\om_2}$}}} (a1) ;
\draw[->,NavyBlue,bend left=55] (a12) to  node[left,pos =.8] {{\scb{.7}{$2\sz^{\om_2}$}}} (a6);

\draw[->,Green,bend right=65] (a9) to node[right,pos =.25]  {{\scb{.7}{$\sz^{\om_2}$}}} (a3);
\draw[->,Green,bend left=65] (a10) to node[left,pos =.75]  {{\scb{.7}{$\sz^{\om_2}$}}} (a4);

\draw[->,Red,bend right=75] (a11) to node[right,pos =.5] {{\scb{.7}{$3\sz^{\om_1}$}}} (a1);
\draw[->,Red,bend left=75] (a12) to node[left,pos =.5]  {{\scb{.7}{$3\sz^{\om_1}$}}} (a2);

\end{tikzpicture}
\end{center}
\end{minipage}
\begin{minipage}{5cm}
\begin{center}
\scalebox{.55}{$\begin{pmatrix}
0&0&0&0&0&2\sz^{\om_2}&0&0&0&0&3\sz^{\om_1}&0\\
1&0&0&0&0&0&0&0&0&0&0&3\sz^{\om_1}\\
0&2&0&0&0&0&0&0&\sz^{\om_2}&0&0&0\\
0&0&3&0&0&0&0&0&0&\sz^{\om_2}&0&0\\
0&0&0&2&0&0&0&0&0&0&0&0\\
0&0&0&0&3&0&0&0&0&0&0&0\\
0&0&0&0&1&0&0&0&0&0&0&2\sz^{\om_2}\\
0&0&0&0&0&1&3&0&0&0&0&0\\
0&0&0&0&0&0&0&2&0&0&0&0\\
0&0&0&0&0&0&0&0&3&0&0&0\\
0&0&0&0&0&0&0&0&0&2&0&0\\
0&0&0&0&0&0&0&0&0&0&1&0\\
\end{pmatrix}$}\end{center}
\end{minipage}
\end{center}
\caption{The graph $\Ga_{\sB_0}$ in type $\tilde{G}_2$ and its adjacency matrix.}
\end{figure}
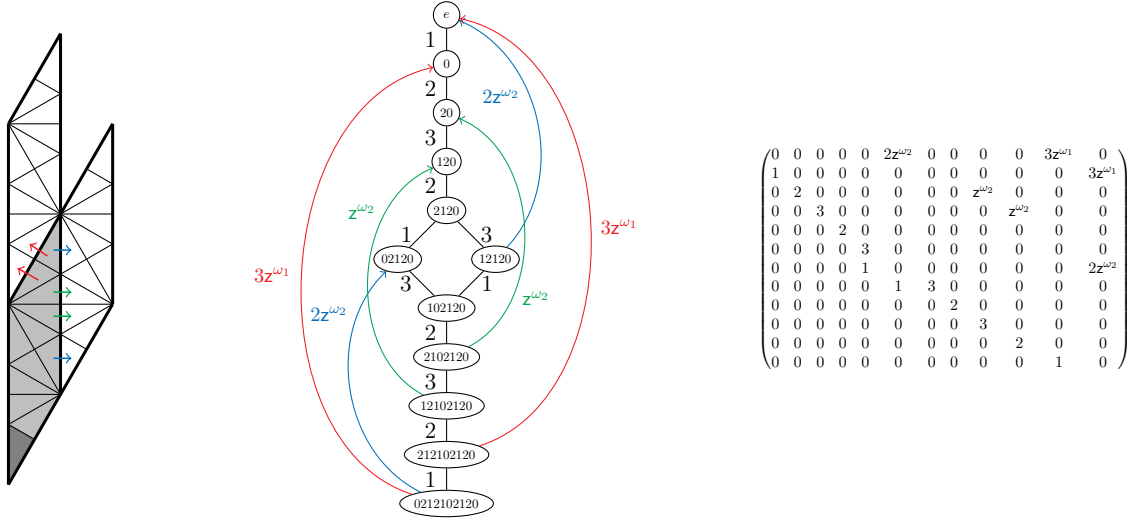
\end{example}

We also have the following interesting proposition whose proof is similar to
Proposition 4.2 in \cite{LT2}. We denote by $\func{Frac}(\sLa)$ and $\Q(P^+)$ the fraction fields of $\sLa$ and $\Q[P^+]$, respectively. Recall the definition of the elements $\xi_{\rho_{a}},a\in X$ at the beginning of §\ref{subsec_affGrass}. 

\begin{proposition}
\ \label{PropLK}

\begin{enumerate}
\item Each $\xi_{\rho_{a}},a\in X$ is algebraic over $\Q(P^+)$.

\item $\func{Frac}(\sLa)$ is an algebraic extension of $\Q(P^+)$ with degree
$\mathrm{card}(\sB_0)$. The set $\mathcal{I}%
=\{\xi_{w}\mid w\in\sB_0\}$ is a basis of $\func{Frac}(\sLa)$ over
$\Q(P^+)$.
\item The element $\xi_{\rho_1}$ is primitive in $\func{Frac}(\sLa)$,
that is we have $\func{Frac}(\sLa)=\Q(P^+)\big[\xi_{\rho_1}\big]$.
\item The expansion of $\Ga_{\sB_0}$ is the graph $\Ga_{\La_0}$ with weights $a_i$.
\end{enumerate}
\end{proposition}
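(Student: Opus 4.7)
The plan is to handle the four assertions in order, using crucially the free $\Q[P^+]$-module structure on $\sLa$. By Lemma~\ref{Lem_RiAlInde} combined with the factorisation~\eqref{Factorization}, the family $\{\xi_u : u\in \sB_0\}$ is a $\Q[P^+]$-basis of $\sLa$, so $\sLa$ is a free $\Q[P^+]$-module of rank $|\sB_0|$. Part~(1) then follows by applying Cayley--Hamilton to the $\Q[P^+]$-linear endomorphism $m_{\xi_{\rho_a}}$: it satisfies a monic polynomial equation with coefficients in $\Q[P^+]\subset \Q(P^+)$. For part~(2), extending scalars gives $\sLa_{\Q(P^+)} := \sLa\otimes_{\Q[P^+]}\Q(P^+)$, a $\Q(P^+)$-algebra of dimension $|\sB_0|$ with basis $\mathcal{I}$. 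Since $\sLa$ is a commutative domain, $\sLa_{\Q(P^+)}$ is a finite-dimensional commutative domain over $\Q(P^+)$, hence a field, and therefore coincides with $\func{Frac}(\sLa)$.

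For part~(3), consider the tower $\Q(P^+)\subseteq \Q(P^+)[\xi_{\rho_1}] \subseteq \func{Frac}(\sLa)$. Since $\xi_{\rho_1}$ is algebraic over $\Q(P^+)$ by~(1), the middle term is a finite-dimensional commutative domain (being a subring of the field $\func{Frac}(\sLa)$), hence itself a field. Consequently the minimal polynomial $\mu$ of $\xi_{\rho_1}$ over $\Q(P^+)$ is irreducible. By Proposition~\ref{mult_B0} the adjacency matrix $A_{\Gamma_{\sB_0}}$ equals $\func{Mat}_{\mathcal{I}}(m_{\xi_{\rho_1}})$, so its minimal polynomial is also $\mu$. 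Proposition~\ref{ThGG1}(3) then gives the invertibility of $M_e$, and the ``only if'' direction of Proposition~\ref{ThGG1}(1) forces $\Gamma_{\sB_0}$ to be of maximal dimension. The adjacency algebra $\Q(P^+)[\xi_{\rho_1}]$ therefore has dimension $|\sB_0|$ over $\Q(P^+)$, yielding $\Q(P^+)[\xi_{\rho_1}]=\func{Frac}(\sLa)$ as required.

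For part~(4), I would construct an explicit bijection between the vertices of the expansion of $\Gamma_{\sB_0}$ at $e$ and $W_a^{\Lambda_0}$ via $(w,\sz^{\beta},\ell)\mapsto w\star t_\beta$; the factorisation~\eqref{Factorization} makes this a bijection. An arrow $(w,\sz^\beta,\ell)\to (u,\sz^{\beta+\kappa},\ell+1)$ of weight $a_i$ in the expansion corresponds by construction to an arrow $w\xrightarrow{a_i\sz^\kappa}u$ in $\Gamma_{\sB_0}$, which by the very definition of $\Gamma_{\sB_0}$ means $s_iw = u\star t_\kappa$ and $\ell(s_iw)=\ell(w)+1$. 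Under the bijection this becomes the arrow $w\star t_\beta \to s_i(w\star t_\beta)=u\star t_{\beta+\kappa}$ of weight $a_i$ in $\Gamma_{\Lambda_0}$ dictated by the Pieri rule~\eqref{PieriBox}. The main obstacle is part~(3): the crucial leverage is that a subring of a field that is algebraic over the base field is automatically itself a field, which renders the minimal polynomial of $\xi_{\rho_1}$ irreducible and allows Proposition~\ref{ThGG1} to upgrade the a priori inequality $[\Q(P^+)[\xi_{\rho_1}]:\Q(P^+)]\leq |\sB_0|$ to equality.
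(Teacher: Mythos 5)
The paper never actually writes out a proof of Proposition \ref{PropLK}; it only remarks that the argument is similar to Proposition 4.2 of \cite{LT2}, so there is nothing in the text to compare you against line by line. Your reconstruction is essentially correct and follows the natural route: $\sLa$ is free of rank $\mathrm{card}(\sB_0)$ over $\Q[P^+]$ with basis $\{\xi_u\mid u\in\sB_0\}$ by (\ref{Factorization}) and Lemma \ref{Lem_RiAlInde}, Cayley--Hamilton gives (1), localisation gives (2) since a finite-dimensional commutative domain over a field is a field, and the field property of $\Q(P^+)[\xi_{\rho_1}]$ forces the minimal polynomial $\mu$ of $\xi_{\rho_1}$ to be irreducible. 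Two points deserve to be made explicit. First, you use throughout that $\sLa$ is an integral domain (otherwise $\func{Frac}(\sLa)$ is not defined); this is true --- rationally the homology of the affine Grassmannian is a polynomial algebra --- and is implicit in the statement, but it should be said. Second, the decisive step in (3), passing from irreducibility of $\mu$ to $\deg\mu=\mathrm{card}(\sB_0)$, is not pure linear algebra: an irreducible minimal polynomial only forces its degree to \emph{divide} the dimension, since $\Q(P^+)^{\mathrm{card}(\sB_0)}$ becomes a vector space over $\Q(P^+)[T]/(\mu)$. The implication you invoke, Proposition \ref{ThGG1}(3), is valid precisely because of the standing strong-connectivity hypothesis of Section \ref{mutl_graph} (by Perron--Frobenius the characteristic polynomial of a strongly connected nonnegative matrix cannot be a proper power of $\mu$), and the paper does check that $\Ga_{\sB_0}$ is strongly connected; your argument is therefore correct but leans on that hypothesis and you should say so. For (4), the bijection $(w,\sz^\beta,\ell)\mapsto w\star t_\beta$ is the right one; to see it is well defined and injective you should note that $\ell$ is forced to equal $\ell(w\star t_\beta)+1$, because iterating the Pieri rule (\ref{PieriBox}) identifies paths of length $\ell-1$ from $e$ in $\Ga_{\sB_0}$ with elements of $W_a^{\La_0}$ of length exactly $\ell-1$, and surjectivity uses that $W_a^{\La_0}$ is an order ideal for the left weak order.
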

The fact that the expansion of  $\Ga_{\sB_0}$ is the graph $\Ga_{\La_0}$ allows one to use the multiplicative properties of the graph $\Ga_{\sB_0}$ to study random walks on $\Ga_{\La_0}$. This was done in type $A$ by the last two authors in \cite{LT2} in which case we have $a_i=1$ for all $i$. 
\begin{corollary}

The graph $\Gamma_{\sB_0}$ has maximal dimension, that is the
degree of $\mu_{A_{\Ga_{\sB_0}}}$ is equal to $\mathrm{card}(\Gamma
_{\sB_0})$.

\end{corollary}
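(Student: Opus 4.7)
The plan is to deduce this corollary directly from Proposition \ref{PropLK} by identifying the minimal polynomial of the adjacency matrix $A_{\Gamma_{\sB_0}}$ with the minimal polynomial of $\xi_{\rho_1}$ viewed as an algebraic element over $\Q(P^+)$.

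First I would recall that, by construction, $A_{\Gamma_{\sB_0}} = \func{Mat}_{\sB_0}(m_{\xi_{\rho_1}})$ is the matrix of multiplication by $\xi_{\rho_1}$ in the basis $\mathcal{I}=\{\xi_w\mid w\in \sB_0\}$ of $\sLa_P$. In any commutative unital algebra, the minimal polynomial of the left-multiplication operator $m_x$ coincides with the minimal polynomial of the element $x$: if $P(m_x)=0$ then $P(x)=P(x)\cdot 1 = P(m_x)(1)=0$, and the reverse implication is obvious. Hence, after extending scalars to the fraction field $\Q(P^+)$, one obtains
\[
\mu_{A_{\Gamma_{\sB_0}}} = \mu_{\xi_{\rho_1}/\Q(P^+)}.
\]

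Next I would invoke parts (2) and (3) of Proposition \ref{PropLK}. By (2), the field $\func{Frac}(\sLa)$ has degree $\mathrm{card}(\sB_0)$ over $\Q(P^+)$, with $\mathcal{I}$ as a basis; by (3), the element $\xi_{\rho_1}$ is primitive, so $\Q(P^+)[\xi_{\rho_1}]=\func{Frac}(\sLa)$. Consequently the degree of the minimal polynomial of $\xi_{\rho_1}$ over $\Q(P^+)$ equals $[\func{Frac}(\sLa):\Q(P^+)] = \mathrm{card}(\sB_0)$. Combining this with the identification above gives $\deg \mu_{A_{\Gamma_{\sB_0}}} = \mathrm{card}(\sB_0)$, which is exactly the number of vertices of $\Gamma_{\sB_0}$, so $\Gamma_{\sB_0}$ has maximal dimension.

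The argument is essentially a bookkeeping exercise once Proposition \ref{PropLK} is available; there is no real obstacle. The only subtle point to check carefully is that the minimal polynomial of $A_{\Gamma_{\sB_0}}$ is understood here over the fraction field $\Q(P^+)$ (equivalently $\Q(P)$, since the $\sz^{\om_i}$ become invertible), so that one can legitimately match it with the minimal polynomial of the algebraic element $\xi_{\rho_1}$; working over the polynomial ring $\Q[P]$ directly would require a passing remark on localization, but this poses no real difficulty.
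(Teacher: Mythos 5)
Your argument matches the paper's proof: the paper likewise deduces the corollary from Proposition \ref{PropLK}, observing that the minimal polynomial of $\xi_{\rho_1}$ (equivalently of $A_{\Ga_{\sB_0}}=\func{Mat}_{\sB_0}(m_{\xi_{\rho_1}})$) has degree $\mathrm{card}(\sB_0)$ since $\func{Frac}(\sLa)=\Q(P^+)[\xi_{\rho_1}]$ has that degree over $\Q(P^+)$. Your version simply spells out the identification $\mu_{A_{\Ga_{\sB_0}}}=\mu_{\xi_{\rho_1}/\Q(P^+)}$ and the localization point, which the paper leaves implicit.
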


\begin{proof}
The minimal polynomial of $\xi_{\rho_1}$ has degree $\mathrm{card}(\Gamma
_{\sB_0})$ which is the size of the matrix $A_{\Ga_{\sB_0}}$.
\end{proof}

\begin{remark}
The graph $\Gamma_{\sB_0}$ has maximal dimension and is multiplicative at $e$. It follows by Proposition~\ref{ThGG1} (1) that the matrix $M_e$ is invertible and the
columns of $M_{e}^{-1}$ give the vectors of a positively multiplicative basis
$\sB'=\{\sb_{0}=1,\ldots,\ldots,\sb_{n-1}\}$ expressed
in the basis $\{1,A_{\Ga_{\sB_0}},\ldots,A_{\Ga_{\sB_0}}^{n-1}\}$. The coefficients appearing in these expressions are in $\frac{1}{\det M_{e}}\Q[P]$. By unicity of the multiplicative basis satisfying~$\sb_0=1$, we see that the basis $\sB'$ has to be equal to the basis $\sB = \{\func{Mat}_{\sB_0}(\xi_w)\mid w\in \sB_0\}$. As a consequence, since all the matrices in $\sB$ have coefficients in $\Q[P^+]$, it follows that all the denominators in the expression of the $\sb_i$ in  $\{1,A_{\Ga_\sB},\ldots,A_{\Ga_\sB}^{n-1}\}$ simplify (see Example \ref{ex_A3}). This also provides an explicit algorithm to compute the structure constants with respect to the basis~$\sB$ (these are given by the columns of the matrix in the basis~$\sB$). We have computed explicitely the basis $\sB$ in type $\tilde{G_2}$ \cite{G2}. 
\end{remark}

\newcommand{\sX}{\mathsf{X}}
\newcommand{\up}{\upsilon}

\subsection{The $\Q$-algebra $\sLa_a$}
We have seen in the previous section that $\sLa_{P}$ is a finite-dimensional $\mathbb{Q}[P]$-algebra with basis $\{\xi_w\mid w\in \sB_0\}$. The set $\sB_0$ is a fundamental domain for the action of $P$ (by translations) on~$W_a$, that is, any element of $W_a$ can be uniquely written as $u\star t_{\la}$ where $u\in \sB_0$ and $\la\in P$. We can therefore define two maps $\wt_{\sB_0}:W_a\to P$ and $\sf_{\sB_0}:W_a\to \sB_0$ by the equation $w= \sf_{\sB_0}(w)\star t_{\wt_{\sB_0}(w)}$. Then by setting $\xi_{w} := \sz^{\wt_{\sB_0}(w)}\xi_{\sf_{\sB_0}(w)}$  for all $w\in W_a$ we can turn $\sLa_{P}$ into an infinite-dimensional algebra over $\Q$ with basis~$\{\xi_w\mid w\in W_a\}$. We will denote by $\sLa_a$ this $\Q$-algebra. Recall the definition of $w\overset{+}{\leadsto} s_iw$ in \ref{crossing}. 
\begin{proposition}
\label{new_pieri}
For all $w\in W_a$, we have in $\sLa_a$
\begin{equation}
\label{eq}
\xi_{\rho_1}\xi_{w} =\sum_{i\in I, w\overset{+}{\leadsto} s_iw} a_{i}\xi_{s_iw}
\end{equation}
\end{proposition}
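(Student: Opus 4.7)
My plan is to deduce~(\ref{eq}) for an arbitrary $w\in W_a$ from the Pieri rule~(\ref{PieriBox}) by exploiting the $\star$-decomposition. Writing $w=u\star t_\lambda$ with $u\in\sB_0\subset W_a^{\Lambda_0}$ and $\lambda\in P$, we have $\xi_w=\sz^\lambda\xi_u$ by the construction of $\sLa_a$, so
\[
\xi_{\rho_1}\xi_w \;=\; \sz^\lambda\,\xi_{\rho_1}\xi_u \;=\; \sum_{i\in I^+(u)} a_i\,\sz^\lambda\xi_{s_iu},
\]
where $I^+(u)=\{i\in I : s_iu>u\text{ and }s_iu\in W_a^{\Lambda_0}\}$. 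The aim is then to rewrite this sum as one indexed by positive crossings from $w$.

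First I would check that $I^+(u) = \{i\in I : u\crossp s_iu\}$. If $s_iu\notin W_a^{\Lambda_0}$, the hyperplane separating $A_0u\in\cC_0$ from $A_0s_iu\notin\cC_0$ is a chamber wall $H_{\alpha_j,0}$ with $j\in I^\ast$, and since $A_0u$ lies on its positive side, the crossing is negative; when both $u$ and $s_iu$ belong to $W_a^{\Lambda_0}$, $s_iu>u$ is equivalent to $u\crossp s_iu$, as recorded in Section~\ref{aff_grass}. Next, using $\sz^\lambda\xi_{s_iu}=\xi_{(s_iu)\star t_\lambda}$ (a direct consequence of the definition of $\sLa_a$), the identity becomes
\[
\xi_{\rho_1}\xi_w \;=\; \sum_{i : u\crossp s_iu} a_i\,\xi_{(s_iu)\star t_\lambda}.
\]

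Finally, I would identify $\{(s_iu)\star t_\lambda : i\in I^+(u)\}$ with $\{s_jw : w\crossp s_jw\}$ bijectively and compatibly with the weights. By definition of $\star$, the alcove $A_0((s_iu)\star t_\lambda) = A_0s_iu+\lambda$ is adjacent to $A_0u+\lambda = A_0w$, hence equals $A_0 s_{j(i)}w$ for a unique $j(i)\in I$. The map $i\mapsto j(i)$ is the permutation of $I$ induced by the length-zero element $\omega_\lambda\in\Omega=P/Q^\vee$ corresponding to the class of $\lambda$, acting as an automorphism of the affine Dynkin diagram. Such automorphisms preserve the integer labels $a_i$ (which are intrinsic to the diagram via $Av=0$), so $a_{j(i)}=a_i$. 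Since the periodic orientation is invariant under translation by $P$ (because $(\lambda,\alpha)\in\mathbb{Z}$ for every $\lambda\in P$ and $\alpha\in\Phi$), the equivalence $u\crossp s_iu\Leftrightarrow w\crossp s_{j(i)}w$ holds, and this rewrites the previous expression as $\sum_{j:\,w\crossp s_jw} a_j\xi_{s_jw}$.

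The main obstacle is the last step: one must verify carefully that the reindexing $i\mapsto j(i)$ induced by translation by $\lambda$ is indeed the Dynkin automorphism attached to $\omega_\lambda$, and that this automorphism preserves the weights $a_i$. An alternative route avoiding explicit mention of $\omega_\lambda$ is to observe that the alcoves adjacent to $A_0w$ with positive crossing are exactly the $\lambda$-translates of the alcoves adjacent to $A_0u$ with positive crossing (by translation-invariance of the tesselation and its periodic orientation), and that the weight $a_j$ attached to each such adjacency depends only on the local face geometry, which is intrinsic to the pair of adjacent alcoves.
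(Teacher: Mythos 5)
Your proof is correct, but it takes a genuinely different route from the paper's. Both arguments reduce the general case to the Pieri rule~(\ref{PieriBox}) on $W_a^{\La_0}$, reinterpreted in terms of positive crossings exactly as you do in your first step, and both then transfer the result by a translation using the invariance of the periodic orientation. The difference is the lattice used for the translation. The paper picks $\ga\in Q^\vee$ with $wt_{\ga}\in W_a^{\La_0}$; since $t_{\ga}$ lies in $W_a$, left multiplication by $s_i$ commutes with right multiplication by $t_{\ga}$ \emph{without any change of index}, so the sum over $i$ transports verbatim and the whole proof is three lines. You instead use the canonical decomposition $w=u\star t_\lambda$ with $u\in\sB_0$ and $\lambda\in P$, which is the decomposition underlying the very definition of $\sLa_a$ — a natural choice — but because $t_\lambda$ lives in the extended affine Weyl group, you are forced to track the re-indexing $i\mapsto j(i)$ through the diagram automorphism attached to the class of $\lambda$ in $P/Q^\vee$, and to check that these automorphisms preserve the marks $a_i$ (they do: any automorphism of the affine Dynkin diagram permutes the entries of the kernel vector of the Cartan matrix, and the normalisation by relatively prime positive entries pins it down, so $a_{j(i)}=a_i$). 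That extra bookkeeping is exactly the ``main obstacle'' you flag, and it is genuinely needed in your setup; your closing alternative — arguing directly that positive adjacencies of $A_0w$ are the $\lambda$-translates of those of $A_0u$ — is essentially the paper's argument, except that the claim that $a_j$ is ``intrinsic to the local face geometry'' is slightly too glib (the \emph{type} of a face is not translation-invariant under $P$, only the mark $a_j$ is, and that is again the diagram-automorphism fact). In short: your proof works and makes the role of $\Omega=P/Q^\vee$ explicit, but translating by $Q^\vee$ instead of $P$ removes the only delicate step.
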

\begin{proof}
We know that for all $w\in W_a^{\La_0}$, we have 
$$\xi_{\rho_1}\xi_{w} = \sum_{i\in I, s_iw\in W_a^{\La_0}, s_iw>w} a_{i}\xi_{s_iw}.$$
If  $w\in W_a^{\La_0}$, the assertion $s_iw\in W_a^{\La_0}$ and $s_iw>w$ is equivalent to $w\overset{+}{\leadsto} s_iw$. Hence (\ref{eq}) holds for~$w\in W_a^{\La_0}$. Now, let $w\in W_a$. There exists $\ga\in Q^\vee$ such that $w = xt_{\ga}\in W_a^{\La_0}$ so that $w = t_{-\ga}x$ and  $\xi_w=\sz^{-\ga}\xi_x$. We have 
$$\xi_{\rho_1}\xi_{w} = \xi_{\rho_1} \sz^{-\ga}\xi_x =  \sz^{-\ga}\sum_{s\in I, x\overset{+}{\leadsto}xs} a_{s}\xi_{xs} = 
\sum_{s\in I, x\overset{+}{\leadsto}xs} a_{s}\xi_{t_{-\ga}sx} = \sum_{s\in I, w\overset{+}{\leadsto}sw} a_{s}\xi_{sw}.$$
The last equality comes from the fact that the orientation is unchanged by translation, that is we have $x\overset{+}{\leadsto}xs$ if and only if $w\overset{+}{\leadsto}ws$.  
\end{proof}
The result above can be generalised to all $\rho_a$ with $a\in X$. Let $w,w'\in W_a$. We write $w\overset{+}\to w'$ if and only if $w$ lies on the positive side of all hyperplane separating $w$ and $w'$. It is clear that if $w$ and $w'$ are adjacent then $w\overset{+}{\leadsto} w'$ is equivalent to $w\overset{+}\to w'$. Then, arguing as above we get for all $w\in W_a$ and $a\in X$:
\begin{equation*}
\xi_{\rho_a}\xi_{w} =\sum_{w',w\overset{+}{\to}w'} a_{w'}\xi_{w'}.
\end{equation*}
Indeed, If  $w\in W_a^{\La_0}$, the assertion $w'\in W_a^{\La_0}$ and $w'\geq w$ in the weak Bruhat order is equivalent to $w\overset{+}{\to}w'$.

\subsection{Fundamental domain and basis of the homology algebra}

In Section \ref{posrho}, we have used the lattice of fundamental weights and the fundamental domain~$\sB_0$ to construct a (finite) positively multiplicative graph $\Ga_{\sB_0}$. In this section we generalise this result to any $\mathbb{Z}$-sublattice $L$ of $P$ of rank $n$ with basis ${\up_1,\ldots,\up_n}$ and to any fundamental domain.

 \begin{definition} 
   A subset $\sB$ of $W_a$ is a fundamental domain for the action of $L$ if and only if any element $w$ of $W_a$ can uniquely be written as a product $u\star t_{\ga}$ where $\ga\in L$ and $u\in \sB$. To any fundamental domain $\sB$ we associate two applications $\wt_\sB:W_a\to L$ and  $\sf_{\sB}:W_a\to \sB$
defined by the equation $w = \sf_{\sB}(w) \star t_{\wt_\sB(w)}$. 
 \end{definition}

Let $\Q[L]=\langle \sz^{\al}\mid \al\in L\rangle$ be the group algebra of $L$ over $\Q$. It is a subalgebra of $\Q[P]$. By the factorisation property and since $L$ is a sublattice of~$P$, we have $\xi_w=\sz^{\wt_\sB(w)}\xi_{\sf_\sB(w)}\in \sLa_{a}$ for all $w\in W_a$ and all fundamental domain $\sB$.
This shows that $\sLa_{a}$ can be turned into a $\Q[L]$-algebra with basis $\{\xi_{w}\mid w\in \sB\}$. We will write $\sLa_{L}$ when we consider~$\sLa_a$ as an algebra over $Q[L]$. 
Note that $\sLa_{L}$  is finite-dimensional since~$L$ is assumed to be of rank $n$. 

\begin{remark}
\label{Q-case}
In the case where $L=Q^\vee$,  $W$ is a fundamental domain for the action of $Q^\vee$ so that $\{\xi_{w}\mid w\in W\}$ is a basis of $\sLa_{Q^\vee}$.
\end{remark}
 
Let $m_{\xi_{\rho_1}}$ be the linear endomorphism of $\sLa_L$ defined by multiplication by $\xi_{\rho_1}$. Given an $L$-fundamental domain $\sB$, we write  $\func{Mat}_{\sB}(m_{\xi_{\rho_1}})$ for the matrix of $m_{\xi_{\rho_1}}$ in the basis $\{\xi_{w}\mid w\in \sB\}$. We then define $\Ga_{\sB}$ to be the graph with set of vertices $\sB$ and adjacency matrix $A_{\Ga_{\sB}} = \func{Mat}_{\sB}(m_{\xi_{\rho_1}})$. Equivalently, the graph $\Ga_{\sB}$ can be defined as follows:
we put a weighted arrow with weight $a_i\sz^{\kappa}$ with $i\in I$ from $w$ to $w'$ in $\sB$ whenever $w\overset{+}{\leadsto} s_iw$, $s_iw\star t_{-\kappa} = w'$ and $\kappa\in L$. Note that $\kappa$ is not equal to zero in the coefficient $a_i\sz^\kappa$ only if $s_iw\notin \sB$. We can see that $\Ga_{\sB}$ is strongly connected. Arguing as in Proposition \ref{mult_B0} we get the following result. 

\begin{proposition}
\label{fdomain}
Let $\sB$ be a fundamental domain for the action of $L$ containing the fundamental alcove~$A_0$. The graph $\Ga_{\sB}$ with adjacency matrix $\func{Mat}_{\sB}(m_{\xi_{\rho_1}})$ is positively multiplicative at $e$.
\end{proposition}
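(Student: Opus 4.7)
The plan is to mimic the proof of Proposition \ref{mult_B0}: I will verify that (a) the adjacency matrix of $\Ga_\sB$ coincides with the matrix $\func{Mat}_\sB(m_{\xi_{\rho_1}})$ of multiplication by $\xi_{\rho_1}$ in $\sLa_L$, and (b) $\sLa_L$ is positively multiplicative with respect to the basis $\{\xi_w\mid w\in\sB\}$. Since $A_0=e\in\sB$ by hypothesis, the basis element $\xi_e=1$ lies in this basis, so Proposition \ref{m_x} together with Definition \ref{Def_MGraphs} then give that $\Ga_\sB$ is positively multiplicative at $e$.

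For (a) I would apply the extended Pieri rule of Proposition \ref{new_pieri}: for any $w\in\sB\subset W_a$,
\[
\xi_{\rho_1}\xi_w=\sum_{i\in I,\,w\crossp s_iw}a_i\,\xi_{s_iw}\qquad\text{in }\sLa_a.
\]
Since $\sB$ is a fundamental domain for $L$, each $s_iw$ decomposes uniquely as $s_iw=w'\star t_\kappa$ with $w'=\sf_\sB(s_iw)\in\sB$ and $\kappa=\wt_\sB(s_iw)\in L$, and the translation identity in $\sLa_a$ gives $\xi_{s_iw}=\sz^\kappa\xi_{w'}$. The coefficient of $\xi_{w'}$ in $\xi_{\rho_1}\xi_w$ is therefore the sum of the $a_i\sz^\kappa$ taken over the arrows $w\,\underset{a_i\sz^\kappa}{\overset{i}{\lra}}\,w'$ of $\Ga_\sB$, so $A_{\Ga_\sB}=\func{Mat}_\sB(m_{\xi_{\rho_1}})$ by the very definition of $\Ga_\sB$.

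For (b) I reduce positivity in the $\sB$-basis to Schubert positivity on $W_a^{\La_0}$. Given $u,v\in\sB$, decompose via the other fundamental domain $\sB_0\subset W_a^{\La_0}$ as $\xi_u=\sz^{\wt_{\sB_0}(u)}\xi_{\sf_{\sB_0}(u)}$ and similarly for $\xi_v$. Schubert positivity in $\sLa$ yields $\xi_{\sf_{\sB_0}(u)}\xi_{\sf_{\sB_0}(v)}=\sum_{y\in W_a^{\La_0}}c_{u,v}^{\,y}\,\xi_y$ with $c_{u,v}^{\,y}\in\Z_{\geq 0}$. Setting $\mu=\wt_{\sB_0}(u)+\wt_{\sB_0}(v)\in P$, using $\sz^\mu\xi_y=\xi_{y\star t_\mu}$ in $\sLa_a$, then rewriting $\xi_{y\star t_\mu}=\sz^{\wt_\sB(y\star t_\mu)}\xi_{\sf_\sB(y\star t_\mu)}$ and grouping terms according to the value $\sf_\sB(y\star t_\mu)=:u'\in\sB$, one obtains
\[
\xi_u\xi_v=\sum_{u'\in\sB}\Bigl(\sum_{y:\,\sf_\sB(y\star t_\mu)=u'}c_{u,v}^{\,y}\,\sz^{\wt_\sB(y\star t_\mu)}\Bigr)\xi_{u'},
\]
which exhibits each structure constant of $\sLa_L$ in the basis $\{\xi_w\mid w\in\sB\}$ as a nonnegative integer combination of monomials $\sz^\la$ with $\la\in L$, hence as an element of $\Q_+[L]$ as required by Section \ref{Sec_PM_Graphs}.

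The only delicate point is the bookkeeping in step (b) --- in particular verifying that the translation identity $\sz^\mu\xi_y=\xi_{y\star t_\mu}$ holds in $\sLa_a$ for every $\mu\in P$ (not merely $\mu\in P^+$). This follows from the construction of $\sLa_a$ as the extension of scalars of $\sLa_P$ from $\Q[P^+]$ to $\Q[P^{\pm1}]$, combined with the $\sB_0$-decomposition $\xi_w=\sz^{\wt_{\sB_0}(w)}\xi_{\sf_{\sB_0}(w)}$ used to define $\xi_w$ for every $w\in W_a$.
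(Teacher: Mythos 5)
Your proposal is correct and follows essentially the same route as the paper, whose proof is simply ``arguing as in Proposition \ref{mult_B0}'': identify $A_{\Ga_\sB}$ with $\func{Mat}_\sB(m_{\xi_{\rho_1}})$, note $\xi_e=1$ lies in the basis since $A_0\in\sB$, and invoke Proposition \ref{m_x} together with the positive multiplicativity of $\sLa_L$ in the basis $\{\xi_w\mid w\in\sB\}$. Your step (b), reducing the positivity of the structure constants in the $\sB$-basis to Schubert positivity via the translation identity $\sz^\mu\xi_y=\xi_{y\star t_\mu}$, supplies a verification that the paper leaves implicit, and it is carried out correctly.
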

This proposition is the key to show that $\Ga_\rho$ is positively multiplicative. Indeed we will show that the adjacency matrix of $\Ga_\rho$ is essentially the matrix of $m_{\xi_{\rho_1}}$ expressed in the basis associated to the coroot lattice $Q^\vee$ and to the fundamental domain $W$.

\begin{example} Let $L=Q^\vee$ in type $\tilde{A}_2$. We can choose as $L$-fundamental domain $\sB$ the set of alcoves in green and the fundamental alcove $A_0$ which is in dark grey. We represent the graph with adjacency matrix $\func{Mat}_{\sB}(m_{\xi_{\rho_1}})$ on the right hand-side. We set $\sz_1=\sz^{\al_1}$ and $\sz_2=\sz^{\al_2}$. 

\vspace{-.1cm}
\begin{figure}[H]
\begin{center}
\begin{minipage}{8cm}
\psset{linewidth=.05mm}
\begin{center}
\begin{tikzpicture}[scale =.5]
\newcommand{\y}{.866}
\newcommand{\x}{.5}

\draw[fill = ForestGreen!30!] (-\x,\y) -- (-2*\x,2*\y) -- (-\x,3*\y) -- (0,2*\y)  -- (\x,3*\y) -- (2*\x,2*\y) -- (\x,\y);

\draw[fill = gray] (0,0) -- (-\x,\y) -- (\x,\y);

\draw[color = black, ->,line width = .4mm] (0,0) -- (-3*\x,\y);
\draw[color = black, ->,line width = .4mm] (0,0) -- (3*\x,\y);
\draw[color = black, ->,line width = .4mm] (0,0) -- (0,2*\y);

\node at (3*\x,.5*\y) {\scalebox{.6}{$\al^\vee_1$}};
\node at (-3*\x,.5*\y) {\scalebox{.6}{$\al^\vee_2$}};

\draw[line width = .5mm] (0,0) -- (-4*\x,4*\y);
\draw[line width = .5mm] (0,0) -- (4*\x,4*\y);


\foreach \n in {-1,...,2}{
\draw (-3,2* \n * \y) -- (3,2 * \n * \y);}

\foreach \n in {-2,...,1}{
\draw (-3.5,2* \n * \y + \y) -- (3.5,2 * \n * \y + \y);}

\foreach \m in {0,...,6}{
\foreach \n in {0,...,3}{
\draw (-3+2*\m*\x,4*\y-\n*2*\y) -- (-3.5+2*\m*\x,3*\y-\n*2*\y);}

\foreach \n in {0,...,2}{
\draw (-3.5+2*\m*\x,3*\y-\n*2*\y) -- (-3+2*\m*\x,2*\y-\n*2*\y);}}

\foreach \m in {0,...,6}{
\foreach \n in {0,...,3}{
\draw (-3.5+2*\m*\x+\x,4*\y-\n*2*\y) -- (-3+2*\m*\x+\x,3*\y-\n*2*\y);}

\foreach \n in {0,...,2}{
\draw (-3+2*\m*\x+\x,3*\y-\n*2*\y) -- (-3.5+2*\m*\x+\x,2*\y-\n*2*\y);}}
\end{tikzpicture}
\end{center}
\end{minipage}
\begin{minipage}{8cm}
\begin{center}
\begin{tikzpicture}[scale =1]
\tikzstyle{vertex}=[inner sep=2pt,minimum size=10pt,ellipse,draw]

\node[vertex] (a1) at (0,0) {\scalebox{.7}{$e$}};
\node[vertex] (a2) at (0,-1) {\scalebox{.7}{$0$}};
\node[vertex] (a3) at (-1,-2) {\scalebox{.7}{$10$}};
\node[vertex] (a4) at (1,-2) {\scalebox{.7}{$20$}};
\node[vertex] (a5) at (-1,-3) {\scalebox{.7}{$102$}};
\node[vertex] (a6) at (1,-3) {\scalebox{.7}{$201$}};

\draw[->] (a1) edge node  {} (a2);
\draw[->] (a2) edge node{} (a3);
\draw[->] (a2) edge node{} (a4);
\draw[->] (a3) edge node{} (a5);
\draw[->] (a4) edge node{} (a6);
\draw[->]  (a5) edge[bend left = 70] node[left]{\scalebox{.8}{$\sz_1\sz_2$}} (a1);
\draw[->]  (a5) edge[bend left = 0] node[pos=.3,below]{\scalebox{.8}{$\sz_2$}} (a4);

\draw[->]  (a6) edge[bend right = 70] node[right]{\scalebox{.8}{$\sz_1\sz_2$}} (a1);
\draw[->]  (a6) edge[bend right = 0] node[pos=.3,below]{\scalebox{.8}{$\sz_1$}} (a3);
\end{tikzpicture}
\end{center}
\end{minipage}
\end{center}
\vspace{-.2cm}
\caption{A fundamental domain for $Q^\vee$ and the associated graph  in type $\tilde{A}_2$}
\end{figure}
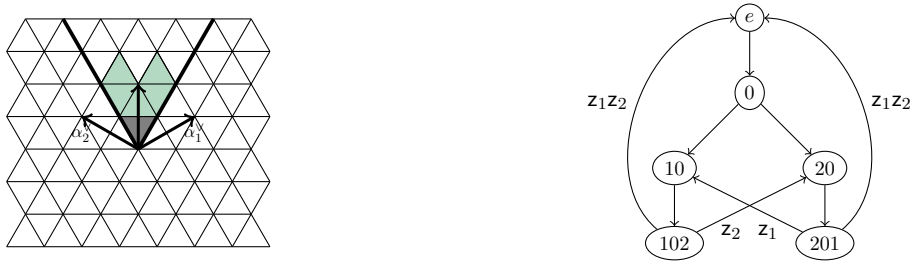
\end{example}

\begin{example}
Let $L=\langle\om_1,2\om_2\rangle_{\Z}$ in type $\tilde{A}_2$. Then $L$ is a sublattice of $P$  of rank 2. The set $\sB$ defined by the alcoves in green and the fundamental alcove $A_0$ in dark grey is a $L$ fundamental domain. We represent the graph with adjacency matrix $\func{Mat}_{\sB}(m_{\xi_{\rho_1}})$ on the right hand-side. We set $\sz_1=\sz^{\om_1}$ and~$\sz_2=\sz^{2\om_2}$. 

\vspace{-.1cm}
\begin{figure}[H]
\begin{center}
\begin{minipage}{5cm}
\psset{linewidth=.05mm}
\begin{center}
\begin{tikzpicture}[scale =.5]
\newcommand{\y}{.866}
\newcommand{\x}{.5}

\draw[fill = ForestGreen!30!] (0,0) -- (-2*\x,2*\y) -- (-\x,3*\y) --  (\x,\y);

\draw[fill = gray] (0,0) -- (-\x,\y) -- (\x,\y);
 
\draw[color = blue, ->,line width = .4mm] (0,0) -- (\x,\y);
\draw[color = blue, ->,line width = .4mm] (0,0) -- (-2*\x,2*\y);

\foreach \n in {-1,...,2}{
\draw (-3,2* \n * \y) -- (3,2 * \n * \y);}

\foreach \n in {-2,...,1}{
\draw (-3.5,2* \n * \y + \y) -- (3.5,2 * \n * \y + \y);}

\foreach \m in {0,...,6}{
\foreach \n in {0,...,3}{
\draw (-3+2*\m*\x,4*\y-\n*2*\y) -- (-3.5+2*\m*\x,3*\y-\n*2*\y);}

\foreach \n in {0,...,2}{
\draw (-3.5+2*\m*\x,3*\y-\n*2*\y) -- (-3+2*\m*\x,2*\y-\n*2*\y);}}

\foreach \m in {0,...,6}{
\foreach \n in {0,...,3}{
\draw (-3.5+2*\m*\x+\x,4*\y-\n*2*\y) -- (-3+2*\m*\x+\x,3*\y-\n*2*\y);}

\foreach \n in {0,...,2}{
\draw (-3+2*\m*\x+\x,3*\y-\n*2*\y) -- (-3.5+2*\m*\x+\x,2*\y-\n*2*\y);}}

\end{tikzpicture}
\end{center}
\end{minipage}
\begin{minipage}{5cm}
\begin{center}
\begin{tikzpicture}[scale =1]
\tikzstyle{vertex}=[inner sep=2pt,minimum size=10pt,ellipse,draw]

\node[vertex] (a1) at (0,0) {\scalebox{.7}{$e$}};
\node[vertex] (a2) at (0,-1) {\scalebox{.7}{$0$}};
\node[vertex] (a3) at (0,-2) {\scalebox{.7}{$10$}};
\node[vertex] (a4) at (0,-3) {\scalebox{.7}{$210$}};

\draw[->] (a1) edge node  {} (a2);
\draw[->] (a2) edge node{} (a3);
\draw[->] (a3) edge node{} (a4);
\draw[->]  (a2) edge[bend left = 70] node[left]{\scalebox{.8}{$\sz_1$}} (a1);
\draw[->]  (a4) edge[bend left = 70] node[left]{\scalebox{.8}{$\sz_1$}} (a3);
\draw[->]  (a4) edge[bend right = 70] node[right]{\scalebox{.8}{$\sz_2$}} (a1);

\end{tikzpicture}
\end{center}
\end{minipage}
\end{center}
\vspace{-.2cm}
\caption{Fundamental domain for $L$ and its associated the graph $\Ga_{\sB}$ in type $\tilde{A}_2$}
\end{figure}
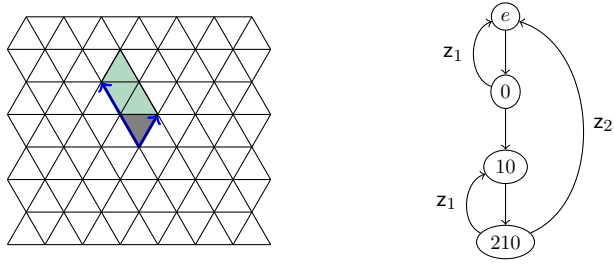
\end{example}

As in the case of $P$, we can prove the following result using similar technics as in \cite{LT2}. Here $\Q(L)$ denotes the fraction field of $\Q[L]$. 
\begin{proposition}
\ \label{PropLM}
Let $L$ be a sublattice of $P$ of rank $n$ and $\sB$ be a $L$-fundamental domain. 
\begin{enumerate}

\item The field $\func{Frac}(\sLa)$ is a finite extension of $\Q(L)$ with degree
$\func{card}(\sB)$. The family $(\xi_{w}\mid w\in\sB)$ is a basis of
$\func{Frac}(\sLa)$ over $\Q(L)$.

\item The element $\xi_{\rho_1}$ is primitive in $\func{Frac}(\sLa)$ as
an extension of $\Q(L)$, i.e we have $\func{Frac}(\sLa)=\Q(L)[\xi_{\rho_1}]$.

\item The graph $\Gamma_{\sB}$ has maximal dimension, that is the
degree of $\mu_{A_{\Ga_{\sB}}}$ is equal to $\mathrm{card}(\Gamma
_{\sB})$.

\end{enumerate}
\end{proposition}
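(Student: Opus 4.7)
The plan is to adapt the techniques of Proposition~4.2 in \cite{LT2}, which served as the model for Proposition~\ref{PropLK}. Three ingredients will be combined: the free $\Q[L]$-module structure of $\sLa_L$ with basis $\{\xi_w\mid w\in\sB\}$ (established in the text preceding the statement), the Pieri-type identity of Proposition~\ref{new_pieri}, and the strong connectivity of $\Ga_\sB$ (proved in the proof of Proposition~\ref{fdomain}). I also take as input that the homology ring $\sLa$ is an integral domain so that $\func{Frac}(\sLa)$ makes sense, a standard fact in the affine Grassmannian literature.

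For (1) and (2), form the $\Q(L)$-algebra $\widehat{\sLa}_L:=\sLa_L\otimes_{\Q[L]}\Q(L)$. Since $\sLa_L$ is free of rank $N:=\func{card}(\sB)$ over $\Q[L]$, this is a $\Q(L)$-algebra of dimension $N$ with basis $\{\xi_w\otimes 1\mid w\in\sB\}$. Because $\sLa_L$ is a localization of the domain $\sLa$ it is itself a domain, and a finite-dimensional commutative domain over a field is automatically a field; hence $\widehat{\sLa}_L$ is a field, and it coincides with $\func{Frac}(\sLa_L)=\func{Frac}(\sLa)$. This immediately yields the degree and basis claim of~(1). For~(2), consider the sequence $v_k:=\xi_{\rho_1}^{\,k}\cdot\xi_e$ for $k=0,\ldots,N-1$. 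Iterating the Pieri rule of Proposition~\ref{new_pieri} in $\sLa_a$ and reducing back into $\sB$ via $\sf_\sB$ shows that the $\xi_w$-coordinate of $v_k$ in the basis $\{\xi_w\mid w\in\sB\}$ is exactly the weighted count in $\Q_+[L]$ of length-$k$ directed paths from $e$ to $w$ in $\Ga_\sB$. In the notation of Proposition~\ref{ThGG1}, the columns of $M_e$ are precisely $v_0,\ldots,v_{N-1}$. Using the strong connectivity of $\Ga_\sB$ together with the polynomial nature of the weights~$\sz^\la$, I would show that $\det M_e\in\Q[L]$ is nonzero; a concrete route is to specialize $\sz^\la\mapsto t^{\phi(\la)}$ for a sufficiently generic linear form $\phi\colon L\to\Z$ and order $\sB$ by shortest-path distance from $e$, so that the specialized $M_e$ becomes upper triangular with nonzero leading monomials. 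Consequently $1,\xi_{\rho_1},\ldots,\xi_{\rho_1}^{\,N-1}$ are $\Q(L)$-linearly independent in $\widehat{\sLa}_L$ and form a basis by dimension, proving $\Q(L)[\xi_{\rho_1}]=\widehat{\sLa}_L=\func{Frac}(\sLa)$.

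Statement~(3) is then immediate: $A_{\Ga_\sB}=\func{Mat}_\sB(m_{\xi_{\rho_1}})$ shares its minimal polynomial with the operator $m_{\xi_{\rho_1}}$ acting on $\widehat{\sLa}_L$, which is the minimal polynomial of $\xi_{\rho_1}$ over $\Q(L)$; by~(2) this polynomial has degree $N=\func{card}(\Ga_\sB)$, so $\Ga_\sB$ is of maximal dimension. The main obstacle is the nonvanishing of $\det M_e$ in $\Q[L]$: strong connectivity alone does not force $\Q(L)$-linear independence of the $v_k$ for an arbitrary weighted graph, so the argument genuinely exploits the polynomial structure of the $\sz^\la$ and the positivity coming from the Pieri rule. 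Constructing the generic specialization and verifying the triangularity is the step demanding real care; the remaining logic is formal bookkeeping within the framework of Sections~\ref{mutl_graph} and~\ref{Sec_Homolo_Ring}.
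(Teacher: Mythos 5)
Your architecture is the right one and matches what the paper intends (the paper itself gives no proof and defers to the techniques of \cite{LT2}): part (1) follows from the freeness of $\sLa_L$ over $\Q[L]$ plus the fact that a finite-dimensional commutative domain over a field is a field; part (3) is formal once (2) is known; and (2) is correctly reduced to the nonvanishing of $\det M_e$, i.e.\ to the $\Q(L)$-linear independence of $1,\xi_{\rho_1},\dots,\xi_{\rho_1}^{N-1}$. The gap is in the one step that carries all the content: your proposed proof that $\det M_e\neq 0$. Ordering $\sB$ by shortest-path distance from $e$ does not make $M_e$ triangular, because distinct vertices of $\Ga_{\sB}$ can lie at the same distance from $e$; their rows then vanish on the same initial set of columns, and no permutation of rows, nor any specialization $\sz^{\la}\mapsto t^{\phi(\la)}$, produces a triangular matrix. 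This already occurs in the simplest relevant case: in type $\tilde{A}_2$ with $L=Q^\vee$ and $\sB=W$, the vertices $s_1s_0$ and $s_2s_0$ are both at distance $2$ from $e$ and their rows of $M_e$ agree in columns $0,\dots,3$; a direct computation gives $\det M_e=\pm\left(\sz^{\al_1^\vee}-\sz^{\al_2^\vee}\right)^2$, a difference of monomials that no leading-term/triangularity argument can detect, and which genuinely vanishes under non-generic specializations. Example \ref{ex_A3} of the paper exhibits the same phenomenon ($M_1$ has two rows agreeing except in the last column, and $\det M_1$ is divisible by $\sz_1-\sz_3$).

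So the crux --- that $\xi_{\rho_1}$ is primitive over $\Q(L)$, equivalently that $m_{\xi_{\rho_1}}$ has $\mathrm{card}(\sB)$ distinct eigenvalues over an algebraic closure of $\Q(L)$ --- is asserted rather than proved. Strong connectivity and positivity of the weights do not suffice (a strongly connected positively weighted graph can perfectly well have a multiplication matrix with repeated eigenvalues and a degenerate $M_e$), and this is precisely the point where \cite{LT2} invokes the explicit symmetric-function realization of $\sLa$ in type $A$ to exhibit pairwise distinct eigenvalues of $m_{h_1}$. Note also that the general case does not follow formally from Proposition \ref{PropLK}: since $\Q(L)\subset\Q(P^+)$ as subfields of $\func{Frac}(\sLa)$, primitivity of $\xi_{\rho_1}$ over the larger field does not imply primitivity over $\Q(L)$, so you need an argument uniform in $L$. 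Everything else in your write-up is correct bookkeeping.
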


\subsection{Quotient of some subalgebra of the homology algebra}
\label{homlagebra} 
Let $J'\subset \{1,\ldots,n\}$ and let $J$ be the complement of $J'$ in the set $\{1,\ldots,n\}$. Recall the definition of $\cAJ$ in Section~\ref{gammagamma}. In this section, we define a quotient of an ideal of $\sLa_a$ which will have a basis in bijection with $\cAJ$.

\begin{lemma}
Let $\al_j$ be a simple root and let $\cI^+_{\al_j,0}$ be the $\Q$-vector space $\langle\xi_{w}\mid w\subset H^+_{\al_j,0}\rangle_{\Q}$. Then $\cI_{\al_j,0}^+$ is a subalgebra of $\sLa_a$.
\end{lemma}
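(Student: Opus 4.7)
My plan is to verify closure under multiplication by reducing to the Pieri generators $\xi_{\rho_a}$, using the positivity features established earlier in the paper. I would proceed in three logical steps.

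First, I would record the unit: $1 = \xi_e \in \cI^+_{\al_j,0}$, since the fundamental alcove $A_0 = \{x \mid 0 < (x,\al) < 1,\ \forall\,\al\in\Phi^+\}$ satisfies $(x,\al_j) > 0$ for all $x \in A_0$. Second, I would establish the key geometric lemma: if $w \subset H^+_{\al_j,0}$ and $w \overset{+}{\leadsto} w'$, then $w' \subset H^+_{\al_j,0}$. Indeed, a positive crossing forces $w$ to lie on the negative side of the unique hyperplane $H$ separating $w$ and $w'$ (Definition \ref{crossing}). If $H$ were $H_{\al_j,0}$, that would place $w$ in $H^-_{\al_j,0}$, contradicting $w \subset H^+_{\al_j,0}$. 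Therefore $H \ne H_{\al_j,0}$, and $w, w'$ lie on the same side of $H_{\al_j,0}$. The same argument iterated handles the generalized relation $w \overset{+}{\to} w'$.

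Third, I would combine this with the Pieri rule of Proposition \ref{new_pieri} and its generalization
\[
\xi_{\rho_a}\,\xi_w \;=\; \sum_{w \overset{+}{\to} w'} c^{w'}_{\rho_a, w}\,\xi_{w'},
\]
whose coefficients are nonnegative. The combination gives $\xi_{\rho_a}\cdot \cI^+_{\al_j,0} \subset \cI^+_{\al_j,0}$ for every $a \in X$. Since $\{\xi_{\rho_a}\}_{a\in X}$ generates $\sLa$ as a $\Q$-algebra (Section \ref{subsec_affGrass}), iterating this yields $\sLa \cdot \cI^+_{\al_j,0} \subset \cI^+_{\al_j,0}$, which settles the case where one factor belongs to the Grassmannian part.

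For the remaining case, given $u, v \in W_a$ with $u, v \subset H^+_{\al_j,0}$, I would expand
\[
\xi_u \xi_v \;=\; \sz^{\wt_{\sB_0}(u)+\wt_{\sB_0}(v)}\,\xi_{\sf_{\sB_0}(u)}\,\xi_{\sf_{\sB_0}(v)} \;=\; \sum_{w' \in W_a^{\La_0}} c^{w'}_{\sf_{\sB_0}(u),\sf_{\sB_0}(v)}\,\xi_{w'\,\star\, t_{\wt_{\sB_0}(u)+\wt_{\sB_0}(v)}},
\]
and verify that each summand lies in $\cI^+_{\al_j,0}$. The structure constants $c^{w'}$ are nonnegative by positivity in $\sLa$, so it suffices to check that the alcove $w' \star t_{\wt(u)+\wt(v)}$ sits in $H^+_{\al_j,0}$ whenever the coefficient is nonzero. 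The hard part will be this last geometric verification: a naive decomposition loses information because translations by $\wt(u)$ or $\wt(v)$ alone can move alcoves across $H_{\al_j,0}$. My plan to handle it is to exhibit $\xi_u$ and $\xi_v$ as elements of the subalgebra of $\sLa_a$ generated by $\{\xi_{\rho_a}\} \cup \{\xi_{t_\mu}:(\mu,\al_j)\geq 0\}$ — a subalgebra that manifestly preserves $\cI^+_{\al_j,0}$ — by exploiting the freedom to choose an alternative decomposition adapted to the hyperplane $H_{\al_j,0}$ (rather than to $\sB_0$), using that $u, v \subset H^+_{\al_j,0}$ allows one to translate them back into the Grassmannian region $W_a^{\La_0} \cap H^+_{\al_j,0}$ via a weight with nonnegative $\al_j$-component. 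The main obstacle is carrying out this change of fundamental domain uniformly; once it is in place, closure of $\cI^+_{\al_j,0}$ under products follows from the closure already established on the generators.
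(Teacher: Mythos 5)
Your overall strategy --- reduce to the generators $\xi_{\rho_a}$ via the generalized Pieri rule, observe that positive crossings never leave $H^+_{\al_j,0}$, and then handle a general factor $\xi_u$ by factoring it through a translation into $W_a^{\La_0}$ --- is exactly the paper's. Your first three steps are correct and match the printed proof. The problem is the final paragraph, which is where the actual content of the lemma sits and which you explicitly leave open as ``the main obstacle''; moreover, the condition you propose for closing it is not the right one.

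Concretely: you want $\xi_u=\xi_{t_{-\la}}\xi_{u\star t_\la}$ with $u\star t_\la\in W_a^{\La_0}$, and you need $\xi_{t_{-\la}}$ to lie in your subalgebra $\langle \{\xi_{\rho_a}\}\cup\{\xi_{t_\mu}:(\mu,\al_j)\ge 0\}\rangle$, which forces $(\la,\al_j)\le 0$. But you ask for the weight carrying $u$ into the Grassmannian region to have \emph{nonnegative} $\al_j$-component; if $(\la,\al_j)>0$, then $t_{-\la}$ translates strictly in the $-\al_j$ direction, and since the Pieri rule only guarantees that the support of $\xi_{u\star t_\la}\xi_v$ lies in $H^+_{\al_j,0}$ (not in $H^+_{\al_j,(\la,\al_j)}$), the translated support can leave $H^+_{\al_j,0}$. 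So the plan as written does not close. The fix --- the one-line observation in the paper's proof --- is that because $u\subset H^+_{\al_j,0}$ already, no push in the $\al_j$-direction is needed: choose $\la\in\bigoplus_{k\ne j}\N\,\om_k$ large enough that $u\star t_\la\in W_a^{\La_0}$. This is possible since adding large multiples of the $\om_k$, $k\ne j$, makes $(\cdot,\al_k)$ positive for all $k\ne j$ while leaving $(\cdot,\al_j)$ untouched, and $(\cdot,\al_j)>0$ holds by hypothesis. With $(\la,\al_j)=0$ the translations $t_{\pm\la}$ preserve $H^+_{\al_j,0}$, and $\xi_u\xi_v=\xi_{t_{-\la}}\bigl(\xi_{u\star t_\la}\xi_v\bigr)$ lands in $\cI^+_{\al_j,0}$ by your step three applied to $\xi_{u\star t_\la}\in\sLa$, followed by the now harmless translation. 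Note also that only one of the two factors needs to be moved into the Grassmannian region; the other simply stays put as an element of $\cI^+_{\al_j,0}$.
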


\begin{proof}
We only need to prove that $\cI^+_{\al_j,0}$ is stable by multiplication. Let $x,y\subset H^+_{\al_j,0}$.  There exists $\la\in \oplus_{k\neq j} \N \om_k$ such that $x\star t_{\la}\in W_a^{\La_0}$. We have $\xi_{x}\xi_y = \xi_{t_{-\la}}\xi_{x\star t_\la}\xi_{y}$. For all $a\in X$, recall that 
$$\xi_{\rho_a}\xi_{w} =\sum_{w',w\overset{+}{\to}w'} a_{w'}\xi_{w'}.$$
Since $x \star t_\la\in W_a^{\La_0}$, we have $\xi_{x \star t_\la}\in \sLa$ and since the $\xi_{\rho_a}$'s generate $\sLa$, we can show that 
$$\xi_{x \star t_\la}\xi_{y} =\sum_{w',y\overset{+}{\to}w'} a_{w'}\xi_{w'}.$$
Now $y\overset{+}{\to}w'$ and $y\subset H^+_{\al_j,0}$  implies that $w'\in H^+_{\al_j,0}$. Since $\la\in \oplus_{k\neq j} \N \om_j$ we get that $w'\star t_{-\la}$ also lies in~$H^+_{\al_j,0}$. Finally $\xi_{x}\xi_y$ lies in $I_{\al_j,0}^+$.
\end{proof}
We set $\cC_{J} = \cap_{j\in J} H_{\al_j,0}^+$ (this is the fundamental Weyl chamber for the group $W_J$). As a consequence of the previous lemma, we see that 
$$\sLa^+_{J,0} = \bigcap_{j\in J} \cI_{\al_j,0}^+ = \langle \xi_{w}\mid w\in \cC_J \rangle_{\Q}$$ is a subalgebra of $\sLa_a$. 

\begin{lemma}
Let $\al\in \Phi_J^+$ and let $\cI^+_{\al,1} = \langle\xi_{w}\mid w\subset H^+_{\al,1}\cap \cC_J\rangle_{\Q}$.
Then  $\cI^+_{\al,1}$ is an ideal of~$\sLa^+_{J,0}$.
\end{lemma}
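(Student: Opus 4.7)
The plan is to mirror the proof of the previous lemma almost verbatim, exploiting the fact that any $\al\in\Phi_J^+$ is a non-negative integer combination of the simple roots $\al_j$ with $j\in J$. First I would observe the trivial containment $\cI_{\al,1}^+\subseteq\sLa_{J,0}^+$ (since $H_{\al,1}^+\cap\cC_J\subseteq\cC_J$), so it remains only to show stability under multiplication by generators $\xi_x$ with $x\in\cC_J$.

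Given $x\in\cC_J$ and $y\in H_{\al,1}^+\cap\cC_J$, the key is to choose a weight $\la\in\bigoplus_{k\notin J}\N\om_k$ large enough that $x\star t_\la\in W_a^{\La_0}$. Because $\om_k$ is dual to $\al_k^\vee$, such a $\la$ satisfies $(\la,\al_j)=0$ for every $j\in J$, and writing $\al=\sum_{j\in J}c_j\al_j$ with $c_j\geq 0$ yields $(\la,\al)=0$ as well. Thus the translation $t_{-\la}$ fixes every hyperplane $H_{\al_j,0}$ for $j\in J$ and also fixes $H_{\al,1}$ pointwise, hence preserves both $\cC_J$ and $H_{\al,1}^+$.

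Now write $\xi_x\xi_y=\sz^{-\la}\,\xi_{x\star t_\la}\,\xi_y$ in $\sLa_a$, and apply the Pieri-type expansion established in the proof of the previous lemma (which uses that $\xi_{x\star t_\la}\in\sLa$ is expressible in the generators $\xi_{\rho_a}$):
$$\xi_{x\star t_\la}\,\xi_y=\sum_{y\overset{+}{\to}w'}a_{w'}\,\xi_{w'}.$$
The definition of $y\overset{+}{\to}w'$ forces $y$ and $w'$ to lie on the same side of every hyperplane of the form $H_{\al_j,0}$ (for $j\in J$) or $H_{\al,1}$: otherwise that hyperplane would separate them, and the positivity condition would put $y$ on the opposite side from where it actually lies ($y\in H_{\al_j,0}^+$ and $y\in H_{\al,1}^+$). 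Consequently $w'\in H_{\al,1}^+\cap\cC_J$.

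Finally, since $(\la,\al_j)=0$ and $(\la,\al)=0$, the alcove $w'\star t_{-\la}$ again belongs to $H_{\al,1}^+\cap\cC_J$, so $\sz^{-\la}\xi_{w'}=\xi_{w'\star t_{-\la}}\in\cI_{\al,1}^+$, which gives $\xi_x\xi_y\in\cI_{\al,1}^+$ and proves the ideal property. The main point requiring care is the Pieri-type expansion of $\xi_{x\star t_\la}\xi_y$ in $\sLa_a$, but this is exactly the fact already invoked in the preceding lemma, so no new technical input is needed.
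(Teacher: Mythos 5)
Your proposal follows the paper's proof essentially verbatim: the same choice of $\la\in\bigoplus_{k\in J'}\N\om_k$ with $x\star t_\la\in W_a^{\La_0}$, the same Pieri-type expansion of $\xi_{x\star t_\la}\xi_y$ over $\{w'\mid y\overset{+}{\to}w'\}$, and the same observation that $(\la,\al)=0$ because $\al$ is a nonnegative combination of the $\al_j$, $j\in J$, so that both $w'$ and $w'\star t_{-\la}$ remain in $H_{\al,1}^+\cap\cC_J$. The argument is correct and matches the paper's.
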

\begin{proof}
Let $\xi_x\in \sLa^+_{J,0}$ and $\xi_y\in \cI^+_{\al,1}$. There exists $\la\in \oplus_{k\in J'} \N \om_k$ such that $x\star t_{\la}\in W_a^{\La_0}$. We have $\xi_{x}\xi_y = \xi_{t_{-\la}}\xi_{x \star t_\la}\xi_{y}$. Arguying as in the proof of the previous lemma we get
$$\xi_{x \star t_\la}\xi_{y} =\sum_{w',y\overset{+}{\to}w'} a_{w'}\xi_{w'}.$$
Now $y\overset{+}{\to}w'$ implies that $w'\in H_{\al,1}^+\cap \cC_J$. Since $\la\in \oplus_{k\in J'} \N \om_i$, we get that $w'\star t_{-\la}\in H_{\al,1}^+\cap \cC_J$ since the root $\al$ is a positive sum of simple roots in $\{\al_j,j\in J\}$. It follows that $\xi_x\xi_y\in \cI_{\al,1}^+$. 
\end{proof}
\begin{definition}
We define~$\sLa_J$ to be the following quotient:
$$\sLa_{J} := \sLa^+_{J,0}\slash\sum_{\al\in \Phi_J^+}\cI_{\al,1}^+.$$
\end{definition}
It is clear that $\sLa_{J}$ admits $\{\ti{\xi}_{u}\mid u\in \cAJ\}$ as a basis where $\ti{\xi}_{u}$ denotes the image of $\xi_u$ in the quotient $\sLa_J$ and $\cAJ$ is the fondamental $J$-alcove defined in Section \ref{gammagamma}.  
In the case where $J=\emptyset$ and $\ga=\rho$, we get that $\sLa^+_{J,0} = \sLa_a$ and $\sLa_J = \sLa_a$.
The following proposition is a consequence of Proposition \ref{new_pieri}.

\begin{proposition}
\label{mult_GaJ}
For all $w\in \cA_J\cap W_a^{\La_0}$, we have 
$$\tilde{\xi}_{\rho_1}\tilde{\xi}_{w} =\sum_{i\in I, w\overset{+}{\leadsto} s_iw, s_iw\in \cAJ} a_{i}\tilde{\xi}_{s_iw}.$$
\end{proposition}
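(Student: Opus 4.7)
The plan is to apply the identity of Proposition \ref{new_pieri} in $\sLa_a$ and reduce modulo the ideal defining $\sLa_J$, showing that exactly the terms with $s_iw \notin \cAJ$ vanish.

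First I would note that $\xi_{\rho_1} = \xi_{s_0}$ and $\xi_w$ both lie in the subalgebra $\sLa^+_{J,0}$: indeed $s_0 \in \cC_0 \subseteq \cC_J$, and $w \in \cAJ \subseteq \cC_J$. Applying Proposition \ref{new_pieri} thus yields
\[
\xi_{\rho_1}\xi_w \;=\; \sum_{i \in I,\ w\,\overset{+}{\leadsto}\, s_iw} a_i\, \xi_{s_iw}
\]
in $\sLa_a$. Since the left-hand side belongs to $\sLa^+_{J,0}$, whose basis is $\{\xi_u : u \in \cC_J\}$, every alcove $s_iw$ appearing on the right automatically lies in $\cC_J$.

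Next, I would show that when $s_iw \notin \cAJ$ the class $\xi_{s_iw}$ lies in the ideal $\sum_{\al \in \Phi_J^+} \cI^+_{\al,1}$. By the proof of Theorem \ref{cross_strip}, the hyperplane $H$ separating $w$ from $s_iw$ has direction in $\Phi_J$; choose $\beta \in \Phi_J^+$ so that $H = H_{\beta, k}$ for some $k \in \Z$. Since $w \in \cAJ$, every $x \in w$ satisfies $0 \leq (x, \beta) \leq 1$, forcing $k \in \{0, 1\}$. The positive crossing $w \overset{+}{\leadsto} s_iw$ places the interior of $w$ in $H^-_{\beta, k}$; for $k = 0$ this would give $(x, \beta) < 0$, contradicting $(x, \beta) \geq 0$ on $\cC_J \supseteq w$. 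Therefore $k = 1$, and $s_iw \subseteq \cC_J \cap H^+_{\beta, 1}$, so $\xi_{s_iw} \in \cI^+_{\beta,1}$ and $\ti\xi_{s_iw} = 0$ in $\sLa_J$.

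Consequently, passing the identity to the quotient kills precisely the terms with $s_iw \notin \cAJ$, yielding the desired formula. The main technical step is the case analysis in the previous paragraph: combining Theorem \ref{cross_strip}, which identifies the direction $\beta$ of the exit hyperplane as lying in $\Phi_J^+$, with the positivity of the crossing, which forces the level to be $k = 1$, so that $s_iw$ lands on the positive side of $H_{\beta, 1}$ and $\xi_{s_iw}$ falls inside the ideal $\cI^+_{\beta, 1}$.
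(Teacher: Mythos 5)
Your argument is correct and is precisely the route the paper intends: the paper states this proposition with no written proof, merely noting it is "a consequence of Proposition \ref{new_pieri}", and your proof supplies exactly the missing details — apply the Pieri rule in $\sLa_a$, observe the product stays in the subalgebra $\sLa^+_{J,0}$, and check that a positively crossed wall with direction in $\Phi_J^+$ must sit at level $1$, so the corresponding terms fall into the ideal $\sum_{\al\in\Phi_J^+}\cI^+_{\al,1}$ and vanish in $\sLa_J$. The case analysis via Theorem \ref{cross_strip} and Definition \ref{crossing} is exactly what is needed and is carried out correctly.
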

We have defined an action of $Q^\vee \slash Q_J^\vee$ on $\cAJ$ in Theorem \ref{UJ} and showed that 
$$\cAJ=\{u\add t_{\ov{\al}}\mid u\in W^J, \al\in Q^\vee\}.$$
This action induces a map from $\sLa_J$ to itself defined by 
$\ti{\xi}_{w}\mapsto \ti{\xi}_{w\add t_{\ov{\al}}}$. We also denote this map $t_{\ov{\al}}$ and write $(\ti{\xi}_{w})t_{\ov{\al}} =  \ti{\xi}_{w\add t_{\ov{\al}}}$. Finally, we simply write $\ti{\xi}_{t_{\ov{\al}}}$ instead of $\ti{\xi}_{e\add t_{\ov{\al}}}$ where $e$ is the identity element of $W_a$ (corresponding to the fundamental alcove~$A_0$). 

\smallskip

Our strategy to establish that the graph $\Ga_\rho$ is positively multiplicative is to show that the adjacency matrix of $\Ga_\rho$ is essentially the matrix of $m_{\xi_{\rho_1}}$ in $\sLa_{P}$ in a well chosen basis. We want to do the same thing for $\Ga_\ga$ using the quotient $\sLa_J$, the multiplication by $\ti{\xi}_{\rho_1}$ and the basis $\{\ti{\xi}_w\mid w\in W^J\}$. To do so, we need to prove some factorisation properties in $\sLa_J$ using the element $\ti{\xi}_{t_{\ov{\al}}}$ defined above.

\begin{theorem}
We have $(\tilde{\xi}_{w})t_{\ov{\al}} =\tilde{\xi}_{w}\tilde{\xi}_{t_{\ov{\al}}}=\tilde{\xi}_{t_{\ov{\al}}}\tilde{\xi}_{w}$ for all $w\in W$ and all $\al\in Q^\vee$.
\end{theorem}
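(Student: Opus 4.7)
The plan is to interpret both sides as linear operators on $\sLa_J$ and show they coincide, by exploiting the Pieri rule together with the translation-invariance of crossings established in Corollary~\ref{crossUJ}. Commutativity $\tilde{\xi}_w\tilde{\xi}_{t_{\ov{\al}}}=\tilde{\xi}_{t_{\ov{\al}}}\tilde{\xi}_w$ is automatic from commutativity of $\sLa_a$ and hence of its subquotient $\sLa_J$, so I only need to prove $(\tilde{\xi}_w)t_{\ov{\al}}=\tilde{\xi}_w\tilde{\xi}_{t_{\ov{\al}}}$. If $w\in W\setminus W^J$ the alcove $A_0w$ does not lie in $\cAJ$ (since $W\cap\cAJ=W^J$ by Theorem~\ref{UJ} and the discussion preceding it), so $\tilde{\xi}_w=0$ and both sides vanish. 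Thus the whole argument reduces to $w\in W^J$.

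Introduce the $\Q$-linear map $\psi:\sLa_J\to\sLa_J$ defined on the basis $\{\tilde{\xi}_u\mid u\in\cAJ\}$ by $\psi(\tilde{\xi}_u)=\tilde{\xi}_{u\add t_{\ov{\al}}}$; this is well defined since $\add t_{\ov{\al}}$ is a bijection of $\cAJ$. Because $\tilde{\xi}_e=1$ and $e\add t_{\ov{\al}}=p_J(t_\al)$, we have $\psi(1)=\tilde{\xi}_{t_{\ov{\al}}}$ by definition. The theorem is therefore equivalent to the assertion that $\psi$ is $\sLa_J$-linear when $\sLa_J$ is viewed as a (left) module over itself, for then $\psi(\tilde{\xi}_w)=\psi(\tilde{\xi}_w\cdot 1)=\tilde{\xi}_w\cdot\psi(1)=\tilde{\xi}_w\tilde{\xi}_{t_{\ov{\al}}}$.

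The central step is to show that $\psi$ commutes with left multiplication by each generator $\tilde{\xi}_{\rho_a}$, $a\in X$. Fix $u\in\cAJ$. The projection of the generalized Pieri rule~\eqref{GenralPieri} to $\sLa_J$ reads
\[
\tilde{\xi}_{\rho_a}\tilde{\xi}_u=\sum_{\substack{u'\in\cAJ\\ u\overset{+}{\to}u'}} a_{u'}\,\tilde{\xi}_{u'},
\]
where the relation $u\overset{+}{\to}u'$ and the condition $u'\in\cAJ$ are both invariant under $\add t_{\ov{\al}}$: invariance of the crossing is Corollary~\ref{crossUJ}, and invariance of the $\cAJ$-membership together with the identity $s_i(u\add t_{\ov{\al}})=(s_iu)\add t_{\ov{\al}}$ follow from Theorem~\ref{cross_strip} combined with the decomposition $\cAJ\cong W^J\times Q^\vee/Q_J^\vee$ in Theorem~\ref{UJ} (the $W^J$-part is fixed by $\add t_{\ov{\al}}$). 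Applying $\psi$ term by term and comparing with the Pieri expansion of $\tilde{\xi}_{\rho_a}\tilde{\xi}_{u\add t_{\ov{\al}}}$ gives $\psi(\tilde{\xi}_{\rho_a}\tilde{\xi}_u)=\tilde{\xi}_{\rho_a}\psi(\tilde{\xi}_u)$.

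A parallel verification is needed for the scalars $\sz^\kappa$ that generate $\sLa_J$ together with the $\tilde{\xi}_{\rho_a}$'s: multiplication by $\sz^\kappa$ permutes the basis of $\sLa_J$ via a translation which commutes with $\add t_{\ov{\al}}$, hence with $\psi$. Since $\{\tilde{\xi}_{\rho_a}\}_{a\in X}$ together with these scalars generate $\sLa_J$ as a $\Q$-algebra (inherited from the fact that $\{\xi_{\rho_a}\}$ generate $\sLa$ over $\Q[P^+]$), commutation on generators extends multiplicatively and linearly to commutation with left multiplication by every element, establishing the $\sLa_J$-linearity of $\psi$ and finishing the proof. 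The main obstacle is the last generation claim together with the precise verification that $s_i\cdot(u\add t_{\ov{\al}})=(s_iu)\add t_{\ov{\al}}$ whenever both sides lie in $\cAJ$; this requires a careful case analysis using the explicit form $u\add t_{\ov{\al}}=u^Jv_J(\beta)t_\beta$ from Theorem~\ref{UJ}, where $\beta\in\cAJ\cap Q^\vee$ is the unique representative of $\ov{\al}$.
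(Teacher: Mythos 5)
Your argument is correct and follows essentially the same route as the paper's proof: you show that the pseudo-translation operator $t_{\ov{\al}}$ (your $\psi$) commutes with multiplication by the Pieri generators $\tilde{\xi}_{\rho_a}$ via Corollary~\ref{crossUJ} and the generalised Pieri rules, extend to all multiplication operators by generation, and then evaluate at $\tilde{\xi}_e=1$ to obtain the factorisation. The only cosmetic difference is your explicit handling of $w\in W\setminus W^J$ (where $\tilde{\xi}_w$ is really undefined rather than zero, since $\xi_w$ need not lie in $\sLa^+_{J,0}$) and of the scalar generators, both of which the paper leaves implicit by working directly with $w\in\cAJ$.
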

\begin{proof}
The map of $m_{\xi_{\rho_1}}$ commutes with $t_{\ov{\al}}$. Indeed, for all $w,u\in \cAJ$  we have $w\crossp u$ if and only if $w\add t_{\ov{\al}}\crossp u\add t_{\ov{\al}}$ by Corollary \ref{crossUJ}. In other words, we have
$$\ti{\xi}_{\rho_1}\cdot (\ti{\xi}_w)t_{\ov{\al}} =  (\ti{\xi}_{\rho_1}\cdot \ti{\xi}_w)t_{\ov{\al}}. $$
 From there, using the generalised Pieri rules, we see that $t_{\ov{\al}}$ commutes with all the $\ti{\xi}_{\rho_a}$ for $a\in X$ and therefore with $\ti{\xi}_x$ for all $x\in \cA_J$. We get for all $w\in \cAJ$
$$\tilde{\xi}_{t_{\ov{\al}}}\cdot \tilde{\xi}_{w} =\tilde{\xi}_{w} \cdot  \tilde{\xi}_{t_{\ov{\al}}}= \tilde{\xi}_{w} \cdot (\tilde{\xi}_e) t_{\ov{\al}}= ( \tilde{\xi}_{w}\tilde{\xi}_e)t_{\ov{\al}} = ( \tilde{\xi}_{w})t_{\ov{\al}} = \xi_{w\add t_{\ov{\al}} }$$
\end{proof}

Let $w=u\add t_{\ov{\al}}$ where $u\in W^J$ and $\al\in Q^\vee$. There exists a unique family $(m_1,\ldots,m_k)$ of integers  such that $\ov{\al} = m_1\ov{\al_{i_1}}+\ldots+m_k\ov{\al_{i_k}}$ where $i_1,\ldots,i_k\in J'$.
It follows from the previous theorem that $\ti{\xi}_w$ with $w\in \cAJ$ can be factorised under the form 
$$\ti{\xi}_w = \ti{\xi}_{t_{\ov{\al_{i_1}}}}^{m_1}\cdot \ti{\xi}_{t_{\ov{\al_{i_2}}}}^{m_2} \cdots \ti{\xi}_{t_{\ov{\al_{i_k}}}}^{m_k}\cdot \xi_{u}.$$
We see that the algebra $\sLa_J$  is finite-dimensional over $\Q[\ti{\xi}_{t_{\ov{\al_{j}}}}^{\pm1}\mid j\in J']$ with basis $\{\ti{\xi}_w\mid w\in W^J\}$. Note that the algebra $\Q[\ti{\xi}_{t_{\ov{\al_{j}}}}^{\pm1}\mid j\in J']$ is isomorphic to the group algebra $\Q[Q^\vee \slash Q^\vee_J]$ via the map 
$$\sz^{\ov{\al}} \mapsto \prod_{j\in J'}\ti{\xi}_{t_{\ov{\al_{j}}}}^{m_j} \qu{where $\ov{\al} = \sum_{j\in J'} m_j\ov{\al_{j}}$.}$$
We will identify these two algebras. 

\smallskip

We define~$\Ga_{W^J}$ to be the graph with set of vertices $W^J$ and adjacency matrix $\func{Mat}_{W^J}(m_{\ti{\xi}_{\rho_1}})$ with coefficients in  $\Z[Q^\vee \slash Q^\vee_J]$. We immediatly get the following result.

\begin{proposition}
\label{pos_ga_ga}
The graph $\Ga_{W^J}$ is positively multiplicative at $e$. 
\end{proposition}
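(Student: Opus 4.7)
The plan is to verify directly that the algebra $\sLa_J$, equipped with the element $\ti{\xi}_{\rho_1}$ and the basis $\{\ti{\xi}_w \mid w \in W^J\}$ over $\Q[Q^\vee/Q^\vee_J]$, witnesses the positive multiplicativity of $\Ga_{W^J}$ at $e$. This is essentially an application of Proposition~\ref{m_x} together with a careful tracking of positivity through the various constructions.

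First I would set up the ingredients required by Proposition~\ref{m_x}. By the factorization established just before the statement, every $\ti{\xi}_w$ with $w\in \cAJ$ can be uniquely written as $\ti{\xi}_{t_{\ov{\al}}}\,\ti{\xi}_u$ with $u\in W^J$ and $\ov{\al}\in Q^\vee/Q^\vee_J$; identifying the polynomial algebra $\Q[\ti{\xi}_{t_{\ov{\al_j}}}^{\pm 1}\mid j\in J']$ with $\Q[Q^\vee/Q^\vee_J]$, this shows that $\sLa_J$ is a free module over $\Q[Q^\vee/Q^\vee_J]$ with basis $\{\ti{\xi}_w\mid w\in W^J\}$ containing $\ti{\xi}_e=1$. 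The adjacency matrix $A_{\Ga_{W^J}}$ is by definition $\func{Mat}_{W^J}(m_{\ti{\xi}_{\rho_1}})$, so Proposition~\ref{m_x} applies and yields multiplicativity of $\Ga_{W^J}$ at $e$.

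Next I would check positivity of the structure constants of $\sLa_J$ expressed in $\{\ti{\xi}_w\mid w\in W^J\}$. The original homology ring $\sLa$ has nonnegative integer structure constants in the Schubert basis $\{\xi_w \mid w\in W_a^{\La_0}\}$. Using the factorization $\xi_{w\star t_{\om_i}}=\xi_w\xi_{t_{\om_i}}$, this positivity extends to $\sLa_a$ written in the basis $\{\xi_w\mid w\in W_a\}$ over $\Q$, and hence to the enlarged $\Q[P]$-algebra $\sLa_P$ with monomial coefficients in $\N[P^+]$. Passing to the quotient $\sLa_J$, which kills the classes $\ti\xi_w$ for $w\notin\cAJ$, preserves nonnegativity of the remaining coefficients, giving a $\Q$-basis $\{\ti\xi_w\mid w\in \cAJ\}$ with structure constants in $\N$. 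Finally, using the bijection $\cAJ\leftrightarrow W^J\times Q^\vee/Q^\vee_J$ from Theorem~\ref{UJ} to group terms according to their $Q^\vee/Q^\vee_J$-part, a product $\ti{\xi}_u\ti{\xi}_v$ with $u,v\in W^J$ expands as
\[
\ti{\xi}_u\ti{\xi}_v=\sum_{w\in W^J}\Bigl(\sum_{\ov{\al}\in Q^\vee/Q^\vee_J} c^{w\add t_{\ov\al}}_{u,v}\,\sz^{\ov\al}\Bigr)\,\ti{\xi}_w,
\]
so that the resulting coefficients lie in $\N[Q^\vee/Q^\vee_J]\subset \Q_+[Q^\vee/Q^\vee_J]$, as required.

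The only subtle point in the plan, and hence the main obstacle, is confirming that the passage from $\sLa$ to its enlargement $\sLa_a$ and then to the quotient $\sLa_J$ genuinely preserves nonnegativity rather than merely structural integrality. For the enlargement this is handled by the factorization property of the $\xi_{t_{\om_i}}$ (which act as multiplication by monomials with unit coefficient), and for the quotient it follows from the observation that the ideal $\sum_{\al\in\Phi_J^+}\cI^+_{\al,1}$ is spanned by a subset of the basis $\{\xi_w\mid w\in \cC_J\}$, so the projection $\xi_w\mapsto \ti\xi_w$ simply discards certain basis terms in any expansion with nonnegative coefficients. Once these compatibilities are verified the conclusion is immediate.
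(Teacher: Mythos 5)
Your proposal is correct and follows essentially the same route as the paper: the paper deduces the statement "immediately" by the same mechanism as Proposition~\ref{mult_B0}, namely that $A_{\Ga_{W^J}}=\func{Mat}_{W^J}(m_{\ti{\xi}_{\rho_1}})$, that $\ti{\xi}_e=1$ lies in the basis $\{\ti{\xi}_w\mid w\in W^J\}$, and that $\sLa_J$ is positively multiplicative with respect to this basis, so Proposition~\ref{m_x} applies. Your explicit tracking of nonnegativity through the passage $\sLa\to\sLa_a\to\sLa_J$ (the quotient merely discarding basis terms from a nonnegative expansion, then regrouping via Theorem~\ref{UJ}) is exactly the verification the paper leaves implicit, and it is sound.
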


\begin{remark}
From there it is possible to study central random walk in the strip $\cAJ\cap W_a^{\La_0}$. Indeed, let~$\sB_0$ be a subset of alcoves in $\cAJ$ defined by $w\in \sB_0$ if and only if $w\add t_{-\ov{\al}_{j'}}\notin \cAJ\cap W_a^{\La_0}$ for all $j'\in J'$. Then the set $\{\ti{\xi}_{w}\mid w\in \sB_0\}$ is a basis of $\sLa_J$ and  the graph $\Ga_{\sB_0}$ with adjacency matrix $\func{Mat}_{\sB_0}(m_{\ti{\xi}_{\rho_1}})$ is positively multiplicative. It is not hard to see that the expansion of $\Ga_{\sB_0}$ will be the graph of the weak Bruhat order on $\cAJ\cap W_a^{\La_0}$ and therefore the methods of \cite{LT2} should be generalisable to study central randon walks on $\cAJ$.
\end{remark}

\subsection{Automatons for affine Weyl group and random walks}

We conclude this section with some considerations about finite automatons
recognising the reduced expressions of any element in an affine Weyl group
$W$. The existence of such automatons, their minimality and their explicit
description is a difficult problem which has been considered in recent
publications (see in particular \cite{HNW}, \cite{PY} and the references
therein). 

As explained in section \ref{gammagamma}, the results in this paper yield
similar automatons for reduced expressions of affine Grassmanians or affine Grassmanians contain in a strip $\cAJ$.
Moreover in both cases, we
are able to prove that the associated graphs are positively multiplicative. By
the results of \cite{GLT1}, we thus have a complete description of the
positive harmonic functions defined on their expanded version and a good
understanding of the associated central Markov chains. This is in particular
the case for $W_a^{\La_0}$ where these Markov chains can be regarded as
random walks on the alcoves located in the fundamental Weyl chamber which
never cross two times the same hyperplane.

Now, this is a natural question to extend the previous results on random walks
on the alcoves of the fondamental Weyl chamber to similar random walks on the full set of
 alcoves.\ Therefore, since we have finite automatons recognising the
reduced expressions of all the elements in $W_a$, one could ask whether they are
positively multiplicative. In fact this is easy to show this cannot be the
case in general. Indeed, consider the infinite diedral group $W_a=\langle
s_{0},s_{1}\rangle$, that is the affine Weyl group of type $\tilde{A}_{1}$.
The reduced expressions in $W$ have the form $w=(s_{1}s_{0})^{l},w=s_{0}%
(s_{1}s_{0})^{l}$ with $l\geq0$ or $w=(s_{0}s_{1})^{k},w=s_{1}(s_{0}s_{1}%
)^{k}$ with $k\geq0$. It is not difficult to see that the finite automaton
with $3$ vertices $v_{1},v_{2},v_{3},$ start state $v_{1}$, accepting states
$v_{1},v_{2},v_{3},$ and transition matrix%
\[
A=\left(
\begin{array}
[c]{ccc}%
0 & 0 & 0\\
1 & 0 & 1\\
1 & 1 & 0
\end{array}
\right)
\]
recognizes exactly all the previous reduced expressions.\ But this automaton
does not admit any weighted version%
\[
A^{\prime}=\left(
\begin{array}
[c]{ccc}%
0 & 0 & 0\\
a & 0 & d\\
b & c & 0
\end{array}
\right)
\]
(where $a,b,c,d$ positive reals) which could be positively multiplicative To see this, we can apply
Theorem 3.1.2 in \cite{GLT1} showing that the graph with adjacency matrix $A'$
is positively multiplicative if and only if the entries of the matrix%
\[
M=\frac{1}{a^{2}c-b^{2}d}\left(
\begin{array}
[c]{ccc}%
1 & 0 & 0\\
0 & ac & -bd\\
0 & -b & a
\end{array}
\right)
\]
are all nonnegative which is clearly impossible. Observe also that the
previous graph is not simply connected: once the vertex $v_{2}$ or $v_{3}$ is
attained, it becomes impossible to come back at $v_{1}$. 

This indicates that the results of the present paper cannot be extended in a
direct way to the complete affine Weyl group. Therefore the determination of
the positive harmonic functions on the alcove walks supported by all the
alcoves needs a different approach. In the present case, they are particularly
easy to determine due to the very simple structure of the Cayley graph for $W_a$
\begin{center}
\begin{tikzpicture}[scale=.8]
\tikzstyle{vertex}=[inner sep=5pt]

\node[vertex] (a1) at (0,0) {\scalebox{1}{$e$}};
\node[vertex] (a21) at (-1,-1) {\scalebox{1}{$s_0$}};
\node[vertex] (a22) at (1,-1) {\scalebox{1}{$s_1$}};

\node[vertex] (a31) at (-1,-2) {\scalebox{1}{$s_1s_0$}};
\node[vertex] (a32) at (1,-2) {\scalebox{1}{$s_0s_1$}};

\node[vertex] (a41) at (-1,-3) {\scalebox{1}{$s_0s_1s_0$}};
\node[vertex] (a42) at (1,-3) {\scalebox{1}{$s_1s_0s_1$}};

\node[vertex] (a51) at (-1,-4) {\scalebox{1}{$s_1s_0s_1s_0$}};
\node[vertex] (a52) at (1,-4) {\scalebox{1}{$s_0s_1s_0s_1$}};

\node[vertex] (a61) at (-1,-5) {};
\node[vertex] (a62) at (1,-5) {};

\draw[->] (a1) edge node  {} (a21);
\draw[->] (a1) edge node  {} (a22);

\draw[->] (a21) edge node{} (a31);
\draw[->] (a31) edge node{} (a41);
\draw[->] (a41) edge node{} (a51);
\draw[->,dotted] (a51) edge node{} (a61);

\draw[->] (a22) edge node{} (a32);
\draw[->] (a32) edge node{} (a42);
\draw[->] (a42) edge node{} (a52);
\draw[->,dotted] (a52) edge node{} (a62);

\end{tikzpicture}
\end{center}

Indeed such an harmonic function $f$ normalized so that $f(e)=1$ satisfies
$f(e)=f(s_{0})+f(s_{1})=1$ and is constant on both subsets of
reduced expressions starting by $s_{0}$ and $s_{1}$. Thus, it is completely
determined by the choice of $f(s_{0})\in]0,1[$ and we have an immediate parametrisation
of the positive harmonic functions by the interval $]0,1[$.\ It would be very
interesting to get a similar parametrisation in the general case of affine
Weyl groups.

\section{Proof of the main Theorems}
In this section we prove that $\Ga_\rho$ and $\Ga_\ga$ are positively multiplicative. To avoid confusion, let us recall how we have defined the different algebras that we have used so far. We started with $\sLa$, the homology ring of affine Grassmannians, which is a commutative $\Q$-algebra with distinguished basis $\{\xi_w\mid w\in W_a^{\La_0}\}$. From there, using the factorisation properties of $\sLa$, we saw that $\sLa$ is a finite-dimensional algebra over~$\Q[P^+]$. This result allowed us to extend the scalars to the ring  $\Q[P]$ and to define a commutative $\Q$-algebra $\sLa_a$ with distinguished basis $\{\xi_w\mid w\in W_a\}$ where $\xi_{w} = \sz^{\wt_{\sB_0}(w)}\xi_{\sf_{\sB_0}(w)}$. Then we showed the following results:
\begin{enumerate}
\item The algebra $\sLa_a$ is a finite-dimensional algebra over $\Q[Q^\vee]$ with basis $\{\xi_w\mid w\in \sB\}$ where~$\sB$ is a  fundamental domain for the action of $Q^\vee$. We denote this algebra by $\sLa_{Q^\vee}$
\item Given $J'=\{i_1,\ldots,i_k\}$ and its complement $J$ in $\{1,\ldots,n\}$, we defined a quotient $\sLa_J$ of some ideal of $\sLa_a$ which is finite-dimensional over $\Q[Q^\vee\slash Q_J^\vee]$ with basis $\{\ti{\xi}_w\mid w\in W^J\}$. 
\end{enumerate}
We use the first assertion to show that $\Ga_\rho$ is positively multiplicative and the second to show that $\Ga_\ga$ where $\ga=\sum_{j\in J'} \om_j$ is positively multiplicative.

\subsection{The graph $\Ga_\rho$}
Let $\bar{\ }$ be the involution of $\Q[Q^{\vee}]$ defined by $\ov{\sz^\al} = \sz^{-\al}$ for all $\al\in Q^\vee$.  Recall that 
\begin{equation*}
\xi_{\rho_1}\xi_{w} =\sum_{i\in I, w\overset{+}{\leadsto} s_iw} a_{i}\xi_{s_iw}\in \sLa_a.
\end{equation*}
By definition the set $W$ is a fundamental domain for the action of $Q^\vee$ so that $\{\xi_w\mid w\in W\}$ is a basis of the algebra $\sLa_{Q^\vee}$. Recall that $\Ga_W$ is the graph with set of vertices $W$ and adjacency matrix $\func{Mat}_{W}(m_{\xi_{\rho_1}})$ where~$m_{\xi_{\rho_1}}$ is the multiplication by $\xi_{\rho_1}$ in $\sLa_{Q^\vee}$ expressed in the basis $\{\xi_w\mid w\in W\}$.

\begin{theorem}
$\func{Mat}_{W}(m_{\xi_{\rho_1}}) = {^t}\ov{A_{\Ga_\rho}}$ and $\Ga_\rho$ is positively multiplicative. 
\end{theorem}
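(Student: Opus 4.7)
I would split the theorem into two claims, handled in sequence: first the matrix identity, then positive multiplicativity as a consequence.

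For the identity $\func{Mat}_W(m_{\xi_{\rho_1}})={^t}\ov{A_{\Ga_\rho}}$, the approach is a direct entry-by-entry computation. Fix $u\in W$; by Proposition~\ref{new_pieri} applied in $\sLa_a$ together with the factorization $\xi_w=\sz^{\wt(w)}\xi_{\sf(w)}$ valid in $\sLa_{Q^\vee}$ (since $W$ is a fundamental domain for $Q^\vee$),
\[\xi_{\rho_1}\xi_u \;=\; \sum_{i\in I,\ u\crossp s_iu} a_i\,\sz^{\wt(s_iu)}\,\xi_{\sf(s_iu)}.\]
Since $\wt(u)=0$, Proposition~\ref{Gamma_rho_orientation}(2) identifies the pairs $(i,v)$ contributing with $v=\sf(s_iu)$ as exactly those indexing the arrows $v\to u$ in $\Ga_\rho$, with weight $a_i\sz^{-\wt(s_iu)}$. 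Taking bar conjugates then yields $\func{Mat}_W(m_{\xi_{\rho_1}})_{v,u}=a_i\sz^{\wt(s_iu)}=\ov{(A_{\Ga_\rho})_{u,v}}=({^t}\ov{A_{\Ga_\rho}})_{v,u}$, as required.

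For positive multiplicativity, the starting point is Proposition~\ref{fdomain}, which (with $L=Q^\vee$ and fundamental domain $W$) asserts that $\Ga_W$, the graph with adjacency matrix $\func{Mat}_W(m_{\xi_{\rho_1}})$, is positively multiplicative at $e$ in the basis $\{\xi_w\}_{w\in W}$ of $\sLa_{Q^\vee}$. The identity above rewrites as $A_{\Ga_\rho}=\ov{{^t}A_{\Ga_W}}$, and my plan is to transfer the PM structure using two ingredients. First, twist the $\Q[\sZ^{\pm 1}]$-module structure on $\sLa_{Q^\vee}$ by the involution $\bar{\ }$ (explicitly, $p\cdot_{\mathrm{new}}x:=\bar p\cdot x$); since $\bar{\ }$ preserves $\Q_+[\sZ^{\pm 1}]$, this produces a PM algebra $\widetilde{\sLa}_{Q^\vee}$ with the same basis $\{\xi_w\}$ and structure constants $\ov{c^u_{v,w}}$. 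Second, exhibit a weight-preserving anti-involution of $\Ga_\rho$, namely $f(w):=w_0 w$ where $w_0\in W$ is the longest element, so that $(A_{\Ga_\rho})_{v,u}=(A_{\Ga_\rho})_{w_0u,w_0v}$. Reindexing via $\eta_w:=\xi_{w_0 w}$ gives $\eta_{w_0}=1$ and a PM basis of $\widetilde{\sLa}_{Q^\vee}$; a short computation combining the Pieri formula for $\xi_{\rho_1}\xi_{w_0u}$, the bar twist on scalars, and the anti-involution symmetry shows that the matrix of $m_{\xi_{\rho_1}}$ in this basis equals $A_{\Ga_\rho}$. Hence $\Ga_\rho$ is positively multiplicative at the vertex $w_0$.

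The principal obstacle is the verification that $f(w)=w_0 w$ is indeed a weight-preserving anti-involution of $\Ga_\rho$. For the type-$i$ arrows with $i\in I^\ast$, the claim reduces to the facts that $w_0$-conjugation induces a length-preserving automorphism of the finite Dynkin diagram extending to the affine one while fixing the node $0$ and preserving the marks $a_i$. For the type-$0$ arrows, one exploits the identity $w_0(\theta)=-\theta$ valid in every irreducible finite root system, which implies $w_0$ and $s_\theta$ commute and yields the weight computation $(w_0 s_\theta u)^{-1}(\theta^\vee)=u^{-1}(\theta^\vee)$; once this combinatorial input is in place, the matrix check is routine.
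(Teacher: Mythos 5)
Your proof of the matrix identity is essentially the paper's own argument: an entry-by-entry comparison in which Proposition \ref{new_pieri} gives the column of $m_{\xi_{\rho_1}}$ at $\xi_u$ and Proposition \ref{Gamma_rho_orientation} matches each contributing term $a_i\sz^{\wt(s_iu)}\xi_{\sf(s_iu)}$ with the arrow of $\Ga_\rho$ from $\sf(s_iu)$ to $u$ of weight $a_i\sz^{-\wt(s_iu)}$; the index bookkeeping checks out. Where you genuinely diverge is the positivity step. The paper disposes of it in one line ("transpose and apply the bar involution to the positive basis"), but taken literally this is delicate: if one bars and transposes the matrices $\func{Mat}_W(m_{\xi_w})$, one finds $A_{\Ga_\rho}\,\ov{{}^t\func{Mat}_W(m_{\xi_u})}=\sum_v (A_{\Ga_\rho})_{u,v}\,\ov{{}^t\func{Mat}_W(m_{\xi_v})}$, which is the multiplicativity condition of Definition \ref{Def_MGraphs} for ${}^tA_{\Ga_\rho}$ rather than for $A_{\Ga_\rho}$. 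Your route supplies exactly the missing ingredient: the symmetry $(A_{\Ga_\rho})_{v,u}=(A_{\Ga_\rho})_{w_0u,w_0v}$, which converts the transpose into a simultaneous relabelling $w\mapsto w_0w$ of rows and columns, i.e.\ a permutation of the PM basis of Proposition \ref{fdomain} (barred), with the identity landing at the vertex $w_0$. Your verification of that symmetry is correct: for $i\in I^\ast$ the arrow $u\to s_iu$ corresponds to the type-$i^\ast$ arrow $w_0s_iu\to w_0u$ with $a_{i^\ast}=a_i$ because $-w_0$ is a diagram automorphism fixing the affine node, and for type $0$ the identities $w_0(\theta)=-\theta$, $w_0s_\theta=s_\theta w_0$ and $(w_0s_\theta u)^{-1}(\theta^\vee)=u^{-1}s_\theta(-\theta^\vee)=u^{-1}(\theta^\vee)$ give existence and equality of weights. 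The trade-off is that your argument is longer but self-contained, whereas the paper's shortcut, as stated, leaves the reader to reconstruct precisely the relabelling (or a dual-basis) argument you spell out.
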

\begin{proof}
Let $x,y\in W$. We have ${^t}\ov{A_{\Ga_\rho}}[x,y] = p\neq 0$ if and only if there is an arrow in $\Ga_\rho$ from $x$ to $y$ of weight $\ov{p}$. There are two cases to consider:
\begin{itemize}
\item[{\tiny $\bullet$}] there exists $i\in I^\ast$ such that $y=s_ix>x$. In this case $\ov{p} = p=a_i \in \N$, $y\overset{+}{\leadsto} x$ and, according to the previous proposition, $a_i\xi_{x}$ will appear when developping $\xi_{\rho_1}\xi_{y}$, showing that  $\func{Mat}_{W}(m_{\xi_{\rho_1}})[x,y] = p$.
\item[{\tiny $\bullet$}] we have $y=s_\theta x<x$. Then $s_0x = \sf(s_0x)t_{\wt(s_0x)} =  yt_{\wt(s_0x)}$ and $\ov{p} = \sz^{\wt(s_0w)}$. By Proposition \ref{Gamma_rho_orientation}, we know that $s_0x\overset{+}{\leadsto}x$ hence showing that the term $\xi_{x}$ will appear when developping $\xi_{\rho_1}\xi_{s_0x}$. We have in $\sLa_a$
$$\xi_{\rho_1}\xi_{s_0x} = \sz^{\wt(s_0x)}\xi_{\rho_1}\xi_y = \xi_x+\cdots$$
hence showing when factorising with respect to the fundamental domain $W$ that $\func{Mat}_{W}(m_{\xi_{\rho_1}})[x,y] = \sz^{-\wt(s_0w)}=p$ as desired. 
\end{itemize}
The fact that $\Ga_\rho$ is positively multiplicative follows easily from the fact that $\Ga_{W}$ is positively multiplicative (see Proposition \ref{fdomain}). Indeed, it suffices to transpose and apply the bar involution to the positive basis associated to $\func{Mat}_{W}(m_{\xi_{\rho_1}})$.
\end{proof}

 \subsection{The graph $\Gamma_\gamma$}
 
Let $\gamma = \om_{i_1}+\cdots+\om_{i_k}$ be a level  $0$ weight. Let $J'=\{i_1,\ldots,i_k\}$ and $J$ be the complement of $J'$ in $\{1,\ldots,n\}$. Let $\bar{\ }$ be the involution of $\Q[Q^{\vee}\slash Q_J^\vee]$ defined by $\ov{\sz^{\ov{\al}}} = \sz^{-\ov{\al}}$ for all $\al\in Q^\vee$.
The set $W^J$ is a fundamental domain for the action of $Q^\vee\slash Q_J^\vee$ on $\cA_J$  so that $\{\ti{\xi}_w\mid w\in W^J\}$ is a basis of the algebra $\sLa_{J}$. Recall that $\Ga_{W^J}$ is the graph with set of vertices $W^J$ and adjacency matrix $\func{Mat}_{W^J}(m_{\ti{\xi}_{\rho_1}})$ where~$m_{\ti{\xi}_{\rho_1}}$ is the multiplication by $\ti{\xi}_{\rho_1}$ in $\sLa_{J}$ expressed in the basis $\{\ti{\xi}_w\mid w\in W^J\}$.
\begin{theorem}
$\func{Mat}_{W^J}(m_{\ti{\xi}_{\rho_1}}) = {^t}\ov{A_{\Ga_\ga}}$ and $\Ga_\ga$ is positively multiplicative. 
\end{theorem}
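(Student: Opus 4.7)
The plan is to mimic the argument just carried out for $\Ga_\rho$, but inside the quotient $\sLa_J$ and using $W^J$ as the fundamental domain for the action of $Q^\vee/Q_J^\vee$ on $\cAJ$. Fix $x,y\in W^J$; the entry $\func{Mat}_{W^J}(m_{\ti{\xi}_{\rho_1}})[x,y]$ is by definition the coefficient of $\ti{\xi}_x$ in the expansion of $\ti{\xi}_{\rho_1}\ti{\xi}_y$ in the basis $\{\ti{\xi}_w \mid w\in W^J\}$. Proposition~\ref{mult_GaJ} gives
\[
\ti{\xi}_{\rho_1}\ti{\xi}_y \;=\; \sum_{i\in I,\ y\crossp s_iy,\ s_iy\in\cAJ} a_i\,\ti{\xi}_{s_iy}.
\]
By Theorem~\ref{UJ} each $s_iy\in\cAJ$ can be uniquely written $s_iy = u_i\add t_{\ov{\beta_i}}$ with $u_i\in W^J$ and $\ov{\beta_i}\in Q^\vee/Q_J^\vee$, and the factorisation property established after the main theorem of §\ref{homlagebra} gives $\ti{\xi}_{s_iy} = \sz^{\ov{\beta_i}}\ti{\xi}_{u_i}$ under the identification $\sz^{\ov{\al}}\leftrightarrow \ti{\xi}_{t_{\ov{\al}}}$.

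The next step is to compare this decomposition with Theorem~\ref{cross_strip} applied at $w=y$: since $\sf(y)^J = y$ and $\wt(y)=0$, the positivity of the crossing $y\crossp s_iy$ puts us in case~(2) of that theorem, producing an arrow $\sf(s_iy)^J \underset{a_i z^{\ov{\al_i}}}{\overset{i}{\lra}} y$ in $\Ga_\ga$ with $\ov{\wt(s_iy)}=-\ov{\al_i}$. On the other hand the $\add$-decomposition forces $\sf(s_iy)^J = u_i$ and $\ov{\wt(s_iy)}=\ov{\beta_i}$. Thus $u_i = \sf(s_iy)^J$ and $\ov{\beta_i}=-\ov{\al_i}$, so collecting the terms with $u_i=x$ yields
\[
\func{Mat}_{W^J}(m_{\ti{\xi}_{\rho_1}})[x,y]\;=\;\sum_{i:\,u_i=x} a_i\sz^{-\ov{\al_i}}\;=\;\ov{\sum_{i:\,u_i=x} a_i\sz^{\ov{\al_i}}}\;=\;\ov{A_{\Ga_\ga}[y,x]}\;=\;{^t}\ov{A_{\Ga_\ga}}[x,y].
\]
The converse inclusion (every arrow $x\to y$ in $\Ga_\ga$ comes from such an $i$) is likewise read off Theorem~\ref{cross_strip}, because an arrow of type $i$ adjacent to $y$ is exactly detected by the condition $s_iy\in\cAJ$.

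For the positivity statement, invoke Proposition~\ref{pos_ga_ga}: the graph $\Ga_{W^J}$ is positively multiplicative at $e$, so there exists a commutative algebra $\sLa$ (namely $\sLa_J$ itself, acting by multiplication) containing $\func{Mat}_{W^J}(m_{\ti{\xi}_{\rho_1}})$ in its adjacency algebra, together with a basis $(\sb_w)_{w\in W^J}$ with $\sb_e=1$ satisfying the conditions of Definition~\ref{Def_MGraphs} and with nonnegative structure constants in $\Q_+[Q^\vee/Q_J^\vee]^{\pm1}$. Applying the algebra automorphism ${^t}\ov{\cdot}$ (the bar involution $\sz^{\ov{\al}}\mapsto\sz^{-\ov{\al}}$ preserves $\Q_+[\sZ^{\pm1}]$, and transposition of the matrix representation yields the opposite, hence isomorphic commutative algebra) transports this witness to $A_{\Ga_\ga} = {^t}\ov{\func{Mat}_{W^J}(m_{\ti{\xi}_{\rho_1}})}$ and produces the required positively multiplicative basis for $\Ga_\ga$ at~$e$.

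I expect the step requiring the most care to be the second one: pinning down the exact relation $\ov{\beta_i}=-\ov{\al_i}$ between the weight on the arrow in $\Ga_\ga$ (which is computed from the \emph{endpoint} $\sf(w)^J$ in Definition~\ref{defGgam}) and the translation part of the $\add$-decomposition of $s_iy$. This is precisely the content of Theorem~\ref{cross_strip}(2), so the work is in verifying that we are in the correct case of that theorem for every admissible $i$ and that the book-keeping of the bar involution aligns the coefficient $\sz^{\ov{\beta_i}}$ with the bar of the arrow weight $\sz^{\ov{\al_i}}$ rather than with $\sz^{\ov{\al_i}}$ itself. Once this sign is correct, the transposition/bar step for positive multiplicativity is formal, exactly as in the proof for $\Ga_\rho$.
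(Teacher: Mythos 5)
Your proof is correct and follows essentially the same route as the paper's: both rest on the Pieri rule in the quotient (Proposition \ref{mult_GaJ}), the crossing/arrow dictionary of Theorem \ref{cross_strip}, the factorisation $\ti{\xi}_{u\add t_{\ov{\al}}}=\sz^{\ov{\al}}\ti{\xi}_{u}$, and Proposition \ref{pos_ga_ga} combined with transposition and the bar involution. The only difference is organisational — you expand the column $\ti{\xi}_{\rho_1}\ti{\xi}_{y}$ and collect all arrows into $y$, whereas the paper fixes an arrow from $x$ to $y$ and checks the corresponding matrix entry case by case — and your sign bookkeeping $\ov{\beta_i}=-\ov{\al_i}$ matches the paper's $\func{Mat}_{W^J}(m_{\ti{\xi}_{\rho_1}})[x,y]=\sz^{-\ov{\wt(s_0x)}}$.
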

\begin{proof}
Let $x,y\in W^J$. We have ${^t}\ov{A_{\Ga_\ga}}[x,y] = p\neq 0$ if and only if there is an arrow in $\Ga_\ga$ from $x$ to $y$ of weight $\ov{p}$. There are two cases to consider:
\begin{itemize}
\item[{\tiny $\bullet$}] there exists $i\in I^\ast$ such that $y=s_iw>w$ and $y\in W^J$. In this case $\ov{p} = p=a_i \in \N$, $y\overset{+}{\leadsto} x$ and, according to Proposition \ref{mult_GaJ}, $a_i\ti{\xi}_{x}$ will appear when developping $\ti{\xi}_{\rho_1}\ti{\xi}_{y}$, showing that  $\func{Mat}_{W}(m_{\ti{\xi}_{\rho_1}})[x,y] = p$.
\item[{\tiny $\bullet$}] we have $y=(s_\theta x)^J<x$ and $s_0x\in \cAJ$. 
We have  $s_0x = \sf(s_0x)t_{\wt(s_0x)} =  yvt_{\wt(s_0x)} = y\add t_{\ov{\w(s_0x)}}$ with $v\in W_J$ and $\ov{p} = \sz^{\ov{\wt(s_0w)}}$. By Theorem \ref{cross_strip}, we know that $s_0x\overset{+}{\leadsto}x$ hence showing that the term $\ti{\xi}_{x}$ will appear when developping $\ti{\xi}_{\rho_1}\ti{\xi}_{s_0w}$. We have in $\sLa_J$
$$\ti{\xi}_{\rho_1}\ti{\xi}_{s_0x} = \sz^{\ov{\wt(s_0w)}}\ti{\xi}_{\rho_1}\ti{\xi}_y = \ti{\xi}_x+\cdots.$$
hence showing that $\func{Mat}_{W}(m_{\xi_{\rho_1}})[x,y] = \sz^{-\ov{\wt(s_0w)}}=p$ as desired. 
\end{itemize}
The fact that $\Ga_\ga$ is positively multiplicative follows easily from the fact that $\Ga_{W^J}$ is positively multiplicative (see Proposition \ref{pos_ga_ga}). 
\end{proof}

\section{Interpretation in terms of colored particle models}

In this section, we give an interpretation of the parabolic Cayley graphs of classical types introduced in Section \ref{SecGraphGg} in terms of particle moving on either a circle or a segment. The main tool to provide such interpretations is the combinatorial description of the affine Weyl groups of classical types as permutations of $\mathbb{Z}$, see \cite[Section 8]{BjBr}. 

\newcommand{\e}{\varepsilon}
\subsection{Type $\tilde{A}$}
Let $V$ be an euclidean space of dimension $n+1$ with basis $\e_1,\ldots,\e_{n+1}$. 
The set of vectors 
$$\Phi^+=\{\e_i-\e_j\mid 1\leq i<j\leq n+1\}$$ 
forms a positive root system of type $A$ with associated simple system $\Pi=\{\al_1,\ldots\al_n\}$ where $\al_i=\e_i-\e_{i+1}$. We have $\theta=\theta^\vee = \e_1-\e_{n+1}$. From there, we can contruct the affine Weyl group of $W_{a}$ of type $\tilde{A}_{n}$ as in Section~\ref{SubSecAlcoves}. 

\medskip

The group $W_a$ can be realised as the group of permutations~$\si$ of $\mathbb{Z}$ (acting on the right) that satisfy the following properties:
\begin{itemize}
\item[{\tiny $\bullet$}] $(i+n+1)\sigma=(i)\sigma+n+1$ for all $i\in \mathbb{Z}$,
\item[{\tiny $\bullet$}]  $\sum_{i=1}^{n+1}(i)\sigma=\frac{(n+1)(n+2)}{2}$.
\end{itemize}
Any element $\sigma \in W_a$ is uniquely determined by the images of $1,2,\ldots,n+1$. We will sometime denote an element $\si\in W_a$ by its window notation $[(1)\sigma,\ldots ,(n+1)\sigma]$. The distinguished generators $s_0,s_1,\ldots,s_n$ of $W_a$ are given by $s_i=[1,\ldots,i+1,i,\ldots,n+1]$ and $s_{0}=[0,2,\ldots,n,n+2]$.

\medskip

The symmetric group $W$ is the subgroup of $W_a$ corresponding to the sequences $[j_1,\ldots ,j_{n+1}]$ with $1\leq j_i\leq n+1$ for $1\leq i\leq n+1$. We have the following formula for translations and affine symmetries
\begin{align*}
t_{\alpha_i}&=[1,\ldots, (i+n+1),(i-n),\ldots, n+1],\\
s_{\alpha_i,k}&=[1,\ldots ,(i+1)+k(n+1),i-k(n+1) ,\dots ,n+1].
\end{align*} 
Given an element $w=[j_1,\ldots,j_{n+1}]$, left multiplication by a distinguished generator is given by: 
\begin{align*}
s_i[j_1,\ldots,j_{n+1}] &= [j_1,\ldots,j_{i+1},j_i,\ldots,j_{n+1}]\qu{and}\\
s_0[j_1,\ldots,j_{n+1}] &= [j_{n+1}-(n+1),\ldots,j_{i+1},j_i,
\ldots,j_{1}+(n+1)].
\end{align*}
Right mutliplication by $\si$ is straightforward by definition 
$$[j_1,\ldots,j_{n+1}]\si = [(j_1)\si,\ldots,(j_{n+1})\sigma].$$
Let $w\in W$. The equality $s_0w = s_\theta t_{\theta} w = s_\theta w t_{w(\theta)}$ in $W_a$  becomes when $j_{n+1}<j_1$
$$s_0 [j_1,j_2,\ldots,j_{n+1}]= [j_{n+1},j_{2}\ldots,j_n,j_{1}]t_{-\sum_{r=j_{n+1}}^{j_1-1}\alpha_r}.$$
Further for $\si\in W$, we have $\ell(s_i\si)>\ell(\si)$ if and only if $j_i<j_{i+1}$ and $\ell(s_\theta \si)<\ell(\si)$ if and only if $j_1<j_{n+1}$. Hence, the edge structure on $\Gamma_{\rho}$ is given by arrows 
\begin{itemize}
\item[{\tiny $\bullet$}] $[j_1,\ldots,j_i,j_{i+1},\ldots ,j_{n+1}] \overset{i}{\underset{1}{\longrightarrow}} [j_1,\ldots,j_{i+1},j_{i},\ldots ,j_{n+1}]$ if $j_{i+1}>j_{i}$ and $1\leq i\leq n$,
\item[{\tiny $\bullet$}] $[j_1,\ldots,j_{n+1}] \overset{0}{\underset{z^{-\alpha}}{\longrightarrow}}[j_{n+1},\ldots,j_1]$ with $\alpha=\sum_{s=j_{n+1}}^{j_1-1}\alpha_s$ if $j_{n+1}<j_{1}$.
\end{itemize}
\subsubsection*{The graphs $\Gamma_{\gamma}$}  
 Let $J=\{i_1,\ldots,i_\ell\}\subset \{1,\ldots n\}$, $J'$ be the complement of $J$ in $\{1,\ldots n\} $  and  $\gamma=\sum_{j\in J'}\omega_{j}$ be the corresponding dominant weight. The group $W_J$ acts on the $\{1,\ldots,n+1\}$ and the set of orbits can be described as $\{B_{m}\mid 1\leq m\leq r\}$ where all the elements in $B_m$ are smaller than those in $B_{m+1}$.
  The set of minimal length representatives $W^J$ is the subset of $W$ corresponding to sequences $[j_1,\ldots,j_{n+1}]$ where $i$ appears before $j$ whenever $i<j$ and $i,j\in B_m$ for some $1\leq m\leq r$. The definition of $\Gamma_{\gamma}$ given in Definition \ref{defGgam} (see also Subsection \ref{SubsecKeys} for another labelling by Key tableaux) translates then into the following rules:
\begin{itemize}
\item[{\tiny $\bullet$}] $[j_1,\ldots,j_i,j_{i+1},\ldots ,j_{n+1}] \underset{1}{\overset{i}{\longrightarrow}} [j_1,\ldots,j_{i+1},j_{i},\ldots ,j_{n+1}]$ if $j_{i+1}>j_{i}$ and $j_{i},j_{i+1}$ belong to different orbits $B_{m}, B_{m'}$,
\item[{\tiny $\bullet$}]  and $[j_1,\ldots,j_{n+1}] \overset{0}{\underset{z^{-\ov{\alpha}}}{\longrightarrow}} \left([j_{n+1},\ldots,j_1]\right)^J$ with $\alpha=\sum_{s=j_{n+1}}^{j_1-1}\alpha_s$ if $j_{n+1}<j_{1}$ and $j_{1},j_{n+1}$ belong to different orbits $B_{m}, B_{m'}$.
\end{itemize}
Recall here that the weights of $\Ga_\ga$ can be regarded as elements of $Q^\vee\slash Q^\vee_J$ and that for $\al\in Q^\vee$,  $\ov{\al}$ denotes the class of $\al$ modulo $Q^\vee_J$. In the last case, the notation $([k_1,\ldots,k_{n+1}])^J$ means that we take the mininal coset representative with respect to the set $J$.

\medskip

\renewcommand{\sc}{\mathsf{c}}
This graph is isomorphic to a graph encoding the clockwise movement of $n+1$ colored particles on a discrete circle with non-intersecting conditions. Let $\sc:\llbracket 1,n\rrbracket\rightarrow \llbracket 1, r\rrbracket$ be the color map defined by  $i\in B_{\sc(i)}$ for all $i\in \{1,\ldots,n\}$, and let $\mathcal{W}_{J}$ be the set of words in the alphabet $\{1,\ldots,r\}$ with $|B_c|$ occurrences of each letter $c$. Then, $\Gamma_{\gamma}$ is isomorphic to the graph $\widetilde{\Gamma}_{\gamma}$ with vertices $\mathcal{W}_J$ and edge rules: 
\begin{itemize}
\item[{\tiny $\bullet$}]  $c_1\ldots c_ic_{i+1}\cdots c_{n+1}\overset{i}{\underset{1}{\longrightarrow}}c_1\cdots c_{i+1}c_i\cdots c_{n+1}$ if $c_i<c_{i+1}$,
\item[{\tiny $\bullet$}]  $c_1\ldots c_{n+1}\overset{0}{\underset{\prod_{c=c_{n+1}}^{c_1-1}\sz^{-\ov{\alpha_ {k_c}}}}{\longrightarrow}} c_{n+1}\cdots c_1$ if $c_{n+1}<c_1$.
\end{itemize}
\begin{example}
Let $W_a$ be the affine Weyl group $\tilde{A}_n$ and the weight $\gamma=\omega_{i}$ for some $1\leq i\leq n-1$. We have $J=\{1,\ldots,n\}\setminus \{i\}$, $J'=\{i\}$, $B_1=\{ 1,\ldots,i\}$ and $B_2=\{i+1,\ldots,n+1\}$. The color map is defined by $\sc(i) = 1$ if $i\in \llbracket 1,i\rrbracket$ and $\sc(i)=2$ otherwise.  The graph $\widetilde{\Gamma}_{\gamma}$ is the graph with vertices being words with $i$ letters $1$ and $n+1-i$ letters $2$ and edge rules 
\begin{itemize}
\item[{\tiny $\bullet$}] $\cdots 12\cdots \overset{i}{\underset{1}{\longrightarrow}}\ldots 21\cdots $ ,
\item[{\tiny $\bullet$}] and $2c_2\cdots c_{n-1} 1\overset{0}{\underset{\sz^{-\ov{\al_i}}}{\longrightarrow}} 1c_2\cdots c_{n-1}2$.
\end{itemize}
Considering letters $2$ as holes and letters $1$ as particles, $\widetilde{\Gamma}_{\gamma}$ encodes thus the  clockwise movement of $i$ non-intersecting particles on a discrete circle of size $n+1$, with an edge weighted by $\sz^{-\ov{\al_i}}$ when a particle returns to the origin. Such graphs have been studied from a probabilistic point of view in \cite{GLT3}.
\end{example}

\begin{example}
Let $W_a$ be the affine Weyl group $\tilde{A}_3$ and let $\ga=\om_1+\om_2$. We have $J=\{3\}$, $J'=\{1,2\}$,  $B_1=\{1\}$, $B_2=\{2\}$ and $B_3=\{3,4\}$. The color map is given by $\sc(1)=1,\sc(2)=2$ and $\sc(3)=\sc(4)=3$. The graph $\widetilde{\Gamma}_{\gamma}$ is the graph with vertices being words of length 4 with one letter $1$, one letter $2$ and two letters $3$. In the graph below, the letter 1 is encoded by a blue particle, the letter 2 by a green particle and the color 3 by a hole (in gray) and we read the word starting from the north position and moving counterclockwise. 
For instance, we have the following correspondance 
$$\begin{tikzpicture}[baseline={([yshift=-.5ex]current bounding box.center)}]
\def\d{.25}
\def\r{.15}
\draw (0,0) circle (\d cm) ;
\node[fill = gray,circle,inner sep=0pt,minimum size=\r cm] at (0,\d) {};
\node[fill = Blue,circle,inner sep=0pt,minimum size=\r cm] at (-\d,0) {};
\node[fill = gray,circle,inner sep=0pt,minimum size=\r cm] at (0,-\d) {};
\node[fill = Green,circle,inner sep=0pt,minimum size=\r cm] at (\d,0) {};
\end{tikzpicture}
\quad \leftrightarrow \quad 3132.$$
We obtain the following graph where the arrows going up are all of type $0$ and are the only arrows with a weight different from 1. Compare with Example \ref{tableaux}.
%
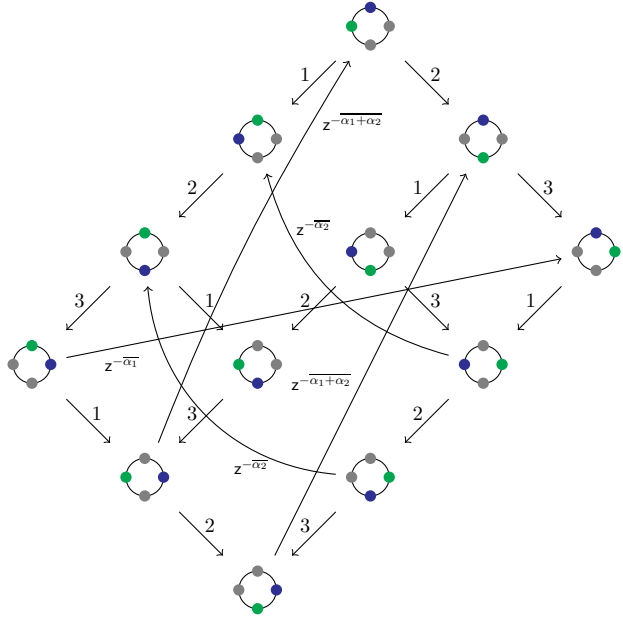
\begin{figure}[H]
$$
\begin{tikzpicture}[scale =1.5]
\tikzstyle{vertex}=[]
\def\d{.25}
\def\r{.15}
\node[vertex] (a1) at (0,0) {\begin{tikzpicture}
\draw (0,0) circle (\d cm) ;
\node[fill = Blue,circle,inner sep=0pt,minimum size=\r cm] at (0,\d) {};
\node[fill = Green,circle,inner sep=0pt,minimum size=\r cm] at (-\d,0) {};
\node[fill = gray,circle,inner sep=0pt,minimum size=\r cm] at (0,-\d) {};
\node[fill = gray,circle,inner sep=0pt,minimum size=\r cm] at (\d,0) {};
\end{tikzpicture}};

\node[vertex] (a2) at (-1,-1)  {\begin{tikzpicture}
\draw (0,0) circle (\d cm) ;
\node[fill = Green,circle,inner sep=0pt,minimum size=\r cm] at (0,\d) {};
\node[fill = Blue,circle,inner sep=0pt,minimum size=\r cm] at (-\d,0) {};
\node[fill = gray,circle,inner sep=0pt,minimum size=\r cm] at (0,-\d) {};
\node[fill = gray,circle,inner sep=0pt,minimum size=\r cm] at (\d,0) {};
\end{tikzpicture}};

\node[vertex] (a3) at (1,-1) {\begin{tikzpicture}
\draw (0,0) circle (\d cm) ;
\node[fill = Blue,circle,inner sep=0pt,minimum size=\r cm] at (0,\d) {};
\node[fill = gray,circle,inner sep=0pt,minimum size=\r cm] at (-\d,0) {};
\node[fill = Green,circle,inner sep=0pt,minimum size=\r cm] at (0,-\d) {};
\node[fill = gray,circle,inner sep=0pt,minimum size=\r cm] at (\d,0) {};
\end{tikzpicture}};

\node[vertex] (a4) at (-2,-2) {\begin{tikzpicture}
\draw (0,0) circle (\d cm) ;
\node[fill = Green,circle,inner sep=0pt,minimum size=\r cm] at (0,\d) {};
\node[fill = gray,circle,inner sep=0pt,minimum size=\r cm] at (-\d,0) {};
\node[fill = Blue,circle,inner sep=0pt,minimum size=\r cm] at (0,-\d) {};
\node[fill = gray,circle,inner sep=0pt,minimum size=\r cm] at (\d,0) {};
\end{tikzpicture}};

\node[vertex] (a5) at (0,-2) {\begin{tikzpicture}
\draw (0,0) circle (\d cm) ;
\node[fill = gray,circle,inner sep=0pt,minimum size=\r cm] at (0,\d) {};
\node[fill = Blue,circle,inner sep=0pt,minimum size=\r cm] at (-\d,0) {};
\node[fill = Green,circle,inner sep=0pt,minimum size=\r cm] at (0,-\d) {};
\node[fill = gray,circle,inner sep=0pt,minimum size=\r cm] at (\d,0) {};
\end{tikzpicture}};

\node[vertex] (a6) at (2,-2){\begin{tikzpicture}
\draw (0,0) circle (\d cm) ;
\node[fill = Blue,circle,inner sep=0pt,minimum size=\r cm] at (0,\d) {};
\node[fill = gray,circle,inner sep=0pt,minimum size=\r cm] at (-\d,0) {};
\node[fill = gray,circle,inner sep=0pt,minimum size=\r cm] at (0,-\d) {};
\node[fill = Green,circle,inner sep=0pt,minimum size=\r cm] at (\d,0) {};
\end{tikzpicture}};

\node[vertex] (a7) at (-3,-3) {\begin{tikzpicture}
\draw (0,0) circle (\d cm) ;
\node[fill = Green,circle,inner sep=0pt,minimum size=\r cm] at (0,\d) {};
\node[fill = gray,circle,inner sep=0pt,minimum size=\r cm] at (-\d,0) {};
\node[fill = gray,circle,inner sep=0pt,minimum size=\r cm] at (0,-\d) {};
\node[fill = Blue,circle,inner sep=0pt,minimum size=\r cm] at (\d,0) {};
\end{tikzpicture}};

\node[vertex] (a8) at (-1,-3) {\begin{tikzpicture}
\draw (0,0) circle (\d cm) ;
\node[fill = gray,circle,inner sep=0pt,minimum size=\r cm] at (0,\d) {};
\node[fill = Green,circle,inner sep=0pt,minimum size=\r cm] at (-\d,0) {};
\node[fill = Blue,circle,inner sep=0pt,minimum size=\r cm] at (0,-\d) {};
\node[fill = gray,circle,inner sep=0pt,minimum size=\r cm] at (\d,0) {};
\end{tikzpicture}};

\node[vertex] (a9) at (1,-3){\begin{tikzpicture}
\draw (0,0) circle (\d cm) ;
\node[fill = gray,circle,inner sep=0pt,minimum size=\r cm] at (0,\d) {};
\node[fill = Blue,circle,inner sep=0pt,minimum size=\r cm] at (-\d,0) {};
\node[fill = gray,circle,inner sep=0pt,minimum size=\r cm] at (0,-\d) {};
\node[fill = Green,circle,inner sep=0pt,minimum size=\r cm] at (\d,0) {};
\end{tikzpicture}};

\node[vertex] (a10) at (-2,-4){\begin{tikzpicture}
\draw (0,0) circle (\d cm) ;
\node[fill = gray,circle,inner sep=0pt,minimum size=\r cm] at (0,\d) {};
\node[fill = Green,circle,inner sep=0pt,minimum size=\r cm] at (-\d,0) {};
\node[fill = gray,circle,inner sep=0pt,minimum size=\r cm] at (0,-\d) {};
\node[fill = Blue,circle,inner sep=0pt,minimum size=\r cm] at (\d,0) {};
\end{tikzpicture}};

\node[vertex] (a11) at (0,-4) {\begin{tikzpicture}
\draw (0,0) circle (\d cm) ;
\node[fill = gray,circle,inner sep=0pt,minimum size=\r cm] at (0,\d) {};
\node[fill = gray,circle,inner sep=0pt,minimum size=\r cm] at (-\d,0) {};
\node[fill = Blue,circle,inner sep=0pt,minimum size=\r cm] at (0,-\d) {};
\node[fill = Green,circle,inner sep=0pt,minimum size=\r cm] at (\d,0) {};
\end{tikzpicture}};

\node[vertex] (a12) at (-1,-5) {\begin{tikzpicture}
\draw (0,0) circle (\d cm) ;
\node[fill = gray,circle,inner sep=0pt,minimum size=\r cm] at (0,\d) {};
\node[fill = gray,circle,inner sep=0pt,minimum size=\r cm] at (-\d,0) {};
\node[fill = Green,circle,inner sep=0pt,minimum size=\r cm] at (0,-\d) {};
\node[fill = Blue,circle,inner sep=0pt,minimum size=\r cm] at (\d,0) {};
\end{tikzpicture}};

\draw[->] (a1) edge node[left,above,pos=.7]  {\scalebox{.7}{$1$}} (a2);
\draw[->]  (a1) edgenode[left,above,pos=.7]  {\scalebox{.7}{$2$}} (a3);

\draw[->]  (a2) edge node[left,above,pos=.7]  {\scalebox{.7}{$2$}} (a4);
\draw[->] (a3) edge node[left,above,pos=.7]  {\scalebox{.7}{$1$}} (a5);
\draw[->] (a3) edge node[left,above,pos=.7]  {\scalebox{.7}{$3$}} (a6);

\draw[->]  (a4) edge node[left,above,pos=.7]  {\scalebox{.7}{$3$}} (a7);
\draw[->]  (a4) edge node[left,above,pos=.7]  {\scalebox{.7}{$1$}} (a8);

\draw[->]  (a5) edge node[left,above,pos=.7]  {\scalebox{.7}{$2$}} (a8);
\draw[->]  (a5) edge node[left,above,pos=.7]  {\scalebox{.7}{$3$}} (a9);

\draw[->]  (a6) edge node[left,above,pos=.7]  {\scalebox{.7}{$1$}} (a9);

\draw[->]  (a7) edge node[left,above,pos=.7]  {\scalebox{.7}{$1$}} (a10);

\draw[->]  (a8) edge node[left,above,pos=.7]  {\scalebox{.7}{$3$}}(a10);

\draw[->]  (a9) edge node[left,above,pos=.7]  {\scalebox{.7}{$2$}}(a11);

\draw[->]  (a10) edge node[left,above,pos=.7]  {\scalebox{.7}{$2$}} (a12);
\draw[->]  (a11) edge node[left,above,pos=.7]  {\scalebox{.7}{$3$}} (a12);

\draw[->] (a10) edge[bend left = 5] node[pos=.85,right]{\scalebox{.6}{$\sz^{-\ov{\al_1+\al_2}}$}} (a1);
\draw[->] (a9) edge[bend left = 30] node[pos=.8,right]{\scalebox{.6}{$\sz^{-\ov{\al_2}}$}} (a2);
\draw[->] (a12) edge[bend left = 0] node[pos=.45,left]{\scalebox{.6}{$\sz^{-\ov{\al_1+\al_2}}$}}(a3);
\draw[->] (a11) edge[bend right = -40] node[pos=.3,below]{\scalebox{.6}{$\sz^{-\ov{\al_2}}$}} (a4);
\draw[->] (a7) edge[bend left = 0] node[pos=.1,below]{\scalebox{.6}{$\sz^{-\ov{\al_1}}$}} (a6);
\end{tikzpicture}$$
\caption{The graph $\widetilde{\Gamma}_{\om_1+\om_2}$ in type $\tilde{A}_3$}
\end{figure}
\end{example}

\subsection{Type $\tilde{C}$}
Let $V$ be an euclidean space of dimension $n$ with basis $\e_1,\ldots,\e_{n}$. 
The set of vectors 
$$\Phi^+=\{\e_i\pm \e_j\mid 1\leq i<j\leq n-1\}\cup \{2\e_i\mid 1\leq i\leq n\}$$ 
forms a positive root system of type $C$ with associated simple system $\Pi=\{\al_1,\ldots\al_{n-1},\al_n\}$ where $\al_i=\e_i-\e_{i+1}$ for $1\leq i\leq n-1$ and $\al_n=2\e_n$. We have $\theta=2\e_1$ and $\theta^\vee = \e_1$. From there we can contruct the affine Weyl group of $W_{a}$ of type $\tilde{C}_{n}$ as in Section \ref{SubSecAlcoves}.

\medskip

The group $W_a$ can be realised as the group of permutations~$\si$ of $\mathbb{Z}$ (acting on the right) that satisfies the following properties
\begin{itemize}
\item[{\tiny $\bullet$}] $(-i)\sigma=-(i)\sigma$ for $i\in\mathbb{Z}$,
\item[{\tiny $\bullet$}] $(i+N)\sigma=(i)\sigma+N$
\end{itemize}
where we have set $N=2n+1$. We see that any element $\si\in W_a$ is completly determined by the image of $1,\ldots,n$. Indeed we have $\si(kN) = kN$ for all $k\in \Z$ and if $n+1\leq i\leq 2n$ we have
$$(i)\si = (N - (N - i))\si = N-(N-i)\si  \qu{and} 1\leq N-i\leq n.$$
For later use, we define the following map 
$$\begin{array}{cccccccc}
\mathsf{s_C}&:& \{1,\ldots,2n\} &\to& \{1,\ldots,2n\}\\
&& i &\mapsto & N-i
\end{array}.$$
We have $(i)\si = -(\syc(i))\si + N$ for all $i\in \{1,\ldots,2n\}$.  
 We will sometime denote an element $\si\in W_a$ by its window notation: $[(1)\sigma,\ldots ,(n)\sigma]$.  The window notation of the distinguished generators $s_0,s_1,\ldots,s_n$ of~$W_a$ is 
\begin{align*}
s_0 &= [-1,2,\ldots,n],\\
s_i &= [1,\ldots,i+1,i,\ldots,n] \qu{for all $1\leq i\leq n-1$,}\\
s_n &= [1,2,\ldots,n-1,n+1].
\end{align*}
The finite Weyl group $W$ is the subgroup of $W_a$ corresponding to the sequences $[j_1\ldots j_n]$ with $1\leq j_i \leq 2n$ for $1\leq i\leq n$. We have the following formula for translations 
\begin{align*}
t_{\al_0} &= [1+N,2,\ldots,n],\\
t_{\al_i}&=[1,\ldots ,(i+1)+N,i-N ,\dots ,n] \qu{for $1\leq i \leq n-1$,}\\
t_{\al_n} &= [1,2,\ldots,n+N].
\end{align*} 
Given an element $w=[j_1,\ldots,j_{n}]$, we have the following rules for right multiplication: 
\begin{align*}
s_0[j_1,\ldots,j_n] &=[-j_1,\ldots,j_n],\\
s_i[j_1,\ldots,j_n] &= [j_1,\ldots,j_{i+1},j_i,\ldots,j_n]\qu{for $1\leq i \leq n+1$,}\\
s_n[j_1,\ldots,j_n] &=[j_1,\ldots,N-j_{n}].
\end{align*} 
Let $w\in W$. We have $s_\theta w<w$ if and only if $j_1>n$ and the equality $s_0w = s_\theta t_{\theta^\vee} w = s_\theta w t_{(\theta^\vee)w}$ in $W_a$ becomes in this case: 
$$s_0 [j_1,j_2,\ldots,j_{n}]= [N-j_1,\ldots,j_{n}]t_{\alpha^{\vee}} \qu{where} \alpha^{\vee}=-\al_n^\vee-\sum_{i=N-j_1}^{n-1}\alpha_i^{\vee}.$$
Hence, the edge structure on $\Gamma_{\rho}$ is given by 
\begin{itemize}
\item[{\tiny $\bullet$}] $[j_1,\ldots,j_i,j_{i+1},\ldots ,j_n] \overset{i}{\underset{2}{\longrightarrow}} [j_1,\ldots,j_{i+1},j_{i},\ldots ,j_n]$ if $j_{i+1}>j_{i}$ and $1\leq i \leq n-1$
\item[{\tiny $\bullet$}]  $[j_1,\ldots,j_n] \overset{n}{\underset{1}{\longrightarrow}} [j_1,\ldots,N-j_n]$ if $j_{n}\leq n$.
\item[{\tiny $\bullet$}] $[j_1,\ldots,j_n] \underset{z^{-\alpha^{\vee}}}{\longrightarrow} [N-j_1,\ldots,j_n]$   if $j_{1}>n$, with $\alpha^{\vee}=\al_n+\sum_{i=N-j_1}^{n-1}\alpha_i^{\vee}$.
\end{itemize}

\subsection*{The graph $\Gamma_{\gamma}$}
Let $J=\{i_1,\ldots,i_\ell\}\subset \{1,\ldots n\}$, $J'$ be the complement of $J$ in $\{1,\ldots n\} $  and  $\gamma=\sum_{j\in J'}\omega_{j}$ be the corresponding weight. The group $W_J$ acts on  $\{1,\ldots,2n\}$. In the case where $n\notin J_r$, the orbits of this action can be described as follows:
\begin{align*}
B_m&=\{k_{m},\ldots,k_{m+1}-1\}\quad\text{for $m=1,\ldots,r$},\\ 
B_{2r-m}&=\{\syc(k_{m+1}-1),\ldots,\syc(k_{m})\}\quad\text{for $m=1,\ldots,r$}
\end{align*}
where $k_{n+1}=n+1$ and $k_m<k_{m+1}$. Note that we must have $n\in B_r$ and $n+1\in B_{r+1}$ in this case. In the case where $n\in J_r$ we have
\begin{align*}
B_m&=\{k_{m},\ldots,k_{m+1}-1\}\quad\text{for $m=1,\ldots,r-1$}\\ 
B_{2r-m}&=\{\syc(k_{m+1}-1),\ldots,\syc(k_{m})\}\quad\text{for $m=1,\ldots,r-1$}\\
B_{r} &=\{k_{r},\ldots,n,\syc(n),\ldots,\syc(k_{r})\}.
\end{align*}
where $k_0=0$ and $k_m<k_{m+1}$. Note that we must have $\{n,n+1\}\subset B_r$ in this case.

\smallskip

The set of minimal length representatives $W^J$ is the subset of $W$ corresponding to sequences $[j_1,\ldots,j_{n}]$ such that 
(1) $i$ appears before $j$ whenever $i<j$ and $i$ and $j$ belong to the same orbit $B_k$ and~(2) if $n\in J$ then $B_r\cap \{j_1,\ldots,j_n\} \subset \{1,\ldots,n\}$. The definition of $\Gamma_{\gamma}$ given in \ref{defGgam} translates then into the following rules:
\begin{itemize}
\item[{\tiny $\bullet$}]  $[j_1,\ldots,j_i,j_{i+1},\ldots ,j_n] \overset{i}{\underset{2}{\longrightarrow}} [j_1,\ldots,j_{i+1},j_{i},\ldots ,j_n]$ if $j_{i+1}>j_{i}$ and $j_{i},j_{i+1}$ belong to two different orbits~$B_{c}, B_{c'}$
\item[{\tiny $\bullet$}]  $[j_1,\ldots,j_n] \overset{n}{\underset{1}{\longrightarrow}} [j_1,\ldots,N-j_n]$ with $j_n\leq n$ and $j_n\notin B_r$ if $n\in J$
\item[{\tiny $\bullet$}]  $[j_1,\ldots,j_n] \overset{0}{\underset{z^{-\ov{\alpha^{\vee}}}}{\longrightarrow}}\bigg([N-j_1,\ldots,j_n]\bigg)^{J}$ if $j_{1}>n$, with $\alpha^{\vee}=\al^\vee_n+\sum_{i=N-j_1}^{n-1}\alpha_i^{\vee}$
\end{itemize}
In the last case, the notation $([k_1,\ldots,k_n])^J$ means that we take the mininal coset representative with respect to the set $J$. 

\smallskip

This graph is isomorphic to a graph encoding the movement on a segment of colored particles with a spin $\{+,-\}$ which exchange their position according to their spin and color. Let $\sc:\{1,\ldots,n\}\to \{1,\ldots,r\}$ be the color map defined by~$i\in B_{\sc(i)}\cup B_{2r-\sc(i)}$ and $\mathsf{sp}:\{1,\ldots,2n\}\to \{ -,+\}$ be the spin map defined by  $\mathsf{sp}(i) = -$ if $1\leq i\leq n $ and $\mathsf{sp}(i)=+$ otherwise.

When $n\notin J$, let $\mathcal{W}_{J}$ be the set of words in $\{1^\pm,\ldots,r^\pm\}$ with $|B_c|$ occurrences of the letter~$c$ (i.e. either $c^+$ or $c^-$). When $n\in J$, let $\mathcal{W}_{J}$ be the set of words in $\{1^\pm,\ldots,r-1^\pm,r\}$ with $|B_c|$ occurrences of the letter $c$ for $c\leq r-1$ and $|B_r|/2$ occurences of the letter $r$. Set $q_c=\sz^{-\ov{\alpha^\vee}}$ with $\alpha^{\vee}=\alpha_n^{\vee}+\sum_{\substack{1\leq r\leq c}}\alpha_r^{\vee}$. Then, $\Gamma_{\gamma}$ is isomorphic to the graph $\widetilde{\Gamma}_{\gamma}$ with vertices $\mathcal{W}_J$ and edge rules
\begin{itemize}
\item[{\tiny $\bullet$}]  $\cdots c_i^{\mathsf{sp}_i}c_{i+1}^{\mathsf{sp}_{i+1}}\cdots x_n\overset{i}{\underset{2}{\longrightarrow}}x_1\cdots c_{i+1}^{\mathsf{sp}_{i+1}}c_i^{\mathsf{sp}_i}\cdots x_n$ if $\mathsf{sp}_{i}(N/2-c_i)<\mathsf{sp}_{i+1}(N/2-c_{i+1})$,
\item[{\tiny $\bullet$}]  $x_1\cdots x_{n-1}c_n^- \overset{n}{\underset{1}{\longrightarrow}} x_1\cdots x_{n-1}c_n^+$ if $c_n\not= r$ when $n \in J$,
\item[{\tiny $\bullet$}] $c_1^+x_2\cdots x_{n} \overset{0}{\underset{q_{c_n}}{\longrightarrow}} c_1^-x_2\cdots x_{n}$ if $c_1\neq r$ when $n \in J$.
\end{itemize}

The first condition in the above rules can be fulfilled in two different ways: either $\mathsf{sp}_{i}=-$ and $\mathsf{sp}_{i+1}=+$, or $\mathsf{sp}_{i}=\mathsf{sp}_{i+1}$ and $\mathsf{sp}_{i}c_i> \mathsf{sp}_{i}c_{i+1}$. In the simple case where $r=2$ and $n\in J$, this graph encodes the movement of $\vert B_1\vert$ particles with a spin $\{+,-\}$ on a discrete segment of size $n$, where particles with spin $-$ go to the right and particles with spin $+$ go to the left.

\begin{example}
Let $W_a$ be the affine Weyl group $\tilde{C}_3$ and let $\ga=\om_2$. We have $J=\{1,3\}$ and $J'=\{2\}$. The group $W_J$ acts on $\{1,2,3,4,5,6\}$  and the orbits are $B_1=\{1,2\}$, $B_2=\{3,4\}$ and $B_3=\{5,6\}$. The elements of $W^J$ are 
\begin{align*}
&e = [1,2,3],\
s_2 = [1,3,2],\
s_1s_2 = [3,1,2],\
s_3s_2 = [1,3,5],\
s_2s_3s_2 = [1,5,3],\\
&s_1s_3s_2=[3,1,5],\
s_1s_2s_3s_2=[5,1,3],\
s_2s_1s_3s_2=[3,5,1],\
s_1s_2s_1s_3s_2=[5,3,1],\\
&s_3s_2s_1s_3s_2=[3,5,6],\
s_1s_3s_2s_1s_3s_2=[5,3,6],\
s_2s_1s_3s_2s_1s_3s_2=[5,6,3].
 \end{align*}
 We have $s_\theta = s_1s_2s_3s_2s_1$. In the list above, the elements that satisfy $s_\theta w<w$ are those with window notation starting by $5$. For all such $w$, we have $(\theta^\vee)w=-\eps_2 = -(\al^\vee_3+\al^\vee_2)$ and  
$s_0w = s_\theta t_{\theta^\vee}w=s_\theta w t_{-\e_2}$
 so that all the $0$-arrows will have weight $\sz^{-\ov{\e_2}}$. We thus omit this information when drawing the $\Ga_\ga$  in the figure below. The color map is defined by $\sc(1) = \sc(2) =1$ and $\sc(3)=2$. Since $3\in J$, the particle $2$ doesn't have spin.   In the graph $\tilde{\Ga}_{\om_2}$ below, the particle $1^-$ is represented in blue, $1^+$ in green and $2$ in gray. We do not indicate the weights $a_i\in \N$ but we do indicate the $i$ when $1\leq i\leq n$.  
 
\vspace{-1cm}

\begin{figure}[H]
$$
\begin{minipage}{8cm}
$$\begin{tikzpicture}[scale =.8]
\tikzstyle{vertex}=[inner sep=2pt,minimum size=10pt]
\node[vertex] (a1) at (0,0) {\scalebox{.7}{$[1,2,3]$}};
\node[vertex] (a2) at (0,-1) {\scalebox{.7}{$[1,3,2]$}};
\node[vertex] (a3) at (-1,-2) {\scalebox{.7}{$[3,1,2]$}};
\node[vertex] (a4) at (1,-2){\scalebox{.7}{$[1,3,5]$}};
\node[vertex] (a5) at (0,-3){\scalebox{.7}{$[3,1,5]$}};
\node[vertex] (a6) at (2,-3){\scalebox{.7}{$[1,5,3]$}};
\node[vertex] (a7) at (0,-4){\scalebox{.7}{$[3,5,1]$}};
\node[vertex] (a8) at (2,-4){\scalebox{.7}{$[5,1,3]$}};
\node[vertex] (a9) at (-1,-5){\scalebox{.7}{$[3,5,6]$}};
\node[vertex] (a10) at (1,-5){\scalebox{.7}{$[5,3,1]$}};
\node[vertex] (a11) at (0,-6){\scalebox{.7}{$[5,3,6]$}};
\node[vertex] (a12) at (0,-7){\scalebox{.7}{$[5,6,3]$}};

\draw[->] (a1) edge node[left]{\scalebox{.5}{$2$}} (a2);
\draw[->] (a2) edge node[left]{\scalebox{.5}{$1$}} (a3);
\draw[->] (a2) edge node[right]{\scalebox{.5}{$3$}}(a4);
\draw[->] (a3) edge node[left]{\scalebox{.5}{$3$}} (a5);
\draw[->] (a5) edge node[left]{\scalebox{.5}{$2$}} (a7);
\draw[->] (a4) edge node[right]{\scalebox{.5}{$2$}} (a6);
\draw[->] (a6) edge node[right]{\scalebox{.5}{$1$}} (a8);

\draw[->] (a7) edge node[left]{\scalebox{.5}{$1$}} (a10);
\draw[->] (a8) edge node[left]{\scalebox{.5}{$1$}} (a10);
\draw[->] (a7) edge node[left]{\scalebox{.5}{$3$}} (a9);

\draw[->] (a9) edge node[right]{\scalebox{.5}{$1$}} (a11);
\draw[->] (a10) edge node[right]{\scalebox{.5}{$3$}} (a11);

\draw[->] (a4) edge node[right]{\scalebox{.5}{$1$}} (a5);

\draw[->] (a11) edge node[right]{\scalebox{.5}{$2$}} (a12);

\draw[->]  (a12) edge[bend right = 90] node[right,pos = .5]{} (a6);
\draw[->]  (a11) edge[bend right = 10] node[pos=.5,right,in = 90]{} (a4);
\draw[->]  (a10) edge[bend right = 5] node[pos=.5,right]{} (a2);
\draw[->]  (a8) edge[bend right = 90] node[pos=.5,right]{} (a1);
\end{tikzpicture}$$
\end{minipage}
\begin{minipage}{8cm}
$$
\begin{tikzpicture}[scale =.8]
\tikzstyle{vertex}=[inner sep=2pt,minimum size=10pt]
\node[vertex] (a1) at (0,0) {
\begin{tikzpicture}[scale =.7]
\draw (0,0) -- (1,0);
\node[fill = Blue,circle,inner sep=0pt,minimum size=.12cm] at (0,0) {};
\node[fill = Blue,circle,inner sep=0pt,minimum size=.12cm] at (.5,0) {};
\node[fill = gray,circle,inner sep=0pt,minimum size=.12cm] at (1,0) {};
\end{tikzpicture}};

\node[vertex] (a2) at (0,-1) {\begin{tikzpicture}[scale =.7]
\draw (0,0) -- (1,0);
\node[fill = Blue,circle,inner sep=0pt,minimum size=.12cm] at (0,0) {};
\node[fill = gray,circle,inner sep=0pt,minimum size=.12cm] at (.5,0) {};
\node[fill = Blue,circle,inner sep=0pt,minimum size=.12cm] at (1,0) {};
\end{tikzpicture}};

\node[vertex] (a3) at (-1,-2) {
\begin{tikzpicture}[scale =.7]
\draw (0,0) -- (1,0);
\node[fill = gray,circle,inner sep=0pt,minimum size=.12cm] at (0,0) {};
\node[fill = Blue,circle,inner sep=0pt,minimum size=.12cm] at (.5,0) {};
\node[fill = Blue,circle,inner sep=0pt,minimum size=.12cm] at (1,0) {};
\end{tikzpicture}};

\node[vertex] (a4) at (1,-2){
\begin{tikzpicture}[scale =.7]
\draw (0,0) -- (1,0);
\node[fill = Blue,circle,inner sep=0pt,minimum size=.12cm] at (0,0) {};
\node[fill = gray,circle,inner sep=0pt,minimum size=.12cm] at (.5,0) {};
\node[fill = Green,circle,inner sep=0pt,minimum size=.12cm] at (1,0) {};
\end{tikzpicture}};

\node[vertex] (a5) at (0,-3){
\begin{tikzpicture}[scale =.7]
\draw (0,0) -- (1,0);
\node[fill = gray,circle,inner sep=0pt,minimum size=.12cm] at (0,0) {};
\node[fill = Blue,circle,inner sep=0pt,minimum size=.12cm] at (.5,0) {};
\node[fill = Green,circle,inner sep=0pt,minimum size=.12cm] at (1,0) {};
\end{tikzpicture}};

\node[vertex] (a6) at (2,-3){
\begin{tikzpicture}[scale =.7]
\draw (0,0) -- (1,0);
\node[fill = Blue,circle,inner sep=0pt,minimum size=.12cm] at (0,0) {};
\node[fill = Green,circle,inner sep=0pt,minimum size=.12cm] at (.5,0) {};
\node[fill = gray,circle,inner sep=0pt,minimum size=.12cm] at (1,0) {};
\end{tikzpicture}};

\node[vertex] (a7) at (0,-4){
\begin{tikzpicture}[scale =.7]
\draw (0,0) -- (1,0);
\node[fill = gray,circle,inner sep=0pt,minimum size=.12cm] at (0,0) {};
\node[fill = Green,circle,inner sep=0pt,minimum size=.12cm] at (.5,0) {};
\node[fill = Blue,circle,inner sep=0pt,minimum size=.12cm] at (1,0) {};
\end{tikzpicture}};

\node[vertex] (a8) at (2,-4){
\begin{tikzpicture}[scale =.7]
\draw (0,0) -- (1,0);
\node[fill = Green,circle,inner sep=0pt,minimum size=.12cm] at (0,0) {};
\node[fill = Blue,circle,inner sep=0pt,minimum size=.12cm] at (.5,0) {};
\node[fill = gray,circle,inner sep=0pt,minimum size=.12cm] at (1,0) {};
\end{tikzpicture}};

\node[vertex] (a9) at (-1,-5){
\begin{tikzpicture}[scale =.7]
\draw (0,0) -- (1,0);
\node[fill = gray,circle,inner sep=0pt,minimum size=.12cm] at (0,0) {};
\node[fill = Green,circle,inner sep=0pt,minimum size=.12cm] at (.5,0) {};
\node[fill = Green,circle,inner sep=0pt,minimum size=.12cm] at (1,0) {};
\end{tikzpicture}};

\node[vertex] (a10) at (1,-5){\begin{tikzpicture}[scale =.7]
\draw (0,0) -- (1,0);
\node[fill = Green,circle,inner sep=0pt,minimum size=.12cm] at (0,0) {};
\node[fill = gray,circle,inner sep=0pt,minimum size=.12cm] at (.5,0) {};
\node[fill = Blue,circle,inner sep=0pt,minimum size=.12cm] at (1,0) {};
\end{tikzpicture}};

\node[vertex] (a11) at (0,-6){
\begin{tikzpicture}[scale =.7]
\draw (0,0) -- (1,0);
\node[fill = Green,circle,inner sep=0pt,minimum size=.12cm] at (0,0) {};
\node[fill = gray,circle,inner sep=0pt,minimum size=.12cm] at (.5,0) {};
\node[fill = Green,circle,inner sep=0pt,minimum size=.12cm] at (1,0) {};
\end{tikzpicture}};

\node[vertex] (a12) at (0,-7){
\begin{tikzpicture}[scale =.7]
\draw (0,0) -- (1,0);
\node[fill = Green,circle,inner sep=0pt,minimum size=.12cm] at (0,0) {};
\node[fill = Green,circle,inner sep=0pt,minimum size=.12cm] at (.5,0) {};
\node[fill = gray,circle,inner sep=0pt,minimum size=.12cm] at (1,0) {};
\end{tikzpicture}};

\draw[->] (a1) edge node[left]{\scalebox{.5}{$2$}} (a2);
\draw[->] (a2) edge node[left]{\scalebox{.5}{$1$}} (a3);
\draw[->] (a2) edge node[right]{\scalebox{.5}{$3$}}(a4);
\draw[->] (a3) edge node[left]{\scalebox{.5}{$3$}} (a5);
\draw[->] (a5) edge node[left]{\scalebox{.5}{$2$}} (a7);
\draw[->] (a4) edge node[right]{\scalebox{.5}{$2$}} (a6);
\draw[->] (a6) edge node[right]{\scalebox{.5}{$1$}} (a8);

\draw[->] (a7) edge node[left]{\scalebox{.5}{$1$}} (a10);
\draw[->] (a8) edge node[left]{\scalebox{.5}{$1$}} (a10);
\draw[->] (a7) edge node[left]{\scalebox{.5}{$3$}} (a9);

\draw[->] (a9) edge node[right]{\scalebox{.5}{$1$}} (a11);
\draw[->] (a10) edge node[right]{\scalebox{.5}{$3$}} (a11);

\draw[->] (a4) edge node[right]{\scalebox{.5}{$1$}} (a5);

\draw[->] (a11) edge node[right]{\scalebox{.5}{$2$}} (a12);

\draw[->]  (a12) edge[bend right = 60] node[right,pos = .5]{\scalebox{.6}{}} (a6);
\draw[->]  (a11) edge[bend right = 10] node[pos=.5,right,in = 90]{\scalebox{.6}{}} (a4);
\draw[->]  (a10) edge[bend right = 7] node[pos=.5,right]{\scalebox{.6}{}} (a2);
\draw[->]  (a8) edge[bend right = 90] node[pos=.5,right]{\scalebox{.6}{}} (a1);
\end{tikzpicture}$$
\end{minipage}
$$
\vspace{-.5cm}
\caption{The graph $\Ga_{\om_2}$ and $\tilde{\Ga}_{\om_2}$ in type $\tilde{C}_3$}
\end{figure}
\end{example}

\begin{example}
Let $W_a$ be the affine Weyl group $\tilde{C}_3$ and let $\ga=\om_3$. We have $J=\{1,2\}$ and $J'=\{3\}$. The group $W_J$ acts on $\{1,2,3,4,5,6\}$  and the orbits are $B_1=\{1,2,3\}$ and $B_2=\{4,5,6\}$. The elements of $W^J$ are 
 \begin{align*}
&e = [1,2,3],\ 
s_3 = [1,2,4],\ 
s_2s_3 = [1,4,2],\ 
s_1s_2s_3 = [4,1,2],\ 
s_3s_2s_3 = [1,4,5],\\
&s_1s_3s_2s_3=[4,1,5],\ 
s_2s_1s_3s_2s_3=[4,5,1],\ 
s_3s_2s_1s_3s_2s_3=[4,5,6].
 \end{align*}
 We have $s_\theta = s_1s_2s_3s_2s_1$ and in the list above, the set of elements that satisfy $s_\theta w<w$ are exactly those whose window notation starts by $4$. As before, the weight of each $0$ arrow will be $\sz^{\ov{-\e_3}}$ and will not be indicated on the graphs below. The associated color map is constant defined by $\sc(1) = \sc(2) = \sc(3)=1$. Since $3\notin J$, the particle $1$ does have spin. In the graph $\tilde{\Ga}_{\om_2}$ below,  $1^-$ is represented in blue and $1^+$ in green. Again, we do not indicate the weights $a_i\in \N$ but we do indicate the $i$ when $1\leq i\leq n$.  

\vspace{-.3cm}
 \begin{figure}[H]
 $$
\begin{minipage}{8cm}
$$\begin{tikzpicture}[scale =.9]
\tikzstyle{vertex}=[inner sep=2pt,minimum size=10pt]

\node[vertex] (a1) at (0,0) {\scalebox{.7}{$[1,2,3]$}};
\node[vertex] (a2) at (0,-1) {\scalebox{.7}{$[1,2,4]$}};
\node[vertex] (a3) at (0,-2) {\scalebox{.7}{$[1,4,2]$}};
\node[vertex] (a4) at (-1,-3){\scalebox{.7}{$[4,1,2]$}};
\node[vertex] (a5) at (1,-3){\scalebox{.7}{$[1,4,5]$}};
\node[vertex] (a6) at (0,-4){\scalebox{.7}{$[4,1,5]$}};
\node[vertex] (a7) at (0,-5){\scalebox{.7}{$[4,5,1]$}};
\node[vertex] (a8) at (0,-6){\scalebox{.7}{$[4,5,6]$}};

\draw[->] (a1) edge node[left]{\scalebox{.5}{$3$}} (a2);
\draw[->] (a2) edge node[left]{\scalebox{.5}{$2$}} (a3);

\draw[->] (a3) edge node[right]{\scalebox{.5}{$1$}}(a4);
\draw[->] (a3) edge node[right]{\scalebox{.5}{$3$}}(a5);

\draw[->] (a4) edge node[right]{\scalebox{.5}{$3$}}(a6);
\draw[->] (a5) edge node[right]{\scalebox{.5}{$1$}}(a6);

\draw[->] (a6) edge node[right]{\scalebox{.5}{$2$}}(a7);
\draw[->] (a7) edge node[right]{\scalebox{.5}{$3$}}(a8);

\draw[->] (a8) edge[bend right = 50] node[right,pos=.5]{} (a5);
\draw[->] (a7) edge[bend right = 50,in=190] node[right,pos=.2]{} (a3);

\draw[->] (a6) edge[bend left = 50,in=120,out = 0] node[left,pos=.7]{} (a2);
\draw[->] (a4) edge[bend left = 50] node[left,pos=.5]{} (a1);
\end{tikzpicture}$$
\end{minipage}
\begin{minipage}{8cm}
$$
\begin{tikzpicture}[scale =.9]
\tikzstyle{vertex}=[inner sep=2pt,minimum size=10pt]

\node[vertex] (a1) at (0,0) {
\begin{tikzpicture}[scale =.7]
\draw (0,0) -- (1,0);
\node[fill = Blue,circle,inner sep=0pt,minimum size=.12cm] at (0,0) {};
\node[fill = Blue,circle,inner sep=0pt,minimum size=.12cm] at (.5,0) {};
\node[fill = Blue,circle,inner sep=0pt,minimum size=.12cm] at (1,0) {};
\end{tikzpicture}};

\node[vertex] (a2) at (0,-1) {
\begin{tikzpicture}[scale =.7]
\draw (0,0) -- (1,0);
\node[fill = Blue,circle,inner sep=0pt,minimum size=.12cm] at (0,0) {};
\node[fill = Blue,circle,inner sep=0pt,minimum size=.12cm] at (.5,0) {};
\node[fill = Green,circle,inner sep=0pt,minimum size=.12cm] at (1,0) {};
\end{tikzpicture}};

\node[vertex] (a3) at (0,-2) {
\begin{tikzpicture}[scale =.7]
\draw (0,0) -- (1,0);
\node[fill = Blue,circle,inner sep=0pt,minimum size=.12cm] at (0,0) {};
\node[fill = Green,circle,inner sep=0pt,minimum size=.12cm] at (.5,0) {};
\node[fill = Blue,circle,inner sep=0pt,minimum size=.12cm] at (1,0) {};
\end{tikzpicture}};

\node[vertex] (a4) at (-1,-3){
\begin{tikzpicture}[scale =.7]
\draw (0,0) -- (1,0);
\node[fill = Green,circle,inner sep=0pt,minimum size=.12cm] at (0,0) {};
\node[fill = Blue,circle,inner sep=0pt,minimum size=.12cm] at (.5,0) {};
\node[fill = Blue,circle,inner sep=0pt,minimum size=.12cm] at (1,0) {};
\end{tikzpicture}};

\node[vertex] (a5) at (1,-3){
\begin{tikzpicture}[scale =.7]
\draw (0,0) -- (1,0);
\node[fill = Blue,circle,inner sep=0pt,minimum size=.12cm] at (0,0) {};
\node[fill = Green,circle,inner sep=0pt,minimum size=.12cm] at (.5,0) {};
\node[fill = Green,circle,inner sep=0pt,minimum size=.12cm] at (1,0) {};
\end{tikzpicture}};

\node[vertex] (a6) at (0,-4){
\begin{tikzpicture}[scale =.7]
\draw (0,0) -- (1,0);
\node[fill = Green,circle,inner sep=0pt,minimum size=.12cm] at (0,0) {};
\node[fill = Blue,circle,inner sep=0pt,minimum size=.12cm] at (.5,0) {};
\node[fill = Green,circle,inner sep=0pt,minimum size=.12cm] at (1,0) {};
\end{tikzpicture}};

\node[vertex] (a7) at (0,-5){\begin{tikzpicture}[scale =.7]
\draw (0,0) -- (1,0);
\node[fill = Green,circle,inner sep=0pt,minimum size=.12cm] at (0,0) {};
\node[fill = Green,circle,inner sep=0pt,minimum size=.12cm] at (.5,0) {};
\node[fill = Blue,circle,inner sep=0pt,minimum size=.12cm] at (1,0) {};
\end{tikzpicture}};

\node[vertex] (a8) at (0,-6){
\begin{tikzpicture}[scale =.7]
\draw (0,0) -- (1,0);
\node[fill = Green,circle,inner sep=0pt,minimum size=.12cm] at (0,0) {};
\node[fill = Green,circle,inner sep=0pt,minimum size=.12cm] at (.5,0) {};
\node[fill = Green,circle,inner sep=0pt,minimum size=.12cm] at (1,0) {};
\end{tikzpicture}};

\draw[->] (a1) edge node[left]{\scalebox{.5}{$3$}} (a2);
\draw[->] (a2) edge node[left]{\scalebox{.5}{$2$}} (a3);

\draw[->] (a3) edge node[right]{\scalebox{.5}{$1$}}(a4);
\draw[->] (a3) edge node[right]{\scalebox{.5}{$3$}}(a5);

\draw[->] (a4) edge node[right]{\scalebox{.5}{$3$}}(a6);
\draw[->] (a5) edge node[right]{\scalebox{.5}{$1$}}(a6);

\draw[->] (a6) edge node[right]{\scalebox{.5}{$2$}}(a7);
\draw[->] (a7) edge node[right]{\scalebox{.5}{$3$}}(a8);

\draw[->] (a8) edge[bend right = 50] node[right]{} (a5);
\draw[->] (a7) edge[bend right = 50,in=190] node[right]{} (a3);

\draw[->] (a6) edge[bend left = 50,in=120,out = 0] node[right]{} (a2);
\draw[->] (a4) edge[bend left = 50] node[right]{} (a1);

\end{tikzpicture}$$
\end{minipage}
$$
\caption{The graph $\Ga_{\om_3}$ and $\tilde{\Ga}_{\om_3}$ in type $\tilde{C}_3$}
\end{figure}
\end{example}
There are similar interpretations of the graphs $\Ga_{\ga}$ in type $\tilde{B}_n$ and $\tilde{D}_n$. 


\begin{thebibliography}{10}

\bibitem{Bourbaki}
N.~Bourbaki.
\newblock {\em Groupes et alg\`ebres de Lie, Chap 4--6}.
\newblock Hermann, Paris, 1968; Masson, Paris, 1981.

\bibitem{BjBr}
Anders Bj{\"o}rner and Francesco Brenti.
\newblock {\em Combinatorics of {Coxeter} groups}, volume 231 of {\em Grad.
  Texts Math.}
\newblock New York, NY: Springer, 2005.

\bibitem{BumpSchilling2017}
Daniel Bump and Anne Schilling.
\newblock {\em Crystal bases. {Representations} and combinatorics}.
\newblock Hackensack, NJ: World Scientific, 2017.

\bibitem{carter_affine}
Roger Carter.
\newblock {\em Lie algebras of finite and affine type}, volume~96 of {\em Camb.
  Stud. Adv. Math.}
\newblock Cambridge: Cambridge University Press, 2005.

\bibitem{HNW}
Christophe Hohlweg, Philippe Nadeau, and Nathan Williams.
\newblock Automata, reduced words and {Garside} shadows in {Coxeter} groups.
\newblock {\em J. Algebra}, 457:431--456, 2016.

\bibitem{GLT1}
J.~Guilhot C.~Lecouvey and P.~Tarrago.
\newblock Basics on positively multiplicative graphs and algebras.
\newblock {\em Preprint available at arXiv:2205.08889}, 2022.

\bibitem{GLT3}
J.~Guilhot C.~Lecouvey and P.~Tarrago.
\newblock Quantum cohomology of the Grassmannian and unitary Dyson brownian motion.
\newblock {\em Preprint available at arXiv:2211.12836}, 2022.

\bibitem{G2}
J.~Guilhot C.~Lecouvey and P.~Tarrago.
\newblock Structure constants of the homology rings of affine Grassmannians in type $G_2$
\newblock {\em Notes available at http://www.lmpt.univ-tours.fr/$\sim$guilhot/Miscellaneous/G2\_matrix.pdf}, 2023.


\bibitem{GLP}
E.~Little J.~Guilhot and J.~{P}arkinson.
\newblock On j-folded alcove paths and combinatorial representations of affine
  hecke algebras.
\newblock {\em Preprint available at arXiv:2212.10781}, 2022.

\bibitem{KacB}
Victor~G. Kac.
\newblock {\em Infinite dimensional {Lie} algebras.}
\newblock Cambridge etc.: Cambridge University Press, 3rd ed. edition, 1990.

\bibitem{Ker}
S.~V. Kerov.
\newblock {\em Asymptotic representation theory of the symmetric group and its
  applications in analysis}, volume 219 of {\em Transl. Math. Monogr.}
\newblock Providence, RI: American Mathematical Society (AMS), 2003.

\bibitem{Lam2}
Thomas Lam.
\newblock The shape of a random affine {Weyl} group element and random core
  partitions.
\newblock {\em Ann. Probab.}, 43(4):1643--1662, 2015.

\bibitem{LLMSSZ}
Thomas Lam, Luc Lapointe, Jennifer Morse, Anne Schilling, Mark Shimozono, and
  Mike Zabrocki.
\newblock {\em {{\(k\)}}-{Schur} functions and affine {Schubert} calculus},
  volume~33 of {\em Fields Inst. Monogr.}
\newblock New York, NY: Springer; Toronto: The Fields Institute for Research in
  the Mathematical Sciences, 2014.

\bibitem{LSSC}
Thomas Lam, Anne Schilling, and Mark Shimozono.
\newblock Schubert polynomials for the affine {Grassmannian} of the symplectic
  group.
\newblock {\em Math. Z.}, 264(4):765--811, 2010.

\bibitem{LSHPieri}
Thomas Lam and Mark Shimozono.
\newblock Dual graded graphs for {Kac}-{Moody} algebras.
\newblock {\em Algebra Number Theory}, 1(4):451--488, 2007.

\bibitem{LT2}
C{\'e}dric Lecouvey and Pierre Tarrago.
\newblock Alcove random walks, {{\(k\)}}-schur functions and the minimal
  boundary of the {{\(k\)}}-bounded partition poset.
\newblock {\em Algebr. Comb.}, 4(2):241--272, 2021.

\bibitem{Len1}
Cristian Lenart, Satoshi Naito, Daisuke Sagaki, Anne Schilling, and Mark
  Shimozono.
\newblock A uniform model for {Kirillov}-{Reshetikhin} crystals. {I}: {Lifting}
  the parabolic quantum {Bruhat} graph.
\newblock {\em Int. Math. Res. Not.}, 2015(7):1848--1901, 2015.

\bibitem{Len2}
Cristian Lenart, Satoshi Naito, Daisuke Sagaki, Anne Schilling, and Mark
  Shimozono.
\newblock A uniform model for {Kirillov}-{Reshetikhin} crystals. {II}: {Alcove}
  model, path model, and {{\(P=X\)}}.
\newblock {\em Int. Math. Res. Not.}, 2017(14):4259--4319, 2017.

\bibitem{Lusz1}
G.~Lusztig.
\newblock Periodic {{\(W\)}}-graphs.
\newblock {\em Represent. Theory}, 1:207--279, 1997.


\bibitem{MST}
E.~Mili\'cevi\'c P.~Schwer and A.~Thomas.
\newblock Chimney retractions in affine buildings encode orbits in affine flag varieties.
\newblock {\em Preprint available at arXiv:2207.12923}, 2022.


\bibitem{PY}
James Parkinson and Yeeka Yau.
\newblock Cone types, automata, and regular partitions in {Coxeter} groups.
\newblock {\em Adv. Math.}, 398:66, 2022.
\newblock Id/No 108146.

\bibitem{Pon}
Steven Pon.
\newblock Affine {Stanley} symmetric functions for classical types.
\newblock {\em J. Algebr. Comb.}, 36(4):595--622, 2012.

\end{thebibliography}

\end{document}